\newcommand{\ics}{\frac{1}{(2\pi\I)^2}}
\newcommand{\iid}{i.i.d.\,}
\newcommand{\hs}{{\rm hs-stat}}
\newcommand{\Or}{\mathcal{O}}
\newcommand{\Ai}{\mathrm{Ai}}
\newcommand{\Pb}{\mathbb{P}}
\newcommand{\Id}{\mathbbm{1}}
\newcommand{\e}{\varepsilon}
\newcommand{\I}{{\rm i}}
\newcommand{\R}{\mathbb{R}}
\newcommand{\N}{\mathbb{N}}
\newcommand{\Z}{\mathbb{Z}}
\renewcommand{\Re}{\mathrm{Re}\,}
\DeclareMathOperator{\sgn}{sgn}
\DeclareMathOperator*{\pf}{\mathrm{pf}}
\newcommand{\bra}[1]{\left\langle #1 \right|}
\newcommand{\ket}[1]{\left| #1 \right\rangle}
\newcommand{\braket}[2]{\left\langle #1 \left| #2 \right\rangle \right.}
\newcommand{\ketbra}[2]{\left| #1 \right\rangle \left\langle #2 \right|}
\newcommand{\zcd}{\raisebox{-0.5\height}{\,\includegraphics[scale=0.03]{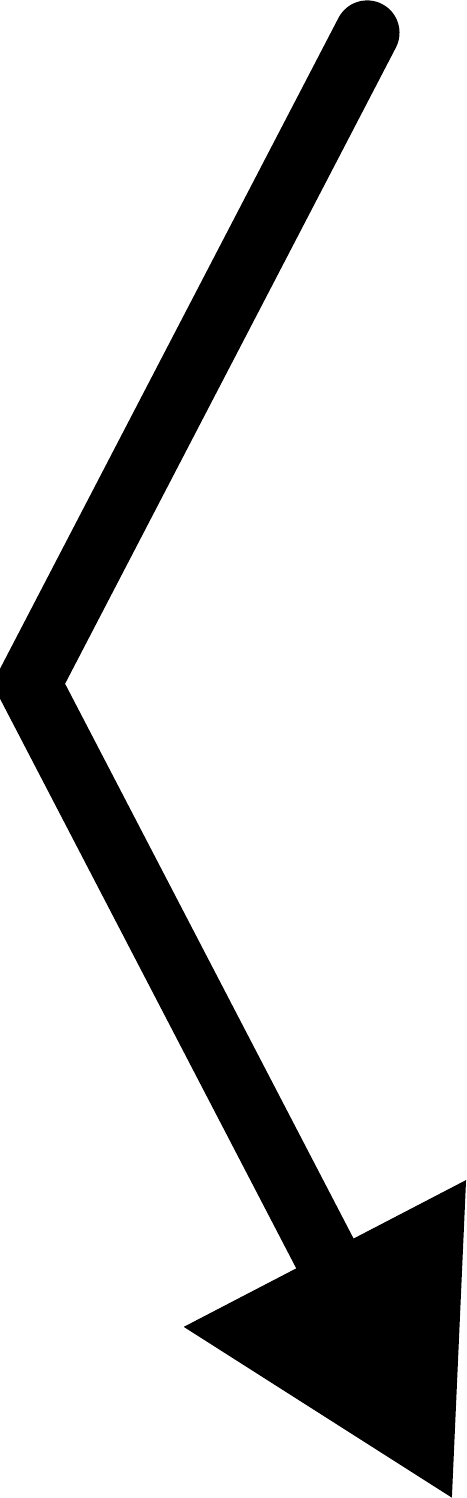}}}
\newcommand{\wcu}{\raisebox{-0.5\height}{\includegraphics[scale=0.03]{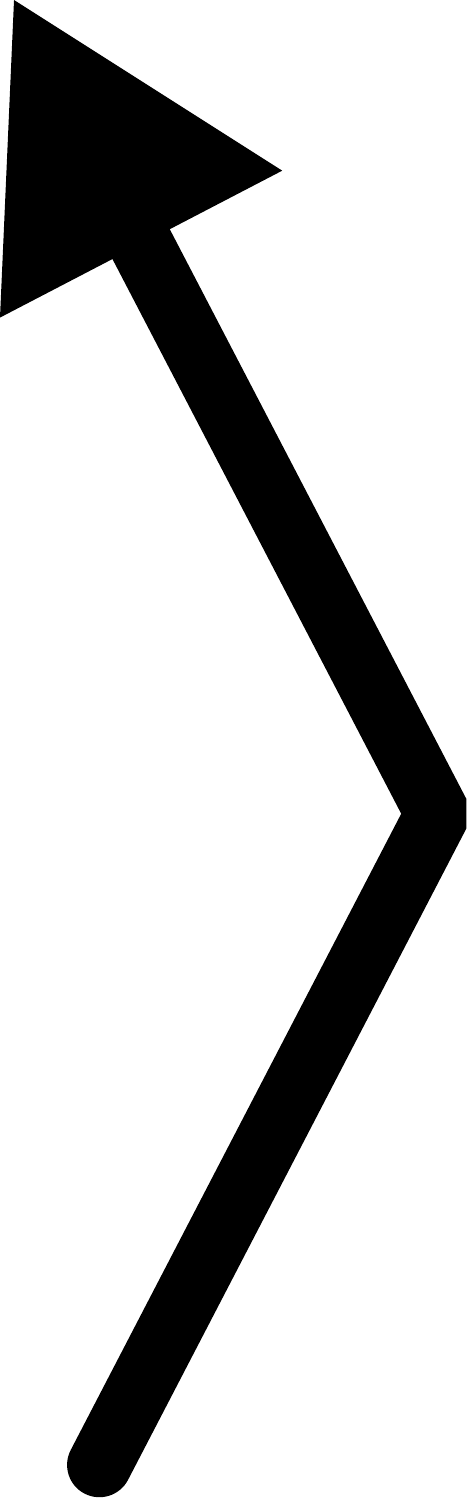}\,}}
\DeclareMathAlphabet{\mathpzc}{OT1}{pzc}{m}{it}
\newtheorem{prop}{Proposition}[section]
\newtheorem{thm}[prop]{Theorem}
\newtheorem{lem}[prop]{Lemma}
\newtheorem{defin}[prop]{Definition}
\newtheorem{cor}[prop]{Corollary}
\newtheorem{cla}[prop]{Claim}
\newtheorem{remark}[prop]{Remark}
\newenvironment{rem}{\begin{remark}\normalfont}{\end{remark}}
\numberwithin{equation}{section}
\title{The half-space Airy stat process}
\author{D. Betea\thanks{Department of Mathematics, KU Leuven, Celestijnenlaan 200b -- box 2400,
B-3001 Leuven, Belgium. E-mail: {\tt dan.betea@gmail.com}}
\and P. L. Ferrari\thanks{Institute for Applied Mathematics, Bonn University, Endenicher Allee 60, 53115 Bonn, Germany. E-mail: {\tt ferrari@uni-bonn.de}}
\and A. Occelli\thanks{Department of Mathematics, Instituto Superior T\'ecnico, Av.~Rovisco Pais 1, 1049-001 Lisbon, Portugal. E-mail: {\tt alessandra.occelli@tecnico.ulisboa.pt}}}
\date{}
\begin{document}
\sloppy
\setcounter{equation}{2}
\maketitle

\begin{abstract}
 We study the multipoint distribution of stationary half-space last passage percolation with exponentially weighted times. We derive both finite-size and asymptotic results for this distribution. In the latter case we observe a new one-parameter process we call \emph{half-space Airy stat}. It is a one-parameter generalization of the Airy stat process of Baik--Ferrari--P\'ech\'e, which is recovered far away from the diagonal. All these results extend the one-point results previously proven by the authors.
\end{abstract}

\tableofcontents

\newpage

\section{Introduction}

\paragraph{Background.} The one-dimensional Kardar--Parisi--Zhang (KPZ) universality class has received a lot of attention in recent years, see e.g.~the surveys and lecture notes~\cite{FS10,Cor11,QS15,BG12,Qua11,Fer10b,Tak16,Zyg18}. A model in this class describes the evolution of a height function $h(x, t)$ (at position $x$ and time $t$) subject to an irreversible stochastic and local microscopic evolution. Macroscopically, the height function evolves according to a certain PDE and this gives a non-random limit shape. Among the models in the KPZ class we list: the KPZ equation itself~\cite{KPZ86}; directed random polymers (the free energy playing the role of the height function); their zero-temperature limits falling in the category called last passage percolation; interacting particle systems like the asymmetric simple exclusion process; the Eden model; and others~\cite{KS92,Mea98}. The last several decades have seen some of these models analyzed for a variety of classes of initial/boundary conditions. It turns out the height function has $\Or(t^{1/3})$ fluctuations ($t$ large) and $\Or(t^{2/3})$ correlation length scales, as predicted in~\cite{FNS77,BKS85}\footnote{Such results hold around smooth limit shapes; hydrodynamical shocks can behave differently and we refer the reader to~\cite{FN13,FN16,FF94b,FGN17,Nej17,FN19,BF20} for some works on shocks.}.

Large time limiting processes of KPZ models usually depend on subclasses of initial conditions. For full-space models ($x\in\R$ for continuous or $x\in\Z$ for discrete models), one encounters the Airy$_2$ process of Pr\"ahofer and Spohn in the case of curved limit shape points~\cite{PS02,Jo03b,BF07}. Its one-point distribution is the GUE Tracy--Widom distribution~\cite{TW94} of random matrix theory. It was discovered in KPZ models first by Baik--Deift--Johansson~\cite{BDJ99} (longest increasing subsequences of random permutations and Hammersley last passage percolation); the same distribution was then shown to hold for a variety of other models in the KPZ class~\cite{Jo00b,SS10,ACQ10,SS10b,FV13,BCF12,Bar14,KPS19}. Beyond models with determinantal structure, the \emph{extended} limit process has been proven to be Airy$_2$ only recently for a stochastic six-vertex model by Dimitrov~\cite{Dim20}. For flat limit shapes and non-random initial conditions, one obtains the Airy$_1$ process, discovered by Sasamoto~\cite{Sas05}, in the large time limit~\cite{Sas05,BFPS06,BFP06}. It has the GOE Tracy--Widom distribution as its one-point distribution~\cite{TW96}. See also~\cite{BR99,BR99b,PS00} for related work.

Stationary initial conditions for full-space also lead to flat limit shapes, but the randomness of the initial condition is relevant. The limit process was obtained by Baik--Ferrari--P\'ech\'e~\cite{BFP09} and called the Airy$_{\rm stat}$ process. It has the Baik--Rains distribution~\cite{BR00} (see~\cite{FS05a} for an alternative formula) as its one-point distribution. See also~\cite{FS05a,BCFV14,IS12,FSW15,Agg16,SI17,SI17b,IMS19,AB19} for related KPZ work and models at stationarity. One obtains stationary models as limits of specific two-sided random initial conditions\footnote{The random initial condition on the two sides is recovered for most of the studied models using boundary sources, by the use of some Burke-type property~\cite{Bur56,DMO05}, as shown for the exclusion process in~\cite{PS02b}. The only exceptions are the works on the stochastic six-vertex model and its partially asymmetric limit in the work by Aggarwal and Borodin~\cite{Agg16,AB19}.}.

The half-space stationary LPP model was introduced and studied by the authors~\cite{BFO20}. A little later Barraquand--Krajenbrink--Le Doussal~\cite{BKLD20} studied the large time half-space KPZ equation started at stationarity. At the boundary of the system (near $0$) and for asymptotically large time, they discovered a special case of the one-point asymptotics of~\cite{BFO20}. Their representation is quite different from ours. See also~\cite{KLD19, NKDT20} for related work on (non-stationary) half-space KPZ. Results for random but not necessarily stationary initial conditions are also known, see~\cite{CLW16,FO17,CFS16,QR16,FV20}.

\paragraph{Main contribution.} In this work we consider a stationary model of last passage percolation in half-space with exponential weights. By half-space we mean the height function $h(x,t)$ is only defined for $x\in\N$ or $x\in\R_+$. This model was introduced in~\cite{BFO20} where we obtained finite-size and asymptotic formulas for the one-point distribution of the last passage time. In the scaling limit we obtained a two-parameter family of probability distributions, where one parameter gives the strength of the weights at the boundary of the system and the second gives the distance from it. It is a half-space analogue and one-parameter generalization of the Baik--Rains distribution~\cite{BR00}. Moreover this distribution converges to the one of Baik--Rains far away from the system boundary.

In this paper we extend the results of~\cite{BFO20} to the multipoint setting. We first obtain the $m$-point joint distribution of last passage times in the half-space stationary model (Theorem~\ref{thm:main_finite}). We further obtain its asymptotics as the size of the system goes to infinity. The resulting process, defined via its finite dimensional distributions, we call the \emph{half-space Airy stat} process, Airy$_\hs$ (Theorem~\ref{thm:main_asymptotics}). It converges to the Airy$_{\rm stat}$ process when moving far away from the boundary of the system (Theorem~\ref{thm:LimitAiryStat}).

The presence of a source at the origin (in the particle system formulation of LPP) is a physical difference going from full- to half-space. This makes the half-space problem richer both mathematically and physically. As already seen from the one-point distribution, the influence of the boundary persists in the limiting process~\cite{BFO20}. We thus obtain a one-parameter dependent process, the parameter being related with the strength of the diagonal in LPP language. By contrast, the full-space Airy$_{\rm stat}$ process is a parameter-free process.

\paragraph{Motivation.} We expect the Airy$_\hs$ process to be universal within the half-space KPZ class to the same extent that the Airy$_{\rm stat}$ is in full-space. In particular, the half-space stationary case has been only recently considered with the one-point distribution analyzed in~\cite{BFO20} and, for a particular case and using a different algebraic approach in~\cite{BKLD20}.

Time-time correlations give a second reason for studying the half-space stationary LPP limit process. In the case of full-space, there has been intense recent activity in the area~\cite{Jo15,Jo18,JR19,BL16,BL17,Liu19,ND17,ND18,NDT17,FS16,FO18}. In particular, the last two authors~\cite{FO18} have shown that the first order correction of the time-time covariance for times macroscopically close to each other is governed by the variance of the Baik--Rains distribution, confirming a prediction of Takeuchi~\cite{Tak13}. The reason is that the system locally converges to equilibrium and the limit process is locally like the stationary one. Thus the Airy$_{\rm stat}$ process plays an important role. For a comparable study in half-space, we would need knowledge of the respective half-space process; it is this process, Airy$_\hs$, we introduce here.

A third reason for our study is that there are considerably fewer results in half-space KPZ models than there are in full-space and the situation is richer in half-space. For a start one needs to prescribe the dynamics at the origin $x=0$ (in the particle system language). A strong influence of the (growth) mechanism at the origin leads to Gaussian fluctuations. A small influence does not effect the asymptotics. Between these two situations there is a critical value of the parameter governing the influence of the origin where it starts becoming relevant. Under a critical scaling one then obtains a family of distributions interpolating between the two extremes. This can be seen in some (non-stationary) half-space LPP models, as well as in some growth models with non-random initial conditions. For example one has a transition between Gaussian fluctuations (at super-criticality and on a different scale) to GOE Tracy--Widom fluctuations (at the critical value) to GSE Tracy--Widom fluctuations (at sub-criticality)~\cite{BR99b,SI03,BBC18,KLD19}. In some models this transition also persists at the extended process level~\cite{SI03,BBCS17,BBNV18,BBCS17b}. Usually these models have a Pfaffian structure, see e.g.~\cite{Ra00, BR01b, RB04, FR07, Gho17, BBCW17, BBNV18, BZ17, BZ19} for further references on this.

\paragraph{Outline.} This paper is organized as follows: we continue this section with some useful notation we use throughout. In Section~\ref{sec:results} we present the stationary half-space LPP model and the main results: in Section~\ref{sec:lpp} we define the model and discuss its connections to TASEP; in Section~\ref{sec:results_finite} we state the main finite-size result on the multipoint joint distribution of stationary half-space LPP (Theorem~\ref{thm:main_finite}), and we prove it in Section~\ref{sec:finite_proof}; in Section~\ref{sec:results_asymptotics} we give the asymptotic result under critical scaling (Theorem~\ref{thm:main_asymptotics}), and we prove it in Section~\ref{sec:proof_asymptotics}; in Section~\ref{sec:resultAiryStat} we show what, moving far away from the origin, the limit process converges to the Airy$_{\rm stat}$ process (Theorem~\ref{thm:LimitAiryStat}), and we prove it in Section~\ref{sec:proof_Airy_stat}. In the last two sections we make efforts to keep everything concise and thus we rely as much as possible on previously done asymptotic analysis in~\cite{BFO20} and~\cite{BFP09}. In Appendix~\ref{sec:pfaff} we discuss basics of Pfaffians. In Appendix~\ref{sec:geom_wts} we state Theorem~\ref{thm:geom_corr_2}, a result on multipoint LPP times with geometric random variable weights which in some sense is the true starting point of all our analysis. In Appendix~\ref{sec:geom_exp_limit} we recover our exponential LPP model of interest from geometric random variables. Finally, in Appendix~\ref{AppWellDef} we prove some odds and ends ensuring that the half-space Airy stat Airy$_\hs$ process is well-defined (and in fact we do so for the Airy stat Airy$_{\rm stat}$ process as well).

\paragraph{Notations.} We use throughout the same notational conventions as in our earlier work~\cite{BFO20}. Notably for complex contours we denote by $\Gamma_{I}$ any simple possibly disconnected counter-clockwise contour around the points in the set $I$. We thus allow for disjoint unions of simple counter-clockwise contours each encircling one point of $I$. We also use the following notation for the usual Airy contours we'll need in the asymptotics: ${}_I\,\raisebox{-0.5\height}{\includegraphics[scale=0.035]{z_cont_down_thick}}\,{}_J$ is a down-oriented contour coming in a straight line from $\exp(\pi i /3)\infty$ to a point on the real line to the right of (all points of) $I$ and to the left of $J$, and continuing in a straight line to $\exp(-\pi i /3)\infty$; and with ${}_{I}\,\raisebox{-0.5\height}{\includegraphics[scale=0.035]{w_cont_up_thick}}\, {}_{J}$ an up-oriented contour from $\exp(-2\pi i /3)\infty$ to $\exp(2\pi i/3)\infty$. Finally, we write $\I \R$ for the up-oriented imaginary axis.

If $A$ is an integral operator with kernel $A(x, y)$ and $f$ a function, we use usual multiplication notation $Af$ to denote $A$ acting on $f$, that is
\begin{equation}
 (Af)(x) = \int_{X} A(x, y) f(y) dy
\end{equation}
with integration over an appropriate space $X$ (usually a semi-infinite interval, the real line, or their discrete analogues).

We use the $\braket{\rm bra}{\, \rm ket}$ notation throughout. If $f, g$ are functions, we denote the scalar product on $L^2((s_1,\infty))$ by
\begin{equation}
 \braket{f}{P_{s_1} g} = \int_{s_1}^\infty f(x) g(x) dx,
\end{equation}
where $P_{s_1}$ is the projector onto $(s_1, \infty)$, while by $\ketbra{f}{g}$ we denote the outer product kernel
\begin{equation}
 \ketbra{f}{g} (x, y) = f(x) g(y).
\end{equation}
We find it useful to extend this notation by putting row/column vectors (and matrices) inside bras and kets. As an example, for vectors and matrices of dimension 2, the quantity
\begin{equation}
\braket{f_1 \quad f_2} { \begin{pmatrix} a_{11} & a_{12} \\ a_{21} & a_{22} \end{pmatrix} \begin{pmatrix} g_1 \\ g_2 \end{pmatrix} } = \braket{f_1}{a_{11} g_1 + a_{12} g_2} + \braket{f_2}{a_{21} g_1 + a_{22} g_2}
\end{equation}
is a sum of four terms (the $a_{ij}:=a_{ij}(x,y)$'s are integral operators) and
\begin{equation}
\ketbra{\begin{array}{c} f_1 \\ f_2 \end{array} } {g_1 \quad g_2} (x, y) = \begin{pmatrix} f_1(x) g_1(y) & f_1(x) g_2(y) \\ f_2(x) g_1(y) & f_2(x) g_2(y) \end{pmatrix}
\end{equation}
is a $2 \times 2$ matrix kernel. We warn the reader that this notation will usually involve $2m \times 1$ vectors of functions and $2m \times 2m$ matrix kernels for some $m \geq 1$.

Letters like $G, K, W, V$ (with possible ornaments and in different fonts) will usually denote $2m \times 2m$ matrix kernels; we also denote by $G_s$ (or $K_s, V_s, \dots$) the $2m \times 2m$ matrix kernel $G_s = P_s G P_s$ where $P_s$ is a certain projector to be defined in the sequel ($P_s = P_s^* = P_s^2$). Note $P_s$ commutes with $G_s$ and thus $P_s$ commutes with its resolvent.

We introduce throughout a variety of kernels and functions that depend on many parameters and live in the finite-size or asymptotic world. To distinguish between them:
\begin{itemize}
 \item in Section~\ref{sec:results_finite} we use a sans-serif $\mathsf{font}$ to denote all our objects that enter the finite-size result;
 \item in Section~\ref{sec:results_asymptotics} we use calligraphic $\mathpzc{fonts}$ and $\mathcal{FONTS}$ to denote all our objects that enter the half-space Airy stat process $\mathcal{A}_{\hs}$;
 \item in Section~\ref{sec:resultAiryStat} we use the notation of Baik--Ferrari--P\'ech\'e for the Airy stat process $\mathcal{A}_{\rm stat}$;
 \item finally and \emph{in Section~\ref{sec:finite_proof} alone} we break from these conventions for technical reasons and we use an up-right sans-serif $\mathsf{font}$ to denote limits of objects as two parameters become one $\beta \to -\alpha$ \emph{if and only if the pre-limit objects depend explicitly on $\beta$}. For example,
 \begin{equation}
 {\mathsf{K}} = \lim_{\beta \to -\alpha} {K},
 \end{equation}
 where $K$ is assumed to depend on both parameters $\alpha$ and $\beta$.
\end{itemize}

\paragraph{Acknowledgements.} We are grateful to G. Barraquand, A. Krajenbrink, and P. Le Doussal for their suggestions for improving and their feedback regarding a preprint of this paper. 

This work was started when all three authors were at the University of Bonn; as such it is supported by the German Research Foundation through the Collaborative Research Center 1060 ``The Mathematics of Emergent Effects'', project B04, and by the Deutsche Forschungs-gemeinschaft (DFG, German Research Foundation) under Germany's Excellence Strategy - GZ 2047/1, Projekt ID 390685813. The article was finished when: D.B. was at KU Leuven (Leuven, Belgium), supported by FWO Flanders project EOS 30889451; and A.O. was at Instituto Superior T\'ecnico (Lisbon, Portugal), supported by the HyLEF ERC starting grant 2016.

\section{Model and main results} \label{sec:results}

\subsection{Stationary last passage percolation}\label{sec:lpp}
This section is expository and introduces all terminology we use then in the presentation of our main results. We discuss last passage percolation, its relation to TASEP, the half-space model, and its stationary version in the sense of~\cite{BCS06}.

\paragraph{Last passage percolation.} We first introduce generic last passage percolation (LPP). We start with independent random variables $\{\omega_{i,j},\,i,j\in\Z\}$. By an \emph{up-right path} $\pi$ on $\Z^2$ from points $A$ to $E$ we mean a sequence of points $(A=\pi(0),\pi(1),\ldots,\pi(n)=E)$ in $\Z^2$ such that \mbox{$\pi(k+1)-\pi(k)\in \{(0,1),(1,0)\}$}. We call $n = \ell(\pi)$ \emph{the length} of $\pi$. Given a set $S_A$ of points and $E$ an endpoint, we define the \emph{last passage time} $L_{S_A\to E}$ by
\begin{equation}\label{eq:3.2}
L_{S_A\to E}=\max_{\begin{subarray}{c}\pi:A\to E\\A\in S_A\end{subarray}} \sum_{(i,j)\in \pi} \omega_{i,j}.
\end{equation}
The maximizing path for the last passage time $L_{S_A\to E}$ is a.s.~unique if the random variables $\omega_{i,j}$ are continuous.

For our purposes we only consider exponentially distributed random variables. LPP with exponential random variables is connected to the well-known and studied \emph{totally asymmetric simple exclusion process} (TASEP) in continuous time. We describe this next.

\paragraph{TASEP and LPP.} TASEP is an interacting particle system on the integers having state space $\Omega=\{0,1\}^\Z$. If $\eta = (\eta_j)_{j \in \Z} \in \Omega$ is a configuration, $\eta_j$ is the \emph{occupation variable} at site $j$. It is $1$ if site $j$ is occupied by a particle and $0$ otherwise. The Markov generator $\cal L$ for TASEP is~\cite{Li99}
\begin{equation}\label{eq:1.1}
{\cal L}f(\eta)=\sum_{j\in\Z}\eta_j(1-\eta_{j+1})\big(f(\eta^{j,j+1})-f(\eta)\big),
\end{equation}
where $f$ is any function depending only on finitely many sites, and $\eta^{j,j+1}$ is the configuration obtained from $\eta$ by interchanging occupation at sites $j$ and $j+1$. We observe that the ordering of particles is preserved for TASEP: if we initially order particles from right to left as
\begin{equation}
 \ldots < x_2(0) < x_1(0) < 0 \leq x_0(0)< x_{-1}(0)< \ldots\, ,
\end{equation}
then we also have $x_{n+1}(t)<x_n(t)$, $n\in\Z$ for all times $t\geq 0$.

Let us now explain the link between TASEP and LPP. Consider $\omega_{i,j}$ to be the waiting time of particle $j$ to jump from site $i-j-1$ to site $i-j$. Then the $\omega_{i,j}$'s are exponential random variables. Moreover, setting $S_A=\{(u,k)\in\Z^2: u=k+x_k(0), k\in\Z\}$ we have the relations
\begin{equation}\label{eq:2.4}
\Pb\left(L_{{S_A}\to (m,n)}\leq t\right)=\Pb\left(x_n(t)\geq m-n\right)=\Pb\left(h_t(m-n)\geq m+n\right),
\end{equation}
where we denoted by $h_t$ the standard \emph{height function} for TASEP at time $t$.

We further explain the terminology \emph{full-space} and \emph{half-space} for LPP. It comes from TASEP, more precisely from the fact that the height function and particles live on $\Z$ for full-space and $\Z_+$ for half-space. For half-space we have $x = m-n \geq 0$ in~\eqref{eq:2.4}; it means that LPP random variables $\omega_{i,j}$ are restricted to $\{(m,n)| m\geq n\}$. The reader can equivalently imagine the other random variables are set to $0$.

\paragraph{Invariant measures for TASEP.} Liggett studied invariant measures for TASEP in full-space~\cite{Lig75,Lig77}. He first considered a finite system to achieve his result. From this finite system we can obtain the half-space model as a simple limiting case. Thus for half-space TASEP (defined on $\N$) where particles can enter from a reservoir at the origin with rate $\lambda\in[0,1]$, Liggett showed that the stationary measure with particle density $\rho=\lambda$ on $\N$ is a product measure. It is for this reason that, in the half-space LPP analogue, we consider diagonal weights which are exponential random variables of parameter $\rho$. In our case and below we set $\rho=\tfrac12+\alpha$. The random initial condition in $\N$ can be replaced, by Burke's theorem~\cite{Bur56}, with a first row of weights which are exponential random variables of parameter $1-\rho$.

The stationary measures for half-space TASEP are not unique. There are other examples which \emph{are not product measures}. See Theorem~1.8 of~\cite{Lig75}. A matrix-product ansatz representation is given in~\cite[Theorem 3.2]{Gro04}. The mapping from TASEP to LPP implies, in such cases, that the $\omega_{i,j}$'s are not independent random variables anymore. Our techniques from this note cannot handle such cases.

\paragraph{Stationary LPP.} We now discuss the object of our study, stationary half-space last passage percolation with exponential weights. It was introduced in~\cite{BFO20} and we follow the exposition therein. We focus on half-space LPP model. By half-space we mean the set ${\cal D}=\{(i,j)\in\Z^2 | 1\leq j \leq i\}$. On it we place non-negative random variables $\{\omega_{i,j}\}_{(i,j)\in {\cal D}}$. The half-space LPP time to the point $(n,m)$ (for $m\leq n$) denoted by $L_{n,m}$ is given by
\begin{equation}
L_{n,m}=\max_{\pi:(1,1)\to (n,m)} \sum_{(i,j)\in\pi} \omega_{i,j}
\end{equation}
with the maximum taken over all up-right paths in ${\cal D}$ from $(1, 1)$ to $(m,n)$.

Our interest is the stationary version of this model. To define it, first write $\mathrm{Exp}(a)$ for an exponential random variable with parameter $a>0$. The stationary setting is then given by
\begin{equation} \label{eq:stat_wts}
 \omega_{i, j} = \begin{cases}
 \mathrm{Exp}\left( \frac{1}{2} + \alpha \right), & i=j>1,\\
 \mathrm{Exp}\left( \frac{1}{2} - \alpha \right), & j=1, i>1, \\
 0, & \textrm{if}\ i=j=1, \\
 \mathrm{Exp}(1), &\textrm{otherwise}
 \end{cases}
\end{equation}
with $\alpha \in (-1/2, 1/2)$ is a fixed parameter. An illustration is given in Figure~\ref{fig:stat_lpp}.

\begin{figure}[!t]
 \centering
 \includegraphics[height=5.5cm]{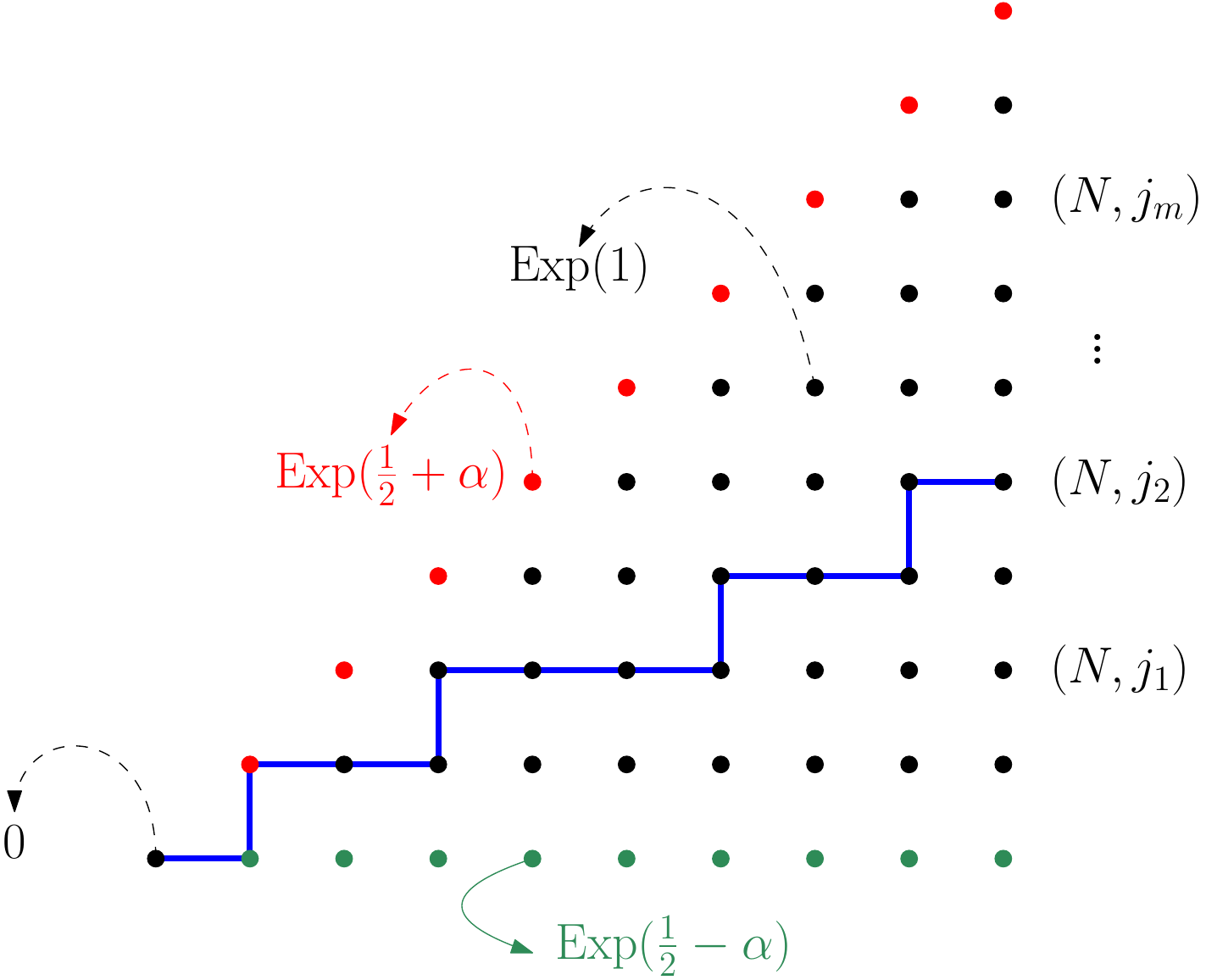}
 \caption{A possible LPP path (polymer) from $(1, 1)$ to $(N, j_2) = (10, 5)$ in the stationary case. The dots are independent random variables: deterministically 0 at the origin, $\mathrm{Exp}(\tfrac12+\alpha)$ (respectively $\mathrm{Exp}(\tfrac12 - \alpha)$) on the rest of the diagonal (respectively the bottom line), and $\mathrm{Exp}(1)$ everywhere else in the bulk.}
 \label{fig:stat_lpp}
\end{figure}

We will be interested in joint $m$-point distributions of LPP times of the ending points $(N, j_1), \dots, (N, j_m)$. Here $m \geq 1$ and $N$ are integers, and
\begin{equation}
 1 \leq j_1 < \dots < j_m \leq N
\end{equation}
are $m$ points we think of as ordinates of our LPP endpoints (the abscissa is always $N$, the ``large parameter'' eventually). The one-point distribution was considered in~\cite{BFO20}.

This model is stationary in the sense of~\cite{BCS06}, i.e.~it has stationary increments as stated in the following result. The proof was given in~\cite{BFO20}. It is a simple extension of the original proof of~\cite{BCS06}.
\newpage
\begin{lem}(Half-space version of~\cite[Lemma 4.1 and 4.2]{BCS06})\label{lem:stationarity}
 Fix any $i, j\geq1$ with $i<j$. The following three random variables are jointly independent and distributed as follows:
 \begin{itemize}
 \item the increment along the horizontal direction $H_{i+1, j+1} = L_{i+1,j+1}-L_{i,j+1}$ is an $\textup{Exp} \left(\tfrac12-\alpha \right)$ random variable;
 \item the increment along the vertical direction $V_{i+1, j+1} = L_{i+1, j+1}-L_{i+1, j}$ is an $\textup{Exp} \left( \tfrac12+\alpha \right)$ random variable;
 \item finally, the minimum of the horizontal and vertical increments at a vertex $(i,j)$, defined by $X_{i, j} = \min(H_{i+1, j}, V_{i, j+1}) = \min( L_{i+1, j} - L_{i, j}, L_{i, j+1} - L_{i, j} )$ is an $\textup{Exp(1)}$ random variable.
 \end{itemize}
 Moreover, fix $\Pi$ any down-right path in half-space from the diagonal to the horizontal axis. Then increments along $\Pi$ are jointly independent, the horizontal ones being $\textup{Exp} \left(\tfrac12-\alpha \right)$ and the vertical ones $\textup{Exp} \left(\tfrac12 + \alpha \right)$ random variables. Moreover, they are independent from the \iid random variables $X_{i, j} = \min( L_{i+1, j} - L_{i, j}, L_{i, j+1} - L_{i, j} )$ for $(i, j)$ any point strictly below and to the left of $\Pi$.
 \end{lem}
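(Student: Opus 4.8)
The plan is to run the ``Burke move'' induction of~\cite{BCS06} on down-right paths, adapted to the half-space wedge $\mathcal{D}=\{1\le j\le i\}$, the engine being an elementary distributional identity for a single lattice box. Write $\lambda=\tfrac12-\alpha$ and $\mu=\tfrac12+\alpha$, so that $\lambda+\mu=1$, and define $\Phi\colon(0,\infty)^3\to(0,\infty)^3$ by
\begin{equation}
 \Phi(u,v,w)=\bigl(\,w+(u-v)^+,\ w+(v-u)^+,\ u\wedge v\,\bigr).
\end{equation}
On each of the two open regions $\{u<v\}$ and $\{u>v\}$ the map $\Phi$ is linear and invertible with unit Jacobian: writing $(H,V,X)=\Phi(u,v,w)$, on $\{u>v\}$ one has $(u,v,w)=(H-V+X,\,X,\,V)$ and on $\{u<v\}$ symmetrically. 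Consequently, if $u\sim\mathrm{Exp}(\lambda)$, $v\sim\mathrm{Exp}(\mu)$, $w\sim\mathrm{Exp}(1)$ are independent, a one-line change of variables -- in which the exponents recombine precisely because $\lambda+\mu=1$ -- shows that the joint density of $(H,V,X)$ equals $\lambda e^{-\lambda H}\,\mu e^{-\mu V}\,e^{-X}$ on $(0,\infty)^3$, i.e.\ the three coordinates are independent, distributed as $\mathrm{Exp}(\lambda)$, $\mathrm{Exp}(\mu)$, $\mathrm{Exp}(1)$. I would prove this first; it is the only genuine computation in the argument.

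Next one reads the LPP recursion through $\Phi$. For $(i,j)\in\mathcal{D}$ strictly below the diagonal, $L_{i,j}=\omega_{i,j}+\max(L_{i-1,j},L_{i,j-1})$ gives, after subtracting $L_{i-1,j-1}$ from every term,
\begin{equation}
 \bigl(L_{i,j}-L_{i-1,j},\ L_{i,j}-L_{i,j-1},\ X_{i-1,j-1}\bigr)=\Phi\bigl(L_{i,j-1}-L_{i-1,j-1},\ L_{i-1,j}-L_{i-1,j-1},\ \omega_{i,j}\bigr):
\end{equation}
the pair of increments leaving the box with corners $(i-1,j-1)$ and $(i,j)$, together with the vertex quantity $X_{i-1,j-1}$, is the $\Phi$-image of the pair of increments entering that box and the fresh weight $\omega_{i,j}$. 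On the diagonal the recursion loses its second predecessor, $L_{i,i}=\omega_{i,i}+L_{i,i-1}$, so the vertical increment there is just $\omega_{i,i}\sim\mathrm{Exp}(\mu)$; likewise $L_{i,1}-L_{i-1,1}=\omega_{i,1}\sim\mathrm{Exp}(\lambda)$ on the bottom line.

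The proof of the down-right path statement is then an induction on the number of sites a path encloses on its lower-left side. The base case is the bottom row $(1,1)\to(N,1)$: it encloses no site, and its increments are exactly the weights $\omega_{i,1}\sim\mathrm{Exp}(\lambda)$, manifestly i.i.d. For the inductive step, any path $\Pi$ with a nonempty enclosed set is obtained from a path $\Pi'$ with one fewer enclosed site by a single Burke move: at a site $(i,j)$ strictly below the diagonal where $\Pi'$ routes around the lower-left of the box with corners $(i-1,j-1),(i,j)$, one re-routes it around the upper-right. By the displayed identity this replaces the two entering increments by the two leaving ones via $\Phi$ and absorbs $X_{i-1,j-1}$ into the enclosed collection; the point is that $\omega_{i,j}$ sits at a site not enclosed by $\Pi'$, hence -- since every $L_{a,b}$ at an enclosed site is a function of the weights at enclosed sites only -- it is independent of all increments along $\Pi'$ and of all $X$'s enclosed by $\Pi'$. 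Applying the one-box identity to this independent triple turns the inductive hypothesis for $\Pi'$ into the assertion for $\Pi$. Moves that extend $\Pi'$ up along the diagonal are the degenerate version (needed because the bottom row has no corner to flip): there one simply trades a horizontal increment for a fresh vertical increment $\omega_{k,k}\sim\mathrm{Exp}(\mu)$, again independent of everything already assembled, and the only new enclosed site is a diagonal one carrying no $X$. Finally, the three-variable statement at a single vertex $(i+1,j+1)$ is the special case in which $\Pi$ has a down step into $(i,j)$ followed by a right step out of it: along such $\Pi$ the increments $L_{i+1,j}-L_{i,j}$ and $L_{i,j+1}-L_{i,j}$ are independent $\mathrm{Exp}(\lambda)$, $\mathrm{Exp}(\mu)$ by the path statement, $\omega_{i+1,j+1}$ is an independent $\mathrm{Exp}(1)$ since it is not enclosed, and $(H_{i+1,j+1},V_{i+1,j+1},X_{i,j})=\Phi(L_{i+1,j}-L_{i,j},\,L_{i,j+1}-L_{i,j},\,\omega_{i+1,j+1})$.

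The main obstacle is entirely combinatorial bookkeeping: making ``the set of sites enclosed by $\Pi$'' precise, checking that every admissible down-right path is reached from the bottom row by a sequence of Burke moves each enclosing one new site, and justifying at each step that the fresh weight $\omega_{i,j}$ is independent of everything assembled so far -- the last point resting on the fact that an LPP value depends only on the weights in its own ``past''. The one genuinely half-space specific feature is the diagonal-boundary move, where the box degenerates and one argues directly; it adds nothing conceptual. All of this is the half-space transcription of~\cite[Lemmas~4.1 and~4.2]{BCS06}, the substance lying in the single-box computation of the first paragraph.
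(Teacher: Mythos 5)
Your proposal is correct and follows precisely the approach the paper itself invokes: the lemma is stated without proof, with a citation to \cite{BFO20} and the remark that it is a ``simple extension of the original proof of \cite{BCS06}'', and what you have written out is exactly that extension — the single-box distributional identity for the map $\Phi(u,v,w)=\bigl(w+(u-v)^+,\,w+(v-u)^+,\,u\wedge v\bigr)$ preserving $\mathrm{Exp}(\tfrac12-\alpha)\otimes\mathrm{Exp}(\tfrac12+\alpha)\otimes\mathrm{Exp}(1)$ because $(\tfrac12-\alpha)+(\tfrac12+\alpha)=1$, the reading of the LPP recursion $L_{i,j}=\omega_{i,j}+\max(L_{i-1,j},L_{i,j-1})$ through $\Phi$, and the Burke-move induction on down-right paths, with the bottom row as base case and a degenerate ``extend up the diagonal'' move handling the half-space boundary. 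Your one-box Jacobian computation is right on both pieces $\{u>v\}$ and $\{u<v\}$, the independence bookkeeping (that $\omega_{i,j}$ at a site strictly above-right of $\Pi'$ is independent of the $\sigma$-algebra generated by weights at or below-left of $\Pi'$, hence of all $\Pi'$-increments and enclosed $X$'s) is correct, and the derivation of the three-variable single-vertex statement as the corner case of the path statement is exactly how one gets it. The only caveat worth flagging is cosmetic: the quantities $X_{i,j}$ are only defined for off-diagonal $(i,j)$ (on the diagonal $L_{i,j+1}$ would leave the half-space), and the lemma as printed has the inequality on $(i,j)$ reversed relative to the convention $1\le j\le i$; your proof correctly treats $X$ as living only off the diagonal and handles the diagonal step as a degenerate move that adds no $X$, which is what the intended statement requires.
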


We make the following important remark, which motivates the next section and all of Section~\ref{sec:finite_proof}.

\begin{rem}
We do not have good (tractable) formulas to study the joint statistics of $L_{N,j_k}$ (for $1 \leq k \leq m$) in the stationary exponential case. However we can recover such statistics using a two-step procedure first carried out in half-space by the authors in~\cite{BFO20} for the one-point distribution and before in~\cite{BFP09} for the full-space multipoint distribution. We first consider a related \emph{integrable} LPP model having two parameters $\alpha$ and $\beta$. For this latter the joint distribution is a Fredholm Pfaffian with a $2m \times 2m$ matrix kernel. We recover our original stationary model by then performing a standard shift argument and a lengthy analytic continuation procedure that will allow us to take the limit $\beta \to -\alpha$ (which leads to stationarity). This last step is far from trivial and will occupy all of Section~\ref{sec:finite_proof}. It is different from both the one-point and multipoint full-space stationary cases of~\cite{FS05a} and~\cite{BFP09} respectively; it is similar to but a multi-point extension of the case considered in~\cite{BFO20} by the authors. Whenever computations are similar to those of~\cite{BFO20} we indicate it.
\end{rem}

\subsection{Finite-time multipoint distribution for stationary LPP} \label{sec:results_finite}

Throughout this section we fix $m \geq 1$, $m$ positive \emph{ordered} integers $1 \leq j_1 < j_2 < \cdots < j_m \leq N$ and real numbers $s_\ell \geq 0,\, 1 \leq \ell \leq m$.

We first start by fixing some auxiliary functions we will need below. Let
\begin{equation}
 \Phi(x, z) = e^{-xz} \phi(z), \quad \phi(z) = \left[ \frac{\frac12+z}{\frac12-z} \right]^{N-1}
\end{equation}
and define
\begin{equation} \label{eq:f_def_sec2}
 \begin{split}
 \mathsf{f}_{+,\alpha}^{j} (x) &= \Phi(x, \alpha)\left( \tfrac12-\alpha \right)^{N-j}, \\
 \mathsf{e}^{\alpha,\, j} (x) &= - \oint\limits_{\Gamma_{1/2, \alpha}} \frac{dz}{2 \pi \I} \frac{\Phi(x, z)}{\Phi(x, \alpha)} \left[ \frac{\tfrac12 + \alpha} {\tfrac12 + z} \right]^{N-j} \frac{1}{(z - \alpha)^2}
 \end{split}
\end{equation}
and furthermore
\begin{equation}
\begin{aligned} \label{eq:g_def_sec2}
 & \mathsf{g}_1^{j}(x) &=\oint\limits_{\Gamma_{1/2}} \frac{dz}{2\pi\I} \Phi (x,z)\left( \tfrac12-z \right)^{N-j}\frac{z+\alpha}{2z}, \quad \mathsf{g}_2^{j}(x) &=\!\!\! \oint\limits_{\Gamma_{1/2,\alpha}} \!\!\!\frac{dz}{2\pi\I} \frac{\Phi(x,z)}{\left( \tfrac12+z \right)^{N-j}}\frac{1}{z-\alpha}, \\
 & \mathsf{g}_3^{j}(x) &= \oint\limits_{\Gamma_{1/2}}\frac{dz}{2\pi\I}\Phi(x,z)\left(\tfrac12-z\right)^{N-j}\frac{1}{z+\alpha},\quad
 \mathsf{g}_4^{j}(x) &= \!\!\! \oint\limits_{\Gamma_{1/2,\pm\alpha}} \!\!\! \frac{dz}{2\pi\I}\frac{\Phi(x,z)}{\left(\frac12-z\right)^{N-j_k}}\frac{2z}{(z-\alpha)(z+\alpha)^2}.
 \end{aligned}
\end{equation}

We further define the following \emph{extended} anti-symmetric kernel (the reason for the indices will become clear soon):
\begin{equation}
 \begin{split}
 \breve{\mathsf{K}}^{j j'}_{11}(x,y) =& - \oint\limits_{\Gamma_{1/2}} \frac{dz}{2\pi\I} \oint\limits_{\Gamma_{-1/2}} \frac{dw}{2\pi\I}\frac{\Phi(x,z)}{\Phi(y,w)} (\tfrac12-z)^{N-j} (\tfrac12+w)^{N-j'} \frac{(z-\alpha)(w+\alpha)(z+w)}{4zw(z-w)}, \\
 \breve{\mathsf{K}}^{j j'}_{12} (x,y) =& - \!\!\! \oint\limits_{\Gamma_{1/2}} \frac{dz}{2\pi\I} \!\!\! \oint\limits_{\Gamma_{-1/2,\alpha}} \!\!\! \frac{dw}{2\pi\I}\frac{\Phi(x,z)}{\Phi(y,w)} \frac{(\tfrac12-z)^{N-j}} {(\tfrac12-w)^{N-j'}} \frac{z-\alpha}{w-\alpha}\frac{z+w}{2z(z-w)} + \mathsf{V}^{j j'} (x, y) \\
 =& -\breve{\mathsf{K}}^{j' j}_{21} (y,x),\\
 \breve{\mathsf{K}}^{j j'}_{22}(x,y) =&\ \mathsf{E}^{j j'}(x,y)+\oint \frac{dz}{2\pi\I}\oint\frac{dw}{2\pi\I} \frac{\Phi(x,z)}{\Phi(y,w)} \frac{1}{(\tfrac12+z)^{N-j} (\tfrac12-w)^{N-j'}} \frac{1}{z-w}\left(\frac{1}{z+\alpha}+\frac{1}{w-\alpha}\right),
 \end{split}
 \end{equation}
 where the integration contours for $\breve{\mathsf{K}}^{j j'}_{22}$ are $\Gamma_{1/2,-\alpha}\times\Gamma_{-1/2}$ for the term with $1/(z+\alpha)$ and $\Gamma_{1/2}\times\Gamma_{-1/2,\alpha}$ for the term with $1/(w-\alpha)$, where
 \begin{equation}
 \begin{split}
 \mathsf{V}^{j j'} (x, y) &= - \Id_{[j > j']} \int_{\I \R} \frac{dz}{2 \pi \I} \frac{e^{-(x-y)z}} {(\tfrac12-z)^{j-j'}} = \Id_{[j > j']} \Id_{[x \geq y]} \oint\limits_{\Gamma_{1/2}} \frac{dz}{2 \pi \I} \frac{e^{-(x-y)z}} {(\tfrac12-z)^{j - j'}} \\
 &=- \Id_{[j > j']} \Id_{[x \geq y]} \frac{(x-y)^{j-j'-1} e^{-\frac{x-y}{2}}} {(j-j'-1)!},
 \end{split}
 \end{equation}
 and we have denoted $\mathsf{E}^{j j'} (x, y) = \mathsf{E}^{j j'}_0 (x, y) + \mathsf{E}^{j j'}_1 (x, y)$. The latter are defined by
 \begin{equation}
 \begin{split}
 \mathsf{E}^{j j'}_0 (x, y) &= - \frac{e^{(x-y) \alpha}} {(\tfrac12 - \alpha)^{N-j} (\tfrac12 + \alpha)^{N-j'}} \quad \text{if } x \geq y,\\
 \quad \mathsf{E}^{j j'}_1 (x, y) &= -\oint\limits_{\Gamma_{1/2}} \frac{dz}{2 \pi \I} \frac{2 z e^{-(x-y)z}} {\left[\tfrac12+z\right]^{N-j} \left[\tfrac12-z\right]^{N-j'}} \frac{1}{z^2-\alpha^2} \quad \text{if } x \geq y,
 \end{split}
 \end{equation}
and extended for $x < y$ by the anti-symmetry property of $\mathsf{E}$, namely
\begin{equation}
 \mathsf{E}^{j j'} (x, y) = - \mathsf{E}^{j' j} (y, x).
\end{equation}
Let us also define two more operators of the same sort by
\begin{equation}
 \begin{split}
 \widetilde{\mathsf{K}}_{12}^{j j'}(x,y) &=-\oint\limits_{\Gamma_{1/2}} \frac{dz}{2\pi\I} \oint\limits_{\Gamma_{-1/2}}\frac{dw}{2\pi\I}\frac{\Phi(x,z)}{\Phi(y,w)} \frac{(\tfrac12-z)^{N-j}} {(\tfrac12-w)^{N-j'}} \frac{z-\alpha}{w-\alpha} \frac{z+w}{2z(z-w)},\\
\widetilde{\mathsf{K}}^{j j'}_{22}(x,y) &=\oint\limits_{\Gamma_{1/2,- \alpha}} \frac{dz}{2\pi\I}\oint\limits_{\Gamma_{-1/2}}\frac{dw}{2\pi\I} \frac{\Phi(x,z)}{\Phi(y,w)} \frac{1}{(\tfrac12+z)^{N-j} (\tfrac12-w)^{N-j'}} \frac{1}{(z+\alpha)(w-\alpha)} \frac{z+w}{z-w}.
 \end{split}
\end{equation}

Define the $2m \times 1$ column vector $\mathsf{U}$ (below $-\alpha < \eta <1/2$) by
\begin{equation}
 \begin{split}
 \mathsf{U}_{2k}(y) &= - \mathsf{f}^{j_1}_{+, -\alpha} (s_1) \int_{\I \R +\eta} \frac{dz}{2 \pi \I} \frac{e^{-(y-s_1)z}}{(\tfrac12-z)^{j_k-j_1} (z+\alpha)} \\
 &= \Id_{[y \geq s_1]} \mathsf{f}^{j_1}_{+, -\alpha} (s_1) \oint\limits_{\Gamma_{1/2}} \frac{dz}{2 \pi \I} \frac{e^{-(y-s_1)z}}{(\tfrac12-z)^{j_k-j_1} (z+\alpha)} - \Id_{[y < s_1]} \mathsf{f}^{j_k}_{+, -\alpha} (y), \quad 1 < k \leq m, \\
 \mathsf{U}_{\ell}(y) &= 0, \quad \text{otherwise}
 \end{split}
\end{equation}
and the row and column vectors (still with $2m$ components)
\begin{equation}
 \begin{split}
 \mathsf{Y} &= \left( - \mathsf{g}_1^{j_1}, \mathsf{g}_2^{j_1}, \cdots -\mathsf{g}_1^{j_m}, \mathsf{g}_2^{j_m} \right), \\
 \mathsf{Q} &= \left(\mathsf{h}^{\alpha,\, j_1}_1, \mathsf{h}^{\alpha,\, j_1}_2, \dots, \mathsf{h}^{\alpha,\, j_m}_1, \mathsf{h}^{\alpha,\, j_m}_2 \right)^t,
 \end{split}
\end{equation}
where
\begin{equation}
 \begin{split}
 \mathsf{h}^{\alpha,\, j}_1(y) &= -\int_{s_1}^\infty \widetilde{\mathsf{K}}^{j j_1}_{22}(y,v) \mathsf{f}_{+, -\alpha}^{j_1} (v) dv - \int_{s_1}^\infty \mathsf{E}^{j j_1}_1 (y,v) \mathsf{f}_{+, -\alpha}^{j_1} (v) dv + \mathsf{g}_4^{j}(y) + \mathsf{j}^{\alpha,\, j} (s_1, y),\\
 \mathsf{h}^{\alpha,\, j}_2(y)&= \int_{s_1}^\infty \widetilde{\mathsf{K}}^{j j_1}_{12}(y,v) \mathsf{f}_{+, -\alpha}^{j_1}(v) dv + \mathsf{g}_3^{j}(y), \\
 \mathsf{j}^{\alpha,\, j}(s, y) &= \Id_{[y > s]} \frac{e^{\alpha s} \phi(-\alpha)} { \big( \frac 12-\alpha \big)^{N-j_1}} \left[\big(\tfrac 12+\alpha\big)^{j-j_1} \frac{\sinh \alpha (y-s)}{\alpha} + \big(\tfrac 12-\alpha\big)^{j-j_1}e^{\alpha (y-s)}(y-s)\right].
 \end{split}
\end{equation}

Consider the $2m \times 2m$ diagonal projector operator
\begin{equation}
 P_s = {\rm diag} \{ \Id_{[x > s_1]}, \Id_{[x > s_1]}, \dots, \Id_{[x > s_m]}, \Id_{[x > s_m]} \}.
\end{equation}

Let $\breve{\mathsf{K}} (x, y)$ be the $2m \times 2m$ matrix kernel having $2 \times 2$ block at position $(k, \ell)$ given by
\begin{equation}
 \begin{pmatrix}
 \breve{\mathsf{K}}^{j_k j_\ell}_{11} (x, y) & \breve{\mathsf{K}}^{j_k j_\ell}_{12} (x, y) \\ \breve{\mathsf{K}}^{j_k j_\ell}_{21} (x, y) & \breve{\mathsf{K}}^{j_k j_\ell}_{22} (x, y)
 \end{pmatrix},
 \quad 1 \leq k, \ell \leq m
\end{equation}
 and let $J$ denote the $2m \times 2m$ matrix with $2 \times 2$ block $\begin{pmatrix} 0 & 1 \\ -1 & 0 \end{pmatrix}$ on the diagonal and $0$'s elsewhere\footnote{Otherwise said, $J = I_m \otimes \begin{pmatrix} 0 & 1 \\ -1 & 0 \end{pmatrix}$ with $I_m$ the $m \times m$ identity matrix; note also that $J^{-1} = -J$.}. Write
\begin{equation}
 \breve{\mathsf{K}}_s = P_s \breve{\mathsf{K}} P_s.
\end{equation}

The following is our main finite-size joint distributions of the stationary half-space LPP model.

\begin{thm} \label{thm:main_finite}
 Fix $m\in\N$ and $\alpha \in (-1/2, 1/2)$ a real parameter. Let $1 \leq j_1 < \dots <j_m \leq N$ be different endpoints (times), $s_k \in\R_+$ ($1 \leq k \leq m$) and consider the stationary last passage times $L_{N, j_k}$. We have:
 \begin{equation}\label{eq:main_finite}
 \begin{split}
 \Pb \left( \bigcap_{k=1}^m \left\{ L_{N,j_k} \leq s_k \right\} \right) = \sum_{k=1}^m \partial_{s_k} \left\{ \pf (J - \breve{\mathsf{K}}_s) \cdot \left[ \mathsf{e}^{\alpha,\, j_1} (s_1) - \braket{P_s \mathsf{Y}} {(\Id-J^{-1} \breve{\mathsf{K}}_s )^{-1} P_s (\mathsf{Q}-\mathsf{U}) } \right] \right\}.
 \end{split}
 \end{equation}
\end{thm}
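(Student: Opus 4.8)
The plan is to follow the two-step strategy outlined in the Remark: first obtain the joint distribution for an auxiliary integrable half-space LPP model carrying two boundary parameters $\alpha$ and $\beta$, and then perform an analytic continuation to reach the stationary point $\beta\to-\alpha$. The integrable model is the one for which the multipoint distribution is a Fredholm Pfaffian with an explicit $2m\times 2m$ matrix kernel; this is precisely the content of Theorem~\ref{thm:geom_corr_2} (stated in Appendix~\ref{sec:geom_wts} for geometric weights), from which the exponential-weight version is recovered by the degeneration carried out in Appendix~\ref{sec:geom_exp_limit}. So the starting identity is
\[
\Pb\Big(\bigcap_{k=1}^m\{L^{(\alpha,\beta)}_{N,j_k}\le s_k\}\Big)=\pf\big(J-K^{(\alpha,\beta)}_s\big),
\]
with $K^{(\alpha,\beta)}$ the double-contour kernel whose building blocks are the objects $K_{11},K_{12},K_{22}$ written (in their $\beta\to-\alpha$ limits) in Section~\ref{sec:results_finite}. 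The first real step is a \emph{shift argument}: because of Lemma~\ref{lem:stationarity} the stationary increments along a down-right path let one trade the genuine boundary weights for a first row/column of frozen weights, which at the level of the Pfaffian amounts to conjugating the kernel by suitable rank-one factors and relocating contours; this is the half-space analogue of the manipulation in~\cite{BFP09} and was done for $m=1$ in~\cite{BFO20}.

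The heart of the proof — and the step I expect to be the main obstacle — is the $\beta\to-\alpha$ analytic continuation. For generic $\beta$ the kernel entries have poles in the integration variables at $z=\beta$ (and $w=-\beta$, etc.) that are \emph{separated} from the pole at $z=\alpha$; as $\beta\to-\alpha$ these poles collide, producing double poles and, after residue extraction, the squared factors $1/(z-\alpha)^2$, $1/(z+\alpha)^2$ visible in $\mathsf{e}^{\alpha,j}$, $\mathsf{g}_4^{j}$, and $\mathsf{j}^{\alpha,j}$. Concretely I would: (i) deform contours so that the $z=\beta$ pole is separated out as an explicit residue \emph{before} taking the limit, isolating a finite-rank correction to the Pfaffian kernel; (ii) recognize that correction as a rank-$\le 2m$ perturbation and use the Pfaffian analogue of the determinantal identity $\det(1-A-\ket u\bra v)=\det(1-A)\big(1-\braket{v}{(1-A)^{-1}u}\big)$, i.e. $\pf(J-A-\text{rank one})=\pf(J-A)\cdot(1-\langle\cdots\rangle)$, to factor $\pf(J-\breve{\mathsf{K}}_s)$ times a scalar bracket; (iii) take $\beta\to-\alpha$ in each surviving scalar ingredient, where the collision of simple poles produces the derivative terms; the $\operatorname{diag}\{\Id_{[x>s_k]}\}$ structure of $P_s$ and the identity $J^{-1}=-J$ are used to rewrite $(\Id-J^{-1}\breve{\mathsf{K}}_s)^{-1}$ consistently. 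The residue bookkeeping here is genuinely delicate: one must check that no spurious contributions arise from contour crossings, that the anti-symmetry $\breve{\mathsf{K}}^{jj'}_{12}=-\breve{\mathsf{K}}^{j'j}_{21}$ and $\mathsf{E}^{jj'}(x,y)=-\mathsf{E}^{j'j}(y,x)$ survive the limit, and that the $\mathsf{V}^{jj'}$ and $\mathsf{E}^{jj'}_0$ pieces (the ``purely exponential'' terms coming from pinching contours through the real axis) are produced with the correct indicator functions.

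Finally, the $\partial_{s_k}$ and the $\sum_{k=1}^m$ in~\eqref{eq:main_finite} appear for a structural reason also present in~\cite{BFP09,BFO20}: the shift argument replaces $L_{N,j_k}$ by a difference $L-\text{(boundary contribution)}$ whose law one controls, and undoing this substitution to recover $\Pb(\cap_k\{L_{N,j_k}\le s_k\})$ requires differentiating the ``shifted'' CDF in each $s_k$ and summing — morally because $\partial_{s_k}$ of the shifted quantity produces the density of the relevant increment at the $k$-th endpoint, and only the total derivative across all endpoints reassembles the joint CDF of the unshifted times. I would carry this out last, after the limit is taken, so that the $\beta\to-\alpha$ analysis is done on the cleaner Pfaffian expression and the differentiation is applied to the already-simplified right-hand side. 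Throughout, wherever a computation duplicates one in~\cite{BFO20} (the one-point case) I would invoke it rather than repeat it, reserving the detailed work for the genuinely multipoint features: the matrix (rather than scalar) resolvent $(\Id-J^{-1}\breve{\mathsf{K}}_s)^{-1}$, the vector $\mathsf{U}$ encoding the $j_1$-vs-$j_k$ pole structure, and the assembly of $\mathsf{Q}$ from $\mathsf{h}^{\alpha,j}_1,\mathsf{h}^{\alpha,j}_2$.
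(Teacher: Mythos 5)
Your high-level plan — Fredholm Pfaffian for the two-parameter $(\alpha,\beta)$ model (Theorem~\ref{thm:exp_corr}), peel a finite-rank correction proportional to $\alpha+\beta$, apply the determinantal/Pfaffian factorization, analytically continue to $\beta\to-\alpha$, and recover the $\sum_k\partial_{s_k}$ from a shift argument — does match the paper's approach. But the shift argument is genuinely mischaracterized, and the order matters. Lemma~\ref{lem:shift} is \emph{not} Lemma~\ref{lem:stationarity}, and it does not ``conjugate the kernel by rank-one factors and relocate contours'' (that is the analytic-continuation machinery of Section~\ref{sec:analytic_continuation}); it is the purely probabilistic identity
$\widetilde\Pb(s)=\big(1+\tfrac{1}{\alpha+\beta}\sum_\ell\partial_{s_\ell}\big)\Pb^{\rm pf}(s)$
obtained by writing $L^{\rm pf}_{N,j_\ell}=\widetilde L_{N,j_\ell}+\tilde\omega_{1,1}$, $\tilde\omega_{1,1}\sim\mathrm{Exp}(\alpha+\beta)$ independent, and integrating by parts in the convolution. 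You cannot push this to the very end: it is what produces the $\tfrac{1}{\alpha+\beta}$ prefactor against which the factorized bracket $\big(1-(\alpha+\beta)\braket{\cdots}\big)$ is balanced. Without it, $\pf(J-K_s)\to 0$ as $\beta\to-\alpha$ (the inner product $\braket{P_sY_2}{P_s\widetilde X_2}$ has a $(\alpha+\beta)^{-1}$ singularity, see~\eqref{eq:inner_alpha_beta}, so the second factor vanishes) and there is nothing left to continue. Only the outermost $\sum_\ell\partial_{s_\ell}$ can be deferred to after the limit; the shift identity itself must be invoked at the outset, at $\alpha+\beta>0$.

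The second gap is the genuinely multipoint one. Your steps (i)--(iii) take it for granted that once the rank-two correction $(\alpha+\beta)R$ is peeled off and the factor $\big(\tfrac{1}{\alpha+\beta}-\braket{P_sY_2}{(\Id-\breve G_s)^{-1}P_sX_2}\big)$ is exhibited, one can simply send $\beta\to-\alpha$. But $X_2=(0,f^{j_1}_{+,\beta},0,f^{j_2}_{+,\beta},\ldots,0,f^{j_m}_{+,\beta})^t$, so the bare inner product against $Y_2=(-g_1^{j_1},g_2^{j_1},\ldots)$ contains $m$ separate pairings $\braket{g_2^{j_k}}{P_{s_k}f^{j_k}_{+,\beta}}$, each carrying its own $(\alpha+\beta)^{-1}$ singularity, while you have only one $\tfrac{1}{\alpha+\beta}$ to absorb. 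The resolution in the paper is the exact identity $X_2=\widetilde X_2-W\widetilde X_2$ (Lemma~\ref{lem:magic}), where $\widetilde X_2$ keeps only the $j_1$ slot and $W=J^{-1}V$ is the off-diagonal block coming from $V^{j_kj_\ell}$. Feeding this into Lemma~\ref{lem:ip_split} splits the bracket into terms A, B, C, each of which is made analytic separately; term C, $\braket{P_sY_2}{(\Id-\breve G_s)^{-1}P_sW(\Id-P_s)\widetilde X_2}$, is precisely the source of the vector $\mathsf U$ in~\eqref{eq:main_finite}. You list $\mathsf U$ as a new multipoint feature but your outline contains no mechanism producing it, and this is not optional bookkeeping — without the identity of Lemma~\ref{lem:magic} the analytic-continuation step simply does not close for $m>1$.
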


\begin{rem}
 The case $m=1$ recovers the main finite result of~\cite{BFO20}, for which case the vector $\mathsf{U}$ disappears.
\end{rem}

\subsection{Asymptotic multipoint distribution for stationary LPP} \label{sec:results_asymptotics}

In this section we present our main asymptotic result. We first discuss critical scaling exponents, then define the necessary ingredients for giving the result. Finally we state Theorem~\ref{thm:main_asymptotics} and a few of its consequences.

\paragraph{Scaling limit.} The same heuristics described in~\cite{BFO20} (beginning of Section 2.3) applies here as well for determining how we scale the various parameters. We will consider critical scaling here, namely $\alpha$ of order $N^{-1/3}$ close to $0$ and all the $j$'s of the form $N - \Or(N^{2/3})$. More precisely, for
\begin{equation}
\alpha = \delta 2^{-4/3} N^{-1/3},\quad N-j=u 2^{5/3} N^{2/3}
\end{equation}
with $\delta \in \R, u>0$ fixed, the macroscopic approximation of LPP times is given by (see~\cite[Section~2.3]{BFO20})
\begin{equation}
L_{N, j}\simeq 4 N -2 u 2^{5/3} N^{2/3} +\delta(2u+\delta) 2^{4/3} N^{1/3}.
\end{equation}

\begin{rem} \label{rem:simple_scaling}
 We will not include the $\Or(N^{1/3})$ contribution of $\delta(2u+\delta) 2^{4/3} N^{1/3}$ in the limit result we give below. The reason is that many formulas are more compact without it. Thus we look at the scaling
 \begin{equation}\label{eq:ScalingS}
 s=4 N - 2 u 2^{5/3}N^{2/3} + S\, 2^{4/3}N^{1/3}.
 \end{equation}
 This term does however need to be accounted for when one takes various limits. For instance in Section~\ref{sec:resultAiryStat} it will be reintroduced in the Airy$_{\rm stat}$ limit $\delta \to -\infty$, i.e.~in such a limit we'll have to substitute $S$ by $S+\delta(2u+\delta)$.
\end{rem}

\paragraph{Definition of the main ingredients.} Throughout this section we fix $m \geq 1$, $m$ \emph{ordered} non-negative real numbers integers $u_1 > u_2 > \cdots > u_m\geq 0$ and $m$ real numbers $S_k$, $1 \leq k \leq m$. We use generic $u, v$ to denote one of the $m$ $u$'s and generic $S$ to denote one of the $m$ $S$'s whenever needed.

In order to state the main result we have to define its various components, functions and kernels we'll need in its statement. Define the functions
\begin{equation}\label{eq2.26}
\begin{aligned}
\mathpzc{f}^{-\delta,\, u}(X)&=e^{-\frac{\delta^3}{3} - \delta^2 u + \delta X}, \\
\mathpzc{e}^{\delta,\, u} (S)&= -\int\limits_{\zcd\, {}_\delta} \frac{d \zeta}{2\pi\I} \frac{e^{\frac{\zeta^3}{3} + \zeta^2 u - \zeta S} }{e^{\frac{\delta^3}{3} + \delta^2 u - \delta S}} \frac{1}{(\zeta - \delta)^2},
\end{aligned}
\end{equation}
as well as
\begin{alignat}{2}
\mathpzc{g}_1^{\delta,\, u} (X) &= \int\limits_{{}_0\zcd} \frac{d \zeta}{2\pi\I} e^{\frac{\zeta^3}{3} - \zeta^2 u - \zeta X} \frac{\zeta + \delta}{2 \zeta},
\quad & \mathpzc{g}_2^{\delta,\, u} (X) &= \int\limits_{\zcd\, {}_\delta} \frac{d \zeta}{2\pi\I} e^{\frac{\zeta^3}{3} + \zeta^2 u - \zeta X} \frac{1}{\zeta - \delta}, \nonumber \\
\mathpzc{g}_3^{\delta,\, u} (X) &= \int\limits_{{}_{-\delta}\zcd} \frac{d \zeta}{2\pi\I} e^{\frac{\zeta^3}{3} - \zeta^2 u - \zeta X} \frac{1}{\zeta+\delta},
\quad & \mathpzc{g}_4^{\delta,\, u} (X) &= \int\limits_{\zcd\, {}_{\pm\delta}} \frac{d \zeta}{2\pi\I} e^{\frac{\zeta^3}{3} + \zeta^2 u - \zeta X} \frac{2\zeta}{(\zeta-\delta)(\zeta + \delta)^2}.
\end{alignat}

We define the following \emph{anti-symmetric} extended Airy-like kernel:
\begin{equation}\label{eq:227}
 \begin{aligned}
 \breve{\mathcal{A}}^{u v}_{11} (X, Y) &= - \int\limits_{{}_{0}\zcd } \frac{d \zeta}{2\pi\I} \int\limits_{\wcu\, {}_{0,\zeta}} \frac{d \omega}{2\pi\I}\frac{ e^{\frac{\zeta^3}{3} - \zeta^2 u - \zeta X} } { e^{\frac{\omega^3}{3} + \omega^2 v - \omega Y} } (\zeta - \delta)(\omega + \delta) \frac{\zeta + \omega}{4 \zeta \omega (\zeta - \omega)},\\
 \breve{\mathcal{A}}^{u v}_{12} (X, Y) &= -\int\limits_{{}_{0}{\zcd}} \frac{d \zeta}{2\pi\I} \int\limits_{{}_{\delta}\wcu\, {}_\zeta} \frac{d \omega}{2\pi\I} \frac{ e^{\frac{\zeta^3}{3} - \zeta^2 u - \zeta X} }{ e^{\frac{\omega^3}{3} - \omega^2 v - \omega Y} } \frac{\zeta-\delta}{\omega-\delta} \frac{\zeta+\omega}{2 \zeta (\zeta-\omega)} + \mathcal{V}^{u v} (X, Y)\\
 &= -\breve{\mathcal{A}}^{v u}_{21} (Y, X),\\
 \breve{\mathcal{A}}^{u v}_{22} (X, Y) &= \mathcal{E}^{u v}(X,Y) + \int \frac{d \zeta}{2\pi\I} \int \frac{d \omega}{2\pi\I} \frac{ e^{\frac{\zeta^3}{3} + \zeta^2 u - \zeta X} }{ e^{\frac{\omega^3}{3} - \omega^2 v - \omega Y} } \frac{1}{\zeta - \omega} \left(\frac{1}{ \zeta + \delta}+\frac{1}{\omega-\delta}\right),
 \end{aligned}
 \end{equation}
 where in $\breve{\mathcal{A}}^{u v}_{22}$ the integration contours for $(\zeta,\omega)$ are $\zcd\, {}_{-\delta}\times \wcu\, {}_{\zeta}$ for the term $1/(\zeta+\delta)$, and $\zcd\, {}\times{}_{\delta}\wcu\, {}_\zeta$ for the term $1/(\omega-\delta)$. We have denoted
\begin{equation}
 \mathcal{V}^{u v} (X, Y) = -\Id_{[u < v]} \int\limits_{\I \R} \frac{d \zeta}{2\pi\I} e^{- \zeta^2 (u - v) - \zeta (X-Y)},
\end{equation}
and $\mathcal{E}^{u v} (X, Y) = \mathcal{E}_0^{u v} (X, Y) + \mathcal{E}_1^{u v} (X, Y)$ with
\begin{equation}
\mathcal{E}_0^{u v} (X, Y) = - e^{\delta (X - Y) + \delta^2 (u + v)}, \quad \mathcal{E}_1^{u v} (X, Y) = - \int\limits_{{}_{\pm\delta}\zcd} \frac{d \zeta}{2\pi\I} e^{-\zeta (X - Y) + \zeta^2 (u + v)} \frac{2\zeta}{\zeta^2-\delta^2}, \quad \text{if } X \geq Y.
\end{equation}
The definition for $X<Y$ comes from the anti-symmetry property of $\mathcal{E}$, namely
\begin{equation}
 \mathcal{E}^{u v} (X, Y) = -\mathcal{E}^{v u} (Y, X).
\end{equation}

\noindent Let us also set
\begin{equation}
\begin{aligned}
\widetilde{\mathcal{A}}^{uv}_{12} (X, Y) &= -\int\limits_{{}_{0}{\zcd }} \frac{d \zeta}{2\pi\I} \int\limits_{\wcu\, {}_{\delta,\zeta}} \frac{d \omega}{2\pi\I} \frac{ e^{\frac{\zeta^3}{3} - \zeta^2 u - \zeta X} }{ e^{\frac{\omega^3}{3} - \omega^2 v -\omega Y} } \frac{\zeta-\delta}{\omega-\delta} \frac{\zeta+\omega}{2 \zeta (\zeta-\omega)},\\
\widetilde{\mathcal{A}}^{uv}_{22} (X, Y) &= \int\limits_{\zcd\, {}_{-\delta}} \frac{d \zeta}{2\pi\I} \int\limits_{\wcu\, {}_{\delta,\zeta}} \frac{d \omega}{2\pi\I} \frac{ e^{\frac{\zeta^3}{3} + \zeta^2 u - \zeta X} } { e^{\frac{\omega^3}{3} - \omega^2 v - \omega Y} } \frac{1} {(\zeta + \delta)(\omega - \delta)} \frac{\zeta + \omega}{\zeta - \omega}.
\end{aligned}
\end{equation}

\noindent Define the $2m \times 1$ column vector (below $-\delta < \eta$):
\begin{equation}
 \begin{split}
 \mathcal{U}_{2 k} (Y) &= -\mathpzc{f}^{-\delta,\, u_1} (S_1) \int_{\I \R+\eta} \frac{d \zeta}{2 \pi \I} \frac{e^{-\zeta^2 (u_k - u_1) - \zeta(Y-S_1)}} {\zeta+\delta}, \quad 1 < k \leq m,\\
 \mathcal{U}_{\ell}(Y) &= 0, \quad \text{otherwise}.
 \end{split}
\end{equation}
Further define the row and respectively column vectors:
\begin{equation}\label{eq2.24}
 \begin{split}
 \mathcal{Y} &= (-\mathpzc{g}^{\delta,\, u_1}_1, \mathpzc{g}^{\delta,\, u_1}_2, \dots, -\mathpzc{g}^{\delta,\, u_m}_1, \mathpzc{g}^{\delta,\, u_m}_2 ), \\
 \mathcal{Q} &= (\mathpzc{h}^{\delta,\, u_1}_1, \mathpzc{h}^{\delta,\, u_1}_2, \dots, \mathpzc{h}^{\delta,\, u_m}_1, \mathpzc{h}^{\delta,\, u_m}_2 )^t,
 \end{split}
\end{equation}
where
\begin{equation}
\begin{aligned}
\mathpzc{h}^{\delta,\, u}_1(Y) &= -\int_{S_1}^\infty dV \widetilde{\mathcal{A}}^{u u_1}_{22}(Y, V) \mathpzc{f}^{-\delta,\, u_1}(V) - \int_{S_1}^\infty dV \mathcal{E}^{u u_1}_1 (Y,V) \mathpzc{f}^{-\delta,\, u_1}(V) + \mathpzc{g}_4^{\delta,\, u}(Y) + \mathpzc{j}^{\delta,\, u} (S_1, Y),\\
\mathpzc{h}^{\delta,\, u}_2(Y)&= \int_{S_1}^\infty dV \widetilde{\mathcal{A}}^{u u_1}_{12}(Y,V) \mathpzc{f}^{-\delta,\, u_1}(V) + \mathpzc{g}_3^{\delta,\, u}(Y), \\
\mathpzc{j}^{\delta,\, u}(S, Y)&= \Id_{[Y > S]} \mathpzc{f}^{-\delta,\, -u} (S) \left[ \frac{\sinh \delta (Y-S)}{\delta} + e^{\delta (Y-S)} (Y-S) \right].
\end{aligned}
\end{equation}

Consider the $2m \times 2m$ projector operator
\begin{equation}
 P_S = {\rm diag} \{ \Id_{[X > S_1]}, \Id_{[X > S_1]}, \dots, \Id_{[X > S_m]}, \Id_{[X > S_m]} \}.
\end{equation}
Finally let $\breve{\mathcal{A}} (X, Y)$ be the $2m \times 2m$ matrix kernel having $2 \times 2$ block at position $(k, \ell)$ given by
\begin{equation}
 \begin{pmatrix}
 \breve{\mathcal{A}}^{u_k u_\ell}_{11} (X, Y) & \breve{\mathcal{A}}^{u_k u_\ell}_{12} (X, Y) \\ \breve{\mathcal{A}}^{u_k u_\ell}_{21} (X, Y) & \breve{\mathcal{A}}^{u_k u_\ell}_{22} (X, Y)
 \end{pmatrix},
 \quad 1 \leq k, \ell \leq m,
\end{equation}
and let $J$ denote the $2m \times 2m$ matrix with $2 \times 2$ block $\begin{pmatrix} 0 & 1 \\ -1 & 0 \end{pmatrix}$ on the diagonal and $0$'s elsewhere. We also write
\begin{equation}
 \breve{\mathcal{A}}_S = P_S \breve{\mathcal{A}} P_S.
\end{equation}

The following is our main asymptotic result.

\begin{thm} \label{thm:main_asymptotics}
 Let $m \geq 1$ be an integer and $\delta \in \R$ be a parameter. Fix $m$ ordered positive real numbers $u_1 > u_2 > \dots > u_m \geq 0$ (thought of as times of a stochastic process) and $m$ real numbers $S_k$, $1 \leq k \leq m$. Consider the stationary last passage times $L_{N, j_k}$ ($1 \leq j_1 < \dots < j_m \leq N$) in the following $N \to \infty$ limit:
 \begin{equation}
 N - j_k = u_k 2^{5/3} N^{2/3}, \quad \alpha = \delta 2^{-4/3} N^{-1/3}.
 \end{equation}
 We have that
 \begin{equation} \label{eq:main_asymptotics}
 \begin{split}
 \lim_{N \to \infty} &\Pb \left( \bigcap_{k=1}^m \left\{ \frac{L_{N,j_k} - 4N + 4 u_k (2N)^{2/3}}{2^{4/3}N^{1/3}} \leq S_k \right\} \right) \\
  &=\sum_{k=1}^m \partial_{S_k} \left\{ \pf (J - \breve{\mathcal{A}}_S) \cdot \left[ \mathpzc{e}^{\delta,\, u_1} (S_1) - \braket{P_S \mathcal{Y}} {(\Id-J^{-1} \breve{\mathcal{A}}_S )^{-1} P_S (\mathcal{Q}-\mathcal{U}) } \right] \right\}.
 \end{split}
 \end{equation}
\end{thm}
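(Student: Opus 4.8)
The plan is to obtain Theorem~\ref{thm:main_asymptotics} as the $N\to\infty$ limit of the finite-size formula~\eqref{eq:main_finite} in Theorem~\ref{thm:main_finite}, under the critical scaling $N-j_k = u_k 2^{5/3} N^{2/3}$, $\alpha = \delta 2^{-4/3} N^{-1/3}$, $s_k = 4N - 2u_k 2^{5/3}N^{2/3} + S_k 2^{4/3}N^{1/3}$. First I would perform the change of variables in the Fredholm Pfaffian: in each contour integral defining the kernels $\breve{\mathsf{K}}^{j_k j_\ell}_{ab}$ and the auxiliary functions $\mathsf{f},\mathsf{e},\mathsf{g}_i,\mathsf{h}_i,\mathsf{j},\mathsf{V},\mathsf{E}$, I would substitute the integration variables $z = \tfrac12 + 2^{-2/3}N^{-1/3}\zeta$ and similarly for $w$, and conjugate the kernel by the appropriate $N$-dependent prefactor (which drops out of the Pfaffian and of the bra-ket pairing since it cancels between bras and kets). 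Under this substitution $\phi(z)=[(\tfrac12+z)/(\tfrac12-z)]^{N-1}$, the powers $(\tfrac12\pm z)^{N-j}$ and $e^{-xz}$ combine, after Taylor expansion to third order, into the cubic exponentials $e^{\pm(\zeta^3/3 \mp \zeta^2 u - \zeta X)}$ appearing in~\eqref{eq2.26}--\eqref{eq:227}; the factor $(z-\alpha)$ becomes $2^{-2/3}N^{-1/3}(\zeta-\delta)$ up to higher order, and rational factors like $1/(z-w)$, $(z+w)/(2zw)$ similarly rescale to $1/(\zeta-\omega)$, $(\zeta+\omega)/(4\zeta\omega)$ etc. The contours $\Gamma_{1/2}$, $\Gamma_{-1/2}$, $\Gamma_{1/2,\pm\alpha}$ deform in the limit to the Airy contours $\zcd$, $\wcu$ with the prescribed pole placements relative to $0$ and $\delta$; the $\I\R$ contours in $\mathsf{V}$, $\mathsf{U}$ rescale to the $\I\R$ contours in $\mathcal{V}$, $\mathcal{U}$.

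The core of the argument is therefore a steepest-descent / dominated-convergence analysis establishing two things: (i) pointwise convergence of every rescaled kernel entry, function, and vector component to its calligraphic counterpart, and (ii) uniform exponential decay bounds, in the tails of the rescaled variables and along the tails of the contours, sufficient to pass the limit inside the Fredholm Pfaffian expansion and inside the resolvent series $(\Id - J^{-1}\breve{\mathsf{K}}_s)^{-1}$. For (i), the double critical point of the exponent is at $z=\tfrac12$ (i.e. $\zeta=0$), which is exactly why the cubic Airy scaling is correct; the bulk of this has already been carried out in~\cite{BFO20} for the one-point pieces ($\breve{\mathsf{K}}_{11}$, $\mathsf{f}$, $\mathsf{e}$, $\mathsf{g}_i$) and in~\cite{BFP09} for the full-space multipoint structure, so I would explicitly invoke those estimates and only treat the genuinely new ingredients: the extended off-diagonal blocks $\breve{\mathsf{K}}^{j_k j_\ell}_{12}$, $\breve{\mathsf{K}}^{j_k j_\ell}_{22}$ with $k\neq\ell$ (carrying the $\mathsf{V}^{jj'}$, $\mathsf{E}^{jj'}$ terms, whose finite binomial-type expressions limit to the Gaussian-type contour integrals $\mathcal{V}$, $\mathcal{E}$), the vector $\mathsf{U}$ encoding the multipoint shift, and the functions $\mathsf{h}_i^{\alpha,j}$, $\mathsf{j}^{\alpha,j}$ which involve $\widetilde{\mathsf{K}}$ and an $s_1$-integral. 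For the $\mathsf{E}^{jj'}_0$ and $\mathsf{j}$ pieces the convergence is elementary once the scaling of $e^{(x-y)\alpha}$, $(\tfrac12\pm\alpha)^{N-j}$, and $\sinh\alpha(y-s)/\alpha$ is tracked. Finally, since $\Pb(\cap\{L_{N,j_k}\le s_k\})$ is a distribution function, once the right-hand side of~\eqref{eq:main_finite} converges (after the scaling and the $2^{4/3}N^{1/3}$ Jacobian from $\partial_{s_k} = 2^{-4/3}N^{-1/3}\partial_{S_k}$ is absorbed — note the sum $\sum_k \partial_{s_k}$ becomes $\sum_k \partial_{S_k}$ up to the common Jacobian which is the same for all $k$), it converges to a bona fide distribution function, which identifies the limit with the claimed expression, giving the finite-dimensional distributions of $\mathcal{A}_{\hs}$.

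I would organize Section~\ref{sec:proof_asymptotics} as follows: a preliminary lemma recording the change of variables and the cancellation of conjugation prefactors; a sequence of convergence lemmas, one for each family of objects (kernel blocks; the vectors $\mathsf{Y}$, $\mathsf{Q}$, $\mathsf{U}$; the scalar $\mathsf{e}^{\alpha,j_1}(s_1)$), each split into ``pointwise limit'' and ``uniform bound''; then the assembly step invoking continuity of Fredholm Pfaffians and of the resolvent in trace-class/Hilbert--Schmidt norm with respect to kernel convergence plus uniform integrable bounds, together with continuity of the bra-ket pairings; and a closing paragraph on the derivative and the distribution-function argument. The main obstacle I anticipate is (ii), the uniform tail estimates — specifically proving that the rescaled finite-$N$ kernels admit $N$-independent majorants with Gaussian/Airy-type decay, so that one has genuine dominated convergence of the Fredholm Pfaffian series rather than mere pointwise convergence of each term. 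This is delicate because the off-diagonal extended blocks and the $\mathsf{h}_i$, $\mathsf{j}$ functions mix several contours with shifting pole structure as $\beta\to-\alpha$ has already been taken, and because the contours must be deformed to their steepest-descent shape uniformly in $N$ without crossing the moving poles at $\pm\alpha$; I expect to handle it by the same contour-modification and large-deviation bounds used in~\cite[Section on asymptotics]{BFO20}, extended block-by-block, plus a Gronwall-type bound on the resolvent using the Hilbert--Schmidt norm of $P_s\breve{\mathsf{K}}_s P_s$.
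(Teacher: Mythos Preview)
Your overall strategy is correct and closely matches the paper's approach in Section~\ref{sec:proof_asymptotics}: rescale and conjugate, establish pointwise convergence together with uniform-in-$N$ exponential bounds for each kernel block and each function, then pass the limit through the Fredholm Pfaffian series and the resolvent by dominated convergence, handling the $\partial_{S_k}$ derivatives at the end.

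There is, however, one concrete error. The critical (double saddle) point of the exponent in $\Phi(x,z)(\tfrac12\pm z)^{N-j}$ is at $z=0$, not at $z=\tfrac12$; the latter is a \emph{pole} of $\phi(z)$. The correct change of variables is the one the paper uses, $z=\zeta/(2^{4/3}N^{1/3})$ and $w=\omega/(2^{4/3}N^{1/3})$, centered at the origin. Your proposed substitution $z=\tfrac12+2^{-2/3}N^{-1/3}\zeta$ would not yield the cubic exponentials $e^{\zeta^3/3\mp\zeta^2 u-\zeta X}$: expanding $(N-1)[\ln(\tfrac12+z)-\ln(\tfrac12-z)]=4(N-1)z+\tfrac{16}{3}(N-1)z^3+\cdots$ shows that the linear term cancels against the $-4Nz$ piece of $-xz$ precisely at $z=0$, leaving the cubic as the leading contribution. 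The contours $\Gamma_{\pm1/2}$ are then deformed to pass through $0$ along steepest-descent directions, becoming the Airy contours in the limit. Once this is corrected, the rest of your outline goes through exactly as you describe.

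Two minor remarks. First, no Gronwall-type estimate is needed for the resolvent: the paper feeds the uniform entrywise bounds (Lemma~\ref{lem:AsymptoticsKernels}) directly into the Fredholm/Neumann expansion via Hadamard's inequality, which already gives absolute convergence uniformly in $N$. Second, showing that the limiting formula is a bona fide distribution function is not part of the proof of Theorem~\ref{thm:main_asymptotics} itself; the theorem only asserts convergence of the probability to the stated expression, and well-definedness of the limiting process is handled separately in Appendix~\ref{AppWellDef}.
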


\begin{rem}
 The case $m=1$ recovers the main asymptotic result of~\cite{BFO20}; the vector $\mathcal{U}$ disappears in that case.
\end{rem}

Let us give a name to the process with joint distribution given by the right-hand side of~\eqref{eq:main_asymptotics}. In Appendix~\ref{AppWellDef} we show this process is indeed well-defined, thus validating the definition.

\begin{defin} \label{def:Airy_half_stat_def}
 We define the \emph{half-space Airy stationary process}, denoted by $\mathcal{A}_\hs^\delta$, via its finite dimensional distributions, by
 \begin{equation} \label{eq:Airy_half_stat_def}
 \begin{split}
 &\Pb \left( \bigcap_{k=1}^m \left\{ \mathcal{A}_\hs^\delta(u_k) \leq S_k \right\} \right) \\
  & \quad \quad \quad =\sum_{k=1}^m \partial_{S_k} \left\{ \pf (J - \breve{\mathcal{A}}_S) \cdot \left[ \mathpzc{e}^{\delta,\, u_1} (S_1) - \braket{P_S \mathcal{Y}} {(\Id-J^{-1} \breve{\mathcal{A}}_S )^{-1} P_S (\mathcal{Q}-\mathcal{U}) } \right] \right\},
 \end{split}
 \end{equation}
 where $\delta \in \R$ is a fixed parameter, $m \geq 1$ is an integer, $u_1 > u_2 >\cdots > u_m \geq 0$ and $S_1, \dots, S_m \in \R$.
\end{defin}

\subsection{Limit to the Airy$_{\rm stat}$ process}\label{sec:resultAiryStat}

In order to define the Airy$_{\rm stat}$ process, we need to introduce a few objects following the conventions of Baik--Ferrari--P\'ech\'e~\cite{BFP09}. In~\cite{BFP09} the functions and kernels were given in terms of integrals of exponentials and Airy functions. We will show the equality between these formulas and the ones in~\cite{BFP09} at the end of Section~\ref{sec:proof_Airy_stat}. Define
\begin{equation}\label{eqDefinFctAiryStat}
\begin{aligned}
{\cal R}=&-e^{-\frac23 \tau_1^3-\tau_1 s_1} \int\limits_{\zcd {}_{-\tau_1}} \frac{dz}{2\pi\I}\frac{e^{\frac{z^3}{3}-z(s_1+\tau_1^2)}}{(z+\tau_1)^2},\\
\Psi^k(x)=& \int\limits_{\zcd\, {}_{-\tau_k}} \frac{dz}{2\pi\I} e^{\frac{z^3}{3} - z(x+\tau_k^2)} \frac{1}{z+\tau_k},\\
\Phi^k(y)=& \int\limits_{{}_{\tau_k}\zcd} \frac{dz}{2\pi\I} e^{\frac{z^3}{3}-z(y+\tau_k^2)} \frac{1}{z-\tau_k}
+\Id_{[\tau_k>\tau_1]}e^{-\frac23 \tau_k^3-\tau_k y} \int\limits_{\I\R+\eta}\frac{dz}{2\pi\I} \frac{e^{(\tau_k-\tau_1) z^2-z(y-s_1)}}{z}\\
&+e^{-\frac23\tau_1^3-\tau_1 s_1}\int\limits_{{\zcd}} \frac{dz}{2\pi\I} \int\limits_{\wcu{}_{z-\tau_k+\tau_1,\tau_1}} \frac{dw}{2\pi\I} \frac{ e^{\frac{z^3}{3}-z(y+\tau_k^2)}}{ e^{\frac{w^3}{3} - w (s_1+\tau_1^2)}} \frac{1}{(z-w-\tau_k+\tau_1)(w-\tau_1)},
\end{aligned}
\end{equation}
where $\eta>0$. Furthermore, define the extended Airy kernel with entries shifted by $\tau_i^2$ by
\begin{equation}
\widehat K_{\rm Ai}^{i,j}(x,y)=-\Id_{[\tau_i<\tau_j]} \frac{e^{-\frac23\tau_j^3-\tau_j x}}{e^{-\frac23\tau_i^3-\tau_i y}} \frac{e^{-(x-y)^2/(4(\tau_j-\tau_i))}}{\sqrt{4\pi(\tau_j-\tau_i)}} -\int\limits_{{\zcd}} \frac{dz}{2\pi\I} \int\limits_{\wcu{}_{z-\tau_j+\tau_i}} \frac{dw}{2\pi\I} \frac{ e^{\frac{z^3}{3}-z(x+\tau_j^2)}}{ e^{\frac{w^3}{3} - w (y+\tau_i^2)}} \frac{1}{(z-w-\tau_j+\tau_i)}.
\end{equation}

Now we define the Airy$_{\rm stat}$ process by giving its finite-dimensional distributions.
\begin{defin}\label{defAiryStat}
Fix any $m\in \N$, real numbers $\tau_1 < \tau_2 < \ldots < \tau_m$ and $s_1,\ldots,s_m\in\R$. Then the stochastic process ${\rm Airy}_{\rm stat}$, denoted ${\cal A}_{\rm stat}$, is defined by its joint distributions given by
\begin{equation}\label{eqAiryStat}
\Pb\left(\bigcap_{k=1}^m \{{\cal A}_{\rm stat}(\tau_k)\leq s_k\}\right) = \sum_{k=1}^m \partial_{s_k} \left[g_m(\tau,s) \det(\Id-P_s \widehat K_{\rm Ai} P_s)_{L^2(\{1,\ldots,m\}\times\R)} \right]
\end{equation}
with
\begin{equation}
 \begin{split}
 g_m(\tau,s) & = {\cal R}-\sum_{i,j=1}^m\int_{s_i}^\infty dx \int_{s_j}^\infty dy \Psi^i(x) [(\Id-P_s \widehat K_{\rm Ai} P_s)^{-1}]^{i,j}(x,y) \Phi^j(y) \\
 & = {\cal R} - \braket{\Psi} {(\Id-P_s \widehat K_{\rm Ai} P_s)^{-1} \Phi}.
 \end{split}
\end{equation}
\end{defin}

Our last result is the convergence of the Airy$_\hs^\delta$ process to the Airy$_{\rm stat}$ process as $\delta\to -\infty$. In this limit we consider positions around $-\delta$, thus moving away from the origin.
\begin{thm}\label{thm:LimitAiryStat}
Fix $m\geq 1$ an integer. Let $S_k=s_k+\delta(2u_k+\delta)$ and $u_k=-\tau_k-\delta$ for fixed real numbers $\tau_1 < \dots < \tau_m$ (times) and $s_i$ ($1 \leq i \leq m$). Then we have
\begin{equation}
\lim_{\delta\to -\infty} \Pb \left( \bigcap_{k=1}^m \left\{ \mathcal{A}_\hs^\delta(u_k) \leq S_k \right\} \right) = \Pb\left(\bigcap_{k=1}^m \{{\cal A}_{\rm stat}(\tau_k)\leq s_k\}\right).
\end{equation}
\end{thm}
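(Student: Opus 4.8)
The plan is to deduce Theorem~\ref{thm:LimitAiryStat} from Theorem~\ref{thm:main_asymptotics} (equivalently, from the definition of $\mathcal{A}_\hs^\delta$ in Definition~\ref{def:Airy_half_stat_def}) by a term-by-term asymptotic analysis of the Fredholm Pfaffian formula~\eqref{eq:Airy_half_stat_def} as $\delta \to -\infty$, under the substitutions $u_k = -\tau_k - \delta$ and $S_k = s_k + \delta(2u_k+\delta)$. This is precisely the multipoint extension of the corresponding one-point limit already carried out in~\cite{BFO20} (and in spirit the $m$-point analysis of~\cite{BFP09}), so I would lean on those computations wherever possible and only spell out the genuinely new ingredients, namely the treatment of the vector $\mathcal{U}$ and the off-diagonal (``$j \neq j'$'', here $u \neq v$) blocks of $\breve{\mathcal{A}}$ and $\widetilde{\mathcal{A}}$.

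The first step is a change of variables in all contour integrals. In each of the building blocks $\mathpzc{f}^{-\delta,u}, \mathpzc{e}^{\delta,u}, \mathpzc{g}_i^{\delta,u}, \breve{\mathcal{A}}^{uv}_{ij}, \widetilde{\mathcal{A}}^{uv}_{ij}, \mathcal{E}^{uv}, \mathcal{V}^{uv}$ I substitute the spectral variables $\zeta \mapsto \zeta - \delta$ (respectively $\omega \mapsto \omega - \delta$), so that the poles at $\pm\delta$ move to $0$ and $-2\delta$, and I simultaneously shift the spatial arguments $X = x + \delta(2u+\delta) - \text{(const)}$ in the manner dictated by $S_k = s_k + \delta(2u_k+\delta)$. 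The point of the substitution $u_k = -\tau_k - \delta$ is that it converts $\zeta^2 u$ into $-\zeta^2(\tau+\delta) = -\zeta^2\tau - \zeta^2\delta$, and then the cubic-plus-quadratic exponent $\tfrac{\zeta^3}{3} + \zeta^2 u - \zeta X$, after the shift $\zeta \mapsto \zeta - \delta$, should reorganize (up to the prefactors $e^{\pm(\frac23\delta^3 + \dots)}$ that cancel in ratios) into the Airy-type exponent $\tfrac{z^3}{3} - z(x + \tau^2)$ appearing in~\eqref{eqDefinFctAiryStat}. The contours $\zcd\,{}_\delta$, $\wcu\,{}_{\delta,\zeta}$ etc.\ should, after the shift, deform to the contours $\zcd\,{}_{-\tau_k}$, $\wcu\,{}_{z - \tau_k + \tau_1, \tau_1}$ etc.\ of the Airy$_{\rm stat}$ formulas, and one checks that no poles are crossed during this deformation (the pole now at $-2\delta \to +\infty$ drifts off harmlessly). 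After this bookkeeping one expects, pointwise:
\begin{equation}
\mathpzc{e}^{\delta,u_1}(S_1) \to \mathcal{R}, \quad
\breve{\mathcal{A}}_S \to \text{(a kernel Pfaffian-equivalent to } P_s\widehat K_{\rm Ai}P_s), \quad
\mathcal{Q} - \mathcal{U} \to \Phi, \quad
\mathcal{Y} \to \Psi
\end{equation}
in an appropriate sense.

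The key structural point is that the limiting kernel is \emph{triangular}. In the $\delta \to -\infty$ limit the terms $\mathcal{E}_0^{uv}(X,Y) = -e^{\delta(X-Y) + \delta^2(u+v)}$ and the $1/(z+\delta)$-type pieces — which are exactly the contributions responsible for the Pfaffian (as opposed to determinantal) structure — should vanish exponentially, collapsing the anti-symmetric $2m \times 2m$ matrix kernel $\breve{\mathcal{A}}$ to one whose Pfaffian reduces to the determinant $\det(\Id - P_s\widehat K_{\rm Ai}P_s)_{L^2(\{1,\dots,m\}\times\R)}$; this is the half-space-to-full-space mechanism observed at $m=1$ in~\cite{BFO20}. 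Correspondingly, the scalar $\braket{P_S\mathcal{Y}}{(\Id - J^{-1}\breve{\mathcal{A}}_S)^{-1}P_S(\mathcal{Q}-\mathcal{U})}$ should converge to $\braket{\Psi}{(\Id - P_s\widehat K_{\rm Ai}P_s)^{-1}\Phi}$. I would organize this as: (i) identify which matrix entries survive and which decay, using that the surviving exponents have negative real part of order $|\delta|$; (ii) show the surviving block is conjugate (by a diagonal similarity built from the $\mathpzc{f}$'s) to the standard extended Airy kernel; (iii) identify $\mathcal{Y}\to\Psi$ and $\mathcal{Q}-\mathcal{U}\to\Phi$, paying attention that the indicator-function pieces of $\mathcal{U}_{2k}$ and of $\mathpzc{j}^{\delta,u}$ become precisely the indicator terms $\Id_{[\tau_k>\tau_1]}$ in $\Phi^k$. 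Finally the $\partial_{S_k} = \partial_{s_k}$ derivatives match because the substitution $S_k = s_k + \delta(2u_k+\delta)$ is an $s_k$-translation at fixed $\delta$.

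The main obstacle I anticipate is \textbf{uniformity}: turning the pointwise contour-integral limits into convergence of Fredholm Pfaffians/determinants and of the resolvent pairings. One must produce $\delta$-uniform (for $\delta$ large negative) trace-norm bounds on $\breve{\mathcal{A}}_S$ and on the decaying off-diagonal pieces, and Hadamard-type bounds with integrable decay in $X,Y$, so that dominated convergence applies to the Pfaffian series and the Neumann series for $(\Id - J^{-1}\breve{\mathcal{A}}_S)^{-1}$. As in~\cite{BFO20,BFP09}, this requires carefully chosen steep-descent contours and Gaussian-type tail estimates on the kernels; the novelty relative to~\cite{BFO20} is only that one now has an $m \times m$ array of such blocks and the extra vector $\mathcal{U}$, but no fundamentally new estimate is needed — the decay in the ``time'' direction is governed by the same $e^{-\zeta^2(u-v)}$ factors and the ordering $u_1 > \dots > u_m$ (equivalently $\tau_1 < \dots < \tau_m$). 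A secondary, more cosmetic obstacle is verifying that the resulting formulas literally coincide with those of~\cite{BFP09} — but the excerpt already announces that this last identification is deferred to and performed at the end of Section~\ref{sec:proof_Airy_stat}, so here it suffices to match the representation given in Definition~\ref{defAiryStat}.
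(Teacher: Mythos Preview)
Your overall architecture is right --- shift the integration variables by (roughly) $u=-\tau-\delta$, conjugate by the factors $e^{\pm(\frac23 u^3+uX)}$, watch the $11$ and $22$ blocks of $\breve{\mathcal{A}}$ and the $\mathpzc{g}_1,\mathpzc{g}_4$ functions die so that the Pfaffian collapses to the determinant of $\widehat K_{\rm Ai}$, and identify the surviving pieces with $\mathcal{R},\Psi,\Phi$. This is exactly what the paper does in Propositions~\ref{prop:LimitKernels} and~\ref{prop:LimitsFunctions}. (One small correction: the shifts are $\zeta=z\pm u$, $\omega=w\pm v$, different signs for different entries, not a uniform $\zeta\mapsto\zeta-\delta$.)

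There is, however, a real gap that would stop your direct approach. You propose to show $\mathcal{Q}-\mathcal{U}\to\Phi$ directly, but $\mathcal{Q}$ is built out of $\mathpzc{h}_1^{\delta,u},\mathpzc{h}_2^{\delta,u}$, which in turn involve $\widetilde{\mathcal{A}}^{uu_1}_{12}$ and $\widetilde{\mathcal{A}}^{uu_1}_{22}$. In those kernels the $\omega$-contour is required to pass to the \emph{left} of $\delta$ (see the definition of $\widetilde{\mathcal{A}}$ in Section~\ref{sec:results_asymptotics}). As $\delta\to-\infty$ that contour is forced off to $-\infty$, so the pole does not ``drift off harmlessly'' --- it drags the contour with it, and the limit of $\mathpzc{h}$ as written is not straightforward. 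The paper handles this by first rewriting (Lemma~\ref{Lemma:5.2} and its corollary) $\mathpzc{h}_i^{\delta,u}=\tilde{\mathpzc{h}}_i^{\delta,u}+\mathpzc{g}_i^{\delta,u}\cdot\braket{\mathpzc{f}^{-\delta,-u_1}}{P_{S_1}\mathpzc{f}^{-\delta,u_1}}$, where $\tilde{\mathpzc{h}}_i$ uses $\breve{\mathcal{A}}$ (contours on the good side) in place of $\widetilde{\mathcal{A}}$. The correction term is then shown to give zero contribution to the scalar product by the same anti-symmetry argument that underlies Lemma~\ref{lemma3.14} --- this is the ``undoing'' of that lemma, which was needed only to handle generic $\delta$ in the first place. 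Once $\mathcal{Q}$ is replaced by $\widetilde{\mathcal{Q}}$, the limits go through as you describe, with $\tilde{\mathpzc{h}}_1\to 0$ and $\tilde{\mathpzc{h}}_2-\mathcal{U}_{2k}$ producing exactly $\Phi^k$.

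So: your plan is essentially correct for the Fredholm Pfaffian, for $\mathpzc{e}^{\delta,u_1}\to\mathcal{R}$, and for $\mathcal{Y}\to\Psi$, but you need to insert this preliminary $\mathcal{Q}\to\widetilde{\mathcal{Q}}$ step before you can take the limit on the $\mathcal{Q}$ side.
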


\section{Finite-size analysis: proof of Theorem~\ref{thm:main_finite}} \label{sec:finite_proof}

\subsection{The integrable model} \label{sec:int}

Our starting point in this section is the modified last passage model with weights
\begin{equation} \label{eq:int_wts}
 \tilde\omega_{i, j} = \begin{cases}
 \mathrm{Exp}\left( \frac{1}{2} + \alpha \right), & i=j>1,\\
 \mathrm{Exp}\left( \frac{1}{2} + \beta \right), & j=1, i>1, \\
 \mathrm{Exp}\left( \alpha + \beta \right), & i=j=1, \\
 \mathrm{Exp}(1), &\text{otherwise}.
 \end{cases}
\end{equation}
Here $\alpha, \beta \in (-1/2, 1/2)$ are parameters satisfying $\alpha + \beta > 0$, though in the early stages of the analysis we'll only consider the case $\beta > 0$. See Figure~\ref{fig:exp_lpp} for an illustration of the geometry and weights.

\begin{figure}[t!]
 \centering
 \includegraphics[height=6cm]{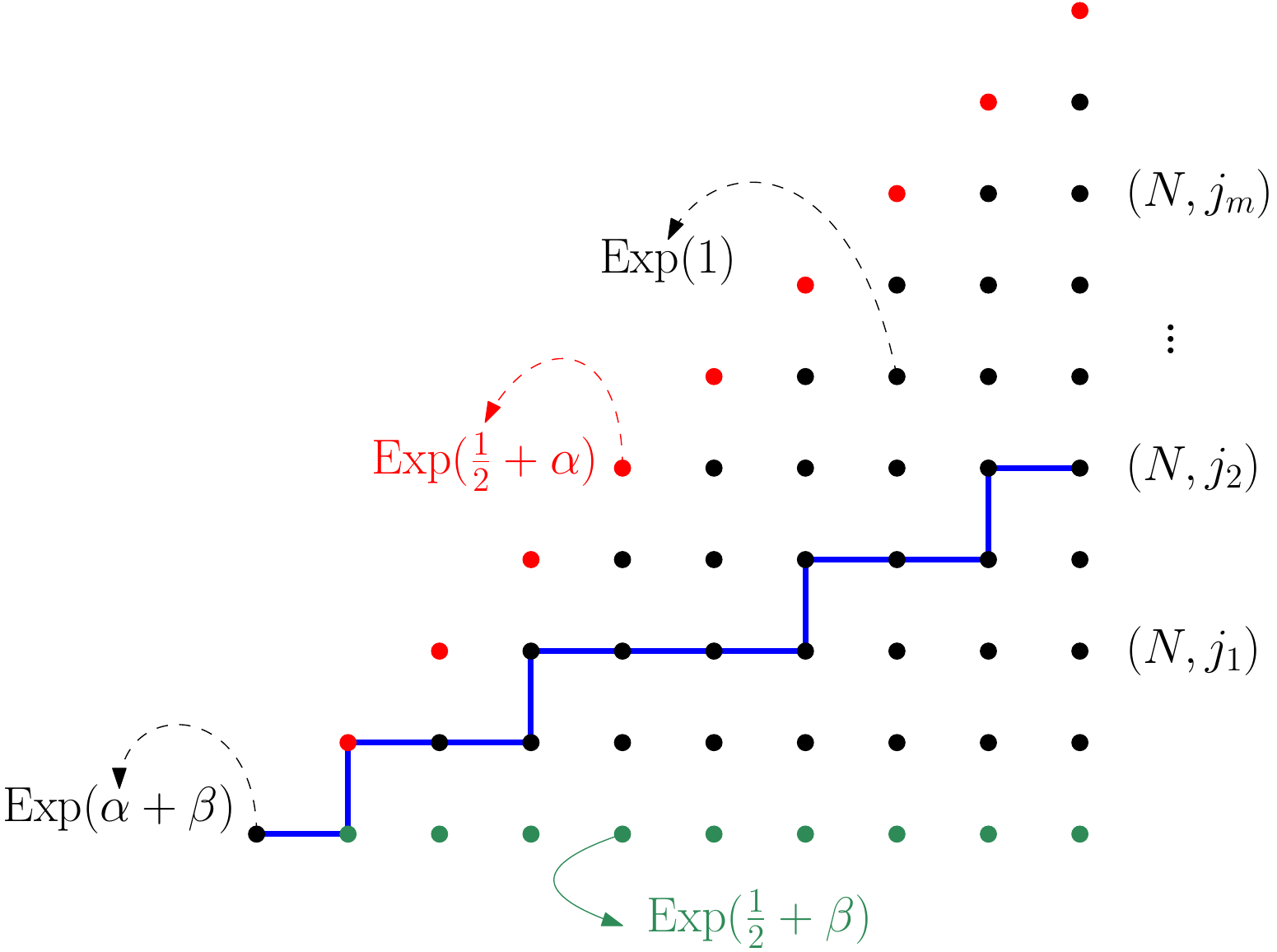}
 \caption{A possible LPP path (polymer) from $(1, 1)$ to $(N, j_2)$ for $(N, j_2) = (10, 5)$ in the integrable case. The dots are independent exponential random variables: $\mathrm{Exp}(\alpha + \beta)$ at the origin, $\mathrm{Exp}(\tfrac12+\alpha)$ (respectively $\mathrm{Exp}(\tfrac12 + \beta)$) on rest of the diagonal (respectively the bottom line), and $\mathrm{Exp}(1)$ everywhere else in the bulk.}
 \label{fig:exp_lpp}
\end{figure}

Let $L^{\rm pf}_{N, j_\ell}$ be the last passage time from $(1, 1)$ to $(N, j_\ell)$ for $1 \leq \ell \leq m$ in this model. We order the $j$'s as $1 \leq j_1 < \dots < j_m \leq N$.

If $\beta>0$ the joint distribution of the $L^{\rm pf}$'s is given by a Fredholm Pfaffian; this explains the superscript ``$\rm pf$''. We prove this in Appendix~\ref{sec:geom_exp_limit} as an exponential limit of a widely studied model with geometric random variables given in Appendix~\ref{sec:geom_wts}. See Appendix~\ref{sec:pfaff} for more on Fredholm Pfaffians; see also~\cite{BBCS17} for a proof of a similar result (the case $\beta = 1/2$) which can be adapted for our purposes with some effort.

\begin{thm} \label{thm:exp_corr}
Let $\beta\in (0,1/2)$ and $\alpha \in (-1/2,1/2)$. Take $1 \leq j_1 < j_2 < \dots < j_m \leq N$ and $s_\ell \in \R_+$ for $1 \leq \ell \leq m$. Let $X = \bigcup_{\ell=1}^m \{ \ell \} \times (s_\ell, \infty)$. Then
 \begin{equation}
 \Pb \left( \bigcap_{\ell=1}^m \{ L^{\rm pf}_{N, j_\ell} \leq s_\ell \} \right) = \pf (J - K)_{L^2(X)},
 \end{equation}
where $K(j, x; j', x')$ is the following $2 \times 2$ matrix kernel:
 \begin{equation} \label{eq:kernel}
 \begin{split}
 K_{11}(j, x; j', x') =& - \! \oint \! \frac{dz}{2\pi\I} \! \oint \! \frac{dw}{2\pi\I}\frac{\Phi(x,z)}{\Phi(x',w)} (\tfrac12-z)^{N-j} (\tfrac12+w)^{N-j'} \\
 & \qquad \qquad \qquad \qquad \times \frac{(z+\beta)(w-\beta)}{(z-\beta)(w+\beta)}\frac{(z+\alpha)(w-\alpha)(z+w)}{4zw(z-w)}, \\
 K_{12}(j, x; j', x') =& - \! \oint \! \frac{dz}{2\pi\I} \! \oint \! \frac{dw}{2\pi\I}\frac{\Phi(x,z)}{\Phi(x',w)} \frac{(\tfrac12-z)^{N-j}} {(\tfrac12-w)^{N-j'}} \frac{z+\alpha}{w+\alpha}\frac{z+\beta}{z-\beta}\frac{w-\beta}{w+\beta}\frac{z+w}{2z(z-w)} \\
 & + V (j, x; j', x')\\
 =& -K_{21} (j',x';j,x),\\
 K_{22}(j, x; j', x') =& \ \tilde \varepsilon(j, x; j', x') + \! \oint \! \frac{dz}{2\pi\I} \! \oint \! \frac{dw}{2\pi\I} \frac{\Phi(x,z)}{\Phi(x',w)} \frac{1} {(\tfrac12+z)^{N-j} (\tfrac12-w)^{N-j'}} \\
 & \qquad \qquad \qquad \qquad \qquad \qquad \quad \times \frac{1}{(z-\alpha)(w+\alpha)} \frac{z+\beta}{z-\beta}\frac{w-\beta}{w+\beta}\frac{z+w}{z-w}.
 \end{split}
 \end{equation}
The contours of integration for the double integrals are unions of the following ones:
\begin{itemize}
\item for $11$ entries: $(z,w) \in \Gamma_{1/2}\times\Gamma_{-1/2, -\beta}$ and $(z,w) \in \Gamma_{\beta}\times\Gamma_{-1/2}$;
\item for $12$ entries: $(z,w) \in \Gamma_{1/2}\times\Gamma_{-1/2,-\alpha,-\beta}$ and $(z,w) \in \Gamma_{\beta} \times \Gamma_{-1/2,-\alpha}$;
\item for $22$ entries: $(z,w) \in \Gamma_{1/2,\alpha,\beta} \times \Gamma_{-1/2}$ and $(z,w) \in \Gamma_{1/2,\beta} \times \Gamma_{-\alpha}$ and $(z,w) \in \Gamma_{1/2,\alpha}\times\Gamma_{-\beta}$.
\end{itemize}
We are using the following notation:
\begin{equation} \label{eq:phi}
 \Phi(x, z) = e^{-xz} \phi (z) \quad \textrm{with } \phi(z) = \left[ \frac{ \tfrac12 + z } { \tfrac12-z }\right]^{N-1},
 \end{equation}
 \begin{equation}\label{eq:V}
 \begin{split}
 V (j, x; j', x') &= - \Id_{[j > j']} \int_{\I \R} \frac{dz}{2 \pi \I} \frac{e^{-(x-x')z}} {(\tfrac12-z)^{j-j'}} = - \Id_{[j > j']} \Id_{[x \geq x']} \frac{(x-x')^{j-j'-1} e^{-\frac{x-x'}{2}}} {(j-j'-1)!},
 \end{split}
 \end{equation}
 \begin{equation}\label{eq:epsilon}
 \begin{split}
 \tilde{\varepsilon} (j, x; j', x') &=-\oint\limits_{\Gamma_{1/2}} \frac{dz}{2 \pi \I} \frac{2 z e^{-(x-x')z}} {\left[\tfrac12+z\right]^{N-j} \left[\tfrac12-z\right]^{N-j'}} \frac{1}{z^2-\alpha^2} - \frac{e^{-(x-x')\alpha}} {\left[\tfrac12 + \alpha\right]^{N-j} \left[\tfrac12 - \alpha\right]^{N-j'}}, \quad \text{if } x \geq x'\\
 &=\tilde{\varepsilon}_1 (j, x; j', x') + \tilde{\varepsilon}_2 (j, x; j', x')
 \end{split}
\end{equation}
with $\tilde{\varepsilon}$ anti-symmetric (which covers the case $x < x'$)
\begin{equation}
 \tilde{\varepsilon} (j, x; j', x') = -\tilde{\varepsilon} (j', x'; j, x).
\end{equation}
\end{thm}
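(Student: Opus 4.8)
The plan is to derive Theorem~\ref{thm:exp_corr} as an exponential limit of a known Pfaffian formula for half-space LPP with geometric weights. Concretely, I would start from Theorem~\ref{thm:geom_corr_2} (stated in Appendix~\ref{sec:geom_wts}), which gives the multipoint joint distribution of last passage times in the half-space geometric model as a Fredholm Pfaffian $\pf(J-K^{\rm geo})_{L^2(X^{\rm geo})}$ with an explicit $2m\times 2m$ matrix kernel built out of contour integrals. This geometric result is itself either taken from the literature or proven in-house (one can adapt the case $\beta=1/2$ treated in~\cite{BBCS17}); either way I will treat it as given per the appendix. The crux of this theorem is then a controlled degeneration: introduce a small parameter $\epsilon$, set the geometric parameters so that the geometric waiting times, rescaled by $\epsilon$, converge in distribution to the exponential weights in~\eqref{eq:int_wts} (so the bulk geometric parameter $q\to 1$ like $e^{-\epsilon}$, the diagonal/boundary/corner parameters tuned to produce $\mathrm{Exp}(\tfrac12+\alpha)$, $\mathrm{Exp}(\tfrac12+\beta)$, $\mathrm{Exp}(\alpha+\beta)$ respectively), and pass to the limit $\epsilon\to 0$ in the Fredholm Pfaffian.

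The key steps, in order, are: (i) fix the exact $\epsilon$-dependent parametrization of the geometric weights and verify the weak convergence to the exponential model, so that $\Pb(\bigcap\{L^{\rm geo}_{N,j_\ell}\le s_\ell/\epsilon\})\to\Pb(\bigcap\{L^{\rm pf}_{N,j_\ell}\le s_\ell\})$ — this is elementary since $N,m$ are fixed and the $L$'s are finite maxima of finitely many independent variables; (ii) rescale the spatial variables in the geometric kernel ($x\mapsto x/\epsilon$, and correspondingly conjugate/rescale the kernel entries, which does not change the Fredholm Pfaffian), and perform the change of contour variables that turns the geometric integration variables into the $z,w$ of~\eqref{eq:kernel} — typically $z_{\rm geo}=$ something like $(\tfrac12+z)/(\tfrac12-z)$ or an additive shift, chosen so that $\Phi(x,z)=e^{-xz}\phi(z)$ with $\phi(z)=[(\tfrac12+z)/(\tfrac12-z)]^{N-1}$ emerges from the geometric analogue; (iii) take the pointwise limit $\epsilon\to0$ of each of the four kernel blocks $K_{11},K_{12},K_{21},K_{22}$, tracking how the geometric cross-terms like $(z+\beta)(w-\beta)/((z-\beta)(w+\beta))$, the factor $V$, and the $\tilde\varepsilon$ piece arise (the $\tilde\varepsilon$ and $V$ terms will come from residues/poles crossed during the contour manipulation, exactly as in~\cite{BFO20}); (iv) check the contour prescriptions listed in the theorem are the correct images under the change of variables, i.e.\ that the limiting contours encircle precisely the asserted sets of poles; (v) upgrade pointwise convergence of the kernel to convergence of the Fredholm Pfaffian, via dominated convergence with a Hadamard-type bound on the Pfaffian expansion, using that for $\beta>0$ the kernel has Gaussian-type decay making the series absolutely convergent uniformly in $\epsilon$.

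The main obstacle I expect is step (v) together with the bookkeeping in step (iii): establishing uniform-in-$\epsilon$ integrable domination for the Fredholm Pfaffian series requires careful estimates on the contour integrals defining $K$ (their decay in $x,x'$ and in the Pfaffian indices), and the exponential degeneration can a priori pinch contours or produce spurious poles at $z=\beta$, $w=-\beta$, $z,w=0$, so one must choose the contours in the pre-limit to stay uniformly bounded away from these bad loci while still enclosing the right residues. The restriction $\beta>0$ (and $\alpha+\beta>0$) is exactly what is needed here to keep the relevant factors integrable and the series convergent; the analytic continuation to $\beta\le 0$, and ultimately the stationary limit $\beta\to-\alpha$, is deliberately postponed to Section~\ref{sec:finite_proof}. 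Apart from this, the calculation is a routine but lengthy saddle-free limit, and wherever a sub-computation duplicates the $m=1$ analysis of~\cite{BFO20} I would simply cite it rather than repeat it.
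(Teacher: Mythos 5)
Your proposal is correct and follows essentially the same route as the paper's proof in Appendix~\ref{sec:geom_exp_limit}: set $(a,b,q)=(1-\epsilon\alpha,\,1-\epsilon\beta,\,1-\epsilon)$ and $(k,k')=(x,x')/\epsilon$, conjugate and rescale the geometric kernel of Theorem~\ref{thm:geom_corr_2}, obtain pointwise block-by-block convergence via the additive change of variables $z=1+Z\epsilon$, $w=1+W\epsilon$ (Proposition~\ref{prop:ConvGeom}), establish uniform-in-$\epsilon$ exponential bounds on the conjugated kernel (Lemma~\ref{lem:Bounds}), and conclude by dominated convergence with the Hadamard bound.

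Two small corrections to the description, neither of which is a gap. First, in your step (iii), the $V$ and $\tilde\varepsilon$ pieces are not generated by residues picked up during the $\epsilon\to 0$ limit: they already appear, isolated as $V^{\rm geo}$ and $E^{\rm geo}$, in Theorem~\ref{thm:geom_corr_2} (those residues were collected once and for all when deforming the contours of Theorem~\ref{thm:geom_corr} to reach Theorem~\ref{thm:geom_corr_2}), and the limit simply sends $V^{\rm geo}\to V$ and $E^{\rm geo}\to\tilde\varepsilon$ directly. Second, the domination in step (v) is exponential, of the form $Ce^{-\nu(x+x')}$, not Gaussian, and it holds only after conjugating by $M(x)=\mathrm{diag}(e^{\mu_1 x},e^{-\mu_1 x},\ldots,e^{\mu_m x},e^{-\mu_m x})$; on their own the $V^{\rm geo}$ and $E^{\rm geo}$ contributions decay only in $|x-x'|$, not in $x+x'$, so the distinct exponents $\mu_1>\cdots>\mu_m$ must be squeezed into a window $(\tilde\mu,\mu)\subset(\max\{0,-\alpha\},\beta)$ whose existence is exactly what the hypotheses $\beta>0$, $\alpha+\beta>0$ guarantee. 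This is the bookkeeping you correctly flagged as the main obstacle.
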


\begin{rem}
 The two equivalent formulas for $V$ follow as limits $q \to 1$ from the two equivalent formulas for $V^{\rm geo}$ from Appendix~\ref{sec:geom_wts} (notably equation~\eqref{eq:V_E_geo}); alternatively, one can close the contour $\I \R$ at $\pm \infty$ (depending on $\sgn(x-x')=\pm 1$) and pick up the residue at $1/2$ (which also gives the indicator in $x,x'$); yet a third way is to see that the first formula for $V$ is a Fourier transform (put $z = \I \tau$) of the function $(\tfrac12-\I \tau)^{-(j-j')}$, and computing this explicitly yields the second formula.
\end{rem}

\begin{rem}
 The contents of Remark~\ref{rem:ext_pfaffian} applies provided we switch from counting to Lebesgue measure. Thus on one hand we can write the $m$-point distribution as
 \begin{equation}
 \Pb \left( \bigcap_{\ell=1}^m \{ L^{\rm pf}_{N, j_\ell} \leq s_\ell \} \right) = \pf (J - P_s K P_s)_{L^2( \{1, 2, \dots, m\} \times \R)},
 \end{equation}
 where $P_s(k, x) = \Id_{[x > s_k]}$. The expansion of $\pf (J + \lambda P_s K P_s)_{L^2( \{1, 2, \dots, m\} \times \R)}$ ($\lambda = -1$ for us) is then
 \begin{equation}
 \pf (J + \lambda P_s K P_s)_{L^2( \{1, 2, \dots, m\} \times \R)} = \sum_{n=0}^{\infty} \frac{\lambda^n}{n!} \sum_{i_1, \dots, i_n = 1}^m \int_{I_{i_1} \times \cdots \times I_{i_n}} \pf [K^{(n)} (j_{i_a}, x_a; j_{i_b}, x_b)]_{1 \leq a, b \leq n} \prod_{a=1}^n d x_a,
 \end{equation}
 where for brevity $I_k = (s_k, \infty)$ and $[K^{(n)} (j_{i_a}, x_a; j_{i_b}, x_b)]_{1 \leq a, b \leq n}$ is the skew-symmetric $2n \times 2n$ matrix with $2 \times 2$ block at $(a, b)$ ($1 \leq a, b \leq n$) given by the matrix kernel $K (j_{i_a}, x_a; j_{i_b}, x_b)$.

On the other hand we can write the same distribution as
 \begin{equation} \label{eq:exp_2m_ker}
 \begin{split}
 \Pb \left( \bigcap_{\ell=1}^{m} \{ L_{N, j_\ell} \leq s_\ell \} \right) &= \pf (J^{(m)}-K^{(m)})_{L^2((s_1, \infty)) \oplus \cdots \oplus L^2((s_m, \infty))} \\
 &= \pf (J^{(m)} - P^{(m)}_s K^{(m)} P^{(m)}_s)_{L^2(\R) \oplus \cdots \oplus L^2(\R)},
 \end{split}
 \end{equation}
where $J^{(m)}$ is the $2m \times 2m$ anti-symmetric matrix having just the blocks $J = \left( \begin{smallmatrix} 0 & 1 \\ -1 & 0 \end{smallmatrix} \right)$ on the diagonal, $K^{(m)}(x, x')$ is the $2m \times 2m$ matrix kernel whose $(a, b)$ $2 \times 2$ block/component ($1 \leq a, b \leq m$) is the $2 \times 2$ matrix kernel $K(j_a, x; j_b, x')$ from the above theorem, and $P^{(m)}_s$ is the $2m \times 2m$ diagonal matrix $\mathrm{diag} (\chi_1, \chi_1, \dots, \chi_m, \chi_m)$ with $\chi_\ell$ the characteristic function of $(s_\ell, \infty)$.

Doing the latter enables us to do useful computations on the $2m \times 2m$ matrix kernel $K^{(m)}$. Moreover, we will drop the superscript ${}^{(m)}$ and just use $K$ for the $2m \times 2m$ matrix kernel hereinafter.
\end{rem}

\begin{rem}
 We will use the following trivial identity many times throughout, and so we make note of it here:
 \begin{equation}\label{eq:phi_symm}
 \phi(-z)=\phi(z)^{-1},\quad \Phi(x, -z)=\Phi(x, z)^{-1}.
 \end{equation}
\end{rem}

\subsection{From integrable to stationary} \label{sec:int_to_stat}

\subsubsection{Shift argument} \label{sec:shift}

For recovering the desired stationary distribution from the Pfaffian one, we follow a strategy easy to explain: we first remove $\tilde \omega_{1,1}$, the undesired random variable at the origin, and then we take the $\beta \to -\alpha$ limit. The first step is achieved by a standard shift argument, used already in the full-space stationary LPP problem~\cite{BR00, FS05a, BFP09, SI04}\footnote{Baik--Rains~\cite{BR00} treat the Poisson case instead of the exponential one but the shift argument is similar.} and also in the half-space case by the authors~\cite[Lemma 3.3]{BFO20}.

We recall that $L^{\rm pf}_{N,j_\ell}$ denotes the LPP time for the random variables $\tilde{\omega}_{i,j}$ of~\eqref{eq:int_wts}. Denote by $\widetilde L_{N,j_\ell} = L^{\rm pf}_{N,j_\ell} - \tilde\omega_{1,1}$ and recall that $L_{N,j_\ell}$ is the $\beta\to-\alpha$ limit of $\widetilde L_{N,j_\ell}$. The shift argument is captured by the following lemma.

\begin{lem} \label{lem:shift}
Let $\alpha,\beta\in(-1/2,1/2)$ with $\alpha+\beta>0$ and let $1 \leq j_1 < \cdots < j_m \leq N$. Define
\begin{equation}
 \begin{split}
 \widetilde \Pb(s_1,\dots,s_m) &= \Pb\left(\bigcap_{\ell=1}^m \{\widetilde L_{N,j_\ell}\leq s_\ell\}\right), \\
 \Pb^{\rm pf}(s_1,\dots,s_m) &= \Pb\left(\bigcap_{\ell=1}^m \{L^{\rm pf}_{N,j_\ell}\leq s_\ell\}\right).
 \end{split}
\end{equation}
Then
\begin{equation}\label{eq:shift3}
 \widetilde\Pb(s_1,\dots,s_m) = \left(1+\frac{1}{\alpha+\beta}\sum_{\ell=1}^m\partial_{s_\ell}\right)\Pb^{\rm pf}(s_1,\dots,s_m).
\end{equation}
\end{lem}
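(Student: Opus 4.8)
The key observation is that the only weight distinguishing $L^{\rm pf}_{N,j_\ell}$ from $\widetilde L_{N,j_\ell}$ is $\tilde\omega_{1,1}\sim\mathrm{Exp}(\alpha+\beta)$, which sits at the corner $(1,1)$ through which \emph{every} admissible up-right path from $(1,1)$ to $(N,j_\ell)$ must pass. Hence for every $\ell$ simultaneously we have the exact identity $L^{\rm pf}_{N,j_\ell} = \widetilde L_{N,j_\ell} + \tilde\omega_{1,1}$, where $\tilde\omega_{1,1}$ is independent of the family $\{\widetilde L_{N,j_\ell}\}_{\ell=1}^m$ (the latter being built only from the weights $\tilde\omega_{i,j}$ with $(i,j)\neq(1,1)$). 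This is the half-space, multipoint analogue of the shift argument in \cite{BFO20,BR00,FS05a,BFP09}, and the only genuinely model-specific input.

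From here it is pure calculus. Write $r=\alpha+\beta>0$, so $\tilde\omega_{1,1}$ has density $re^{-rt}\mathbbm{1}_{[t\geq 0]}$. Conditioning on the value of $\tilde\omega_{1,1}=t$ and using independence,
\begin{equation}
 \Pb^{\rm pf}(s_1,\dots,s_m) = \int_0^\infty r e^{-rt}\, \widetilde\Pb(s_1-t,\dots,s_m-t)\, \D t.
\end{equation}
Substituting $u=t$ and differentiating under the integral sign in the "diagonal" direction, one checks that $F(s_1,\dots,s_m):=\Pb^{\rm pf}(s_1,\dots,s_m)$ satisfies the first-order linear PDE $\big(\sum_{\ell=1}^m\partial_{s_\ell}\big)F = r\big(\widetilde\Pb - F\big)$; this is obtained by writing $\sum_\ell\partial_{s_\ell}$ as the derivative along the flow $s_\ell\mapsto s_\ell+a$, changing variables $t\mapsto t+a$ inside the integral, and then differentiating the resulting expression at $a=0$ (the boundary term at $t=0$ produces exactly the $r\,\widetilde\Pb$ piece, the differentiation of $e^{-rt}$ the $-rF$ piece). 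Rearranging gives
\begin{equation}
 \widetilde\Pb(s_1,\dots,s_m) = F + \frac{1}{r}\Big(\sum_{\ell=1}^m\partial_{s_\ell}\Big)F = \Big(1+\frac{1}{\alpha+\beta}\sum_{\ell=1}^m\partial_{s_\ell}\Big)\Pb^{\rm pf}(s_1,\dots,s_m),
\end{equation}
which is exactly \eqref{eq:shift3}.

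The main (and really the only) obstacle is justifying the interchange of differentiation and integration, i.e.\ that $\widetilde\Pb(\cdot)$ is sufficiently smooth and that its $s_\ell$-derivatives are integrable against $re^{-rt}$; since the exponential weights give $\widetilde\Pb$ a density in each coordinate and the kernel formulas of Theorem~\ref{thm:exp_corr} are manifestly smooth in the $s_\ell$, this is routine, and one may also simply quote the corresponding justification from \cite[Lemma 3.3]{BFO20}, the present statement differing from it only in that we track $m$ endpoints rather than one (the shift variable $\tilde\omega_{1,1}$ being common to all of them, nothing changes structurally). I would present the proof in the conditioning-plus-change-of-variables form above, remarking that it reduces verbatim to the one-point case of \cite{BFO20} once one notes the common corner.
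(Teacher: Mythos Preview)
Your proposal is correct and follows exactly the standard shift argument that the paper invokes by reference: the paper's own proof is just a pointer to \cite[Lemma~3.3]{BFO20} and \cite[Proposition~2.1]{BFP09}, whose content is precisely the conditioning-plus-integration-by-parts computation you wrote out. In fact you have given more detail than the paper does, and your remark that the multipoint case differs from the one-point case only through the common shift $\tilde\omega_{1,1}$ is exactly the point being made there.
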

\begin{proof}
It is a generalization of the proof of~\cite[Lemma 3.3]{BFO20} for the one-point distribution; see~\cite[Proposition 2.1]{BFP09} for the full argument.
\end{proof}

\subsubsection{Kernel decomposition} \label{sec:kernel_decomp}

Throughout this section we fix $1 \leq j_1 < \cdots < j_m \leq N$ a sequence of ordinates (times) and $m$ real numbers $s_\ell \geq 0$, $1 \leq \ell \leq m$.

We use $K = K(x, y)$ to stand for the $2m \times 2m$ matrix kernel from Theorem~\ref{thm:exp_corr}. We denote its $2 \times 2$ block at $(k, \ell)$ ($1 \leq k, \ell \leq m$) by
\begin{equation}
 K^{j_k j_\ell} (x, y) = K(j_k, x; j_\ell, y)
\end{equation}
to save space in most of the formulas below (the right-hand side above uses the notation of Theorem~\ref{thm:exp_corr}).

Note that $K$ satisfies $K(x, y) = -K^t (y, x)$, that is, we have the following symmetries:
\begin{equation} \label{eq:ker_symm}
 K^{j_k j_\ell}_{12} (x, y) = - K^{j_\ell j_k}_{21} (y, x), \qquad K^{j_k j_\ell}_{aa} (x, y) = - K^{j_\ell j_k}_{aa} (y, x), \quad a=1,2.
\end{equation}
Thus, the $(k,\ell)$ block satisfies
\begin{equation}\label{eq:k_ex}
 \begin{pmatrix}
 K^{j_k j_\ell}_{11}(x,y) & K^{j_k j_\ell}_{12}(x,y) \\
 K^{j_k j_\ell}_{21}(x,y) & K^{j_k j_\ell}_{22}(x,y)
 \end{pmatrix}
 =
\begin{pmatrix}
 K^{j_k j_\ell}_{11}(x,y) & K^{j_k j_\ell}_{12}(x,y) \\
 -K^{j_\ell j_k}_{12}(y,x) & K^{j_k j_\ell}_{22}(x,y)
 \end{pmatrix} .
\end{equation}

We denote by $V=V(x, y)$ the following block $2m \times 2m$ matrix: at position $(k, \ell)$, $1 \leq k, \ell \leq m$, it has the following two-by-two block:
\begin{equation}
\begin{pmatrix} 0 & 0 \\ -V^{j_\ell j_k} (y, x) & 0 \end{pmatrix}\textrm{ if }k < \ell,\quad
\begin{pmatrix} 0 & V^{j_k j_\ell} (x, y) \\ 0 & 0 \end{pmatrix} \textrm{ if }k > \ell,\quad \textrm{and }
\begin{pmatrix} 0 & 0 \\ 0 & 0 \end{pmatrix} \textrm{ if }k = \ell.
\end{equation}
As above, we use the notation
 \begin{equation}
 V^{j_k j_\ell} (x, y) := V(j_k, x; j_\ell, y),
 \end{equation}
which is non-zero only for $k > \ell$ (as this corresponds to $j_k>j_\ell$).
Finally we use $P_s = P_s(x)$ to stand for the following $2m \times 2m$ matrix of projectors:
\begin{equation}
 P_s(x) = \mathrm{diag} (\Id_{[x > s_1]}, \Id_{[x > s_1]}, \dots, \Id_{[x > s_m]}, \Id_{[x > s_m]}).
\end{equation}
Finally, define the following auxiliary functions:
\begin{equation} \label{eq:fg_def}
\begin{aligned}
f_{+,\beta}^{j} (x) &=\Phi(x, \beta)\left( \tfrac12-\beta \right)^{N-j}, & g_1^{j}(x)&=\oint\limits_{\Gamma_{1/2}} \frac{dz}{2\pi\I} \Phi (x,z)\left( \tfrac12-z \right)^{N-j}\frac{z+\alpha}{2z},\\
f_{-,\beta}^{j} (x) &=\frac{\Phi(x, \beta)}{\left( \tfrac12+\beta \right)^{N-j}},& g_2^{j}(x)&=\!\!\! \oint\limits_{\Gamma_{1/2,\alpha}} \!\!\!\frac{dz}{2\pi\I} \frac{\Phi(x,z)}{\left( \tfrac12+z \right)^{N-j}}\frac{1}{z-\alpha}.
\end{aligned}
\end{equation}

\begin{prop} \label{prop:decomposition} Let $\alpha \in (-1/2,1/2)$, $\beta \in (0, 1/2)$. Then the kernel $K$ splits as
 \begin{equation}
 K = \breve{K} + (\alpha+\beta) R,
 \end{equation}
 where for $1\leq k,\ell\leq m$, the $(k, \ell)$ $2 \times 2$ block of $\breve{K}$ is given by
 \begin{equation} \label{eq:h_bar_def}
 \begin{split}
 \breve{K}^{j_k j_\ell}_{11}(x, y) =& - \!\!\! \oint\limits_{\Gamma_{1/2}} \!\!\! \frac{dz}{2\pi\I} \!\!\! \oint\limits_{\Gamma_{-1/2}} \!\!\! \frac{dw}{2\pi\I}\frac{\Phi(x,z)}{\Phi(y,w)}(\tfrac12-z)^{N-j_k} (\tfrac12+w)^{N-j_\ell} \\
 & \qquad \qquad \qquad \qquad \times \frac{(z+\beta)(w-\beta)}{(z-\beta)(w+\beta)}\frac{(z+\alpha)(w-\alpha)(z+w)}{4zw(z-w)}, \\
 \breve{K}^{j_k j_\ell}_{12}(x, y) =& - \!\!\! \oint\limits_{\Gamma_{1/2}} \!\!\! \frac{dz}{2\pi\I} \!\!\! \oint\limits_{\Gamma_{-1/2,-\alpha,-\beta}} \!\!\!\!\!\!\!\!\!\! \frac{dw}{2\pi\I}\frac{\Phi(x,z)}{\Phi(y,w)} \frac{(\tfrac12-z)^{N-j_k}} {(\tfrac12-w)^{N-j_\ell}} \frac{z+\alpha}{w+\alpha}\frac{z+\beta}{z-\beta}\frac{w-\beta}{w+\beta}\frac{z+w}{2z(z-w)} + V^{j_k j_\ell} (x, y)\\
 =& - \breve{K}_{21}^{j_\ell j_k}(y, x), \\
 \breve{K}^{j_k j_\ell}_{22}(x, y) =&\ \tilde\e^{j_k j_\ell}(x,y)+\oint \frac{dz}{2\pi\I}\oint\frac{dw}{2\pi\I} \frac{\Phi(x,z)}{\Phi(y,w)} \frac{1} {(\tfrac12+z)^{N-j_k} (\tfrac12-w)^{N-j_\ell}}\\
 & \qquad \qquad \qquad \qquad \qquad \qquad \quad \times \frac{1}{(z-\alpha)(w+\alpha)} \frac{z+\beta}{z-\beta}\frac{w-\beta}{w+\beta}\frac{z+w}{z-w},
 \end{split}
 \end{equation}
where the integration contours for $\breve{K}^{j_k j_\ell}_{22}$ are, for $(z,w)$, the union of $\Gamma_{1/2,\alpha,\beta}\times\Gamma_{-1/2}$, $\Gamma_{1/2,\beta}\times \Gamma_{-\alpha}$, and $\Gamma_{1/2,\alpha}\times\Gamma_{-\beta}$.

The operator ($2 \times 2$ matrix kernel) $R^{j_k j_\ell}$ is of rank two and given by
 \begin{equation}
 R^{j_k j_\ell}=\begin{pmatrix}
 \ketbra{g_1^{j_k}}{f_{+,\beta}^{j_\ell}} - \ketbra{f_{+,\beta}^{j_k}}{g_1^{j_\ell}} &\quad \ketbra{f_{+,\beta}^{j_k}} {g_2^{j_\ell}} \\[10pt]
 - \ketbra{ g^{j_k}_2}{f_{+,\beta}^{j_\ell}} & \quad 0
 \end{pmatrix} = \ketbra{ \begin{matrix} f^{j_k}_{+, \beta} \\ 0 \end{matrix}}{-g_1^{j_\ell} \quad g_2^{j_\ell}} + \ketbra{\begin{matrix} g_1^{j_k} \\ -g_2^{j_k} \end{matrix}} { f_{+, \beta}^{j_\ell} \quad 0}.
 \end{equation}
\end{prop}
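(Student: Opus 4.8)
The statement is a purely algebraic identity: it rewrites the $2m\times 2m$ matrix kernel $K$ from Theorem~\ref{thm:exp_corr} as a sum $\breve K+(\alpha+\beta)R$ with $\breve K$ having no pole at $z=\beta$ (resp.\ $w=-\beta$) and $R$ a rank-two (per block) operator. So the plan is to work block-by-block: fix $1\le k,\ell\le m$ and compare the three entries $K^{j_k j_\ell}_{11}, K^{j_k j_\ell}_{12}, K^{j_k j_\ell}_{22}$ (the $21$ entry is determined by anti-symmetry~\eqref{eq:ker_symm}) with the claimed decomposition. In each case the mechanism is the same: the contour for $z$ in $K$ is a union that includes a small circle $\Gamma_\beta$ around $z=\beta$ (and for $K_{22}$ also a circle around $w=-\beta$); on that piece the integrand has a simple pole at $z=\beta$ coming from the factor $\tfrac{z+\beta}{z-\beta}$, and we pick up its residue. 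Deforming/dropping the $\Gamma_\beta$-piece leaves exactly the $\breve K$-integral over $\Gamma_{1/2}\times\Gamma_{-1/2,\dots}$ (the $\tfrac{z+\beta}{z-\beta}\tfrac{w-\beta}{w+\beta}$ factors remain, but they are now regular on those contours and are precisely what appears in the displayed formulas for $\breve K$), while the residue term, after also evaluating the remaining $w$-integral by residues where appropriate, assembles into $(\alpha+\beta)$ times a product of the explicit functions $f^{j}_{+,\beta}, g^{j}_1, g^{j}_2$ defined in~\eqref{eq:fg_def}.

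Concretely: in the residue at $z=\beta$, the factor $\tfrac{z+\beta}{z-\beta}$ contributes $2\beta$ but combined with the remaining $z$-dependence one extracts $\Phi(x,\beta)(\tfrac12-\beta)^{N-j_k}=f^{j_k}_{+,\beta}(x)$ for the $11$ and $12$ entries, and $\Phi(x,\beta)(\tfrac12+\beta)^{-(N-j_k)}$-type factors for $22$. The leftover $w$-integral in the $11$ residue, over $\Gamma_{-1/2}$ with integrand $\propto \tfrac{\Phi(y,w)^{-1}(\tfrac12+w)^{N-j_\ell}(w-\beta)(w-\alpha)(\beta+w)}{4\beta w(\beta-w)(w+\beta)}$, simplifies (the $(w\pm\beta)$ factors cancel) and after a change of variable $w\mapsto -z$ using~\eqref{eq:phi_symm} becomes $-g_1^{j_\ell}(y)$ up to the scalar $\alpha+\beta$ coming from $\tfrac{(\beta+\alpha)}{\cdots}$; similarly the $w$-integral in the residue appearing in the $12$ and $22$ blocks produces $g_2^{j_\ell}$ (resp.\ $g_1$) via the contour $\Gamma_{1/2,\alpha}$. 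One also has to track a symmetric contribution, namely the residue at $w=-\beta$ (for the $K_{22}$ block where the $w$-contour is $\Gamma_{-\beta}$, and, after using the anti-symmetry, the ``transposed'' residue), which is what produces the antisymmetrized combination $\ketbra{g_1^{j_k}}{f_{+,\beta}^{j_\ell}}-\ketbra{f_{+,\beta}^{j_k}}{g_1^{j_\ell}}$ in the $(1,1)$ slot of $R^{j_k j_\ell}$ and the $-\ketbra{g_2^{j_k}}{f_{+,\beta}^{j_\ell}}$ in the $(2,1)$ slot. Collecting the four pieces into the outer-product form displayed at the end of the proposition is then just bookkeeping in the $\ket{\cdot}\bra{\cdot}$ notation.

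The main obstacle is \textbf{contour bookkeeping}: one must be careful about exactly which poles lie inside which contour in Theorem~\ref{thm:exp_corr}'s prescription (e.g.\ for the $22$ block there are \emph{three} contour pieces $\Gamma_{1/2,\alpha,\beta}\times\Gamma_{-1/2}$, $\Gamma_{1/2,\beta}\times\Gamma_{-\alpha}$, $\Gamma_{1/2,\alpha}\times\Gamma_{-\beta}$), and to check that after extracting the $z=\beta$ and $w=-\beta$ residues the \emph{remaining} contours genuinely coincide with the ones quoted for $\breve K_{22}$ (namely $\Gamma_{1/2,\alpha,\beta}\times\Gamma_{-1/2}$, $\Gamma_{1/2,\beta}\times\Gamma_{-\alpha}$, $\Gamma_{1/2,\alpha}\times\Gamma_{-\beta}$ — note the pole at $z=\beta$ or $w=-\beta$ is no longer present in the integrand so ``$\beta$'' in those subscripts becomes vacuous and the remaining contours can be merged consistently). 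One must also verify the residue computations do not introduce spurious poles at $z=w$, $z=\alpha$, or $w=-\alpha$; these are handled exactly as in the one-point case~\cite[Section~3]{BFO20}, so wherever the computation is parallel to that reference we will simply say so. A secondary, purely notational, point is to confirm the two displayed forms of $R^{j_k j_\ell}$ agree, which is immediate from the definition of $\ketbra{\cdot}{\cdot}$ for vector-valued bras/kets.
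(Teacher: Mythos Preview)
Your overall approach---extract the residues at $z=\beta$ and $w=-\beta$ from the appropriate contour pieces and identify what is left with $\breve K$---is exactly what the paper does. But your contour bookkeeping is wrong in two places, and if you followed your plan literally you would not recover the stated $R$.

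First, $R^{j_kj_\ell}_{22}=0$: compare the contours for $K_{22}$ in Theorem~\ref{thm:exp_corr} with those for $\breve K_{22}$ in~\eqref{eq:h_bar_def}---they are \emph{identical}, so nothing whatsoever is extracted from the $22$ entry. Your parenthetical ``note the pole at $z=\beta$ \dots\ is no longer present in the integrand'' is false: the factor $\tfrac{z+\beta}{z-\beta}$ is still in the $\breve K_{22}$ integrand, and $\beta$ remains inside the $z$-contours $\Gamma_{1/2,\alpha,\beta}$ and $\Gamma_{1/2,\beta}$.

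Second, the antisymmetrized combination in $R_{11}$ does not come from $K_{22}$ via anti-symmetry; the anti-symmetry~\eqref{eq:ker_symm} only links $11\leftrightarrow 11$, $22\leftrightarrow 22$, $12\leftrightarrow 21$. Both terms of $R_{11}$ come directly from $K_{11}$: the contour piece $\Gamma_\beta\times\Gamma_{-1/2}$ gives the residue at $z=\beta$ (producing $-\ketbra{f^{j_k}_{+,\beta}}{g_1^{j_\ell}}$ after your change of variable $w\mapsto -z$), while the piece $\Gamma_{1/2}\times\Gamma_{-\beta}$ gives the residue at $w=-\beta$ (producing $+\ketbra{g_1^{j_k}}{f^{j_\ell}_{+,\beta}}$). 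What remains is precisely the $\Gamma_{1/2}\times\Gamma_{-1/2}$ integral that defines $\breve K_{11}$. For $K_{12}$ only the piece $\Gamma_\beta\times\Gamma_{-1/2,-\alpha}$ is removed (residue at $z=\beta$, giving $R_{12}=\ketbra{f^{j_k}_{+,\beta}}{g_2^{j_\ell}}$); the piece $\Gamma_{1/2}\times\Gamma_{-1/2,-\alpha,-\beta}$ is kept intact as $\breve K_{12}$.

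Once you correct these two points your plan is complete; the residue evaluations and the $w\mapsto -z$ substitution using~\eqref{eq:phi_symm} that you describe are correct and match the paper's one-line proof.
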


\begin{proof}
 The decomposition follows from standard residue computations. Namely, we include in $R^{j_k j_\ell}$ the contribution from (a) the residues at $(z, w) \in \{(\beta,-1/2), (1/2,-\beta)\}$ for $K_{11}^{j_k j_\ell}$, and (b) the residues at $(z, w) \in \{ (\beta,-1/2), (\beta,-\alpha)\}$ for $K_{12}^{j_k j_\ell}$. The reader can easily verify that these residue computations give $R$ the above factorization. This finishes the proof.
\end{proof}

\begin{rem}
We can decompose the kernel $\breve{K}^{j_k j_\ell}_{22}(x, y) - \tilde\e^{j_k j_\ell}(x,y)$ using the identities
\begin{equation}
\frac{(z+\beta)(w-\beta)}{(z-\beta)(w+\beta)(z-w)}=\frac{1}{z-w}+\frac{2\beta}{(w+\beta)(z-\beta)},\quad \frac{z+w}{(z-\alpha)(w+\alpha)}=\frac{1}{z-\alpha}+\frac{1}{w+\alpha}
\end{equation}
as
\begin{equation}\label{eq:term_1}
\oint \frac{dz}{2\pi\I}\oint\frac{dw}{2\pi\I} \frac{\Phi(x,z)}{\Phi(y,w)} \frac{1}{(\tfrac12+z)^{N-j_k}(\tfrac12-w)^{N-j_\ell}}\frac{1}{z-w}\left(\frac{1}{z-\alpha}+\frac{1}{w+\alpha}\right)
\end{equation}
plus
\begin{equation}\label{eq:term_2}
\oint \frac{dz}{2\pi\I}\oint\frac{dw}{2\pi\I} \frac{\Phi(x,z)}{\Phi(y,w)} \frac{1}{(\tfrac12+z)^{N-j_k}(\tfrac12-w)^{N-j_\ell}} \frac{2\beta}{(w+\beta)(z-\beta)}\left(\frac{1}{z-\alpha}+\frac{1}{w+\alpha}\right).
\end{equation}
The integration contour for the term in~\eqref{eq:term_1} and~\eqref{eq:term_2} with $1/(z-\alpha)$ is $\Gamma_{1/2,\alpha}\times\Gamma_{-1/2}$, while the one for the term with $1/(w+\alpha)$ is $\Gamma_{1/2}\times\Gamma_{-1/2,-\alpha}$.
\end{rem}

Now we have decomposed the kernel as
\begin{equation}
 K = \overline K + V + (\alpha+\beta)R
\end{equation}
where by definition
\begin{equation}
 \overline K = \breve K - V.
\end{equation}

Further define
\begin{equation}
 (G, \breve G, \overline{G}) = (J^{-1} K, J^{-1} \breve K, J^{-1} \overline{K}), \qquad T=J^{-1}R, \qquad W = J^{-1} V,
\end{equation}
where $J$ is the $2m \times 2m$ matrix with $2 \times 2$ diagonal blocks given by $\begin{pmatrix} 0 & 1 \\ -1 & 0 \end{pmatrix}$ and zeros elsewhere. The effect of multiplying a $2m \times 2m$ matrix by $J^{-1}$ on the left is as follows: if $\begin{pmatrix} a & b \\ c & d \end{pmatrix}$ is the block of the original matrix at $(k, \ell)$ ($1 \leq k, \ell \leq m$), it is sent to $\begin{pmatrix} -c & -d \\ a & b \end{pmatrix}$.

Thus the $(k,\ell)$ block of $G$ reads
\begin{equation} \label{eq:g_ex}
 \begin{pmatrix}
 G^{j_k j_\ell}_{11}(x,y) & G^{j_k j_\ell}_{12}(x,y) \\
 G^{j_k j_\ell}_{21}(x,y) & G^{j_k j_\ell}_{22}(x,y)
 \end{pmatrix}
=
 \begin{pmatrix}
 K^{j_\ell j_k}_{12}(y,x) & -K^{j_k j_\ell}_{22}(x,y) \\
 K^{j_k j_\ell}_{11}(x,y) & K^{j_k j_\ell}_{12}(x,y)
 \end{pmatrix}.
\end{equation}

Notice the entries satisfy (a consequence of equation~\eqref{eq:ker_symm}) the following symmetries:
 \begin{equation}\label{eq3.31}
 G^{j_k j_\ell}_{11} (x, y) = G^{j_\ell j_k}_{22} (y, x), \quad G_{12}^{j_k j_\ell}(x,y)=-G_{12}^{j_\ell j_k}(y,x),\quad G_{21}^{j_k j_\ell}(x,y)=-G_{21}^{j_\ell j_k}(y,x).
 \end{equation}

We can write $T = J^{-1}R$ as
 \begin{equation}
 T = \ketbra{X_1}{Y_1} + \ketbra{X_2}{Y_2}
\end{equation}
with
\begin{equation} \label{eq:X_def}
X_1 = \left(g_2^{j_1}, g_1^{j_1}, \cdots, g_2^{j_m}, g_1^{j_m}\right)^t, \qquad X_2 = \left( 0,f_{+,\beta}^{j_1},\cdots, 0, f_{+,\beta}^{j_m} \right)^t
\end{equation}
and
\begin{equation} \label{eq:Y_def}
 Y_1 = \left( f_{+,\beta}^{j_1}, 0, \cdots, f_{+,\beta}^{j_m} , 0 \right), \qquad Y_2 = \left( -g_1^{j_1} , g_2^{j_1} , \cdots ,-g_1^{j_m} , g_2^{j_m} \right).
\end{equation}

Furthermore we can write
\begin{equation}\label{eq:det_decomp}
\begin{split}
 \det(\Id - G) &= \det(\Id - \breve{G}) \cdot \det (\Id - (\alpha+\beta) (\Id-\breve{G})^{-1} T)\\
 &=\det(\Id - \breve{G}) \cdot\det\left(\Id-(\alpha+\beta)
 \begin{pmatrix}
  \braket{Y_1}{Z_1} & \braket{Y_2}{Z_1}\\
  \braket{Y_1}{Z_2} & \braket{Y_2}{Z_2}
 \end{pmatrix}
 \right) \\
 &= \det(\Id-\breve{G}) (1-(\alpha+\beta)\braket{Y_2}{Z_2})^2
 \end{split}
\end{equation}
with $Z_i=(\Id-\breve G)^{-1} X_i$. Here we have used the fact that
\begin{equation} \label{eq:xz_eq}
 \braket{Y_1}{Z_2} = \braket{Y_2}{Z_1} = 0, \qquad \braket{Y_1}{Z_1} = \braket{Y_2}{Z_2}.
\end{equation}
For a proof of the computations and of the equalities from~\eqref{eq:xz_eq}, the arguments from respectively Section 3.2.3 and Appendix B of~\cite{BFO20} apply \emph{mutatis mutandis}. Since all of this is valid on $L^2((s_1, \infty)) \oplus \dots \oplus L^2((s_m, \infty))$, upon reintroducing the projectors and writing $X_s := P_s X P_s$ ($X$ a matrix kernel), we are interested in the following quantity:
\begin{equation}
 \begin{split}
 \det(\Id - G_s ) &= \det(\Id - \breve{G}_s ) \cdot (1-(\alpha+\beta) \braket{P_s Y_2}{P_s Z_2})^2 \\
 &= \det(\Id - \breve{G}_s ) \cdot \left( 1-(\alpha+\beta) \braket{P_s Y_2}{(\Id-\breve{G}_s)^{-1} P_s X_2} \right)^2.
 \end{split}
\end{equation}
Equivalently in terms of Fredholm Pfaffians and after dividing by $(\alpha + \beta)$, we need to study
\begin{equation}
 \frac{1}{\alpha+\beta} \pf(J - K_s ) = \pf(J - \breve{K}_s ) \cdot \left( \frac{1}{\alpha+\beta} - \braket{P_s Y_2}{(\Id-\breve{G}_s)^{-1} P_s X_2} \right).
\end{equation}

We wish to show that it is analytic for any $\alpha, \beta \in (-\tfrac12, \tfrac12)$ and determine its limit as $\beta \to - \alpha$. To that end, we further decompose the above expression into more manageable terms. Let us first define the following $2m \times 1$ vector:
\begin{equation}\label{eq:X_tilde_def}
 \widetilde{X}_2 = \left( 0 ,f_{+,\beta}^{j_1}, 0 ,\cdots, 0, 0 \right)^t.
\end{equation}

We have the following lemma.

\begin{lem}\label{lem:magic}
 It holds that
 \begin{equation}\label{eq:magic}
 X_2 = \widetilde{X}_2 - W \widetilde{X}_2.
 \end{equation}
\end{lem}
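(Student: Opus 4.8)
The plan is to verify the identity \eqref{eq:magic} componentwise in the $2m$ coordinates, exploiting the very explicit block structure of $W = J^{-1}V$. Recall $\widetilde X_2$ has the single nonzero entry $f^{j_1}_{+,\beta}$ in slot $2$ (i.e.\ in the second coordinate of the first $2\times 2$ block), and $X_2$ has $f^{j_k}_{+,\beta}$ in slot $2k$ for each $1\le k\le m$ and zeros in the odd slots. So the claim is equivalent to the two families of scalar identities
\begin{equation}
 (W\widetilde X_2)_{2k-1} = 0 \ \ (1\le k\le m), \qquad (W\widetilde X_2)_{2k} = \widetilde X_{2,2k} - f^{j_k}_{+,\beta} = -\,\Id_{[k>1]}\, f^{j_k}_{+,\beta} \ \ (1\le k\le m).
\end{equation}

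First I would write out $W=J^{-1}V$ explicitly using the description of $V$ given in Section~\ref{sec:kernel_decomp} together with the rule that left-multiplication by $J^{-1}$ sends a $2\times 2$ block $\left(\begin{smallmatrix} a&b\\ c&d\end{smallmatrix}\right)$ to $\left(\begin{smallmatrix} -c&-d\\ a&b\end{smallmatrix}\right)$. Since $V$'s $(k,\ell)$ block is $\left(\begin{smallmatrix} 0&0\\ -V^{j_\ell j_k}(y,x)&0\end{smallmatrix}\right)$ for $k<\ell$, $\left(\begin{smallmatrix} 0&V^{j_k j_\ell}(x,y)\\ 0&0\end{smallmatrix}\right)$ for $k>\ell$, and $0$ for $k=\ell$, the block of $W$ at $(k,\ell)$ is $\left(\begin{smallmatrix} V^{j_\ell j_k}(y,x)&0\\ 0&0\end{smallmatrix}\right)$ for $k<\ell$ and $\left(\begin{smallmatrix} 0&0\\ 0&V^{j_k j_\ell}(x,y)\end{smallmatrix}\right)$ for $k>\ell$ (and $0$ on the diagonal). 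Because $\widetilde X_2$ is supported only in slot $2$ (the lower component of the first block), when we compute $(W\widetilde X_2)$ only the first block-column of $W$ acts, and within that only the lower-right entries of the blocks $(k,1)$ with $k>1$ are relevant; all upper components of $W\widetilde X_2$ and the entire first block vanish, which already gives $(W\widetilde X_2)_{2k-1}=0$ for all $k$ and $(W\widetilde X_2)_2=0$. Thus the only thing left to prove is
\begin{equation}
 (W\widetilde X_2)_{2k} = \int V^{j_k j_1}(x,y)\, f^{j_1}_{+,\beta}(y)\, dy = -\,f^{j_k}_{+,\beta}(x), \qquad 1<k\le m,
\end{equation}
on $L^2((s_1,\infty))$, i.e.\ the integral operator $V^{j_k j_1}$ applied to $f^{j_1}_{+,\beta}$ reproduces $-f^{j_k}_{+,\beta}$.

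The heart of the matter is therefore this single convolution identity, and I expect it to be the main (though routine) obstacle. Using the closed form $V^{j_k j_1}(x,y) = -\Id_{[x\ge y]}\frac{(x-y)^{j_k-j_1-1}e^{-(x-y)/2}}{(j_k-j_1-1)!}$ together with $f^{j_1}_{+,\beta}(y) = \Phi(y,\beta)(\tfrac12-\beta)^{N-j_1} = e^{-\beta y}\phi(\beta)(\tfrac12-\beta)^{N-j_1}$, the integral $\int_{s_1}^{\infty}\!\cdots$ becomes, after the substitution $t = x-y$, a Gamma-type integral $\int_0^{\infty} t^{j_k-j_1-1} e^{-(\tfrac12-\beta)t}\,dt / (j_k-j_1-1)!$ times $-f^{j_1}_{+,\beta}(x)$ (here one needs $s_1$ to drop out, which it does: the support condition $y>s_1$ is compatible with extending the $t$-integral to $0$ once one notes $f^{j_1}_{+,\beta}$ carries the exponential cleanly—more precisely one should verify directly by Fourier/contour means, as in the second Remark after Theorem~\ref{thm:exp_corr}, so that the lower limit is genuinely immaterial). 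That integral equals $(\tfrac12-\beta)^{-(j_k-j_1)}$, and multiplying, $-f^{j_1}_{+,\beta}(x)(\tfrac12-\beta)^{-(j_k-j_1)} = -e^{-\beta x}\phi(\beta)(\tfrac12-\beta)^{N-j_1-(j_k-j_1)} = -e^{-\beta x}\phi(\beta)(\tfrac12-\beta)^{N-j_k} = -f^{j_k}_{+,\beta}(x)$, as desired. Alternatively, and perhaps more cleanly, one can use the contour representation $V^{j_k j_1}(x,y) = \Id_{[x\ge y]}\oint_{\Gamma_{1/2}}\frac{dz}{2\pi\I}\frac{e^{-(x-y)z}}{(\tfrac12-z)^{j_k-j_1}}$, swap the order of integration, and evaluate the $y$-integral against $e^{-\beta y}$ first, reducing the whole thing to a single residue at $z=\beta$ of $\frac{e^{-xz}}{(\tfrac12-z)^{j_k-j_1}(z-\beta)}$—this picks off $-f^{j_k}_{+,\beta}(x)$ directly and makes transparent why the identity requires only $\beta\in(-\tfrac12,\tfrac12)$ and nothing about $s_1$ or $\alpha$. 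Either route finishes the lemma; I would present the contour version as the main argument and relegate the real-variable check to a remark.
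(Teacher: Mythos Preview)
Your block-by-block analysis of $W=J^{-1}V$ and the reduction to the single convolution identity are correct, but there is a genuine confusion about the domain of the $y$-integral. The identity $X_2=\widetilde X_2-W\widetilde X_2$ is an identity of functions on $\R$: the operator $W$ carries no projector (that is precisely the point of Lemma~\ref{lem:ip_split}, where $W$ is split as $W P_s + W(\Id-P_s)$ \emph{after} this lemma). So $(W\widetilde X_2)_{2k}(x)=\int_\R V^{j_k j_1}(x,y)\,f^{j_1}_{+,\beta}(y)\,dy$, not $\int_{s_1}^\infty$. With the lower limit $s_1$ your substitution $t=x-y$ would give $\int_0^{x-s_1}$, not $\int_0^\infty$, and $s_1$ does \emph{not} drop out---your attempted justification there does not work. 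Once the domain is corrected to $\R$, the indicator $\Id_{[x\ge y]}$ makes the $y$-integral $\int_{-\infty}^{x}$, the substitution $t=x-y$ genuinely yields the Gamma integral $\int_0^\infty t^{j_k-j_1-1}e^{-(1/2-\beta)t}\,dt/(j_k-j_1-1)!=(\tfrac12-\beta)^{-(j_k-j_1)}$, and the identity follows immediately (this requires $\beta<\tfrac12$ for convergence, which is assumed).

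With that fix, your real-variable route is actually cleaner than the paper's. The paper uses the $\I\R$ contour representation of $V$, splits the $y$-integral at $0$, shifts the vertical contour to either side of $\beta$ on each half to justify Fubini, integrates in $y$, and recombines the two vertical lines into a small loop $\Gamma_\beta$ picking up the residue at $z=\beta$. Your direct Gamma computation bypasses all of this. Your alternative contour sketch is in the same spirit as the paper's, though note that with the $\Gamma_{1/2}$ representation the contour does not enclose $\beta$; one either deforms through $\infty$ (the integrand is $\Or(z^{-2})$ there) or observes that the sum of residues at $1/2$ and $\beta$ vanishes, rather than landing on a ``single residue at $z=\beta$'' directly.
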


\begin{proof}
 Let us compute the column vector $W \widetilde{X}_2$: it has zeros in the odd components and in the 2nd component as well, while in component $2 \ell$, $1 < \ell \leq m$ it has the following entry:
 \begin{equation}
 \begin{split}
 (W \widetilde{X}_2)_{2 \ell}(x) &= \int_\R V^{j_\ell j_1}(x, y) f_{+,\beta}^{j_1}(y) dy \\
 &=-\phi(z) (\tfrac12-\beta)^{N-j_1} \int_{-\infty}^\infty dy e^{-y \beta} \int_{\I \R} \frac{dz}{2 \pi \I} \frac{e^{-(x-y)z}}{(\tfrac12-z)^{j_\ell-j_1}}.
 \end{split}
 \end{equation}
By choosing $\eta$ an appropriately small positive number and shifting the $\I \R$ to the left or right of $\beta$ (without crossing $1/2$), the integral above can be decomposed as:
\begin{equation}
 \begin{split}
 &\int_{-\infty}^\infty dy e^{-y \beta} \int_{\I \R} \frac{dz}{2 \pi \I} \frac{e^{-(x-y)z}}{(\tfrac12-z)^{j_\ell-j_1}} \\
 & \quad = \int_{-\infty}^0 dy e^{-y \beta} \int_{\beta + \eta + \I \R} \frac{dz}{2 \pi \I} \frac{e^{-(x-y)z}}{(\tfrac12-z)^{j_\ell-j_1}} + \int_{0}^\infty dy e^{-y \beta} \int_{\beta - \eta + \I \R} \frac{dz}{2 \pi \I} \frac{e^{-(x-y)z}}{(\tfrac12-z)^{j_\ell-j_1}}.
 \end{split}
\end{equation}
We can now interchange order of integration in both integrals by Fubini, and integrate in $y$ explicitly since we have $\Re(z) > \beta$ in the first integral (ensuring convergence) and $\Re(z) < \beta$ in the second. The result we obtain is (recall $\I \R$ is bottom-to-top oriented):
\begin{equation}
 \begin{split}
 &\int_{\beta + \eta + \I \R} \frac{dz}{2 \pi \I} \frac{e^{-xz}}{(\tfrac12-z)^{j_\ell-j_1} (z-\beta)} + \int_{\beta - \eta + \I \R} \frac{dz}{2 \pi \I} \frac{e^{-xz}}{(\tfrac12-z)^{j_\ell-j_1} (\beta-z)} \\
 & \quad = \oint\limits_{\Gamma_\beta} \frac{dz}{2 \pi \I} \frac{e^{-xz}}{(\tfrac12-z)^{j_\ell-j_1} (z-\beta)} =\frac{e^{-x \beta}} {(\tfrac12-\beta)^{j_\ell-j_1}}.
 \end{split}
\end{equation}
In the second equation we changed the sign of $(\beta-\zeta)$ by reversing the orientation of $\I \R$ (so that it goes top-down), in the third we closed the contour around $\beta$ (at $\pm \I \infty$), and in the fourth we evaluated the integral via residue calculus. Putting all together and recalling the minus sign in front of everything, we get:
\begin{equation}
 (W \widetilde{X}_2)_{2 \ell}(x) = -f_{+,\beta}^{j_\ell}(x), \quad 1 < \ell \leq m.
\end{equation}
This gives the result upon recalling the definition of $X_2$ and $\widetilde{X}_2$.
\end{proof}

In view of what we have just proven, we have the following further decomposition of our inner product.

\begin{lem} \label{lem:ip_split}
 We have:
 \begin{equation} \label{eq:ip_split}
 \begin{split}
 \frac{1}{\alpha+\beta} - \braket{P_s Y_2}{(\Id-\breve{G}_s)^{-1} P_s X_2} = & \frac{1}{\alpha+\beta} - \braket{P_s Y_2}{P_s \widetilde{X}_2} - \braket{P_s Y_2} {(\Id-\breve{G}_s)^{-1} \overline{G}_s \widetilde{X}_2} \\
 & + \braket{P_s Y_2}{(\Id-\breve{G}_s)^{-1} P_s W (\Id - P_s) \widetilde{X}_2}.
 \end{split}
 \end{equation}
\end{lem}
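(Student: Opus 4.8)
The plan is to derive~\eqref{eq:ip_split} by pure operator algebra, using only Lemma~\ref{lem:magic}, the resolvent identity $(\Id-\breve{G}_s)^{-1}=\Id+(\Id-\breve{G}_s)^{-1}\breve{G}_s$, and the splitting $\breve{G}=\overline{G}+W$ (immediate from $\breve{K}=\overline{K}+V$ upon left-multiplying by $J^{-1}$). No analytic input is needed at this stage; everything is bookkeeping with the idempotent $P_s$ and with the fact that $\breve{G}_s$, $\overline{G}_s$ are by definition conjugated by $P_s$ (so that, e.g., $\breve{G}_sP_s=\breve{G}_s$).

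First I would insert Lemma~\ref{lem:magic} in the form $P_s X_2=P_s\widetilde{X}_2-P_s W\widetilde{X}_2$ and split $P_s W\widetilde{X}_2=P_s W P_s\widetilde{X}_2+P_s W(\Id-P_s)\widetilde{X}_2$. This yields
\begin{equation*}
\braket{P_s Y_2}{(\Id-\breve{G}_s)^{-1}P_s X_2}=\braket{P_s Y_2}{(\Id-\breve{G}_s)^{-1}P_s\widetilde{X}_2}-\braket{P_s Y_2}{(\Id-\breve{G}_s)^{-1}P_s W P_s\widetilde{X}_2}-\braket{P_s Y_2}{(\Id-\breve{G}_s)^{-1}P_s W(\Id-P_s)\widetilde{X}_2},
\end{equation*}
and the last summand, after the overall subtraction from $\tfrac{1}{\alpha+\beta}$, is exactly the fourth term on the right-hand side of~\eqref{eq:ip_split}.

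Next I would treat the first two summands together. Applying the resolvent identity to $P_s\widetilde{X}_2$ and using $P_s^2=P_s$ together with $\breve{G}=\overline{G}+W$ gives $\breve{G}_sP_s\widetilde{X}_2=P_s\breve{G}P_s\widetilde{X}_2=\overline{G}_s\widetilde{X}_2+P_s W P_s\widetilde{X}_2$ (recall $\overline{G}_s:=P_s\overline{G}P_s$), hence
\begin{equation*}
(\Id-\breve{G}_s)^{-1}P_s\widetilde{X}_2=P_s\widetilde{X}_2+(\Id-\breve{G}_s)^{-1}\overline{G}_s\widetilde{X}_2+(\Id-\breve{G}_s)^{-1}P_s W P_s\widetilde{X}_2.
\end{equation*}
Pairing with $\bra{P_s Y_2}$ and subtracting the $P_s W P_s\widetilde{X}_2$-term makes that contribution cancel identically, leaving $\braket{P_s Y_2}{P_s\widetilde{X}_2}+\braket{P_s Y_2}{(\Id-\breve{G}_s)^{-1}\overline{G}_s\widetilde{X}_2}$. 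Substituting back into $\tfrac{1}{\alpha+\beta}-\braket{P_s Y_2}{(\Id-\breve{G}_s)^{-1}P_s X_2}$ produces precisely~\eqref{eq:ip_split}.

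Since Lemma~\ref{lem:magic} does the real work, the argument is essentially routine; the one place I would present carefully is the projector bookkeeping, namely the split of $P_s W\widetilde{X}_2$ into the ``interior'' piece $P_s W P_s\widetilde{X}_2$ — which gets absorbed into $\overline{G}_s\widetilde{X}_2$ through the identity $\breve{G}_s=\overline{G}_s+P_s W P_s$ — and the genuinely new ``boundary'' piece $P_s W(\Id-P_s)\widetilde{X}_2$, which survives as the last term. Convergence of all the objects appearing ($\overline{G}_s\widetilde{X}_2$, the resolvents, etc.) is not an issue here, having been dealt with where $Z_i=(\Id-\breve{G}_s)^{-1}X_i$ and $\overline{G}$ were introduced.
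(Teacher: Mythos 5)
Your proof is correct and follows essentially the same route as the paper's: split $X_2 = \widetilde{X}_2 - W\widetilde{X}_2$ via Lemma~\ref{lem:magic}, apply the resolvent identity, decompose $W$ with $\Id = P_s + (\Id - P_s)$, and observe that the $P_s W P_s\widetilde{X}_2$ contributions cancel using $\breve{G}_s = \overline{G}_s + W_s$. The only cosmetic difference is whether one decomposes $P_s W\widetilde{X}_2$ before or after invoking the resolvent identity, which does not change the argument.
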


\begin{proof}
 From Lemma~\ref{lem:magic} we have that $X_2 = \widetilde{X}_2 - W \widetilde{X}_2$ and splitting the quantity based on this yields
 \begin{equation}
 \text{l.h.s. of~\eqref{eq:ip_split}} =\frac{1}{\alpha+\beta} - \braket{P_s Y_2}{(\Id-\breve{G}_s)^{-1} P_s \widetilde{X}_2} + \braket{P_s Y_2}{(\Id-\breve{G}_s)^{-1} P_s W \widetilde{X}_2}.
 \end{equation}
 On the right-hand side above we use $(\Id-\breve{G}_s)^{-1} = \Id + (\Id-\breve{G}_s)^{-1} \breve{G}_s$ for the second term and $W = W (\Id - P_s + P_s)$ for the third term, and recombine the expansions recalling $\breve{G}_s = \overline{G}_s + W_s$ and that subscript $s$ means projection by $P_s$. The result follows.
\end{proof}

Putting everything together, we have
\begin{equation}
 \begin{split}
 \frac{1}{\alpha+\beta} \pf(J - K_s ) = \pf(J - \breve{K}_s ) \cdot & \left( \frac{1}{\alpha+\beta} - \braket{P_s Y_2}{P_s \widetilde{X}_2} - \braket{P_s Y_2} {(\Id-\breve{G}_s)^{-1} \overline{G}_s \widetilde{X}_2} \right. \\
 & \left. + \braket{P_s Y_2}{(\Id-\breve{G}_s)^{-1} P_s W (\Id - P_s) \widetilde{X}_2} \right).
 \end{split}
\end{equation}
We will show that the following four terms are analytic for all $\alpha,\beta\in (-1/2,1/2)$ and then take their limit as $\beta\to -\alpha$:
\begin{equation}\label{eq3.49}
 \begin{split}
 \text{Fredholm Pfaffian: } \quad &\pf(J - \breve{K}_s ), \\
 \text{term A: }\quad &\frac{1}{\alpha+\beta} - \braket{P_s Y_2}{P_s \widetilde{X}_2}, \\
 \text{term B: }\quad &\braket{P_s Y_2} {(\Id-\breve{G}_s)^{-1} \overline{G}_s \widetilde{X}_2}, \\
 \text{term C: }\quad &\braket{P_s Y_2}{(\Id-\breve{G}_s)^{-1} P_s W (\Id - P_s) \widetilde{X}_2}.\\
 \end{split}
\end{equation}

\subsection{Analytic continuation} \label{sec:analytic_continuation}

In this section we will show that the different terms in~\eqref{eq3.49} are analytic in $\alpha,\beta$ in a bounded subset of $(-1/2,1/2)$. More precisely, fix arbitrarily small $\epsilon>0$. We will show analyticity for $\alpha,\beta\in [-1/2+\epsilon,1/2-\epsilon]$.

The functions and kernels appearing in~\eqref{eq3.49} are not necessarily $L^2$ in the natural space, but they are once we will apply a proper conjugation, given as follows. Let us first choose $m$ positive numbers
\begin{equation} \label{eq:conj_constants}
 \frac{1}{2} - \epsilon < \mu_m < \mu_{m-1} < \cdots < \mu_2 < \mu_1 < \frac{1}{2} - \frac{\epsilon}{2}.
\end{equation}
For a $2m\times 2m$ kernel $K$, we define its conjugate as follows:
\begin{equation}
 K_{\rm conj}(x,y) = M(x) K(x, y) M(y),
\end{equation}
where
\begin{equation} \label{eq:M_def}
M(x)={\rm diag}(e^{\mu_1 x}, e^{-\mu_1 x}, \dots, e^{\mu_m x}, e^{-\mu_m x}).
\end{equation}
For the first term in~\eqref{eq3.49} we have
\begin{equation}
\pf(J-\breve{K}_s) = \pf(J-\breve{K}_{s,{\rm conj}}).
\end{equation}
For the other terms in~\eqref{eq3.49}, we need to see how the conjugation acts on functions. First of all notice that
\begin{equation}
J^{-1} M^{-1} = M J^{-1}
\end{equation}
and recall that $\breve{G}=J^{-1}\breve{K}$. Thus for the scalar product we have
\begin{equation}
\begin{split}
\braket{a}{(\Id-\breve{G})^{-1} b} &= \braket{a}{(\Id-J^{-1}M^{-1}\breve{K}_{\rm conj} M^{-1})^{-1} b} \\
&= \braket{a}{ (\Id- M J^{-1} \breve{K}_{\rm conj}M^{-1})^{-1} b} \\
&=\braket{a M}{ (\Id- \breve{G}_{\rm conj})^{-1} M^{-1} b} \\
&=\braket{a_{\rm conj}}{ (\Id- \breve{G}_{\rm conj})^{-1} b_{\rm conj}},
\end{split}
\end{equation}
where $\breve{G}_{\rm conj}= J^{-1} \breve{K}_{\rm conj}= M^{-1} \breve{G} M$ and
\begin{equation}
a_{\rm conj}(x)= a(x) M(x),\quad b_{\rm conj}(y) = M^{-1}(y) b(y)=M(-y) b(y).
\end{equation}
With the above given conjugation, the operators and functions will then be in $L^2$. For term B we need to be a bit more careful since we will first take away some terms which give zero scalar product before doing the analytic continuation to $\beta \leq 0$.

\subsubsection{Analytic continuation and limit of the Fredholm Pfaffian}

We start by showing that the Fredholm Pfaffian $\pf (J-\breve{K_s})$ is analytic in $\alpha, \beta \in (-1/2, 1/2)$ with well-defined limit as $\beta \to -\alpha$. We further omit writing $K_s = P_s K P_s$ for our kernels throughout this section for simplicity but implicitly we assume the projector $P_s$ everywhere.

\begin{lem} \label{lem:Kbreve}
 The $2m \times 2m$ matrix kernel $\breve{K}$ is analytic for $\alpha, \beta \in (-1/2, 1/2)$. The limit kernel $\breve{\mathsf{K}}=\lim_{\beta\to-\alpha} \breve{K}$ has the $2 \times 2$ block at $(k, \ell)$ given by:
 \begin{equation}\label{eq:336}
 \begin{split}
 \breve{\mathsf{K}}^{j_k j_\ell}_{11}(x,y) =& - \oint\limits_{\Gamma_{1/2}} \frac{dz}{2\pi\I} \oint\limits_{\Gamma_{-1/2}} \frac{dw}{2\pi\I}\frac{\Phi(x,z)}{\Phi(y,w)} (\tfrac12-z)^{N-j_k} (\tfrac12+w)^{N-j_\ell} \frac{(z-\alpha)(w+\alpha)(z+w)}{4zw(z-w)}, \\
 \breve{\mathsf{K}}^{j_k j_\ell}_{12} (x,y) =& - \!\!\! \oint\limits_{\Gamma_{1/2}} \frac{dz}{2\pi\I} \!\!\! \oint\limits_{\Gamma_{-1/2,\alpha}} \!\!\! \frac{dw}{2\pi\I}\frac{\Phi(x,z)}{\Phi(y,w)} \frac{(\tfrac12-z)^{N-j_k}} {(\tfrac12-w)^{N-j_\ell}} \frac{z-\alpha}{w-\alpha}\frac{z+w}{2z(z-w)} + V^{j_k j_\ell} (x, y) \\
 =& -\breve{\mathsf{K}}^{j_\ell j_k}_{21} (y,x),\\
 \breve{\mathsf{K}}^{j_k j_\ell}_{22}(x,y) =&\ \e^{j_k j_\ell}(x,y)+\oint \frac{dz}{2\pi\I}\oint\frac{dw}{2\pi\I} \frac{\Phi(x,z)}{\Phi(y,w)} \frac{1}{(\tfrac12+z)^{N-j_k} (\tfrac12-w)^{N-j_\ell}} \frac{1}{z-w}\left(\frac{1}{z+\alpha}+\frac{1}{w-\alpha}\right),
 \end{split}
 \end{equation}
 where the integration contours for $\breve{\mathsf{K}}^{j_k j_\ell}_{22}$ are $\Gamma_{1/2,-\alpha}\times\Gamma_{-1/2}$ for the term with $1/(z+\alpha)$ and $\Gamma_{1/2}\times\Gamma_{-1/2,\alpha}$ for the term with $1/(w-\alpha)$. Furthermore, $\e$ is given by
 \begin{equation}
 \begin{split}
 \e^{j_k j_\ell} (x, y) &= \tilde{\e}^{j_k j_\ell} (x, y) + \frac{e^{-(x-y) \alpha}} {(\tfrac12 + \alpha)^{N-j_k} (\tfrac12 - \alpha)^{N-j_\ell}} - \frac{e^{(x-y) \alpha}} {(\tfrac12 - \alpha)^{N-j_k} (\tfrac12 + \alpha)^{N-j_\ell}} \\
 &= -\oint\limits_{\Gamma_{1/2}} \frac{dz}{2 \pi \I} \frac{2 z e^{-(x-y)z}} {\left[\tfrac12+z\right]^{N-j_k} \left[\tfrac12-z\right]^{N-j_\ell}} \frac{1}{z^2-\alpha^2} - \frac{e^{(x-y) \alpha}} {(\tfrac12 - \alpha)^{N-j_k} (\tfrac12 + \alpha)^{N-j_\ell}}, \quad \text{if } x \geq y,
 \end{split}
 \end{equation}
where $\e$ is anti-symmetric $\e^{j_k j_\ell} (x, y) = - \e^{j_\ell j_k} (y, x)$ (covering the case $x < y$ above).
 \end{lem}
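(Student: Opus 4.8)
The plan is to treat the three $2\times 2$-block families $\breve{K}_{11}$, $\breve{K}_{12}$, $\breve{K}_{22}$ separately. In each case the block is a finite sum of (double) contour integrals whose integrands are rational in $\beta$ with the \emph{only} $\beta$-poles at $z=\beta$ and $w=-\beta$, coming from the factors $\tfrac{z+\beta}{z-\beta}$ and $\tfrac{w-\beta}{w+\beta}$. As $(\alpha,\beta)$ ranges over a fixed compact subinterval of $(-1/2,1/2)$, the locations $\beta$, $-\beta$, $\pm\alpha$ of all parameter-poles stay uniformly bounded away from $\pm1/2$, hence away from the fixed Airy-type contours $\Gamma_{\pm1/2}$; one may therefore choose the remaining movable pieces of the contours to vary holomorphically with $(\alpha,\beta)$, attached to the poles they enclose, so that no pole is ever spuriously crossed, and holomorphy of $\breve{K}$ follows by differentiation under the integral sign together with Fubini for the double integrals. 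Analyticity in $\alpha$ is obtained by the same movable-contour argument applied to the $\alpha$-poles, and needs no limit to be taken.

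For $\breve{K}_{11}$ the contours $\Gamma_{1/2}\times\Gamma_{-1/2}$ are $\beta$-independent and enclose no $\beta$-pole, so analyticity is immediate and the limit is read off by substituting $\beta=-\alpha$: the prefactor $\tfrac{(z+\beta)(w-\beta)}{(z-\beta)(w+\beta)}$ becomes $\tfrac{(z-\alpha)(w+\alpha)}{(z+\alpha)(w-\alpha)}$ and cancels part of $\tfrac{(z+\alpha)(w-\alpha)(z+w)}{4zw(z-w)}$, giving $\breve{\mathsf{K}}_{11}$. For $\breve{K}_{12}$ the only new feature is that the $w$-contour $\Gamma_{-1/2,-\alpha,-\beta}$ depends on $\beta$ and encircles $w=-\beta$; locally in $\beta$ one replaces it by a fixed loop enclosing $-1/2,-\alpha,-\beta$ (merging the last two into one loop when $\beta$ is near $\alpha$). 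On such a contour, as $\beta\to-\alpha$ the factor $\tfrac{z+\alpha}{w+\alpha}\cdot\tfrac{z+\beta}{z-\beta}\cdot\tfrac{w-\beta}{w+\beta}$ tends to $\tfrac{z-\alpha}{w-\alpha}$; in particular the pole at $w=-\alpha$ is killed by the vanishing numerator, so the loop may be contracted back to $\Gamma_{-1/2,\alpha}$, yielding $\breve{\mathsf{K}}_{12}$ (the summand $V^{j_k j_\ell}$ is $\beta$-free and unchanged). The anti-symmetry $\breve{K}_{12}^{j_k j_\ell}=-\breve{K}_{21}^{j_\ell j_k}$ survives the limit since it holds before it.

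The block $\breve{K}_{22}$ is the delicate one: here the original $z$- and $w$-contours enclose $z=\beta$ and $w=-\beta$, and the limit produces $\e$ in place of $\tilde{\e}$. I would first use the partial-fraction identities $\tfrac{(z+\beta)(w-\beta)}{(z-\beta)(w+\beta)(z-w)}=\tfrac1{z-w}+\tfrac{2\beta}{(w+\beta)(z-\beta)}$ and $\tfrac{z+w}{(z-\alpha)(w+\alpha)}=\tfrac1{z-\alpha}+\tfrac1{w+\alpha}$ (as in the Remark following Proposition~\ref{prop:decomposition}) to split the double integral of $\breve{K}_{22}-\tilde{\e}$ into the manifestly $\beta$-free piece~\eqref{eq:term_1} plus the piece~\eqref{eq:term_2} carrying $\tfrac{2\beta}{(w+\beta)(z-\beta)}$; holomorphy of the latter again follows from the movable-contour argument. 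As $\beta\to-\alpha$ one has $\tfrac{2\beta}{(w+\beta)(z-\beta)}\to\tfrac{-2\alpha}{(w-\alpha)(z+\alpha)}$, the $\alpha$-poles of $\breve{K}_{22}$ at $z=\alpha$ and $w=-\alpha$ get cancelled by the zeros of $\tfrac{z+\beta}{z-\beta}$ and $\tfrac{w-\beta}{w+\beta}$ at $z=-\beta\to\alpha$, $w=\beta\to-\alpha$ while new $\alpha$-poles appear at $z=-\alpha$, $w=\alpha$ (so the contour labels move from $\Gamma_{1/2,\alpha}$, $\Gamma_{-1/2,-\alpha}$ to $\Gamma_{1/2,-\alpha}$, $\Gamma_{-1/2,\alpha}$ as in~\eqref{eq:336}), and the residues gained as the enclosed poles sweep past the $\alpha$-poles during the continuation combine with $\tilde{\e}=\tilde{\e}_1+\tilde{\e}_2$ to give $\e=\e_1+\e_0$, where $\e_1=\tilde{\e}_1$ (which carries no $\beta$) and $\e_0$ differs from $\tilde{\e}_2$ exactly by those accumulated residues. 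This residue computation is of the same type as, and parallel to, the $m=1$ analysis carried out in~\cite{BFO20}. Since $\breve{K}$ is anti-symmetric before the limit and the convergence is pointwise in $(x,y)$, $\breve{\mathsf{K}}$ is anti-symmetric too; in particular $\e$ is anti-symmetric, which supplies its definition for $x<y$.

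The main obstacle is precisely this residue/contour bookkeeping in $\breve{K}_{22}$: correctly identifying which residues are gained as the $\beta$- and $\alpha$-poles pass one another (and through the movable parts of the contours) during the continuation, so that they reassemble into $\e_0$ and the final contour labels $\Gamma_{1/2,-\alpha}$, $\Gamma_{-1/2,\alpha}$ come out as stated — lengthy but entirely mechanical. The remaining points are routine: holomorphy under a double contour integral (Morera plus Fubini), the trace-class estimate on the conjugated kernel (with the conjugation $M$ of this section) needed downstream for the Fredholm Pfaffian, and the degenerate case $\alpha=0$, where $z=\beta$ and $z=\alpha$ (and $w=-\beta$ and $w=-\alpha$) coalesce in the limit — handled by summing the two residues before letting $\beta\to0$, or equivalently by reducing to $\alpha\neq0$ via continuity of both $\breve{K}$ and $\breve{\mathsf{K}}$ in $\alpha$.
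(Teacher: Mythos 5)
Your proposal is correct and takes essentially the same approach as the paper: block‑by‑block holomorphy via movable contours, direct substitution for the $11$ block, cancellation of the $w=-\alpha$ pole for the $12$ block, and residue bookkeeping for the $22$ block to pass from $\tilde{\e}$ to $\e$. The paper's own proof is just a terse pointer to Lemma 3.7 of~\cite{BFO20} listing the differences, and in particular names precisely the residue pairs $(z,w)\in\{(\beta,-\alpha),(\alpha,-\beta)\}$ (taken as $\beta\to-\alpha$) as the two extra terms that turn $\tilde{\e}$ into $\e$; your sketch is consistent with this but leaves that identification implicit inside the ``lengthy but mechanical'' step, so it would be worth spelling out exactly those two residues to close the gap.
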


\begin{proof}
 The proof is almost the same as that of Lemma 3.7 of~\cite{BFO20}, so we only highlight the differences. Firstly, one should compare $\breve{K}$ in this manuscript with $\overline{K}$ in~\cite[Lemma 3.7]{BFO20}; in the latter we are considering a $2 \times 2$ matrix kernel, while here we have a $2m \times 2m$ matrix kernel but nevertheless the entries are very similar. Secondly, our $\breve{K}_{12}$ has the extra $V$ kernel, but this latter is independent of $(\alpha, \beta)$ and so it is obviously analytic in them. Thirdly, in~\cite{BFO20} we had a single integer parameter $n$ in all $z$ and $w$ integrands, and that is replaced here by $N-j_k$ for the $z$ integrands and $N-j_\ell$ for the $w$ integrands: again this does not affect analyticity. Finally $\tilde \e$ is also different from that of~\cite{BFO20}, since it now depends on $j_k$ and $j_\ell$, but this does not affect analyticity: for the limit $\beta \to -\alpha$, $\e$ comes from $\tilde{\e}$ to which we have added the two explicit terms above, which are the residues of the integrand in $\breve{K}^{j_k j_\ell}_{22}$ at $(z, w) \in \{ (\beta, -\alpha), (\alpha, -\beta) \}$ in the limit $\beta \to -\alpha$. Note that $\tilde{\e}$ only depends on $\alpha$ so there is no limit $\beta \to -\alpha$ to be taken for it.
\end{proof}

We now show that also the Fredholm Pfaffian itself is analytic with a well-defined $\beta \to -\alpha$ limit.

\begin{prop}\label{prop:cvgOfFredPf}
 The Fredholm Pfaffian $\pf (J- \breve{K})$ is analytic for $\alpha, \beta \in (-1/2, 1/2)$. It has the following well-defined $\beta \to -\alpha$ limit:
 \begin{equation}
 \lim_{\beta\to -\alpha} \pf (J- \breve{K}) = \pf (J - \breve{\mathsf{K}}).
 \end{equation}
\end{prop}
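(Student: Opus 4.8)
The plan is to bootstrap from the entrywise statement of Lemma~\ref{lem:Kbreve} --- that $\breve K$ is analytic in $(\alpha,\beta)$ on $(-1/2,1/2)^2$ and converges entrywise to $\breve{\mathsf K}$ as $\beta\to-\alpha$ --- to the same statement for the Fredholm Pfaffian, via the standard principle that $\pf(J-\breve K_s)$, expanded as in the remark following Theorem~\ref{thm:exp_corr},
\[
\pf(J-\breve K_s)=\sum_{n\ge 0}\frac{(-1)^n}{n!}\sum_{i_1,\dots,i_n=1}^{m}\int_{I_{i_1}\times\cdots\times I_{i_n}}\pf\big[\breve K(j_{i_a},x_a;j_{i_b},x_b)\big]_{1\le a,b\le n}\prod_{a=1}^n dx_a,\qquad I_k=(s_k,\infty),
\]
depends analytically on $(\alpha,\beta)$ as soon as this series converges absolutely and uniformly on compact subsets of $(-1/2,1/2)^2$: each summand is then analytic (a finite Pfaffian of entries analytic in $(\alpha,\beta)$, integrated against a dominating function over fixed half-lines, so Morera/differentiation under the integral applies), and a uniform limit of analytic functions is analytic by Weierstrass. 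Equivalently and more compactly one works with $\det(\Id-\breve G_s)=\pf(J-\breve K_s)^2$ where $\breve G_s=J^{-1}\breve K_s$ (using $\det J=1$), and invokes analyticity of $\det(\Id-\,\cdot\,)$ on trace-class operators together with analyticity of $(\alpha,\beta)\mapsto\breve G_s$ into the trace ideal with uniformly bounded trace norm.

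Thus the only thing to prove is a \emph{uniform} bound on $\breve K_s$ for $(\alpha,\beta)\in[-1/2+\epsilon,1/2-\epsilon]^2$. I would conjugate by $M(x)$ from~\eqref{eq:M_def} (which changes neither the Pfaffian nor the determinant) with the $\mu_k$ chosen as in~\eqref{eq:conj_constants}, and bound each conjugated block directly, keeping the contours as fixed small loops of radius of order $\epsilon$ around $\pm1/2$ and, for the $\breve K_{22}$ blocks, an additional small loop around the extra point $\mp\alpha$; these choices are legitimate uniformly on the box because $0,\pm\alpha,\pm\beta,\pm1/2$ all stay at distance bounded below from the chosen loops. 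On such loops $\Re z\approx\pm1/2$ and $\Re w\approx\mp1/2$, so the factors $e^{\pm\mu_k x}\Phi(x,z)=e^{(\pm\mu_k-z)x}\phi(z)$ and $e^{\mp\mu_\ell y}/\Phi(y,w)$ yield exponential decay in $x$ and $y$ with rate bounded below uniformly --- precisely what the constraints $1/2-\epsilon<\mu_m$ and $\mu_1<1/2-\epsilon/2$ were arranged for (on an odd row one gains $e^{(\mu_k-1/2+r)x}$ with $\mu_k-1/2+r<0$; on an even row the stronger $e^{-(\mu_k+1/2-r)x}$). The non-integral pieces are checked by hand: the residue of the $\breve K_{22}$ integrand at $z=-\alpha$ (resp. $w=\alpha$) contributes $e^{\alpha x}$ (resp. $e^{\alpha y}$), and $\alpha-\mu_k<0$ uniformly; the $V^{j_kj_\ell}$ term in $\breve K_{12}$ (a polynomial in $x-y$ times $e^{-(x-y)/2}$, nonzero only for $j_k>j_\ell$, i.e.\ $k>\ell$, i.e.\ $\mu_k<\mu_\ell$, and supported on $x\ge y$) is jointly integrable after conjugation; and the explicit rank-one part of $\e^{j_kj_\ell}$, namely $e^{(x-y)\alpha}/((\tfrac12-\alpha)^{N-j_k}(\tfrac12+\alpha)^{N-j_\ell})$ on $x\ge y$, becomes $e^{(\alpha-\mu_k)x}e^{-(\mu_\ell+\alpha)y}$ with both exponents strictly negative uniformly. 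Collecting these gives $|\breve K_{s,\mathrm{conj}}(j_k,x;j_\ell,y)|\le Ce^{-c(x+y)}+Ce^{-c|x-y|}\Id_{[x\ge y,\,k>\ell]}$ with $C<\infty$, $c>0$ uniform on the box; a Hadamard-type bound for Pfaffians then bounds the $n$-th term of the series by a constant times $c_1^n n^{n/2}/n!$, so the series converges absolutely and uniformly there, proving analyticity on $(-1/2,1/2)^2$.

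For the limit I would note that the bounds above used only $|\alpha|,|\beta|\le 1/2-\epsilon$, hence hold uniformly for $\beta$ in a neighbourhood of $-\alpha$ and at $\beta=-\alpha$ itself; together with $\breve K\to\breve{\mathsf K}$ entrywise from Lemma~\ref{lem:Kbreve}, dominated convergence applied term by term to the (uniformly convergent) Pfaffian series gives $\lim_{\beta\to-\alpha}\pf(J-\breve K_s)=\pf(J-\breve{\mathsf K}_s)$. Equivalently, $\|\breve G_{s,\mathrm{conj}}(\alpha,\beta)-\breve G_{s,\mathrm{conj}}(\alpha,-\alpha)\|_1\to 0$ and Fredholm determinants are trace-norm continuous.

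The step I expect to be the main obstacle --- really the only non-bookkeeping one --- is the uniform decay estimate: arranging a single contour choice that works for every parameter value in the compact box while still producing decay with rate bounded away from zero, and checking that the non-integral contributions ($V$ and the explicit rank-one part of $\e$) remain integrable after conjugation, with the correct sign of $x-y$ supplied by their support. This is a routine $m$-point extension of the one-point analysis in~\cite{BFO20}; the genuinely new feature is merely the $2m\times 2m$ block structure and keeping straight which diagonal entry of $M$ dresses which block.
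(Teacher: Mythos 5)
Your proposal reproduces the paper's proof almost exactly: conjugate by $M(x)$ with the $\mu_k$ chosen as in~\eqref{eq:conj_constants}, establish uniform exponential decay of each conjugated block on fixed contours bounded away from the parameters, apply a Hadamard-type bound to get absolute, uniform convergence of the Pfaffian series (hence analyticity by Weierstrass), and pass the limit $\beta\to-\alpha$ inside by dominated convergence. The only small imprecision is the description of the contours --- for the $12$ blocks the $w$-contour must also enclose $-\alpha,-\beta$ (and for $22$ also $\beta,-\beta$), as in the paper's statement of $\breve K$, but your residue bookkeeping at those poles implicitly covers this and the conclusion is unaffected.
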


\begin{proof}
We start by choosing a small positive $\epsilon$ and fixing $\alpha, \beta \in [-1/2+\epsilon, 1/2-\epsilon]$. Let us look at the $2 \times 2$ $(k, \ell)$ block $\breve{K}^{j_k j_\ell}(x, y)$ of $\breve{K}$ and how it behaves as $x, y \to \infty$. We have:
\begin{equation}
 \begin{split}
 \breve{K}^{j_k j_\ell}(x, y) &=
 \left(
 \begin{array}{cc}
 \breve{K}^{j_k j_\ell}_{11} (x, y) & \breve{K}^{j_k j_\ell}_{12} (x, y) \\
 \breve{K}^{j_k j_\ell}_{21} (x, y) & \breve{K}^{j_k j_\ell}_{22} (x, y)
 \end{array}
 \right) \\
 & \leq
 C \left(
 \begin{array}{cc}
  e^{-(\frac{1}{2}-\frac{\epsilon}{2})x} e^{-(\frac{1}{2}-\frac{\epsilon}{2})y} & e^{-(\frac{1}{2}-\frac{\epsilon}{2})x} e^{(\frac{1}{2}-\epsilon)y} + \Id_{[j_k > j_\ell]} \Id_{[x \geq y]} e^{-(\frac{1}{2}-\frac{\epsilon}{2})|x-y|} \\
 \cdots & e^{(\frac{1}{2} - \epsilon)x} e^{(\frac{1}{2} - \epsilon)y }
 \end{array}
 \right),
 \end{split}
\end{equation}
where $C$ is a constant independent of $x, y$ and we put dots in the $21$ entry as the bounds are like in the 12 term due to the relation $\breve{K}^{j_k j_\ell}_{21} (x, y) = -\breve{K}^{j_\ell j_k}_{12} (y, x)$.

We achieve these bounds as follows: for the $11$ entry, we take the integrals around $\pm 1/2$ to have contours $|z-1/2| = \epsilon/2$ and $|w + 1/2| = \epsilon/2$ respectively and then we use the estimate
 \begin{equation}
 \left| e^{-x z} \right| = e^{-x \Re(z)} \leq e^{-(1/2-\epsilon/2)x}.
 \end{equation}

 For the $12$ entry we consider the double contour integrals first. We proceed similarly for the $z$ integral but we notice the $w$ integral is dominated by the poles at $-\alpha, -\beta$ which give an asymptotic contribution in $y$ as $e^{\max(-\alpha, -\beta)y} = e^{-\min(\alpha, \beta) y}$ and the stated bound in the first summand follows from our choice $\alpha, \beta \in [-1/2+\epsilon, 1/2-\epsilon]$. Finally, the second term in the bound comes from the $V$ function which we can write as
\begin{equation}
 V^{j_k j_\ell}(x, y) = \Id_{[j_k > j_\ell]} \Id_{[x \geq y]} \oint\limits_{\Gamma_{1/2}} \frac{dz}{2 \pi \I} \frac{e^{-(x-y)z}} {(\tfrac12-z)^{j_k - j_\ell}}
\end{equation}
and then to obtain the stated bound we use again the contour $|z-1/2| = \epsilon/2$.

Finally, for the $22$ entry the asymptotic contribution comes from the poles at $\alpha$ and $\beta$ (for the $z$ integrals) and $-\alpha, -\beta$ respectively (for $w$).

Since $\pf(J-\breve{K}_s) = \pf(J-\breve{K}_{s,{\rm conj}})$, we use the conjugated kernel instead of the original one. The $(k,\ell)$ block of $\breve{K}_{\rm conj}$ is given by
\begin{equation}
 \breve{K}^{j_k j_\ell}_{\rm conj} (x, y) =
 \left(
 \begin{array}{cc}
 \breve{K}^{j_k j_\ell}_{11} (x, y) e^{\mu_k x} e^{\mu_\ell y} & \breve{K}^{j_k j_\ell}_{12} (x, y) e^{\mu_k x} e^{-\mu_\ell y} \\
 \breve{K}^{j_k j_\ell}_{21} (x, y) e^{-\mu_k x} e^{\mu_\ell y} & \breve{K}^{j_k j_\ell}_{22} (x, y) e^{-\mu_k x} e^{-\mu_\ell y}
 \end{array}
 \right).
\end{equation}
The right-hand side above is then bounded, entry-wise, by (note we again recover the $21$ entry from anti-symmetry):
\begin{equation}
 C \left(
 \begin{array}{cc}
 e^{-(\frac{1}{2}-\frac{\epsilon}{2})x} e^{-(\frac{1}{2}-\frac{\epsilon}{2})y } e^{\mu_k x} e^{\mu_\ell y} & e^{-(\frac{1}{2}-\frac{\epsilon}{2})x} e^{(\frac{1}{2}-\epsilon)y} e^{\mu_k x} e^{-\mu_\ell y} + \Id_{[j_k > j_\ell]} \Id_{[x \geq y]} e^{-(\frac{1}{2}-\frac{\epsilon}{2})|x-y|} e^{\mu_k x} e^{-\mu_\ell y} \\
 \cdots & e^{(\frac{1}{2} - \epsilon)x} e^{(\frac{1}{2} - \epsilon)y } e^{-\mu_k x} e^{-\mu_\ell y}
 \end{array}
 \right).
\end{equation}
Let us denote $\overline{\mu}_{k\ell} = \frac{\mu_k + \mu_\ell}{2}$. After some simple algebra, the above bound equals
\begin{equation}\label{eq3.65}
 \begin{split}
 &C \left(
 \begin{array}{cc}
 e^{[\mu_k-(\frac{1}{2}-\frac{\epsilon}{2})]x} e^{[\mu_\ell-(\frac{1}{2}-\frac{\epsilon}{2})]y } & e^{[\mu_k-(\frac{1}{2}-\frac{\epsilon}{2})]x} e^{[(\frac{1}{2}-\epsilon)-\mu_\ell] y} \\
 \cdots & e^{[(\frac{1}{2} - \epsilon) - \mu_k]x} e^{[(\frac{1}{2} - \epsilon) - \mu_\ell]y }
 \end{array}
 \right) \\
 +\ & C \left(
 \begin{array}{cc}
  0 & \Id_{[j_k > j_\ell]} \Id_{[x \geq y]} e^{(\mu_k - \overline{\mu}_{k\ell}) x} e^{(\overline{\mu}_{k\ell} - \mu_\ell) y} e^{[\overline{\mu}_{k\ell} - (\frac{1}{2}-\frac{\epsilon}{2})](x-y)} \\
 \cdots & 0
 \end{array}
 \right).
 \end{split}
\end{equation}
We see that every single exponent multiplying $x$ or $y$ above is negative due to the inequalities in~\eqref{eq:conj_constants}. Furthermore, the term $e^{[\overline{\mu}_{k\ell} - (\frac{1}{2}-\frac{\epsilon}{2})](x-y)}$ is bounded above by 1 (it only appears for $x \geq y$). Hence all entries decrease exponentially in $x$ and $y$ (recall the $21$ entries are recovered from anti-symmetry) and so $\breve{K}_{\rm conj}$ has entry-wise exponential decay. This decay allows us to apply the usual Hadamard bound for Pfaffians/determinants and conclude that the series for $\pf(J - \breve{K})$ is absolutely convergent. We can then pass the $\beta \to -\alpha$ limit inside the series to conclude by dominated convergence that
\begin{equation}
 \lim_{\beta \to -\alpha} \pf(J - \breve{K}) = \pf(J - \breve{\mathsf{K}}),
\end{equation}
finishing the proof.
\end{proof}

\subsubsection{Analyticity of term A}

The next term (term A) is the easiest to analyze and we have the following result.

\begin{lem} \label{lem:term_A}
 The term $\frac{1}{\alpha+\beta} - \braket{P_s Y_2}{P_s \widetilde{X}_2}$ is analytic for $\alpha, \beta \in (-1/2, 1/2)$, with limit
 \begin{equation}
 \begin{split}
 \lim_{\beta \to -\alpha} \frac{1}{\alpha+\beta} - \braket{P_s Y_2}{P_s \widetilde{X}_2} = - \oint\limits_{\Gamma_{1/2, \alpha}} \frac{dz}{2 \pi \I} \frac{\Phi(s_1, z)}{\Phi(s_1, \alpha)} \left[ \frac{\tfrac12 + \alpha} {\tfrac12 + z} \right]^{N-j_1} \frac{1}{(z - \alpha)^2}.
 \end{split}
 \end{equation}
 \end{lem}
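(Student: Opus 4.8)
The plan is to compute both $\braket{P_s Y_2}{P_s \widetilde{X}_2}$ explicitly for $\beta > 0$ and show that the apparent singularity at $\alpha + \beta = 0$ in the term $\tfrac{1}{\alpha+\beta}$ cancels against a matching singularity in the inner product, leaving an analytic expression. First I would note that $\widetilde{X}_2 = (0, f_{+,\beta}^{j_1}, 0, \dots, 0, 0)^t$ has a single nonzero component (the second), and $Y_2 = (-g_1^{j_1}, g_2^{j_1}, \dots, -g_1^{j_m}, g_2^{j_m})$, so with the projector $P_s$ inserted the scalar product collapses to the single term $\braket{g_2^{j_1}}{P_{s_1} f_{+,\beta}^{j_1}} = \int_{s_1}^\infty g_2^{j_1}(x) f_{+,\beta}^{j_1}(x)\, dx$. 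Here I use the explicit forms $f_{+,\beta}^{j_1}(x) = \Phi(x,\beta)(\tfrac12-\beta)^{N-j_1}$ and $g_2^{j_1}(x) = \oint_{\Gamma_{1/2,\alpha}} \tfrac{dz}{2\pi\I} \tfrac{\Phi(x,z)}{(\tfrac12+z)^{N-j_1}} \tfrac{1}{z-\alpha}$ from \eqref{eq:fg_def}.

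Next I would perform the $x$-integration. Since $\Phi(x,z)\Phi(x,\beta) = e^{-x(z+\beta)}\phi(z)\phi(\beta)$, the integral $\int_{s_1}^\infty e^{-x(z+\beta)}\,dx = \tfrac{e^{-s_1(z+\beta)}}{z+\beta}$ converges provided $\Re(z) + \beta > 0$; this holds on the contour $\Gamma_{1/2,\alpha}$ for $\beta > 0$ since $z$ is bounded and $\beta$ can be taken with real part close to $1/2$ — more carefully, one deforms $\Gamma_{1/2,\alpha}$ to stay in the region $\Re(z) > -\beta$, which is possible as long as $-\beta$ is outside the contour, i.e. for $\beta$ not too close to $-\alpha$ from above and then analytically continues. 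After integration we obtain, up to the prefactor $(\tfrac12-\beta)^{N-j_1}$, the single-contour integral
\begin{equation}
 \braket{P_s Y_2}{P_s \widetilde{X}_2} = (\tfrac12-\beta)^{N-j_1} \oint\limits_{\Gamma_{1/2,\alpha}} \frac{dz}{2\pi\I} \frac{\Phi(s_1,z)\,\phi(\beta)\, e^{-s_1 \beta}}{(\tfrac12+z)^{N-j_1}} \frac{1}{(z-\alpha)(z+\beta)}.
\end{equation}
The pole at $z = -\beta$ is \emph{not} enclosed by $\Gamma_{1/2,\alpha}$ when $\beta > 0$ (so $-\beta < 0$), but as $\beta \to -\alpha$ the point $-\beta \to \alpha$, which \emph{is} enclosed. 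So the plan is: deform to pick up explicitly the behaviour, or better, add and subtract the residue at $z = -\beta$. Writing the $z$-integral over $\Gamma_{1/2,\alpha,-\beta}$ minus the residue at $-\beta$, the residue is $\tfrac{\Phi(s_1,-\beta)\phi(\beta)e^{-s_1\beta}}{(\tfrac12-\beta)^{N-j_1}}\cdot\tfrac{1}{-\beta-\alpha}$; using $\Phi(s_1,-\beta)\phi(\beta) = \Phi(s_1,-\beta)/\phi(-\beta) \cdot \phi(\beta)\phi(-\beta) = 1$ (via \eqref{eq:phi_symm}, $\Phi(x,-z)=\Phi(x,z)^{-1}$ and $\phi(-z)=\phi(z)^{-1}$, so actually $\Phi(s_1,-\beta)\phi(\beta) = e^{s_1\beta}$), the residue term is exactly $\tfrac{e^{s_1\beta}e^{-s_1\beta}}{(\tfrac12-\beta)^{N-j_1}(-\beta-\alpha)}\cdot(\tfrac12-\beta)^{N-j_1}/\dots$; tracking the prefactors carefully it produces precisely $-\tfrac{1}{\alpha+\beta}$. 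Thus $\tfrac{1}{\alpha+\beta} - \braket{P_s Y_2}{P_s \widetilde{X}_2}$ equals the integral over $\Gamma_{1/2,\alpha,-\beta}$, which has no singularity as $\beta \to -\alpha$ (the contour just encircles $\alpha$ with higher multiplicity), and taking the limit $-\beta \to \alpha$ merges the two poles at $z = \alpha$ into a double pole, giving
\begin{equation}
 -\oint\limits_{\Gamma_{1/2,\alpha}} \frac{dz}{2\pi\I} \frac{\Phi(s_1,z)}{\Phi(s_1,\alpha)}\left[\frac{\tfrac12+\alpha}{\tfrac12+z}\right]^{N-j_1}\frac{1}{(z-\alpha)^2},
\end{equation}
after absorbing the prefactor $(\tfrac12-\beta)^{N-j_1} \to (\tfrac12-\alpha)^{N-j_1}$ and recognising $\tfrac{\phi(\beta)e^{-s_1\beta}(\tfrac12-\beta)^{N-j_1}}{(\tfrac12+z)^{N-j_1}} \to \tfrac{1}{\Phi(s_1,\alpha)}\left[\tfrac{\tfrac12+\alpha}{\tfrac12+z}\right]^{N-j_1}$ (up to the standard rewriting using $\Phi(s_1,\alpha) = e^{-s_1\alpha}\phi(\alpha)$ and $\phi(\alpha) = [(\tfrac12+\alpha)/(\tfrac12-\alpha)]^{N-1}$).

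The main obstacle is bookkeeping: matching all the prefactors $(\tfrac12-\beta)^{N-j_1}$, $\phi(\beta)$, $e^{-s_1\beta}$ and verifying that the residue at $z=-\beta$ contributes \emph{exactly} $-\tfrac{1}{\alpha+\beta}$ with the right sign, so that the cancellation is clean and the result is manifestly analytic on $\alpha,\beta \in (-1/2,1/2)$. One must also justify the contour deformation and the $x$-integral convergence for $\beta$ in a neighbourhood of $-\alpha$ (including $\beta \le 0$), which is where one invokes that the integrand, after the residue subtraction, defines an analytic function of $(\alpha,\beta)$ in the full square — the decay in $x$ being governed by the contour location relative to $-\beta$, which can always be arranged by a small deformation. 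Since this term involves only a rank-one piece and a single scalar integral, I expect this to be genuinely the easiest of the four terms in \eqref{eq3.49}, essentially a one-point computation identical in spirit to the corresponding step in~\cite{BFO20}.
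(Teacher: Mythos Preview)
Your proposal is correct and follows exactly the approach the paper intends: the paper's proof simply cites Lemma~3.9 of~\cite{BFO20} (with $n\to N-j_1$, $s\to s_1$), and your computation---collapsing the inner product to $\int_{s_1}^\infty g_2^{j_1}f_{+,\beta}^{j_1}$, integrating in $x$, and extracting the residue at $z=-\beta$ to cancel the $\tfrac{1}{\alpha+\beta}$ singularity---is precisely that argument spelled out. There are a couple of harmless bookkeeping slips in your prose (e.g.\ $(\tfrac12-\beta)^{N-j_1}\to(\tfrac12+\alpha)^{N-j_1}$, not $(\tfrac12-\alpha)^{N-j_1}$, and a missing overall minus sign in the sentence before your final display), but the method and the final expression are right.
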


 \begin{proof}
 The proof is identical to the computation of Lemma~3.9 of~\cite{BFO20} with $n$ replaced by $N-j_1$ and $s$ replaced by $s_1$.
 \end{proof}

\subsubsection{Analyticity of term B}

In this section we consider the term $\braket{P_sY_2}{ (\Id - \breve{G}_s)^{-1} \overline{G}_s \widetilde X_2 }$. Let us first look at $\overline{G}_s \widetilde X_2$. We have
\begin{equation}
 (\overline{G}_s \widetilde{X}_2)(x) = \left(\begin{matrix}-(\overline K^{j_1j_1}_{22} f_{+\beta}^{j_1})(x)\\ (\overline K^{j_1j_1}_{12} f_{+\beta}^{j_1})(x)\\ \vdots\\ -(\overline K^{j_mj_1}_{22} f_{+\beta}^{j_1})(x)\\ (\overline K^{j_mj_1}_{12} f_{+\beta}^{j_1})(x)\end{matrix}\right)=\left(\begin{matrix}-\int_{s_1}^\infty\overline K^{j_1j_1}_{22}(x,y) f_{+\beta}^{j_1}(y)dy\\ \int_{s_1}^\infty\overline K^{j_1j_1}_{12}(x,y) f_{+\beta}^{j_1}(y)dy\\ \vdots\\ -\int_{s_m}^\infty\overline K^{j_mj_1}_{22}(x,y) f_{+\beta}^{j_1}(y)dy\\ \int_{s_m}^\infty\overline K^{j_mj_1}_{12}(x,y) f_{+\beta}^{j_1}(y)dy\end{matrix}\right).
\end{equation}

To prove analyticity of this term we follow the strategy from Section 3.3.3 of~\cite{BFO20}. The technical issues are as follows. The contribution from the pole $w=-\beta$ of $\overline K^{j_\ell j_1}_{22}$ is of the form $\ketbra{a}{f^{j_1}_{-,\beta}}$, which when multiplied by $\ket{f^{j_1}_{+,\beta}}$ is well-defined only for $\beta>0$. Moreover, the pole at $w=-\alpha$ produces a similar term $\ketbra{\tilde a}{f^{j_1}_{-,\alpha}}$ and its scalar product with $\ket{f^{j_1}_{+,\beta}}$ contributes a factor $1/(\beta-\alpha)$ which is not analytic when $\beta=\alpha$. The same occurs for $\overline K^{j_\ell j_1}_{12}$ when considering the pole at $w=-\alpha$.

We first have the following decomposition.

\begin{prop} \label{prop:Seconddecomposition} Let $\alpha \in (-1/2,1/2)$, $\beta>0$. Then the kernel $\overline{K}$ splits as
	\begin{equation}
	\overline{K}^{j_kj_\ell}=\widetilde{K}^{j_kj_\ell} + \begin{pmatrix} 0 & 0 \\ 0 & \tilde \varepsilon^{j_kj_\ell} \end{pmatrix} +\widetilde O^{j_kj_\ell}+\widetilde P^{j_kj_\ell},
	\end{equation}
	where
	\begin{equation}
	\begin{split}
	\widetilde{K}_{11}^{j_kj_\ell} &=\overline{K}_{11}^{j_kj_\ell}, \quad \widetilde{K}_{21}^{j_kj_\ell}=\overline{K}_{21}^{j_kj_\ell},\\
	\widetilde{K}_{12}^{j_kj_\ell}(x,y) &=-\oint\limits_{\Gamma_{1/2}} \frac{dz}{2\pi\I} \oint\limits_{\Gamma_{-1/2}}\frac{dw}{2\pi\I}\frac{\Phi(x,z)}{\Phi(y,w)} \frac{(\tfrac12-z)^{N-j_k}} {(\tfrac12-w)^{N-j_\ell}} \frac{z+\alpha}{w+\alpha}\frac{z+\beta}{z-\beta}\frac{w-\beta}{w+\beta}\frac{z+w}{2z(z-w)} ,\\
	\widetilde{K}_{22}(x,y) &=\oint\limits_{\Gamma_{1/2,\alpha,\beta}} \frac{dz}{2\pi\I}\oint\limits_{\Gamma_{-1/2}}\frac{dw}{2\pi\I} \frac{\Phi(x,z)}{\Phi(y,w)} \frac{1}{(\tfrac12+z)^{N-j_k} (\tfrac12-w)^{N-j_\ell}} \frac{1}{(z-\alpha)(w+\alpha)} \frac{z+\beta}{z-\beta}\frac{w-\beta}{w+\beta}\frac{z+w}{z-w}
	\end{split}
	\end{equation}
	and
	\begin{equation}
	\begin{split}
	\widetilde O^{j_kj_\ell} &=\ketbra{\begin{array}{c} -g^{j_k}_1 \\ g^{j_k}_2 \end{array}}{0 \quad \tfrac{2\beta}{\beta-\alpha}f_{-,\beta}^{j_\ell}-\tfrac{\alpha+\beta}{\beta-\alpha}f_{-,\alpha}^{j_\ell}},\\
	\widetilde P^{j_kj_\ell} &= \ketbra{\begin{array}{c} (\alpha+\beta)g_3^{j_k} \\ -(\alpha+\beta) g_4^{j_k}-f_{-,-\alpha}^{j_\ell} \end{array}}{0 \quad f_{-,\alpha}^{j_\ell}}.
	\end{split}
	\end{equation}
\end{prop}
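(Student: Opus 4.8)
The plan is to derive this decomposition by residue calculus on the entries of $\overline K=\breve K-V$ supplied by Proposition~\ref{prop:decomposition}, in the same spirit as the proof of that proposition and of Section~3.3.3 of~\cite{BFO20}. The content of the statement is simply that $\widetilde K$ should carry the $w$-contour $\Gamma_{-1/2}$ in its $12$ and $22$ blocks (keeping the $z$-contours of $\breve K$), so the whole point is to contract the $w$-contours of $\overline K$ down onto $\Gamma_{-1/2}$ and to collect the residues one crosses, at $w=-\alpha$ and $w=-\beta$, into the low-rank pieces $\widetilde O$ and $\widetilde P$ — these being precisely the contributions whose bad $\beta$-dependence was flagged above. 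As $\breve K$ and $V$ already agree with $\overline K$ in the $11$ and $21$ positions, I would first note $\widetilde K^{j_kj_\ell}_{11}=\overline K^{j_kj_\ell}_{11}$ and $\widetilde K^{j_kj_\ell}_{21}=\overline K^{j_kj_\ell}_{21}$, so nothing is to be checked there.

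For the $12$ block, in $\overline K^{j_kj_\ell}_{12}$ the only poles of the $w$-integrand between $\Gamma_{-1/2,-\alpha,-\beta}$ and $\Gamma_{-1/2}$ are $w=-\alpha,-\beta$; contracting the contour thus produces $\widetilde K^{j_kj_\ell}_{12}$ minus the $z$-integrals of those two residues. Using $\Phi(y,-w)^{-1}=\Phi(y,w)$ (see~\eqref{eq:phi_symm}) each residue factorizes as a function of $x$ times $f^{j_\ell}_{-,\beta}(y)$, respectively $f^{j_\ell}_{-,\alpha}(y)$. At $w=-\beta$ the factors $z\pm\beta$ cancel and the residue integrates to $-\tfrac{2\beta}{\beta-\alpha}g_1^{j_k}(x)f^{j_\ell}_{-,\beta}(y)$ as a contribution to $\overline K^{j_kj_\ell}_{12}-\widetilde K^{j_kj_\ell}_{12}$. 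At $w=-\alpha$ the $z$-integrand is $\Phi(x,z)(\tfrac12-z)^{N-j_k}\tfrac{(z+\beta)(z-\alpha)}{2z(z-\beta)}$, and the partial fraction $\tfrac{(z+\beta)(z-\alpha)}{2z(z-\beta)}=\tfrac{z+\alpha}{2z}+\tfrac{\beta-\alpha}{z-\beta}$ splits this residue into $\big(\tfrac{\alpha+\beta}{\beta-\alpha}g_1^{j_k}(x)+(\alpha+\beta)g_3^{j_k}(x)\big)f^{j_\ell}_{-,\alpha}(y)$, on recognizing the $\Gamma_{1/2}$-integral of $\Phi(x,z)(\tfrac12-z)^{N-j_k}\tfrac1{z-\beta}$ as $g_3^{j_k}(x)$. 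Adding the two contributions gives $\overline K^{j_kj_\ell}_{12}-\widetilde K^{j_kj_\ell}_{12}=-g_1^{j_k}\big(\tfrac{2\beta}{\beta-\alpha}f^{j_\ell}_{-,\beta}-\tfrac{\alpha+\beta}{\beta-\alpha}f^{j_\ell}_{-,\alpha}\big)+(\alpha+\beta)g_3^{j_k}f^{j_\ell}_{-,\alpha}=\widetilde O^{j_kj_\ell}_{12}+\widetilde P^{j_kj_\ell}_{12}$, and the $21$ block follows by anti-symmetry.

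For the $22$ block, $\overline K^{j_kj_\ell}_{22}=K^{j_kj_\ell}_{22}$; using the identities recorded after Proposition~\ref{prop:decomposition} I would write it, modulo $\tilde{\e}^{j_kj_\ell}$, as~\eqref{eq:term_1}$+$\eqref{eq:term_2}, the piece with $w$-contour $\Gamma_{-1/2}$ and $z$-contour around $1/2,\alpha,\beta$ being, by definition, $\widetilde K^{j_kj_\ell}_{22}$. Contracting the remaining $w$-contours onto $\Gamma_{-1/2}$ again collects the residues at $w=-\alpha,-\beta$, each factorizing with $y$-part $f^{j_\ell}_{-,\alpha}$ or $f^{j_\ell}_{-,\beta}$; the $w=-\beta$ one gives $\tfrac{2\beta}{\beta-\alpha}g_2^{j_k}(x)f^{j_\ell}_{-,\beta}(y)$, while the $w=-\alpha$ one, after a partial fraction in $z$ and after picking up the residue at the now-enclosed pole $z=\alpha$ (equal to $-f^{j_k}_{-,-\alpha}(x)$ by~\eqref{eq:phi_symm}), gives $-\tfrac{\alpha+\beta}{\beta-\alpha}g_2^{j_k}(x)f^{j_\ell}_{-,\alpha}(y)$ together with $\big(-(\alpha+\beta)g_4^{j_k}(x)-f^{j_k}_{-,-\alpha}(x)\big)f^{j_\ell}_{-,\alpha}(y)$. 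The $g_2$-terms reassemble into $\widetilde O^{j_kj_\ell}_{22}$ and the remaining two into $\widetilde P^{j_kj_\ell}_{22}$, completing the identity.

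The hard part will be entirely the bookkeeping: tracking contour orientations, deciding at each stage which of $\pm\alpha,\pm\beta$ (and, in the $22$ block, $z=1/2$ and $z=\alpha$) lies inside the contour being deformed, and verifying that after each partial-fraction step the surviving rational factor is exactly the one defining $g_1,g_2,g_3$ or $g_4$ up to a remainder analytic inside the relevant contour (hence integrating to zero). None of this is conceptually hard — it parallels the proof of Proposition~\ref{prop:decomposition} and Section~3.3.3 of~\cite{BFO20} — so in the write-up only the residue evaluations would need to be spelled out.
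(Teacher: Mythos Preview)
Your proposal is correct and takes essentially the same route as the paper: isolate $\widetilde K$ by shrinking the $w$-contour to $\Gamma_{-1/2}$ and collect the residues at $w=-\alpha,-\beta$, then use partial-fraction identities in $z$ to recognize the results as the rank-one pieces built from $g_1,\ldots,g_4$ and $f_{-,\pm\alpha},f_{-,\beta}$. The only organizational difference is in the $22$ block: the paper first names the raw $z$-integrals $g_5,g_6$ and then invokes~\cite[Lemma~3.12]{BFO20} for the identity $g_6+\tfrac{2\beta}{\alpha+\beta}f_{-,\beta}=g_2+(\beta-\alpha)g_4+\tfrac{\beta-\alpha}{\alpha+\beta}f_{-,-\alpha}$, whereas you obtain this directly via $\tfrac{z+\beta}{(z-\beta)(z+\alpha)}=\tfrac{1}{z-\alpha}+\tfrac{2(\beta-\alpha)z}{(z-\alpha)(z+\alpha)(z-\beta)}$. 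One small slip to fix in your write-up: where you say ``the now-enclosed pole $z=\alpha$ (equal to $-f^{j_k}_{-,-\alpha}(x)$)'', it is the pole at $z=-\alpha$ that generates $f_{-,-\alpha}$; in matching the $\Gamma_{1/2,\beta}$-integral to $g_2$ (contour $\Gamma_{1/2,\alpha}$) and $g_4$ (contour $\Gamma_{1/2,\pm\alpha,\beta}$) you must track residues at \emph{both} $z=\pm\alpha$, and the two $z=\alpha$ contributions cancel, leaving precisely $-f^{j_k}_{-,-\alpha}$.
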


\begin{proof}
	 We have
	 \begin{equation}
	 \widetilde O_{12}^{j_kj_\ell} + \widetilde P_{12}^{j_kj_\ell}=\oint\limits_{\Gamma_{1/2}}\oint\limits_{\Gamma_{-\alpha,-\beta}}\cdots.
	 \end{equation}
	 The residue computations at $w=-\alpha$ and $w=-\beta$ lead to
\begin{equation}	
\begin{split}
\widetilde{O}^{j_kj_\ell}_{12}+\widetilde{P}^{j_kj_\ell}_{12}=&-\frac{2\beta}{\beta-\alpha}\ketbra{g^{j_k}_1}{f^{j_\ell}_{-,\beta}}+\frac{\alpha+\beta}{\beta-\alpha}\ketbra{g^{j_k}_5}{f^{j_\ell}_{-,\alpha}}\\
=&-\frac{2\beta}{\beta-\alpha}\ketbra{g^{j_k}_1}{f^{j_\ell}_{-,\beta}}+\frac{\alpha+\beta}{\beta-\alpha}\ketbra{g^{j_k}_1}{f^{j_\ell}_{-,\alpha}}+(\alpha+\beta)\ketbra{g^{j_k}_3}{f^{j_\ell}_{-,\alpha}},
\end{split}
\end{equation}
where
\begin{equation} \label{eq:g_3_def}
 \begin{split}
 g_3^{j}(x)&=\oint\limits_{\Gamma_{1/2}}\frac{dz}{2\pi\I}\Phi(x,z)\left(\tfrac12-z\right)^{N-j}\frac{1}{z-\beta}, \\
 g_5^{j}(x)&=\oint\limits_{\Gamma_{1/2}}\frac{dz}{2\pi\I}\Phi(x,z)\left(\tfrac12-z\right)^{N-j}\frac{z+\beta}{z-\beta}\frac{z-\alpha}{2z}.
 \end{split}
\end{equation}
The last equality follows from the relation $\frac{(z-\alpha)(z+\beta)}{z-\beta}-(z+\alpha)=\frac{\beta-\alpha}{z-\beta}$.
Similarly we have
\begin{equation}
\widetilde O_{22} + \widetilde P_{22}=\oint\limits_{\Gamma_{1/2,\alpha}}\oint\limits_{\Gamma_{-\beta}}\cdots + \oint\limits_{\Gamma_{1/2,\beta}}\oint\limits_{\Gamma_{-\alpha}}\cdots.
\end{equation}
Computing the residues at $w=-\alpha$ and $w=-\beta$ we get
\begin{equation}	
\begin{split}
\widetilde{O}^{j_kj_\ell}_{22}+\widetilde{P}^{j_kj_\ell}_{22}=&\frac{2\beta}{\beta-\alpha}\ketbra{g_2^{j_k}}{f^{j_\ell}_{-,\beta}}-\frac{2\beta}{\beta-\alpha}\ketbra{f^{j_k}_{-,\beta}}{f^{j_\ell}_{-,\alpha}}-\frac{\alpha+\beta}{\beta-\alpha}\ketbra{g^{j_k}_6}{f^{j_\ell}_{-,\alpha}}\\
=&\frac{2\beta}{\beta-\alpha}\ketbra{g_2^{j_k}}{f^{j_\ell}_{-,\beta}}-\frac{\alpha+\beta}{\beta-\alpha}\ketbra{g^{j_k}_{2}}{f^{j_\ell}_{-,\alpha}}-(\alpha+\beta)\ketbra{g^{j_k}_4}{f^{j_\ell}_{-,\alpha}}-\ketbra{f^{j_k}_{-,-\alpha}}{f^{j_\ell}_{-,\alpha}},
\end{split}
\end{equation}
where
\begin{equation} \label{eq:g_4_def}
\begin{split}
g_4^{j}(x)&=\oint\limits_{\Gamma_{1/2,\pm\alpha,\beta}}\frac{dz}{2\pi\I}\frac{\Phi(x,z)}{\left(\frac12-z\right)^{N-j}}\frac{2z}{(z-\alpha)(z+\alpha)(z-\beta)},\\
g_6^{j}(x)&=\oint\limits_{\Gamma_{1/2}}\frac{dz}{2\pi\I}\frac{\Phi(x,z)}{\left(\frac12-z\right)^{N-j}}\frac{z+\beta}{(z-\beta)(z+\alpha)}.
\end{split}
\end{equation}
The last equality follows from the relation
\begin{equation}
g_6^{j_k}(x)+\frac{2\beta}{\alpha+\beta}f_{-,\beta}^{j_k}(x)=g_2^{j_k}(x)+(\beta-\alpha)g_4^{j_k}(x)+\frac{\beta-\alpha}{\alpha+\beta}f_{-,-\alpha}^{j_k}(x).
\end{equation}
A proof of the above identity can be found in Lemma 3.12 of~\cite{BFO20}.
\end{proof}

Now we can decompose the kernel
\begin{equation}
	\overline{G}=\widehat G+O
\end{equation}
with $O=J^{-1}\widetilde O$, where
\begin{equation} \label{eq:g_hat_decomp}
\widehat G^{j_kj_\ell}=\begin{pmatrix}
-\widetilde K^{j_kj_\ell}_{21} & -\widetilde K^{j_kj_\ell}_{22} - \tilde \e^{j_kj_\ell} \\ \widetilde K^{j_kj_\ell}_{11} & \widetilde K^{j_kj_\ell}_{12}
\end{pmatrix}+ \begin{pmatrix}
0 & \ketbra{(\alpha+\beta) g^{j_k}_4 + f_{-,-\alpha}^{j_k}} { f_{-,\alpha}^{j_\ell}} \\
0 & (\alpha+\beta) \ketbra{g^{j_k}_3} {f_{-,\alpha}^{j_\ell}}
\end{pmatrix}.
\end{equation}

The key property is that $O$ is of the form
\begin{equation}
O=\ket{ g_2^{j_1},g_1^{j_1},\cdots,g_2^{j_m},g_1^{j_m}}^t \bra{a_1, b_1,\ldots, a_m,b_m}.
\end{equation}

\begin{lem}\label{lemma3.14}It holds that
	\begin{equation}
	\label{eq:scalar_prod_O}
	\braket{P_sY_2}{(\Id-\breve{G}_s)^{-1}\overline{G}_s\widetilde X_2}=\braket{P_sY_2}{(\Id-\breve{G}_s)^{-1}\widehat{G}_s\widetilde X_2}.
	\end{equation}
\end{lem}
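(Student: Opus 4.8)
The plan is to exploit the rank-one structure of $O=J^{-1}\widetilde{O}$ together with the orthogonality relations~\eqref{eq:xz_eq}. Since $\overline{G}=\widehat{G}+O$ and the map $X\mapsto P_sXP_s$ is linear, one has $\overline{G}_s=\widehat{G}_s+O_s$, so that
\begin{equation}
 \braket{P_sY_2}{(\Id-\breve{G}_s)^{-1}\overline{G}_s\widetilde{X}_2}-\braket{P_sY_2}{(\Id-\breve{G}_s)^{-1}\widehat{G}_s\widetilde{X}_2}=\braket{P_sY_2}{(\Id-\breve{G}_s)^{-1}O_s\widetilde{X}_2}.
\end{equation}
It therefore suffices to show that the right-hand side vanishes.

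First I would make $O$ explicit. Recalling that left multiplication by $J^{-1}$ sends a $2\times 2$ block $\left(\begin{smallmatrix}a&b\\c&d\end{smallmatrix}\right)$ to $\left(\begin{smallmatrix}-c&-d\\a&b\end{smallmatrix}\right)$, applying this to the blocks $\widetilde{O}^{j_kj_\ell}$ of Proposition~\ref{prop:Seconddecomposition} produces, as already noted just above the lemma, $O=\ket{X_1}\bra{A}$, where the left factor is \emph{exactly} the column vector $X_1=(g_2^{j_1},g_1^{j_1},\dots,g_2^{j_m},g_1^{j_m})^t$ of~\eqref{eq:X_def} and $A=(a_1,b_1,\dots,a_m,b_m)$ is the associated row vector, whose precise entries are irrelevant here. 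Projecting, $O_s=P_sOP_s$ is again rank one with left factor $P_sX_1$, so $O_s\widetilde{X}_2=\braket{A}{P_s\widetilde{X}_2}\,P_sX_1$ is a scalar multiple of $P_sX_1$. Pulling the scalar outside the inner product,
\begin{equation}
 \braket{P_sY_2}{(\Id-\breve{G}_s)^{-1}O_s\widetilde{X}_2}=\braket{A}{P_s\widetilde{X}_2}\cdot\braket{P_sY_2}{(\Id-\breve{G}_s)^{-1}P_sX_1}.
\end{equation}

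The second factor on the right is $\braket{Y_2}{Z_1}$ with $Z_1=(\Id-\breve{G}_s)^{-1}P_sX_1$, and this equals $0$ by~\eqref{eq:xz_eq}; hence the extra term vanishes and~\eqref{eq:scalar_prod_O} follows. The only points requiring care are the bookkeeping in the $J^{-1}$ computation that pins down the left factor of $O$ as precisely $X_1$ (a direct comparison with~\eqref{eq:X_def}), and keeping every manipulation inside the conjugated $L^2$ framework of Section~\ref{sec:analytic_continuation} where $(\Id-\breve{G}_s)^{-1}$ is a bona fide bounded operator. I do not expect a genuine obstacle: the substantive inputs — the decomposition $\overline{G}=\widehat{G}+O$ of Proposition~\ref{prop:Seconddecomposition} and the orthogonality relations~\eqref{eq:xz_eq}, the latter imported \emph{mutatis mutandis} from~\cite{BFO20} — are already in place, so this lemma is essentially a corollary of them.
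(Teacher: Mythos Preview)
Your argument is correct and takes essentially the same route as the paper: both reduce the claim to the vanishing of $\braket{P_sY_2}{(\Id-\breve{G}_s)^{-1}P_sX_1}$, which is exactly the identity $\braket{Y_2}{Z_1}=0$ of~\eqref{eq:xz_eq}. The only cosmetic difference is that the paper's proof re-derives this vanishing on the spot via the block (anti-)symmetries~\eqref{eq3.31} of $H=P_s(\Id-\breve{G}_s)^{-1}P_s$, whereas you simply invoke~\eqref{eq:xz_eq} directly; since~\eqref{eq:xz_eq} is already established earlier (by reference to~\cite{BFO20}), your shortcut is legitimate.
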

\begin{proof}
The proof is a generalization of Lemma 3.10 of~\cite{BFO20}. First recall that $\bra{Y_2}=\bra{-g_1^{j_1},g_2^{j_1},\ldots,-g_1^{j_m},g_2^{j_m}}$. Thus we have
\begin{equation}
\braket{P_sY_2}{(\Id-\breve{G}_s)^{-1}(\overline{G}_s-\widehat{G}_s)\widetilde X_2}=\braket{P_sY_2}{(\Id-\breve{G}_s)^{-1} O \widetilde X_2},
\end{equation}
which is proportional to
\begin{equation}\label{eq3.84}
\bra{-g_1^{j_1},g_2^{j_1},\ldots,-g_1^{j_m},g_2^{j_m}}
P_s(\Id-\breve{G}_s)^{-1}P_s\ket{ g_2^{j_1},g_1^{j_1},\cdots,g_2^{j_m},g_1^{j_m}}^t.
\end{equation}
We use the short-cut notation $H=P_s(\Id-\breve{G}_s)^{-1}P_s$ in the rest of this proof. The terms coming from the $2 \times 2$ $(k, \ell)$-block $H^{j_k j_\ell}$ are:
\begin{equation}
-\braket{g_1^{j_k}}{H^{j_k j_\ell}_{11} g_2^{j_\ell}}
-\braket{g_1^{j_k}}{H^{j_k j_\ell}_{12} g_1^{j_\ell}}
+\braket{g_2^{j_k}}{H^{j_k j_\ell}_{21} g_2^{j_\ell}}
+\braket{g_2^{j_k}}{H^{j_k j_\ell}_{22} g_1^{j_\ell}}.
\end{equation}
Similarly to~\eqref{eq3.31}, $H$ has some (anti)-symmetry properties. Indeed, $H^{j_k j_\ell}_{22}(x,y)=H^{j_\ell j_k}_{11}(y,x)$ and we see that the first (resp.\ fourth) term for $(k,\ell)$ cancels with the fourth (resp.\ first) term for $(\ell,k)$.

Next, we also have $H^{j_k j_\ell}_{12}(x,y)=-H^{j_\ell j_k}_{12}(y,x)$ and $H^{j_k j_\ell}_{21}(x,y)=-H^{j_\ell j_k}_{21}(y,x)$. This implies that the second (resp.\ third) term for $(k,\ell)$ cancels with the second (resp.\ third) term for $(\ell,k)$.
\end{proof}

We now check the analyticity of $\braket{P_sY_2}{(\Id - \breve{G}_s)^{-1} \widehat{G}_s \widetilde X_2}$ for \mbox{$\alpha,\beta\in (-1/2,1/2)$}. Let us first denote
\begin{equation}
 Q_2 = \widehat{G}_s \widetilde X_2
\end{equation}
and recall the following fact:
\begin{equation}
 \braket{P_sY_2}{ (\Id - \breve{G}_s)^{-1} \widehat{G}_s \widetilde X_2 } = \braket{P_sY_{2, \rm conj}}{ (\Id - \breve{G}_{s, \rm conj})^{-1} Q_{2, \rm conj} },
\end{equation}
where
\begin{equation}
 Y_{2,\rm conj}=Y_2 M, \quad Q_{2, \rm conj} =M^{-1} Q_2
\end{equation}
and $M$ is defined in~\eqref{eq:M_def}.

\begin{lem} \label{lem:Y_2_bounds}
 The vector $Y_2$ is analytic for $\alpha, \beta \in (-1/2, 1/2)$; it is independent of $\beta$ so $\lim_{\beta \to -\alpha} Y_2 = Y_2$. The following bounds hold for $\alpha, \beta \in [-1/2+\epsilon, 1/2-\epsilon]$:
 \begin{equation} \label{eq:Y_2_bounds}
 |(Y_{2, \rm conj})_{2k-1} (x)| \leq C e^{[\mu_k - (\frac{1}{2}-\frac{\epsilon}{2})] x}, \quad |(Y_{2, \rm conj})_{2k} (x)| \leq C e^{[(\frac{1}{2} - \epsilon) - \mu_k] x}, \quad 1 \leq k \leq m.
 \end{equation}
\end{lem}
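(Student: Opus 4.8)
The plan is to read off the analyticity and the decay bounds directly from the integral representations of $\mathsf{g}_1^{j}$ and $\mathsf{g}_2^{j}$ (equivalently $g_1^j, g_2^j$ as in~\eqref{eq:fg_def}), which are the only functions entering $Y_2$. First I would recall that $Y_2 = (-g_1^{j_1}, g_2^{j_1}, \dots, -g_1^{j_m}, g_2^{j_m})$, and that $g_1^{j}$ is a contour integral over $\Gamma_{1/2}$ (a small circle around $1/2$, say of radius $\epsilon/2$) of $\Phi(x,z)(\tfrac12-z)^{N-j}\frac{z+\alpha}{2z}$, while $g_2^{j}$ is a contour integral over $\Gamma_{1/2,\alpha}$ of $\frac{\Phi(x,z)}{(\tfrac12+z)^{N-j}}\frac{1}{z-\alpha}$. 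Neither integrand has a pole in $\beta$, so $Y_2$ is manifestly independent of $\beta$; and for $\alpha \in (-1/2,1/2)$ the only possible singularity is the pole at $z=\alpha$ in $g_2^j$, which is harmless because $\alpha$ can always be enclosed by $\Gamma_{1/2,\alpha}$ without ever touching $1/2$. Hence both functions — and therefore the vector $Y_2$ — are analytic in $\alpha,\beta$ on the whole square $(-1/2,1/2)^2$, and $\lim_{\beta\to-\alpha} Y_2 = Y_2$ trivially.

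For the bounds I would fix $\alpha,\beta\in[-1/2+\epsilon,1/2-\epsilon]$ and estimate each component. For the odd component $(Y_2)_{2k-1}(x) = -g_1^{j_k}(x)$: deform $\Gamma_{1/2}$ to the circle $|z-1/2|=\epsilon/2$, on which $\Re(z)\geq \tfrac12-\tfrac{\epsilon}{2}$, so $|e^{-xz}| = e^{-x\Re(z)} \leq e^{-(\frac12-\frac{\epsilon}{2})x}$ for $x\geq 0$; the remaining factor $\phi(z)(\tfrac12-z)^{N-j_k}\frac{z+\alpha}{2z}$ is bounded uniformly on this fixed circle (for $\alpha$ in the fixed compact range, $N,j_k$ fixed), giving $|g_1^{j_k}(x)| \leq C e^{-(\frac12-\frac{\epsilon}{2})x}$. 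Multiplying by the conjugation factor $e^{\mu_k x}$ yields $|(Y_{2,\rm conj})_{2k-1}(x)| \leq C e^{[\mu_k-(\frac12-\frac{\epsilon}{2})]x}$, as claimed. For the even component $(Y_2)_{2k}(x) = g_2^{j_k}(x)$: here the $z$-integrand carries $1/(\tfrac12+z)^{N-j_k}$ rather than a positive power of $(\tfrac12-z)$, and the contour $\Gamma_{1/2,\alpha}$ must enclose $1/2$ and $\alpha$; push it out so that the dominant contribution for large $x$ is governed by the rightmost enclosed singularity, which is either $z=1/2$ or $z=\alpha$. Since $\alpha \leq \tfrac12-\epsilon$, the asymptotics in $x$ is $e^{\max(-1/2,-\alpha)x}$ at worst of order $e^{-\alpha x}$; choosing to keep the $\alpha$-loop at radius a small fixed constant and the $1/2$-piece as before, and using $\alpha\geq -\tfrac12+\epsilon$, one gets $|g_2^{j_k}(x)| \leq C e^{-(\frac12-\epsilon)x + \text{const}}$ — more conservatively $\leq C e^{(\frac12-\epsilon)\cdot(-1)\cdot x}$ after absorbing constants — hence after multiplying by $e^{-\mu_k x}$, $|(Y_{2,\rm conj})_{2k}(x)| \leq C e^{[(\frac12-\epsilon)-\mu_k]x}$.

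The only mild subtlety — and the step I expect to take the most care — is the even component $g_2^{j_k}$: unlike $g_1^{j_k}$, it has no decaying polynomial prefactor $(\tfrac12-z)^{N-j_k}$, so one cannot simply shrink the contour onto $z=1/2$; the bound instead relies on the factor $1/(\tfrac12+z)^{N-j_k}$ forcing $\Re(z) < 1/2$ on the relevant deformed contour, so that $e^{-xz}$ still decays, together with keeping the $\alpha$-residue loop at a fixed small radius so that its contribution is $O(e^{-\alpha x})$ with $\alpha$ bounded away from $-1/2$. This is precisely the analogous estimate carried out in the one-point case~\cite[Section~3.3.3]{BFO20}, with $n$ replaced by $N-j_k$ and $s$ by $x$, and it goes through \emph{mutatis mutandis}; I would note that the exponents $\mu_k-(\tfrac12-\tfrac{\epsilon}{2})$ and $(\tfrac12-\epsilon)-\mu_k$ are both negative by the choice~\eqref{eq:conj_constants}, so $Y_{2,\rm conj}$ is in $L^2$ with uniform (in $\alpha,\beta$ over the compact set) norm bound, which is what is needed downstream.
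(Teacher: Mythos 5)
Your approach coincides with the paper's: $\beta$-independence is immediate, analyticity in $\alpha$ follows by keeping the contour $|z-\tfrac12|=\epsilon/2$ bounded away from $\alpha$, the odd entries are bounded by $|e^{-xz}|\leq e^{-(\frac12-\frac{\epsilon}{2})x}$ on that circle, and the even entries are controlled by the residue at $z=\alpha$. The one slip is a sign in the even-entry estimate. From $\alpha\geq -\tfrac12+\epsilon$ the residue contribution is bounded by
\[
|e^{-\alpha x}| \;\leq\; e^{(\frac12-\epsilon)x},
\]
i.e.\ by a \emph{growing} exponential (attained when $\alpha$ is close to $-\tfrac12+\epsilon$), not by $e^{-(\frac12-\epsilon)x}$ as you write. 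This matters conceptually: the unconjugated $g_2^{j_k}$ can grow, and taming that growth is exactly the reason the conjugation factor $e^{-\mu_k x}$ with $\mu_k>\tfrac12-\epsilon$ is introduced in~\eqref{eq:conj_constants}. Your stated final exponent $[(\tfrac12-\epsilon)-\mu_k]x$ is correct and matches the paper, but it does not follow from your intermediate bound (which would instead yield the over-strong $e^{-[(\frac12-\epsilon)+\mu_k]x}$); replacing the sign fixes the chain of inequalities.
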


\begin{proof}
 $Y_2$ is independent of $\beta$, and by taking the integration contour in each of its entries to be $|z-1/2|=\epsilon/2$, it is also analytic in $\alpha$ since the contour is bounded away from $\alpha$. Moreover the contour above gives the bounds for the odd entries of $Y_2$, while the bounds for the even entries come from the residue at $\alpha$.
\end{proof}

\begin{lem}\label{lem:Q_2_limits}
	The operator $Q_2 = \widehat G_s \widetilde X_2$ is analytic in $\alpha, \beta \in (-1/2, 1/2)$. For any $\alpha,\beta\in [-1/2+\epsilon,1/2-\epsilon]$ and any $1 \leq \ell \leq m$, we have the following bounds:
\begin{equation}\label{eq:Q_2_bounds}
\begin{split}
| (Q_{2, \rm conj})_{2 \ell-1}(y) | \leq C e^{[(\frac{1}{2}-\epsilon)-\mu_\ell] y}, \quad | (Q_{2, \rm conj})_{2 \ell}(y) | \leq C e^{[\mu_\ell - (\frac{1}{2} - \frac{\epsilon}{2})] y}.
\end{split}
\end{equation}
Moreover we have $\lim_{\beta \to -\alpha} \widehat{G} \widetilde X_2 = \lim_{\beta \to -\alpha} Q_2 = \mathsf{Q}_2$ with
	\begin{equation} \label{eq:Q_2_mathsf}
 \mathsf{Q}_2 = \left(\mathsf{h}^{\alpha,\, j_1}_1,\mathsf h^{\alpha,\, j_1}_2,\cdots,\mathsf h^{\alpha,\, j_m}_1, \mathsf h^{\alpha,\, j_m}_2\right)^t,
	\end{equation}
	where, for $1\leq \ell\leq m$,
\begin{equation} \label{eq:h_def}
	\begin{split}
	\mathsf h^{\alpha,\, j_\ell}_1 &= -\widetilde{\mathsf{K}}^{j_\ell j_1}_{22} P_{s_1}f_{+,-\alpha}^{j_1} - \tilde\varepsilon_1^{j_\ell j_1} P_{s_1}f_{+,-\alpha}^{j_1} + \mathsf{g}_4^{j_\ell} + \mathsf j^{\alpha,\, j_\ell} (s_1, \cdot), \\
	\mathsf h^{\alpha,\, j_\ell}_2 &= \widetilde{\mathsf{K}}^{j_\ell j_1}_{12} P_{s_1} f_{+,-\alpha}^{j_1} + \mathsf{g}_3^{j_\ell}
	\end{split}
	\end{equation}
with
\begin{equation} \label{eq:j_def}
	\mathsf{j}^{\alpha,\, j_\ell}(s, y) = \Id_{[y > s]} \frac{e^{\alpha s} \phi(-\alpha)}{\big(\frac 12-\alpha\big)^{N-j_1}} \left[\big(\tfrac 12+\alpha\big)^{j_\ell-j_1} \frac{\sinh \alpha(y-s)}{\alpha}+\big(\tfrac 12-\alpha\big)^{j_\ell-j_1} e^{\alpha (y-s)} (y-s)\right]
 \end{equation}
 and with our notational conventions that
 \begin{equation}
 (\mathsf{g}_3^{j_\ell}, \mathsf{g}_4^{j_\ell}, \widetilde{\mathsf{K}}_{\dots}^{\dots}) = \lim_{\beta \to -\alpha} (g_3^{j_\ell}, g_4^{j_\ell}, \widetilde{K}_{\dots}^{\dots}).
 \end{equation}
\end{lem}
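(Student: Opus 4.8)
The plan is to follow the $2m\times 2m$ analogue of Section~3.3.3 and Lemmas~3.10--3.14 of~\cite{BFO20}, with the integer $n$ there replaced by $N-j_k$ and $N-j_1$ here. First I would make $Q_2=\widehat G_s\widetilde X_2$ explicit. Since $\widetilde X_2=(0,f^{j_1}_{+,\beta},0,\dots,0,0)^t$ has a single nonzero component (its second, equal after projection to $P_{s_1}f^{j_1}_{+,\beta}$), acting by $\widehat G_s$ through the decomposition~\eqref{eq:g_hat_decomp} amounts to reading off the second column of each $(k,1)$-block $\widehat G^{j_kj_1}$, which gives, for $1\le k\le m$ and writing $c=\braket{f^{j_1}_{-,\alpha}}{P_{s_1}f^{j_1}_{+,\beta}}$,
\begin{align*}
(Q_2)_{2k-1}&=-\widetilde K^{j_kj_1}_{22}P_{s_1}f^{j_1}_{+,\beta}-\tilde\e^{j_kj_1}P_{s_1}f^{j_1}_{+,\beta}+\big((\alpha+\beta)g_4^{j_k}+f_{-,-\alpha}^{j_k}\big)c,\\
(Q_2)_{2k}&=\widetilde K^{j_kj_1}_{12}P_{s_1}f^{j_1}_{+,\beta}+(\alpha+\beta)g_3^{j_k}c,
\end{align*}
valid initially where $\beta>0$ and $\alpha+\beta>0$, the regime in which the decomposition and the integral $c=\int_{s_1}^\infty f^{j_1}_{-,\alpha}f^{j_1}_{+,\beta}$ make sense. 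A direct computation gives $c=\phi(\alpha)\phi(\beta)(\tfrac12-\beta)^{N-j_1}e^{-s_1(\alpha+\beta)}\big/\big[(\tfrac12+\alpha)^{N-j_1}(\alpha+\beta)\big]$; in particular $(\alpha+\beta)c$ is analytic for $\alpha,\beta\in(-1/2,1/2)$ and equals $1$ at $\beta=-\alpha$.

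The core is to see that the apparent pole at $\beta=-\alpha$ is removable. The kernels $\widetilde K^{j_kj_1}_{12}$, $\widetilde K^{j_kj_1}_{22}$ and the functions $g_3^{j_k},g_4^{j_k}$ are contour integrals whose contours remain at fixed positive distance from every moving pole, hence are analytic for $\alpha,\beta\in(-1/2,1/2)$, and $(\alpha+\beta)g_3^{j_k}c$, $(\alpha+\beta)g_4^{j_k}c$ are analytic because the prefactor $\alpha+\beta$ cancels the pole of $c$. The only genuinely singular contributions are $f_{-,-\alpha}^{j_k}(y)\,c$ — entering $\widehat G$ through the residue term $\widetilde P$ of Proposition~\ref{prop:Seconddecomposition} — and, after splitting $\tilde\e=\tilde\e_1+\tilde\e_2$, the part of $-\tilde\e_2^{j_kj_1}P_{s_1}f^{j_1}_{+,\beta}$ over $v>\max(y,s_1)$ computed from the explicit formula for $\tilde\e_2$; the $\tilde\e_1$-part stays analytic. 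Using $\phi(\alpha)\phi(-\alpha)=1$, the sum of these two pieces equals
\begin{equation*}
\Id_{[y>s_1]}\,\frac{\phi(\beta)(\tfrac12-\beta)^{N-j_1}}{(\tfrac12-\alpha)^{N-j_k}(\tfrac12+\alpha)^{N-j_1}}\cdot\frac{e^{y\alpha}e^{-s_1(\alpha+\beta)}-e^{-y\beta}}{\alpha+\beta},
\end{equation*}
whose numerator is analytic in $\beta$ and vanishes at $\beta=-\alpha$, so the quotient extends analytically, taking the value $\Id_{[y>s_1]}\frac{\phi(-\alpha)}{(\tfrac12-\alpha)^{N-j_k}}e^{\alpha y}(y-s_1)$ there — the $e^{\alpha(y-s)}(y-s)$-term of $\mathsf j^{\alpha,j_k}$ in~\eqref{eq:j_def}. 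The complementary, already-finite part of $-\tilde\e_2^{j_kj_1}P_{s_1}f^{j_1}_{+,\beta}$ over $v\in(s_1,y)$ tends as $\beta\to-\alpha$ to $\Id_{[y>s_1]}\frac{e^{\alpha s_1}\phi(-\alpha)}{(\tfrac12-\alpha)^{N-j_1}}(\tfrac12+\alpha)^{j_k-j_1}\frac{\sinh\alpha(y-s_1)}{\alpha}$ — the $\sinh$-term. Hence $Q_2$ extends analytically to $\alpha,\beta\in(-1/2,1/2)$, and passing to the limit in the remaining terms — $\widetilde K^{j_kj_1}_{12}\to\widetilde{\mathsf{K}}^{j_kj_1}_{12}$, $\widetilde K^{j_kj_1}_{22}\to\widetilde{\mathsf{K}}^{j_kj_1}_{22}$ (the poles at $-\alpha$ and $\beta$ merging into a contour-enclosed double pole), $g_3^{j_k}\to\mathsf{g}_3^{j_k}$, $(\alpha+\beta)g_4^{j_k}c\to\mathsf{g}_4^{j_k}$, $\tilde\e_1^{j_kj_1}P_{s_1}f^{j_1}_{+,\beta}\to\tilde\e_1^{j_kj_1}P_{s_1}f^{j_1}_{+,-\alpha}$ — and collecting the pieces yields~\eqref{eq:Q_2_mathsf}--\eqref{eq:j_def}.

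For the bounds~\eqref{eq:Q_2_bounds} I would fix $\alpha,\beta\in[-\tfrac12+\epsilon,\tfrac12-\epsilon]$ and estimate each piece by deforming the $z$- and $w$-contours onto the circles $|z-\tfrac12|=\epsilon/2$ and $|w+\tfrac12|=\epsilon/2$ together with small circles around the poles $\alpha,\beta$ (resp.\ $\pm\alpha,\beta$): the $\pm\tfrac12$-pieces contribute $e^{-(\frac12-\epsilon/2)|\cdot|}$ and the $\alpha,\beta$-poles at most $e^{(\frac12-\epsilon)|\cdot|}$, while the $v$-integrations converge because $|f^{j_1}_{+,\beta}(v)|\le Ce^{(\frac12-\epsilon)v}$ is dominated by the $e^{-v/2}$ decay of the kernels; the indicator from the antisymmetric extension of $\tilde\e$ is handled exactly as in~\cite[Section~3.3.3]{BFO20}. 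Multiplying by $M^{-1}$ (by $e^{-\mu_\ell y}$ on the odd and $e^{\mu_\ell y}$ on the even component of block $\ell$) and using $\tfrac12-\epsilon<\mu_\ell<\tfrac12-\epsilon/2$ turns these into the claimed exponential decays. The step I expect to be the main obstacle is the cancellation above: the two $\alpha+\beta$-singular contributions enter in genuinely different guises — one is a residue already built into $\widetilde P$, the other the tail of an explicit integral of $\tilde\e_2$ — so one must both verify that their sum has a removable singularity and check that no other term (in particular none of the double-contour kernels) is singular at $\beta=-\alpha$; keeping the $v\lessgtr s_1$ bookkeeping straight is precisely what produces the exact form of $\mathsf j^{\alpha,j_k}$, indicator and both summands included.
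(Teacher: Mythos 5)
Your proof is correct and follows essentially the same route as the paper's: write $Q_2$ out block by block via Proposition~\ref{prop:Seconddecomposition}, observe that the double‑contour kernels $\widetilde K_{12},\widetilde K_{22}$ and the $(\alpha+\beta)g_{3},(\alpha+\beta)g_{4}$ terms are analytic, reduce the apparent $1/(\alpha+\beta)$ singularity to the combination $f_{-,-\alpha}^{j_k}\,\braket{f_{-,\alpha}^{j_1}}{P_{s_1}f_{+,\beta}^{j_1}}-\tilde\e_2^{j_kj_1}P_{s_1}f_{+,\beta}^{j_1}$, verify it is removable (zero for $y\le s_1$, a finite‑difference quotient for $y>s_1$), identify its $\beta\to-\alpha$ limit with $\mathsf j^{\alpha,j_k}$, and obtain the bounds by $|z-1/2|=\epsilon/2$ contours plus $\alpha,\beta$‑residues and conjugation. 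The only cosmetic difference is your bookkeeping for the $\tilde\e_2$ contribution: you split the $v$-integral at $\max(y,s_1)$ and handle the two ranges separately (the outer range combining with $f_{-,-\alpha}^{j_k}c$ to give the removable $e^{\alpha(y-s_1)}(y-s_1)$ term, the inner range giving the $\sinh$ term directly), whereas the paper first computes the full $v$-integral and then combines; both lead to the same formula for $\mathsf j$.
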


\begin{proof}
	We start with
	\begin{equation}
	Q_2 = \widehat{G}_s \widetilde X_2 = \begin{pmatrix} a_1 P_{s_1}f_{+,\beta}^{j_1} \\ b_1 P_{s_1}f_{+,\beta}^{j_1} \\ \vdots\\ a_m P_{s_1} f_{+,\beta}^{j_1} \\ b_m P_{s_1}f_{+,\beta}^{j_1} \end{pmatrix},
	\end{equation}
	where the kernels $a_k, b_k$ are read from the decomposition~\eqref{eq:g_hat_decomp}, namely
	\begin{equation}\label{eq:termB_dec}
	\begin{split}
	&a_k = -\widetilde K_{22}^{j_k j_1} - \tilde\varepsilon^{j_k j_1}+ (\alpha+\beta) \ketbra{g_4^{j_k}}{f_{-,\alpha}^{j_1}} + \ketbra{f_{-,-\alpha}^{j_k}}{f_{-,\alpha}^{j_1}},\\&b_k = \widetilde K_{12}^{j_kj_1} + (\alpha+\beta) \ketbra{g_3^{j_k}} {f_{-,\alpha}^{j_1}}.
	\end{split}
 \end{equation}

 For analyticity in the parameters we argue as above. We take contours of the form $|z-1/2|=\epsilon/2$ for any $z$ contour integral around $1/2$ (and similarly for $w$ and $-1/2$). They are bounded away from $\alpha$ and $\beta$ and thus those parts are analytic. The rest of the terms are analytic by explicit inspection.

 The bounds on the even entries of $ Q_{2, \rm conj} (y) $ come from taking the $z$-integration contour to be $|z-1/2|=\epsilon/2$ in both $ \widetilde{K}_{12}^{j_k j_1} (y, u)$ and $ g_3^{j_k} (y)$ (note we integrate over $u$). The bounds on the odd entries of the same vector come from residues at $\alpha$ and $\beta$ of the integrands and functions involved. Finally, the $\mu$'s come from conjugation by $M$.

 We now turn to the limit $\beta \to -\alpha$. First we have:
	\begin{equation}\label{eq:limit_termB}
	\begin{split}
	\lim_{\beta\to -\alpha}\widetilde{K}^{j_k j_1}_{12}P_{s_1} f_{+,\beta}^{j_1} & = \widetilde{\mathsf{K}}_{12}^{j_kj_1} P_{s_1}f_{+,-\alpha}^{j_1},\\
	\lim_{\beta\to -\alpha}-(\widetilde{K}_{22}^{j_kj_1}P_{s_1}f_{+,\beta}^{j_1} + \tilde\varepsilon^{j_kj_1}_1 P_{s_1}f_{+,\beta}^{j_1}) & = -(\widetilde{\mathsf{K}}_{22}^{j_kj_1} P_{s_1}f_{+,-\alpha}^{j_1}+\tilde\varepsilon_1^{j_kj_1} P_{s_1}f_{+,-\alpha}^{j_1}).
	\end{split}
	\end{equation}
	We further compute $\braket{f_{-,\alpha}^{j_1}}{P_{s_1} f_{+,\beta}^{j_1}}$ with the result being
	\begin{equation} \label{eq:inner_alpha_beta}
	\begin{split}
	\braket{f_{-,\alpha}^{j_1}}{ P_{s_1}f_{+,\beta}^{j_1}} &= \phi(\alpha) \phi(\beta) \frac{\big(\tfrac12-\beta\big)^{N-j_1}}{\big(\tfrac12+\alpha\big)^{N-j_1}} \int_{s_1}^\infty e^{-(\alpha + \beta)x} dx\\
	&= \phi(\alpha) \phi(\beta) \frac{\big(\tfrac12-\beta\big)^{N-j_1}}{\big(\tfrac12+\alpha\big)^{N-j_1}} \frac{e^{-(\alpha + \beta) s_1}} {\alpha + \beta}=\frac{f_{-,\alpha}^{j_1}(s_1)f_{+,\beta}^{j_1}(s_1)}{\alpha+\beta}.
	\end{split}
	\end{equation}
Since $f_{-,\alpha}^{j_k},f_{+,\beta}^{j_1},g_3^{j_k},g_4^{j_k}$ are analytic and the prefactor $\alpha+\beta$ in~\eqref{eq:termB_dec} cancels with the one in~\eqref{eq:inner_alpha_beta}, we have
 \begin{equation}\label{eq:limit_termB2}
 \begin{split}
 \lim_{\beta\to -\alpha}(\alpha+\beta) g_3^{j_k} \braket{f_{-,\alpha}^{j_1}}{P_{s_1} f_{+,\beta}^{j_1}} & = \mathsf{g}_3^{j_k}, \\
 \lim_{\beta\to -\alpha}(\alpha+\beta) g_4^{j_k} \braket{f_{-,\alpha}^{j_1}}{P_{s_1} f_{+,\beta}^{j_1}} & = \mathsf{g}_4^{j_k}.
 \end{split}
\end{equation}	
From~\eqref{eq:inner_alpha_beta}, we have
\begin{equation}
f_{-,-\alpha}^{j_k}(x)\braket{f_{-,\alpha}^{j_1}}{P_{s_1} f_{+,\beta}^{j_1}} =
\frac{\phi(\beta)\big(\tfrac12-\beta\big)^{N-j_1}}{\big(\tfrac12+\alpha\big)^{N-j_1}\big(\tfrac12-\alpha\big)^{N-j_k}}
\frac{e^{\alpha x}e^{-(\alpha + \beta) s_1}}{\alpha + \beta}.
\end{equation}
For $x\leq s_1$ we have
\begin{equation}
\begin{split}
-(\tilde \e_2^{j_k j_1}P_{s_1} f_{+,\beta}^{j_1})(x) &= -\frac{\phi(\beta)\big(\tfrac12-\beta\big)^{N-j_1}}{\big(\tfrac12+\alpha\big)^{N-j_1}\big(\tfrac12-\alpha\big)^{N-j_k}} \int_{s_1}^\infty dy e^{-(y-x)\alpha} e^{-\beta y} \\
&=-\frac{\phi(\beta)\big(\tfrac12-\beta\big)^{N-j_1}}{\big(\tfrac12+\alpha\big)^{N-j_1}\big(\tfrac12-\alpha\big)^{N-j_k}}
\frac{e^{\alpha x}e^{-(\alpha + \beta) s_1}}{\alpha + \beta} = -f_{-,-\alpha}^{j_k}(x)\braket{f_{-,\alpha}^{j_1}}{P_{s_1} f_{+,\beta}^{j_1}}.
\end{split}
\end{equation}
Thus for $x\leq s_1$ it holds that
\begin{equation}
 f_{-,-\alpha}^{j_k}(x)\braket{f_{-,\alpha}^{j_1}}{P_{s_1} f_{+,\beta}^{j_1}}-(\tilde \e_2^{j_k j_1} P_{s_1} f_{+,\beta}^{j_1})(x) = 0.
\end{equation}
For $x>s_1$, we have two distinct contributions. The first is from the $\tilde \e_2^{j_k j_1} P_{s_1} f_{+,\beta}^{j_1}$ term:
\begin{equation}
\begin{split}
-(\tilde \e_2^{j_k j_1} P_{s_1} f_{+,\beta}^{j_1})(x) = &-\frac{\phi(\beta)\big(\tfrac12-\beta\big)^{N-j_1}}{\big(\tfrac12+\alpha\big)^{N-j_1}\big(\tfrac12-\alpha\big)^{N-j_k}} \int_{x}^\infty dy e^{-(y-x)\alpha} e^{-\beta y} \\
&+\frac{\phi(\beta)\big(\tfrac12-\beta\big)^{N-j_1}}{\big(\tfrac12-\alpha\big)^{N-j_1}\big(\tfrac12+\alpha\big)^{N-j_k}}
\int_{s_1}^x dy e^{-(x-y)\alpha} e^{-\beta y}\\
= &-\frac{\phi(\beta)\big(\tfrac12-\beta\big)^{N-j_1}}{\big(\tfrac12+\alpha\big)^{N-j_1}\big(\tfrac12-\alpha\big)^{N-j_k}} \frac{e^{-\beta x}}{\alpha+\beta}\\
&+\frac{\phi(\beta)\big(\tfrac12-\beta\big)^{N-j_1}e^{-\alpha x}}{\big(\tfrac12-\alpha\big)^{N-j_1}\big(\tfrac12+\alpha\big)^{N-j_k}} \frac{e^{(\alpha-\beta)x}-e^{(\alpha-\beta)s_1}}{\alpha-\beta}.
\end{split}
\end{equation}
The second is from $f_{-,-\alpha}^{j_k}(x)\braket{f_{-,\alpha}^{j_1}}{P_{s_1} f_{+,\beta}^{j_1}}$. Thus for $x>s_1$ the two terms together give
\begin{equation}
\begin{split}
f_{-,-\alpha}^{j_k}(x)\braket{f_{-,\alpha}^{j_1}}{P_{s_1} f_{+,\beta}^{j_1}}-(\tilde \e_2^{j_k j_1} f_{+,\beta}^{j_1})(x) = &\frac{\big(\tfrac12-\beta\big)^{N-j_1} \phi(\beta) e^{-\alpha x}}{\big(\tfrac12-\alpha\big)^{N-j_1}\big(\tfrac12+\alpha\big)^{N-j_k}}\frac{e^{(\alpha-\beta)x}-e^{(\alpha-\beta)s_1}}{\alpha-\beta}\\
&-\frac{\big(\tfrac12-\beta\big)^{N-j_1} \phi(\beta) e^{\alpha x}}{\big(\tfrac12+\alpha\big)^{N-j_1}\big(\tfrac12-\alpha\big)^{N-j_k}}\frac{e^{-(\alpha+\beta)x}-e^{-(\alpha+\beta)s_1}}{\alpha+\beta}.
\end{split}
\end{equation}
The $\beta\to-\alpha$ limit of the above is
\begin{equation}
\begin{split}
&\lim_{\beta\to-\alpha}f_{-,-\alpha}^{j_k}(x)\braket{f_{-,\alpha}^{j_1}}{P_{s_1} f_{+,\beta}^{j_1}}-(\tilde \e_2^{j_k j_1}P_{s_1} f_{+,\beta}^{j_1})(x)\\
& \quad =\frac{\phi(-\alpha)}{\big(\frac 12-\alpha\big)^{N-j_1}}\Big[\big(\tfrac 12+\alpha\big)^{j_k-j_1}e^{-\alpha x}\frac{e^{2\alpha x}-e^{2\alpha s_1}}{2\alpha}+\big(\tfrac 12-\alpha\big)^{j_k-j_1}e^{\alpha x}(x-s_1)\Big]\\
& \quad = \frac{\phi(-\alpha)}{\big(\frac 12-\alpha\big)^{N-j_1}}\Big[\big(\tfrac 12+\alpha\big)^{j_k-j_1}e^{\alpha s_1}\frac{\sinh \alpha(x-s_1)}{\alpha}+\big(\tfrac 12-\alpha\big)^{j_k-j_1}e^{\alpha x}(x-s_1)\Big],
\end{split}
\end{equation}
which is our $\mathsf{j}$ function.
\end{proof}

We now put everything together.

\begin{lem} \label{lem:term_B}
The term $\braket{P_sY_2}{ (\Id - \breve{G}_s)^{-1} \widehat{G}_s \widetilde X_2 } = \braket{P_sY_{2, \rm conj}}{ (\Id - \breve{G}_{s, \rm conj})^{-1} Q_{2, \rm conj} }$ is analytic for $\alpha,\beta\in(-1/2,1/2)$. Its $\beta \to-\alpha$ limit is given by:
\begin{equation}
\lim_{\beta\to-\alpha} \braket{P_sY_2}{ (\Id - \breve{G}_s)^{-1} \widehat{G}_s \widetilde X_2 } = \braket{P_s Y_2}{ (\Id - \breve{\mathsf G}_s)^{-1} \mathsf{Q}_{2}}
\end{equation}
with $\mathsf{Q}_2$ defined in~\eqref{eq:Q_2_mathsf} and with $\breve{\mathsf G}_s = \lim_{\beta \to -\alpha} \breve{G}_s$.
\end{lem}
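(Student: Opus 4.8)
The plan is to obtain the statement by assembling three analytic ingredients already established: the trace-class analyticity of $\breve K_{s,{\rm conj}}$ proved inside Proposition~\ref{prop:cvgOfFredPf}, and the $L^2$-analyticity of the two vectors $Y_{2,{\rm conj}}$ and $Q_{2,{\rm conj}}$ supplied by Lemmas~\ref{lem:Y_2_bounds} and~\ref{lem:Q_2_limits}. One works throughout with the conjugated objects, using the identity $\braket{P_sY_2}{(\Id-\breve G_s)^{-1}\widehat G_s\widetilde X_2}=\braket{P_sY_{2,{\rm conj}}}{(\Id-\breve G_{s,{\rm conj}})^{-1}Q_{2,{\rm conj}}}$ recorded above; fix $\epsilon>0$ and the constants $\mu_1>\dots>\mu_m$ as in~\eqref{eq:conj_constants}, and restrict to $\alpha,\beta\in[-1/2+\epsilon,1/2-\epsilon]$, a compact set on which all estimates below are uniform.

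First I would record that $\breve G_{s,{\rm conj}}=J^{-1}\breve K_{s,{\rm conj}}$ is an analytic family of trace-class operators on $\bigoplus_{k=1}^m L^2((s_k,\infty))$ that converges in trace norm to $\breve{\mathsf G}_{s,{\rm conj}}=J^{-1}\breve{\mathsf K}_{s,{\rm conj}}$ as $\beta\to-\alpha$. This is immediate from the proof of Proposition~\ref{prop:cvgOfFredPf}: the entrywise bound~\eqref{eq3.65} is of the form $Ce^{-c(|x|+|y|)}$ uniformly on the parameter range above, the matrix entries of $\breve K_{s,{\rm conj}}(x,y)$ are jointly continuous in $(x,y)$ and analytic in $(\alpha,\beta)$, hence $\breve G_{s,{\rm conj}}$ is trace class and $(\alpha,\beta)\mapsto\breve G_{s,{\rm conj}}$ is analytic into the trace ideal (Morera together with differentiation under the integral, both justified by the uniform bound), while the trace-norm limit follows by dominated convergence using Lemma~\ref{lem:Kbreve}. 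In parallel, the bounds~\eqref{eq:Y_2_bounds} and~\eqref{eq:Q_2_bounds} of Lemmas~\ref{lem:Y_2_bounds} and~\ref{lem:Q_2_limits} give exponential decay for every component of $P_sY_{2,{\rm conj}}$ and $Q_{2,{\rm conj}}$, uniform in $(\alpha,\beta)$ and compatible with~\eqref{eq:conj_constants}; thus $(\alpha,\beta)\mapsto P_sY_{2,{\rm conj}}$ and $(\alpha,\beta)\mapsto Q_{2,{\rm conj}}$ are analytic $\bigoplus_kL^2((s_k,\infty))$-valued maps, with $Y_{2,{\rm conj}}$ independent of $\beta$ and $Q_{2,{\rm conj}}\to\mathsf Q_{2,{\rm conj}}$ in $L^2$ as $\beta\to-\alpha$ (dominated convergence once more).

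The remaining, and I expect the only genuinely delicate, point is to control the resolvent $(\Id-\breve G_{s,{\rm conj}})^{-1}$: one must know that $\Id-\breve G_{s,{\rm conj}}$ is invertible for every $\alpha,\beta\in(-1/2,1/2)$, equivalently (since $\det(\Id-\breve G_{s,{\rm conj}})=\det(\Id-J^{-1}\breve K_{s,{\rm conj}})=\pf(J-\breve K_s)^2$) that the Fredholm Pfaffian of Proposition~\ref{prop:cvgOfFredPf} is nowhere zero. I would establish this by the argument of Section~3.3.3 of~\cite{BFO20}: for $\beta>0$ one has $\pf(J-K_s)=\Pb\bigl(\bigcap_\ell\{L^{\rm pf}_{N,j_\ell}\le s_\ell\}\bigr)>0$ for $s_\ell>0$, so $\Id-G_s$ is invertible there; since $\Id-\breve G_s$ differs from $\Id-G_s$ only by the finite-rank perturbation $(\alpha+\beta)T_s$ with $T_s=J^{-1}R_s=\ketbra{X_1}{Y_1}+\ketbra{X_2}{Y_2}$ of rank at most two, the corresponding rank-$\le2$ determinant can be evaluated using the orthogonality and equality relations~\eqref{eq:xz_eq} (cf.~\eqref{eq:det_decomp}) and is seen not to vanish for $\beta>0$, so $\det(\Id-\breve G_{s,{\rm conj}})\neq0$ on $\{\beta>0\}$; as this determinant is analytic in $(\alpha,\beta)$ on the connected set $(-1/2,1/2)^2$, one then rules out zeros in the remaining region $\{\beta\le0\}$ exactly as in~\cite{BFO20}, so $\Id-\breve G_{s,{\rm conj}}$ is boundedly invertible throughout, uniformly on compacts. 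This non-vanishing of $\pf(J-\breve K_s)$ is the main obstacle; everything else is routine.

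Granting invertibility, the conclusion follows immediately. On each compact $(\alpha,\beta)$-set $(\Id-\breve G_{s,{\rm conj}})^{-1}$ is a uniformly bounded analytic family of bounded operators, because $A\mapsto(\Id-A)^{-1}$ is analytic wherever $\Id-A$ is invertible, and it converges in operator norm to $(\Id-\breve{\mathsf G}_{s,{\rm conj}})^{-1}$ as $\beta\to-\alpha$ by the second resolvent identity and the trace-norm convergence established above. Hence $(\alpha,\beta)\mapsto\braket{P_sY_{2,{\rm conj}}}{(\Id-\breve G_{s,{\rm conj}})^{-1}Q_{2,{\rm conj}}}$ is a pairing of jointly analytic ingredients, so analytic, and passing to the limit term by term yields
\[
\lim_{\beta\to-\alpha}\braket{P_sY_{2,{\rm conj}}}{(\Id-\breve G_{s,{\rm conj}})^{-1}Q_{2,{\rm conj}}}=\braket{P_sY_{2,{\rm conj}}}{(\Id-\breve{\mathsf G}_{s,{\rm conj}})^{-1}\mathsf Q_{2,{\rm conj}}}=\braket{P_sY_2}{(\Id-\breve{\mathsf G}_s)^{-1}\mathsf Q_2},
\]
with $\mathsf Q_2$ as in~\eqref{eq:Q_2_mathsf}, which is the asserted identity. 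In short, beyond the invertibility step the proof reduces to uniform exponential decay (giving trace-class and $L^2$ membership), dominated convergence, and the analyticity of the resolvent map.
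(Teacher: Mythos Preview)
Your approach matches the paper's: assemble the exponential bounds on the conjugated kernel from the proof of Proposition~\ref{prop:cvgOfFredPf} with those on $Y_{2,{\rm conj}}$ and $Q_{2,{\rm conj}}$ from Lemmas~\ref{lem:Y_2_bounds} and~\ref{lem:Q_2_limits}, and pass to the limit by dominated convergence. The paper's own argument is terser and in fact does not address the invertibility of $\Id-\breve G_{s,{\rm conj}}$ at all; you are right to flag it as the one non-routine point.

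That said, your sketched invertibility argument contains a gap. From non-vanishing of the (real-)analytic function $(\alpha,\beta)\mapsto\det(\Id-\breve G_{s,{\rm conj}})$ on the open half $\{\beta>0\}$ you cannot conclude non-vanishing on all of $(-1/2,1/2)^2$: real-analytic functions of two variables may well vanish on codimension-one sets (the function $(\alpha,\beta)\mapsto\beta$ is already a counterexample), so the clause ``one then rules out zeros in the remaining region $\{\beta\le0\}$'' does not follow from analyticity alone. Also note that the factorization~\eqref{eq:det_decomp} you invoke defines $Z_i=(\Id-\breve G)^{-1}X_i$ and hence already presupposes the invertibility you are trying to establish. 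A clean way around the whole issue---used elsewhere in the paper (see the proof of Theorem~\ref{thm:main_asymptotics} and Appendix~\ref{AppWellDef})---is to avoid the bare resolvent and work with the product $\pf(J-\breve K_s)\cdot\braket{P_sY_2}{(\Id-\breve G_s)^{-1}Q_2}$, rewriting it via the identity
\[
\pf(J-K)\,\braket{g}{(\Id-J^{-1}K)^{-1}h}=\pf(J-K)-\pf\bigl[J-K-J\ketbra{h}{g}-\ketbra{g}{h}J\bigr]
\]
as a difference of two Fredholm Pfaffians. Each of these is analytic in $(\alpha,\beta)$ by the same bounds and argument as in Proposition~\ref{prop:cvgOfFredPf}, and since only this product enters the final formula~\eqref{eq:main_finite}, that suffices.
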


\begin{proof}
Analyticity of the whole inner product follows from the analyticity of all of the different entries in it, together with the bounds on the conjugated kernel and functions obtained above in~\eqref{eq3.65} (see also~\eqref{eq3.31}), \eqref{eq:Y_2_bounds}, and~\eqref{eq:Q_2_bounds}.

We see that the various products are bounded by functions which decay exponentially as $x, y \to \infty$. E.g.~the bound on the integrand inside the scalar product in the even $2k$ summands ($1 \leq k \leq m$) is, up to constants, $e^{-\epsilon x/2}$ as $x \to \infty$. Informally speaking, this case corresponds to the integral $\int_{s_k}^\infty (Y_{2, \rm conj})_{2k} (x) (Q_{2, \rm conj})_{2k} (x) dx$ coming from the identity term in the Neumann expansion of $(\Id-\breve{G}_{s, \rm conj})^{-1}$. The same is true for the odd summands of the overall inner product. These bounds allow us to pass the $\beta \to -\alpha$ limit inside each integral yielding the result.
\end{proof}

\subsubsection{Analyticity of term C}

Let us begin by computing the vector $U_2 = W (\Id-P_s) \widetilde{X}_2$.
\begin{lem}
 The $2m \times 1$ vector $U_2 = W (\Id-P_s) \widetilde{X}_2$ is analytic for $\alpha, \beta \in (-1/2, 1/2)$ having the following explicit form:
 \begin{equation}
 (U_2)_{\ell}(x) =
 \begin{dcases}
 0, & \text{if } \ell=2 \text{ or $\ell$ is odd},\\
  \Id_{[x \geq s_1]} f^{j_1}_{+, \beta} (s_1) \oint\limits_{\Gamma_{1/2}} \frac{dz}{2 \pi \I} \frac{e^{-(x-s_1)z}}{(\tfrac12-z)^{j_k-j_1} (z-\beta)} - \Id_{[x < s_1]} f^{j_k}_{+, \beta} (x), & \text{if } \ell=2k>2.
 \end{dcases}
 \end{equation}
\end{lem}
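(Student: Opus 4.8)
The plan is to compute $U_2 = W(\Id - P_s)\widetilde X_2$ by direct evaluation. First I would observe that $(\Id - P_s)\widetilde X_2$ has a single nonzero entry, namely $\Id_{[x \le s_1]}\,f^{j_1}_{+,\beta}(x)$ sitting in its second component. Since $W = J^{-1}V$ and $V$ only has the block $V^{j_k j_\ell}(x,y)$ in the $12$-slot of its $(k,\ell)$ block for $k>\ell$, left multiplication by $J^{-1}$ moves this entry into the $22$-slot; in particular the $(k,1)$ block of $W$ equals $\mathrm{diag}(0,\,V^{j_k j_1})$ for $k>1$ and vanishes for $k=1$. Hence every odd component of $U_2$ vanishes, its second component vanishes, and for $1<k\le m$
\[
 (U_2)_{2k}(x) = \int_{-\infty}^{s_1} V^{j_k j_1}(x,y)\, f^{j_1}_{+,\beta}(y)\, dy .
\]
This already gives the claimed shape of the answer, and it only remains to evaluate this scalar integral in the two regimes $x<s_1$ and $x\ge s_1$.

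For $x<s_1$ I would use the explicit polynomial representation of $V^{j_k j_1}$ from~\eqref{eq:V}: its indicator $\Id_{[x\ge y]}$ cuts the integration down to $y<x$, and after the substitution $u=x-y$ the integral reduces to $\int_0^\infty u^{j_k-j_1-1}e^{-u(\frac12-\beta)}\,du=(j_k-j_1-1)!\,(\tfrac12-\beta)^{-(j_k-j_1)}$; collecting the prefactors hidden in $f^{j_1}_{+,\beta}$ then yields exactly $-f^{j_k}_{+,\beta}(x)$. For $x\ge s_1$ I would instead use the $\I\R$-representation of $V^{j_k j_1}$ and repeat the contour-shifting device from the proof of Lemma~\ref{lem:magic}: move the $z$-contour to $\beta+\eta+\I\R$ with $\eta>0$ small enough that $\beta+\eta<\tfrac12$, which renders $\int_{-\infty}^{s_1}e^{y(z-\beta)}\,dy$ absolutely convergent and equal to $e^{s_1(z-\beta)}/(z-\beta)$; the surviving prefactors combine into $f^{j_1}_{+,\beta}(s_1)$. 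Because $x-s_1\ge0$ and the remaining integrand decays like $|z|^{-(j_k-j_1)-1}$ in the right half-plane, closing the contour to the right around the pole $z=\tfrac12$ converts $\int_{\beta+\eta+\I\R}$ into $-\oint_{\Gamma_{1/2}}$, and the two sign changes cancel to produce precisely
\[
 f^{j_1}_{+,\beta}(s_1)\oint_{\Gamma_{1/2}} \frac{dz}{2\pi\I}\,\frac{e^{-(x-s_1)z}}{(\tfrac12-z)^{j_k-j_1}\,(z-\beta)} .
\]

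Finally, analyticity for $\alpha,\beta\in(-1/2,1/2)$ is immediate from the resulting formula: $U_2$ carries no $\alpha$-dependence whatsoever, and its $\beta$-dependence enters only through $f^{j_k}_{+,\beta}$, $f^{j_1}_{+,\beta}$, and the last contour integral, all of which are manifestly analytic on $(-1/2,1/2)$ once $\Gamma_{1/2}$ is taken to be a small circle bounded away from $\beta$. The only places requiring care are the case split at $x=s_1$ and the contour shift-and-close in the regime $x\ge s_1$; both are handled exactly as in the one-point computation of~\cite{BFO20} and in the proof of Lemma~\ref{lem:magic}, so no genuine obstacle arises.
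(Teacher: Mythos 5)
Your proposal is correct and follows essentially the same route as the paper: reduce to the scalar integral $\int_{-\infty}^{s_1}V^{j_k j_1}(x,y)\,f^{j_1}_{+,\beta}(y)\,dy$ after identifying the sole nonzero block of $W$ acting on the sole nonzero entry of $(\Id-P_s)\widetilde X_2$, then use the $\I\R$-representation of $V$ with a contour shift to $\beta+\eta+\I\R$. The only cosmetic difference is the regime $x<s_1$: the paper derives the $-\Id_{[x<s_1]}f^{j_k}_{+,\beta}(x)$ term by closing the shifted contour to the left and picking up the residue at $z=\beta$, whereas you obtain it directly from the polynomial representation of $V$ and a gamma integral; the two computations are equivalent.
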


\begin{proof}
 Let us begin by fixing a small positive $\epsilon$ and picking $\beta \in [-1/2+\epsilon, 1/2-\epsilon]$. From the explicit (and sparse) form of $W$, it follows immediately that the $\ell$-th component $(U_2)_\ell$ is zero unless $\ell = 2k > 2$ for some $k$. For that case we can perform the product explicitly:
 \begin{equation}
 \begin{split}
 (U_2)_{2k}(x) &= \int_{-\infty}^{s_1} V^{j_k j_1} (x, y) f^{j_1}_{+, \beta} (y) dy \\
 &= -\phi(\beta) (\tfrac12-\beta)^{N-j_1} \int_{-\infty}^{s_1} dy e^{-y \beta} \int_{\I \R + \beta + \eta} \frac{dz}{2 \pi \I} \frac{e^{-(x-y)z}}{(\tfrac12-z)^{j_k-j_1}} \\
 &= - f^{j_1}_{+, \beta} (s_1) \int_{\I \R + \beta + \eta} \frac{dz}{2 \pi \I} \frac{e^{-(x-s_1)z}}{(\tfrac12-z)^{j_k-j_1} (z-\beta)} \\
 &= \Id_{[x \geq s_1]} f^{j_1}_{+, \beta} (s_1) \oint\limits_{\Gamma_{1/2}} \frac{dz}{2 \pi \I} \frac{e^{-(x-s_1)z}}{(\tfrac12-z)^{j_k-j_1} (z-\beta)} - \Id_{[x < s_1]} f^{j_k}_{+, \beta} (x),
 \end{split}
 \end{equation}
for some small $0 < \eta \ll 1$. For the second equality we used that
\begin{equation}
 V^{j_k j_1} (x, y) = -\Id_{[j_k > j_1]} \int_{\I \R} \frac{dz}{2 \pi \I} \frac{e^{-(x-y)z}}{(\tfrac12-z)^{j_k-j_1}}
\end{equation}
and $\Id_{[j_k > j_1]} = 1$ as $k > 1$. We can shift the contour to the right so it sits between $\beta$ and $1/2$ (becoming $\I \R + \beta + \eta$ for small positive $\eta$). This ensures $\Re(z) > \beta$ and we can then explicitly perform the $dy$ integral obtaining the third equality. We can then close the vertical contour at $\infty$ if $x \geq s_1$ (and at $-\infty$ otherwise) to pick the two terms in the fourth equality. The first summand changes sign due to reversing the orientation of the clockwise contour into counter-clockwise $\Gamma_{1/2}$, while the second summand is just the residue of the integrand at $z = \beta$ upon rearranging some factors. The end result is clearly analytic in $\beta$ since we can fix the $\Gamma_{1/2}$ close enough to $1/2$ ($|z-1/2|=\epsilon/2$ would do). It is also independent of (and hence analytic in) $\alpha$.
\end{proof}

To continue, we wish to recall the following fact:
\begin{equation}
 \braket{P_sY_2}{(\Id-\breve{G}_s)^{-1} P_s W (\Id-P_s) \widetilde{X}_2} = \braket{P_sY_{2, \rm conj}}{(\Id-\breve{G}_{s, \rm conj})^{-1} P_s U_{2, \rm conj}},
\end{equation}
where
\begin{equation}
 Y_{2,\rm conj}=Y_2 M, \quad U_{2,\rm conj}=M^{-1} U_2
\end{equation}
and $M$ is defined in~\eqref{eq:M_def}.

\begin{lem} \label{lem:lim_U_2}
For $\alpha, \beta \in [-1/2 + \epsilon, 1/2 - \epsilon]$ we have the following bounds:
\begin{equation}\label{eq:U_2_bounds}
 |(U_{2, \rm conj})_{2k} (y)| \leq C e^{[\mu_k - (\frac{1}{2} - \frac{\epsilon}{2})] y}, \quad 1 < k \leq m, \quad (U_{2, \rm conj})_\ell(y)=0 \quad \text{ otherwise}.
\end{equation}
Let us write $\mathsf{U}_2 = \lim\limits_{b \to -\alpha} U_2$. We then have, for any $\eta\in (-\alpha,1/2)$,
\begin{equation}
 \begin{split}
 (\mathsf{U}_2)_{2k}(y) &= - f^{j_1}_{+, -\alpha} (s_1) \int_{\I \R +\eta} \frac{dz}{2 \pi \I} \frac{e^{-(y-s_1)z}}{(\tfrac12-z)^{j_k-j_1} (z+\alpha)} \\
 &= \Id_{[y \geq s_1]} f^{j_1}_{+, -\alpha} (s_1) \oint\limits_{\Gamma_{1/2}} \frac{dz}{2 \pi \I} \frac{e^{-(y-s_1)z}}{(\tfrac12-z)^{j_k-j_1} (z+\alpha)} - \Id_{[y < s_1]} f^{j_k}_{+, -\alpha} (y),
 \end{split}
\end{equation}
 for $1 < k \leq m$ and $(\mathsf{U}_2)_{\ell}(y) = 0$ otherwise.
\end{lem}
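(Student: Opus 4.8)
The plan is to read everything off the explicit formula for $U_2 = W(\Id-P_s)\widetilde X_2$ obtained in the lemma just above: only the even components $(U_2)_{2k}$ with $1 < k \le m$ are nonzero, each the sum of a $\Gamma_{1/2}$-contour integral (supported on $x \ge s_1$) and the term $-f^{j_k}_{+,\beta}$ (supported on $x < s_1$). Conjugation by $M$ of~\eqref{eq:M_def} multiplies the $2k$-th slot by $e^{\mu_k y}$ (note that it is $M^{-1}$, not $M$, that acts on a column vector), leaving the odd slots and the second slot zero. First I would establish analyticity: fixing the contour in the first term as the circle $|z-\tfrac12|=\epsilon/2$ — permissible since $\beta$ is never enclosed by $\Gamma_{1/2}$ and, for $\beta \le \tfrac12-\epsilon$, stays outside this circle — the integrand depends on $\beta$ only through the simple factor $1/(z-\beta)$ with $z$ bounded away from $\beta$, while $f^{j_k}_{+,\beta}$ is analytic for $\beta \in (-1/2,1/2)$; the whole vector is moreover independent of $\alpha$.

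Next I would prove the bounds~\eqref{eq:U_2_bounds}. On the branch $y \ge s_1$, using $|e^{-(y-s_1)z}| \le e^{-(\frac12-\frac\epsilon2)(y-s_1)}$ on $|z-\tfrac12|=\epsilon/2$ and multiplying by the conjugation factor $e^{\mu_k y}$ gives, after absorbing the fixed $s_1$-dependence into the constant, the claimed bound $C e^{[\mu_k-(\frac12-\frac\epsilon2)]y}$. On the branch $y < s_1$: once the external projector $P_s$ is in force we only need $y > s_k$, so this branch is either empty (when $s_k \ge s_1$) or confined to the compact window $(s_k, s_1)$, on which $e^{\mu_k y} f^{j_k}_{+,\beta}(y)$ is bounded by a constant and hence by a constant times the claimed exponential. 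This yields~\eqref{eq:U_2_bounds} uniformly in $\alpha,\beta \in [-1/2+\epsilon, 1/2-\epsilon]$.

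Finally, for $\mathsf{U}_2 = \lim_{\beta\to-\alpha} U_2$: since $-\alpha \in (-1/2,1/2)$ stays away from the pole at $1/2$, I would pass to the limit pointwise in the explicit formula by continuity, obtaining for $1 < k \le m$ the split expression $\Id_{[y\ge s_1]} f^{j_1}_{+,-\alpha}(s_1)\oint_{\Gamma_{1/2}}\frac{dz}{2\pi\I}\frac{e^{-(y-s_1)z}}{(\tfrac12-z)^{j_k-j_1}(z+\alpha)} - \Id_{[y<s_1]} f^{j_k}_{+,-\alpha}(y)$, with the pole now at $z=-\alpha$. To recover the single-contour form, I would reverse the closing-of-contours step that produced $U_2$ in the first place: for any $\eta \in (-\alpha, 1/2)$ the line $\I\R+\eta$ lies between the poles $-\alpha$ and $1/2$, and closing it to the right when $y \ge s_1$ (picking up the residue at $1/2$, which rebuilds the $\Gamma_{1/2}$ integral) or to the left when $y < s_1$ (picking up the residue at $-\alpha$, which rebuilds $-f^{j_k}_{+,-\alpha}$) identifies $(\mathsf U_2)_{2k}(y)$ with $-f^{j_1}_{+,-\alpha}(s_1)\int_{\I\R+\eta}\frac{dz}{2\pi\I}\frac{e^{-(y-s_1)z}}{(\tfrac12-z)^{j_k-j_1}(z+\alpha)}$, exactly as in the one-point case of~\cite{BFO20}. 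I expect the only (minor) obstacle to be the orientation and enclosed-pole bookkeeping in these contour closures, together with the $s_k$-versus-$s_1$ case split in the bounds; neither is a genuine difficulty, so I do not anticipate any serious issue in this lemma.
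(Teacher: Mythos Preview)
Your proposal is correct and follows essentially the same approach as the paper: choose the contour $|z-\tfrac12|=\epsilon/2$ to get the exponential bound in the regime $y\ge s_1$, and pass the limit $\beta\to-\alpha$ inside by dominated convergence. You are in fact more thorough than the paper, which dismisses the $y<s_1$ case with a parenthetical ``we are only interested in the unbounded regime $y\ge s_1$'' and does not spell out the contour-closing step recovering the $\I\R+\eta$ representation; your treatment of both points is fine.
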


\begin{proof}
 The bound comes from taking the integration contour in the even entries $2k > 2$ of $U_2$ to be $|z-1/2|=\epsilon/2$ (note we are only interested in the unbounded regime $y \geq s_1$). By dominated convergence we can pass the limit $\beta \to -\alpha$ inside the integral to obtain the stated result.
\end{proof}

We now put everything together.

\begin{lem} \label{lem:term_C}
The term $\braket{P_sY_2}{(\Id-\breve{G}_s)^{-1} P_s W (\Id-P_s) \widetilde{X}_2} = \braket{P_sY_2}{(\Id-\breve{G}_s)^{-1} P_s U_2}$ is analytic in $\alpha,\beta\in(-1/2,1/2)$. Its $\beta\to-\alpha$ limit is given by:
\begin{equation}
\lim_{\beta\to-\alpha} \braket{P_sY_2}{(\Id-\breve{G}_s)^{-1} P_s U_2} = \braket{P_s Y_2}{(\Id-\breve{\mathsf G}_s)^{-1} P_s \mathsf{U}_2}.
\end{equation}
\end{lem}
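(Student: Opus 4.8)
The plan is to mirror the argument used for term B in Lemma~\ref{lem:term_B}, since the structure of the inner product is identical; essentially all of the genuinely new work has already been carried out in Lemma~\ref{lem:lim_U_2}, where $U_2$ was computed explicitly, shown to be analytic and bounded after conjugation, and where its $\beta\to-\alpha$ limit $\mathsf{U}_2$ was identified. First I would pass to the conjugated quantities, writing
\[
\braket{P_sY_2}{(\Id-\breve{G}_s)^{-1} P_s U_2} = \braket{P_sY_{2,\rm conj}}{(\Id-\breve{G}_{s,\rm conj})^{-1} P_s U_{2,\rm conj}},
\]
with $M$ as in~\eqref{eq:M_def}, so that $\breve{G}_{s,\rm conj}$, $Y_{2,\rm conj}$ and $U_{2,\rm conj}$ all lie in the relevant $L^2$ space.

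Next I would expand $(\Id-\breve{G}_{s,\rm conj})^{-1}$ in its Neumann series and combine the entrywise exponential decay of $\breve{K}_{\rm conj}$ (hence of $\breve{G}_{s,\rm conj}$) from~\eqref{eq3.65}, the bounds~\eqref{eq:Y_2_bounds} on $Y_{2,\rm conj}$, and the bounds~\eqref{eq:U_2_bounds} on $U_{2,\rm conj}$, so as to obtain a Hadamard-type estimate on each term of the series that is summable and uniform for $\alpha,\beta\in[-1/2+\epsilon,1/2-\epsilon]$. Since each term is analytic in $(\alpha,\beta)$ (the kernel $\breve{K}$ is analytic by Lemma~\ref{lem:Kbreve}, $Y_2$ trivially so, and $U_2$ by the lemma preceding this one) and the series converges locally uniformly, the whole inner product is analytic on $(-1/2,1/2)^2$.

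For the $\beta\to-\alpha$ limit I would use that $Y_2$ is independent of $\beta$, that $\breve{G}_{s,\rm conj}\to\breve{\mathsf G}_{s,\rm conj}$ by Lemma~\ref{lem:Kbreve} and Proposition~\ref{prop:cvgOfFredPf}, and that $U_{2,\rm conj}\to\mathsf{U}_{2,\rm conj}$ by Lemma~\ref{lem:lim_U_2}, all accompanied by the uniform exponential bounds above; dominated convergence then lets me pass the limit inside each term of the Neumann series and inside the integrals defining the scalar product, yielding
\[
\lim_{\beta\to-\alpha}\braket{P_sY_2}{(\Id-\breve{G}_s)^{-1} P_s U_2} = \braket{P_sY_2}{(\Id-\breve{\mathsf G}_s)^{-1} P_s \mathsf{U}_2}.
\]
I do not anticipate a real obstacle here: the only delicate point, that the $z$-contour defining $U_2$ must avoid the pole at $\beta$ (and at $-\alpha$ in the limit) as well as the pole at $1/2$ so that conjugation by $M$ keeps everything $L^2$, was already dealt with in the proof of Lemma~\ref{lem:lim_U_2}, and the absolute convergence of the Neumann series is exactly the one used for term B.
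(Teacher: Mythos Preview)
Your proposal is correct and follows essentially the same route as the paper: both arguments combine the exponential bounds~\eqref{eq3.65}, \eqref{eq:Y_2_bounds}, and~\eqref{eq:U_2_bounds} after conjugation to obtain uniform decay, conclude analyticity from the analyticity of the individual ingredients, and then pass the $\beta\to-\alpha$ limit inside by dominated convergence. Your write-up is a bit more explicit about the Neumann-series mechanism and the role of Lemma~\ref{lem:lim_U_2}, but the content is the same.
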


\begin{proof}
 Analyticity of the whole inner product follows from the analyticity of all of the different entries in it and the bounds ensuring the scalar product to be well-defined. Fixing $\epsilon >0$ and putting together the bounds of~\eqref{eq3.65}, \eqref{eq:Y_2_bounds}, and~\eqref{eq:U_2_bounds}, we see that the various products are bounded by functions exponentially decaying at infinity. For example, the bound on the integrand inside the scalar product in the $2k$-th summand ($k>1$) is, up to constants, $e^{-\epsilon x/2}$ as $x \to \infty$. Thus we can pass the $\beta \to -\alpha$ limit inside each integral yielding the result.
\end{proof}

\subsection{Proof of Theorem~\ref{thm:main_finite}}

\begin{proof}[Proof of Theorem~\ref{thm:main_finite}]
 We start from the integrable model with parameters $\alpha \in (-1/2, 1/2)$ and $\beta \in (0, 1/2)$. Theorem~\ref{thm:exp_corr} then gives a formula for the multipoint distribution of the integrable (Pfaffian) LPP times, but with $\beta>0$. By the shift argument, Lemma~\ref{lem:shift}, we can remove the random variable $\tilde{\omega}_{1,1}$ at the origin. By the analytic continuation contained in: Proposition~\ref{prop:cvgOfFredPf} (for the Fredholm Pfaffian), Lemma~\ref{lem:term_A} (for term A), Lemma~\ref{lem:term_B} (for term B), and Lemma~\ref{lem:term_C} (for term C) we have that: (a) the model is indeed well-defined for any $\alpha, \beta \in(-1/2, 1/2)$, (b) all the terms are furthermore analytic for $\alpha$ and $\beta$ in the described range, and (c) we can take the $\beta \to -\alpha$ limit to obtain the multipoint distribution of the stationary model.

 This implies the result save for the complicated notation we have used in Section~\ref{sec:finite_proof}. We now explain the translation from the notation of Section~\ref{sec:finite_proof} (left below) to the cleaner less cumbersome notation of Section~\ref{sec:results_finite} (right below):
 \begin{equation}
 \begin{split}
 (\mathsf{Q_2}, \mathsf{U}_2, Y_2) &\longrightarrow (\mathsf{Q}, \mathsf{U}, \mathsf{Y}), \\
 (\varepsilon, V) &\longrightarrow (\mathsf{E}, \mathsf{V}), \\
 (g_1^{\dots}, g_2^{\dots}, f_{\dots}^{\dots}) &\longrightarrow (\mathsf{g}_1^{\dots}, \mathsf{g}_2^{\dots}, \mathsf{f}_{\dots}^{\dots}).
 \end{split}
 \end{equation}
 This finishes the proof of Theorem~\ref{thm:main_finite}.
\end{proof}

\section{Asymptotic analysis: proof of Theorem~\ref{thm:main_asymptotics}} \label{sec:proof_asymptotics}

In this section we prove Theorem~\ref{thm:main_asymptotics}, our main asymptotic result.

Let us fix an integer $m \geq 1$, $\delta \in \R$, $m$ real numbers $S_1, \dots, S_m$, and $m$ \emph{ordered} non-negative real numbers $u_1 > u_2 > \dots > u_m \geq 0$. As anticipated in Section~\ref{sec:results_asymptotics} we consider the scaling
\begin{equation}
 \begin{split}
 \alpha &= \delta 2^{-4/3} N^{-1/3}, \\
 s_k & =4 N - 2 u_k 2^{5/3}N^{2/3} + S_k\, 2^{4/3}N^{1/3}, \\
 N - j_k &= u_k 2^{5/3} N^{2/3}
 \end{split}
\end{equation}
and we find it convenient at times to use $s$ ($S$) for one of the $s_k$'s ($S_k$'s) generically, $(j, j')$ for a generic pair $(j_k, j_\ell)$, and $(u, v)$ for a generic pair $(u_k, u_\ell)$. Therefore we have
\begin{equation}
 \begin{split}
 s &= 4 N - 2 u 2^{5/3}N^{2/3} + S\, 2^{4/3}N^{1/3},\\
 (N - j, N-j') &= (u, v) 2^{5/3} N^{2/3}.
 \end{split}
\end{equation}

Accordingly, in the functions and/or kernels, we need to scale $x,y$ as
\begin{equation}
\begin{split}
 x &=4 N -2 u 2^{5/3}N^{2/3} + X 2^{4/3} N^{1/3},\\
 y &=4 N -2 v 2^{5/3}N^{2/3} + Y 2^{4/3} N^{1/3},
\end{split}
\end{equation}
while in the integrals we will consider the change of variables
\begin{equation}
z=\zeta/(2^{4/3}N^{1/3}), \quad w=\omega/(2^{4/3} N^{1/3}).
\end{equation}
Observe from the Fredholm expansion\footnote{Here $I_k = (s_k, \infty)$ and $[K^{(n)} (j_{i_a}, x_a; j_{i_b}, x_b)]_{1 \leq a, b \leq n}$ is the skew-symmetric $2n \times 2n$ matrix with $2 \times 2$ block at $(a, b)$ given by the matrix kernel $K (j_{i_a}, x_a; j_{i_b}, x_b)$, $1 \leq a, b \leq m$.}
 \begin{equation}
 \pf (J - P_s \breve{\mathsf K} P_s) = \sum_{n=0}^{\infty} \frac{(-1)^n}{n!} \sum_{i_1, \dots, i_n = 1}^m \int_{I_{i_1} \times \cdots \times I_{i_n}} \pf [\breve{\mathsf K}^{(n)} (j_{i_a}, x_a; j_{i_b}, x_b)]_{1 \leq a, b \leq n} \prod_{a=1}^n d x_a
 \end{equation}
	that the Pfaffian has to be multiplied by the volume element $(2^{4/3}N^{1/3})^n$. This implies that elements of each block of the Pfaffian kernel have to be rescaled and conjugated as follows:
\begin{equation}
\begin{split}
\mathcal{K}_{11}^{u v ,\, \rm resc}(X,Y) &= (2^{4/3}N^{1/3})^2 2^{2N-(j+j')} \mathcal{K}^{j j'}_{11} (x,y),\\
\mathcal{K}_{12}^{u v ,\, \rm resc}(X,Y) &= 2^{4/3} N^{1/3} 2^{j'-j} \mathcal{K}^{j j'}_{12} (x,y),\\
\mathcal{K}_{22}^{u v ,\, \rm resc}(X,Y) &= 2^{-2N+j+j'} \mathcal{K}^{j j'}_{22} (x,y),
\end{split}
\end{equation}
where $\mathcal{K} \in\{\breve{\mathsf{K}}, \widetilde{\mathsf{K}}\}$.

Similarly we set ${\mathsf E}^{u v,\, \rm resc}_k(X,Y)=2^{-2N + j + j'}{\mathsf E}^{j j'}_{k}(x,y)$ with $k=0,1$ or empty. Now we rescale the functions
\begin{equation}
 \mathsf{f}_{+}^{-\delta,\, -u,\, {\rm resc}}(X) = 2^{N-j} \mathsf f^j_{+,-\alpha}(x), \quad
 \mathsf{e}^{\delta,\, u,\, \rm resc}(X) = 2^{-4/3} N^{-1/3} \mathsf{e}^{\alpha,\, j}(x),
\end{equation}
as well as
\begin{alignat}{2}
\mathsf{g}^{\delta,\, u,\, \rm resc}_1(X) &= 2^{4/3}N^{1/3}2^{j-N} \mathsf g^{j}_1(x), \quad & \mathsf{g}^{\delta,\, u,\, \rm resc}_2 (X) &=2^{j-N} \mathsf{g}^{j}_2(x) , \nonumber \\
\mathsf{g}^{\delta,\, u,\, \rm resc}_3(X) &= 2^{4/3}N^{1/3}2^{j-N} \mathsf{g}^{j}_3(x), \quad & \mathsf{g}^{\delta,\, u,\, \rm resc}_4(X) &= 2^{j-N} \mathsf{g}^{j}_4(x)
\end{alignat}
and
\begin{equation}
 \begin{split}
 \mathsf{j}^{\delta,\, u ,\, \rm resc} (S, X) &= 2^{-4/3} N^{-1/3} 2^{j-N} \mathsf j^{\alpha,\, j }(s,x), \\
 \mathsf{h}^{\delta,\, u ,\, \rm resc}_1 (X) &= 2^{-4/3} N^{-1/3} 2^{j-N} \mathsf h^{\alpha,\, j }_1(x), \\
 \mathsf{h}^{\delta,\, u ,\, \rm resc}_2 (X) &= 2^{N - j}  \mathsf h^{\alpha,\, j }_2(x)
 \end{split}
\end{equation}
and finally
\begin{equation}
 \begin{split}
 \mathsf{U}_{2k}^{\delta,\, u_k,\, \rm resc} (Y) &= 2^{4/3}N^{1/3}2^{k-N}  \mathsf{U}_{2k} (y), \quad 1 < k \leq m, \\
 \mathsf{U}_{\ell}^{\delta,\, u_k,\, \rm resc} (Y) &= 0, \quad \text{otherwise}.
 \end{split}
\end{equation}

The functions and the kernels above are similar to the functions of~\cite[Section~2.3]{BFO20} (for the one-point half-space stationary case) and to those of~\cite{BFP09} (for the full-space multipoint stationary case). The analysis is mostly very similar as well. For these reasons we are not going to repeat all the details of the asymptotic analysis, but only point out the relevant differences.

The limits of the functions entering in the statement of Theorem~\ref{thm:main_asymptotics} are the following.

\begin{lem}\label{lem:AsymptoticsFunctions}
	For any given $L>0$, the following limits hold uniformly for $X\in [-L,L]$:
	\begin{equation}
\begin{aligned}
&\lim_{N\to\infty} \mathsf{f}^{-\delta,\,-u,\, \rm resc}_+(X)=\mathpzc{f}^{-\delta,\, -u}(X),\\
&\lim_{N\to\infty} \mathsf{e}^{\delta,\, u,\, \rm resc}(X)=\mathpzc{e}^{\delta,\, u} (X),\\
&\lim_{N\to\infty}\mathsf{j}^{\delta,\, u,\, \rm resc}(S, X) =\mathpzc{j}^{\delta,\, u}(S,X),
\end{aligned}
	\end{equation}
 as well as
 \begin{equation}
 \lim_{N \to \infty} \mathsf{g}_a^{\delta,\, u,\, \rm resc}(X) = \mathpzc{g}_a^{\delta,\, u} (X),\quad 1 \leq a \leq 4.
 \end{equation}
 Furthermore, for any $X\geq -L$, we have the following bounds which hold uniformly in $N$:
\begin{equation}
 \begin{split}
 |\mathsf{f}^{-\delta,\, -u, \rm resc}_+ (X)| &\leq C e^{\delta X},\\
 |\mathsf{j}^{\delta,\, u, \rm resc}(S,X)| &\leq C |X|e^{|\delta X|},
 \end{split}
\end{equation}
for some constant $C$. Finally, any $\kappa>0$ we have
\begin{equation}
\begin{split}
|\mathsf{g}_1^{\delta,\, u,\, \rm resc}(X)| &\leq C e^{-\kappa X}, \\
|\mathsf{g}_2^{\delta,\, u,\, \rm resc}(X)| &\leq C (e^{-\delta X}+ e^{-\kappa X}),\\
|\mathsf{g}_3^{\delta,\, u,\, \rm resc}(X)| &\leq C e^{-\kappa X}, \\
|\mathsf{g}_4^{\delta,\, u,\, \rm resc}(X)| &\leq C (|X|e^{|\delta X|}+e^{-\kappa X}).
\end{split}
\end{equation}
\end{lem}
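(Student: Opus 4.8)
The plan is to perform a steepest descent analysis on each of the contour integrals defining the finite-size functions $\mathsf{f}_{+,-\alpha}^{j}$, $\mathsf{e}^{\alpha,j}$, $\mathsf{g}_a^{j}$, $\mathsf{j}^{\alpha,j}$, and then directly Taylor-expand the elementary functions. Recall the basic rescaling: with $x = 4N - 2u 2^{5/3}N^{2/3} + X 2^{4/3}N^{1/3}$, $N-j = u 2^{5/3}N^{2/3}$, $\alpha = \delta 2^{-4/3}N^{-1/3}$, and $z = \zeta/(2^{4/3}N^{1/3})$, one has the standard expansion
\begin{equation}
 \log \Phi(x,z) = \log\!\left[ e^{-xz}\Big(\tfrac{1/2+z}{1/2-z}\Big)^{N-1}\right] = \text{(const)} + \tfrac{\zeta^3}{3} - \zeta^2 u\cdot(\pm 1) - \zeta X + o(1),
\end{equation}
uniformly for $\zeta$ in compact sets, together with $(\tfrac12-z)^{N-j} = 2^{-(N-j)} e^{-\zeta^2 u + O(N^{-1/3})}$ and similarly for $(\tfrac12+z)^{N-j}$; the constants and powers of $2$ are exactly those absorbed into the ``resc'' normalizations. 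This is precisely the computation carried out in~\cite[Section~2.3]{BFO20} for the one-point case, and since our $j_k$-dependence enters only through the single integer parameter $N-j$ in each integrand (replacing the $n$ of~\cite{BFO20}), the expansions go through verbatim for each individual function. Under the change of variables the integration contours $\Gamma_{1/2}$, $\Gamma_\alpha$, $\Gamma_{-\alpha}$, etc.\ rescale to contours encircling $0$, $\delta$, $-\delta$ respectively, which after deformation to the steepest-descent paths become the Airy contours $\zcd$, $\wcu$ (with the appropriate decorations) appearing in the definitions \eqref{eq2.26}--\eqref{eq2.24}; the double poles at $z=\alpha$ become double poles at $\zeta=\delta$, giving the $1/(\zeta-\delta)^2$ in $\mathpzc{e}$, and so on. The function $\mathsf{j}^{\alpha,j}$ is elementary: substituting $\alpha = \delta 2^{-4/3}N^{-1/3}$, $s_1 - s$-type differences scaling like $2^{4/3}N^{1/3}(S_1 - S)$, and using $\phi(-\alpha) = \phi(\alpha)^{-1} \to 1$ after the $2^{N-j_1}$ normalization, one checks directly that $\mathsf{j}^{\delta,u,\rm resc}(S,X) \to \mathpzc{j}^{\delta,u}(S,X)$ with $\sinh(\alpha(y-s))/\alpha \to \sinh(\delta(Y-S))/\delta$ and $e^{\alpha(y-s)}(y-s) \to e^{\delta(Y-S)}(Y-S)$.

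For the uniform-in-$N$ bounds, the strategy is the usual one: fix the steepest-descent contours once and for all (independent of $N$ and of $X \geq -L$, and bounded away from the relevant poles), estimate $|\Phi(x,z)/\Phi(y,w)|$ and the polynomial factors along them, and control the tails. Along the descent contour for a $\zcd$-type integral the real part of the cubic exponent $\tfrac{\zeta^3}{3}$ gives Gaussian-type decay at infinity, the linear term $-\zeta X$ is bounded by $e^{\mathrm{Re}(\zeta) X}$, and since the contour can be taken with $\mathrm{Re}(\zeta) \le \kappa$ arbitrarily small (or, for the $\mathpzc{g}_1,\mathpzc{g}_3$-type with contour through $0$, with $\mathrm{Re}(\zeta) = \kappa > 0$ chosen freely on the $X \to +\infty$ side), one extracts the $e^{-\kappa X}$ bounds; the contributions from the poles at $\delta$ and $-\delta$ (present in $\mathpzc{g}_2$, $\mathpzc{g}_4$) produce the residual $e^{-\delta X}$, resp.\ $|X|e^{|\delta X|}$ (the factor $|X|$ from the double pole at $\pm\delta$). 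The bounds $|\mathsf{f}^{-\delta,-u,\rm resc}_+(X)| \le C e^{\delta X}$ and $|\mathsf{j}^{\delta,u,\rm resc}(S,X)| \le C|X|e^{|\delta X|}$ follow trivially from the explicit closed forms. One must take some care that these estimates are genuinely uniform in $N$: this requires the finite-$N$ integrand to be dominated, after the change of variables, by an $N$-independent integrable function along the fixed contour, which is where the precise exponential rate $\tfrac12 - \epsilon$ versus the pole locations (all $O(N^{-1/3})$ close to $0$) matters — the analogous verification was done in~\cite{BFO20}.

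The main obstacle — really a bookkeeping obstacle rather than a conceptual one — is handling the contour configurations for the functions whose integrands have poles at both $\alpha$ and $-\alpha$ (namely $\mathsf{g}_4$, where the contour is $\Gamma_{1/2,\pm\alpha}$): as $\alpha \to 0$ these two poles coalesce and one must expand carefully to see that the finite part converges to the $2\zeta/((\zeta-\delta)(\zeta+\delta)^2)$ integrand rather than blowing up, and that the deformation to the Airy contour $\zcd\,{}_{\pm\delta}$ does not cross a pole in the process. The same delicacy, already confronted in the finite-size analytic continuation of Section~\ref{sec:finite_proof} and in~\cite{BFO20}, is what forces the $|X|e^{|\delta X|}$ (rather than purely exponential) bound on $\mathsf{g}_4$ and $\mathsf{j}$. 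Everything else is a routine saddle-point computation which we omit, referring to~\cite[Section~2.3]{BFO20} and~\cite{BFP09} for the analogous details.
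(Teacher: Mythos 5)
Your proposal matches the paper's approach, which is itself just a citation to~\cite[Lemma~31]{BFO20} and~\cite[Lemma~4.6]{BFP09} noting that the computations carry over with $n$ replaced by $N-j$; your sketch correctly fleshes out what those references do. One small inaccuracy worth flagging: in your discussion of $\mathsf{g}_4$, the poles at $\pm\alpha$ do \emph{not} coalesce under the rescaling --- with $z=\zeta/(2^{4/3}N^{1/3})$ and $\alpha=\delta 2^{-4/3}N^{-1/3}$ they map to $\pm\delta$, which are $N$-independent and a fixed distance apart, so there is no finite-part cancellation to worry about; the only care required, as you correctly note at the end, is that the deformed contour stays on the correct side of $\pm\delta$.
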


\begin{proof}
 The proof is the same as~\cite[Lemma 31]{BFO20} following computations similar to those of~\cite[Lemma 4.6]{BFP09}.
\end{proof}

The limits of the kernels are the following.

\begin{lem}\label{lem:AsymptoticsKernels}
	For any given $L>0$ the following limits hold uniformly for $X,Y\in [-L,L]$:
	\begin{equation}
	\lim_{N \to \infty} \breve{\mathsf{K}}^{uv,\, \rm resc}_{a b}(X,Y)= \breve{\mathcal{A}}^{uv}_{ab} (X, Y),\quad a,b\in\{1,2\}.
	\end{equation}
 Furthermore, for any $X,Y\geq -L$ and $\kappa > 0$, we have the following bounds which hold uniformly in $N$:
\begin{equation}
 \begin{split}
 |\breve{\mathsf{K}}^{uv,\, \rm resc}_{11}(X,Y)| &\leq C e^{-\kappa (X+Y)}, \\
 |\breve{\mathsf{K}}^{uv,\, \rm resc}_{12}(X,Y)| &\leq C (e^{-\kappa (X+Y)}+e^{-\kappa X} e^{\delta Y}) + |\mathcal{V}^{uv,\, \rm resc} (X, Y)|, \\
 |\breve{\mathsf{K}}^{uv,\, \rm resc}_{21}(X,Y)| &\leq C (e^{-\kappa (X+Y)}+e^{\delta X} e^{-\kappa Y}) + |\mathcal{V}^{vu,\, \rm resc} (X, Y)|, \\
 |\breve{\mathsf{K}}^{uv,\, \rm resc}_{22}(X,Y)| &\leq |{\mathcal E}^{uv,\, \rm resc}(X,Y)|+C (e^{-\kappa X}e^{\delta Y}+e^{\delta X} e^{-\kappa Y})
 \end{split}
\end{equation}
and
\begin{equation}
 \begin{split}
 |\mathcal{V}^{uv,\, \rm resc} (X, Y)| &\leq \Id_{[u < v]} C e^{-|X-Y|}, \\
 |{\mathcal E}^{uv,\, \rm resc}_0(X,Y)| &\leq C e^{\delta |X-Y|}, \\
 |{\mathcal E}^{uv,\, \rm resc}_1(X,Y)| &\leq C e^{-(|\delta|+\kappa) |X-Y|},
 \end{split}
\end{equation}
 for some constant $C$.
\end{lem}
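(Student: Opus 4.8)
The plan is to run a saddle-point (steepest-descent) analysis on the double contour integrals defining the blocks $\breve{\mathsf{K}}^{j_k j_\ell}_{ab}$, in the spirit of \cite[Lemma 31]{BFO20} and \cite[Section~4]{BFP09}; the only genuinely new features relative to the one-point case are the dependence on the two ordinates $j_k, j_\ell$ (which in the limit produces the two times $u, v$) and the extra pieces $V^{j_k j_\ell}$ and $\e^{j_k j_\ell}$ sitting inside the kernel. First I would insert the scalings $x = 4N - 2u\,2^{5/3}N^{2/3} + X\,2^{4/3}N^{1/3}$, $N-j = u\,2^{5/3}N^{2/3}$ (and likewise for $y,j',v,Y$), together with $z = \zeta/(2^{4/3}N^{1/3})$, $w = \omega/(2^{4/3}N^{1/3})$ and $\alpha = \delta\,2^{-4/3}N^{-1/3}$, into the integrand. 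After collecting the powers of $2$ absorbed by the rescaling prescription stated above, the $N$-dependent exponential factor in the $z$-variable is
\[
 \exp\!\Big[(N-1)\log\tfrac{\tfrac12+z}{\tfrac12-z} \;-\; xz \;+\; (N-j)\log(\tfrac12-z) \;+\; (\text{compensating constants})\Big],
\]
and a Taylor expansion of the bracket in powers of $\zeta/N^{1/3}$ around $z=0$ yields, once the rescaling is accounted for, the limiting exponent $\tfrac{\zeta^3}{3} - \zeta^2 u - \zeta X$ (and $\tfrac{\omega^3}{3} + \omega^2 v - \omega Y$ in the $w$-variable, with signs depending on whether one is in the $11$, $12$ or $22$ block), while all the remaining rational factors converge to the prefactors in~\eqref{eq:227}; in particular the poles at $z = \pm\alpha$ rescale to poles at $\zeta = \pm\delta$.

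Next I would justify that the rescaled contours may be deformed to the Airy contours $\zcd$ and $\wcu$: one moves $\Gamma_{1/2}$, $\Gamma_{-1/2}$ and the small loops around $\pm\alpha$ so that, after rescaling, they pass through a neighbourhood of the origin along the steepest-descent rays $e^{\pm\I\pi/3}\infty$, $e^{\pm 2\I\pi/3}\infty$, picking up exactly the residues at $\pm\delta$ already present in the finite-$N$ definition of $\breve{\mathsf{K}}$. On the rescaled contour $\Re(\zeta^3/3) \leq -c|\zeta|^3$ for $|\zeta|$ large and some $c>0$, which simultaneously localizes each integral near the origin (giving the pointwise limit by dominated convergence, uniform for $X,Y$ in a compact set) and supplies the uniform-in-$N$ decay. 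Concretely, splitting each contour into a bounded part near the origin (where the integrand is uniformly bounded and the Taylor error is $\Or(N^{-1/3})$) and an unbounded tail (where the cubic term dominates for all $N$ large), one obtains the stated bounds $|\breve{\mathsf{K}}^{uv,\,\rm resc}_{11}| \leq C e^{-\kappa(X+Y)}$, etc.; the $e^{\delta Y}$, $e^{\delta X}$ contributions in the $12$, $21$, $22$ bounds come from the residues at $\zeta=\delta$ (resp.\ $\omega=-\delta$), and the rate $\kappa$ is arbitrary because the non-polar part of the contour can be pushed arbitrarily far from the imaginary axis.

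The remaining pieces are treated separately but identically to the one-point case. The summand $V^{j_k j_\ell}(x,y)$ in $\breve{\mathsf{K}}^{j_k j_\ell}_{12}$, written as the $\I\R$-integral of $e^{-(x-y)z}(\tfrac12-z)^{-(j_k-j_\ell)}$, becomes under the same substitution the Gaussian-type integral $\mathcal{V}^{uv}(X,Y) = -\Id_{[u<v]}\int_{\I\R}\tfrac{d\zeta}{2\pi\I} e^{-\zeta^2(u-v)-\zeta(X-Y)}$ (the indicator survives because $j_k>j_\ell \iff u_k<u_\ell$), with bound $\Id_{[u<v]}\,C e^{-|X-Y|}$ obtained by completing the square; similarly $\e^{j_k j_\ell}$, i.e.\ $\tilde\e^{j_k j_\ell}$ plus the two explicit residue terms added in Lemma~\ref{lem:Kbreve}, scales to $\mathcal{E}^{uv} = \mathcal{E}^{uv}_0 + \mathcal{E}^{uv}_1$, with $\mathcal{E}_0$ the limit of the explicit exponential and $\mathcal{E}_1$ the limit of the single $z$-integral, and bounds $|\mathcal{E}_0^{uv}| \leq C e^{\delta|X-Y|}$, $|\mathcal{E}_1^{uv}| \leq C e^{-(|\delta|+\kappa)|X-Y|}$ following from the same saddle estimate for a single integration variable; the anti-symmetric extension for $X<Y$ is then automatic. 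I expect the main obstacle to be not the pointwise convergence — which is routine steepest descent — but obtaining the bounds \emph{uniformly in $N$ and for all $X,Y\geq -L$}, not merely on a compact set: this forces one to choose, for every large $N$ simultaneously, a single family of contours that stays a fixed distance from the drifting poles $\pm\alpha$ (which tend to $0$) while still realizing the cubic decay, and to track that the polynomial-in-$N$ prefactors produced by the rescaling are exactly cancelled. This bookkeeping is precisely what is carried out in \cite[Lemma 31]{BFO20}; we would invoke it there, indicating only the modifications listed above.
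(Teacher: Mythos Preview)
Your proposal is correct and follows essentially the same route as the paper: both reduce the argument to the steepest-descent analysis already carried out in \cite{BFO20} (Lemma~32 there, not Lemma~31) and in \cite[Lemmas~4.4--4.5]{BFP09}, with the only new bookkeeping being the two time parameters $u,v$ and the extra $V$ and $\e$ pieces, which you handle exactly as the paper does. Your write-up is in fact more explicit than the paper's terse proof about the mechanism (contour deformation, splitting into bounded/tail parts, isolating the $\pm\alpha$ residues as the source of the $e^{\delta X}, e^{\delta Y}$ terms); the only minor slip is in the location of the poles---for instance in the $22$ entry the relevant residue is at $\zeta=-\delta$ (from $z=-\alpha$), not $\zeta=\delta$---but this does not affect the argument.
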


\begin{proof}
 The proof is similar to~\cite[Lemma~32]{BFO20}. Furthermore the asymptotics of the double integrals and the uniform bounds follow the same arguments as in~\cite[Lemma~4.4]{BFP09} and~\cite[Lemma~4.5]{BFP09}. For the bounds, we do the computation in two steps: first we calculate explicitly the values of the poles at $\pm \alpha$ if inside the integration contours; for the rest we have Airy-like super-exponential decay in both variables yielding the terms $e^{-\kappa X}$ and $e^{-\kappa Y}$. For ${\mathcal{E}}^{uv,\, \rm resc}_1(X,Y)$, we can take the $\zeta$ contour to pass on the right of $|\delta|$ at distance $\kappa$; it can be deformed to become vertical while still keeping the integral convergent since we also have the quadratic term in $\zeta$. For $\mathcal{V}^{uv,\, \rm resc} (X, Y)$ the bound is immediate from its explicit form.
 \end{proof}

To obtain the limits of $\mathsf h_1^{\delta,\, u, \,\rm resc}$ and $\mathsf h_2^{\delta,\, u,\, \rm resc}$, we need the limits of $\widetilde{\mathsf{K}}^{u v,\, \rm resc}_{12}$ and $\widetilde{\mathsf{K}}^{u v,\, \rm resc}_{22}$ when applied to $v=u_1$.

\begin{lem}\label{lem:AsymptoticsKernelsTilde}
	For any given $L>0$, the following limits hold uniformly for $X, Y\in [-L,L]$:
	\begin{equation}
	\lim_{N\to\infty} \widetilde{\mathsf{K}}^{u v,\, \rm resc}_{12}(X,Y)=\widetilde{\mathcal{A}}^{u v }_{12} (X, Y),\quad
	\lim_{N\to\infty} \widetilde{\mathsf{K}}^{u v,\, \rm resc}_{22}(X,Y)=\widetilde{\mathcal{A}}^{u v }_{22} (X, Y).
	\end{equation}
 Furthermore, for any $X,Y\geq -L$ and $\kappa > 0$, we have the following bounds which hold uniformly in $N$:
 \begin{equation}
 \begin{split}
 |\widetilde{\mathsf{K}}^{u v,\, \rm resc}_{12}(X,Y)| &\leq C e^{-\kappa (X+Y)}, \\
 |\widetilde{\mathsf{K}}^{u v,\, \rm resc}_{22}(X,Y)| &\leq C (e^{-\kappa X}+e^{\delta X}) e^{-\kappa Y},
 \end{split}
 \end{equation}
 for some constant $C$.\end{lem}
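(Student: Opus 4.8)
The plan is to prove both convergences and both sets of bounds by the same steepest-descent (saddle-point) scheme already employed for the kernels $\breve{\mathsf K}^{uv}_{ab}$ in Lemma~\ref{lem:AsymptoticsKernels}, and before that in~\cite[Lemma~32]{BFO20} and~\cite[Lemmas~4.4 and~4.5]{BFP09}. The key observation is that the two kernels at hand are a genuine sub-case of that analysis: the integrand of $\widetilde{\mathsf K}^{j j'}_{12}$ (resp. $\widetilde{\mathsf K}^{j j'}_{22}$) is literally the double-contour part of $\breve{\mathsf K}^{j j'}_{12}$ (resp. $\breve{\mathsf K}^{j j'}_{22}$), except that the $w$-contour is now $\Gamma_{-1/2}$ instead of $\Gamma_{-1/2,\alpha}$, hence encloses no pole at $\pm\alpha$; consequently the extra residue pieces — the $V$ added to $\breve{\mathsf K}_{12}$ and the $\tilde\e$ added to $\breve{\mathsf K}_{22}$ — are absent, and one is left with a single clean double integral whose asymptotics are dictated purely by the exponents. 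We therefore only indicate where the statement is read off, referring to the cited proofs for the routine estimates.

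First I would apply the rescaling built into the definition of the rescaled kernels together with the change of variables $z=\zeta/(2^{4/3}N^{1/3})$, $w=\omega/(2^{4/3}N^{1/3})$. A Taylor expansion of $\log\Phi(x,z)=-xz+(N-1)\log\tfrac{\frac12+z}{\frac12-z}$ and of $(N-j)\log(\tfrac12\mp z)$ in powers of $\zeta$ — using $\alpha=\delta2^{-4/3}N^{-1/3}$, $x=4N-2u2^{5/3}N^{2/3}+X2^{4/3}N^{1/3}$, $N-j=u2^{5/3}N^{2/3}$ and the conjugation constants — shows that the $N^{2/3}$ and $N^{1/3}$ contributions cancel and the surviving exponent is $\tfrac{\zeta^3}{3}-\zeta^2u-\zeta X$ for the factor carrying $(\tfrac12-z)^{N-j}$ and $\tfrac{\zeta^3}{3}+\zeta^2u-\zeta X$ for the one carrying $(\tfrac12+z)^{-(N-j)}$, and similarly in $\omega$. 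The rational prefactors $\tfrac{z-\alpha}{w-\alpha}\tfrac{z+w}{2z(z-w)}$ and $\tfrac1{(z+\alpha)(w-\alpha)}\tfrac{z+w}{z-w}$ scale to their $(\zeta,\omega,\delta)$-analogues times a power of $2^{4/3}N^{1/3}$ which, together with the Jacobian $dz\,dw=(2^{-4/3}N^{-1/3})^2d\zeta\,d\omega$, exactly cancels the $2^{4/3}N^{1/3}$ in the rescaling constant (this is precisely why those constants are chosen as they are). One then deforms the finite-$N$ contours $\Gamma_{1/2}$ and $\Gamma_{-1/2}$ (resp. $\Gamma_{1/2,-\alpha}$ and $\Gamma_{-1/2}$), without crossing poles, onto the steepest-descent curves through the degenerate critical point of the exponent at $z=0$; in the rescaled variables these become the Airy contours $\zcd$ and $\wcu$, and the only freedom — on which side of $0$, $\pm\delta$ and $\zeta$ they pass — is fixed by the pole positions and reproduces exactly the contours ${}_{0}\zcd$, $\wcu\,{}_{\delta,\zeta}$ and $\zcd\,{}_{-\delta}$, $\wcu\,{}_{\delta,\zeta}$ in the definitions of $\widetilde{\mathcal A}^{uv}_{12}$, $\widetilde{\mathcal A}^{uv}_{22}$. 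The super-exponential decay coming from the cubic $\tfrac{\zeta^3}{3}$ along the arms lets one localize near the origin and pass to the limit by dominated convergence, giving the claimed locally uniform limits.

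For the uniform-in-$N$ bounds I would argue as in~\cite[Lemma~4.5]{BFP09}: along the steepest-descent contour the integrand decays faster than any exponential away from the saddle, and for large $X$ or $Y$ the actual saddle moves (at $\zeta\approx\pm\sqrt X$, $\omega\approx\mp\sqrt Y$), producing Airy-type decay $e^{-c|X|^{3/2}}$, $e^{-c|Y|^{3/2}}$ and hence the factors $e^{-\kappa(X+Y)}$ and $e^{-\kappa Y}$ for any $\kappa>0$; the $e^{\delta X}$ contribution in the bound on $\widetilde{\mathsf K}^{uv,\,\rm resc}_{22}$ is the residue at $\zeta=-\delta$ (the pole of $1/(\zeta+\delta)$) picked up when the $\zeta$-contour is deformed across it to reach the Airy saddle, whereas for $\widetilde{\mathsf K}^{uv,\,\rm resc}_{12}$ the contours never cross a $\pm\delta$-pole so only Airy-type decay survives. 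Uniformity in $N$ is automatic since all estimates use only the signs of the leading exponents, which are $N$-independent after rescaling. The one genuinely delicate point is the combinatorial bookkeeping of the contour deformations in the multipoint setting — with distinct $j$'s and $u$'s the $z$- and $w$-contours must be kept $k,\ell$-dependently nested, and one must check that no relevant pole, in particular at $z=w$ or $z=\pm\alpha$, is crossed — but this is handled exactly as for $\breve{\mathsf K}$ above and in~\cite{BFP09}, so it is bookkeeping rather than a conceptual obstacle.
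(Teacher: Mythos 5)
Your proposal is correct and follows the same steepest-descent scheme the paper invokes (the paper's own proof is literally the one-liner ``the proof of Lemma~\ref{lem:AsymptoticsKernels} applies \emph{mutatis mutandis}'', which in turn refers to \cite[Lemma~32]{BFO20} and \cite[Lemmas~4.4, 4.5]{BFP09} and to the two-step residue/Airy-decay bound). You correctly observe that $\widetilde{\mathsf K}^{jj'}_{12}$ and $\widetilde{\mathsf K}^{jj'}_{22}$ are the double-integral parts of $\breve{\mathsf K}^{jj'}_{12}$ and $\breve{\mathsf K}^{jj'}_{22}$ with the $w$-contour not encircling $\alpha$ and with the $V$ and $\tilde\e$ pieces absent, so the analysis is a strict sub-case of the one already done; you identify the correct limiting exponents $\tfrac{\zeta^3}{3}\mp\zeta^2 u-\zeta X$ and the correct Airy-contour positions relative to $0$ and $\pm\delta$; and your explanation of the $e^{\delta X}$ term in the $22$ bound as the residue at $\zeta=-\delta$, with the remaining contour giving the $e^{-\kappa X}$ Airy decay, is exactly the two-step argument the paper describes in the proof of Lemma~\ref{lem:AsymptoticsKernels}. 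The only thing worth flagging is precision rather than substance: after rescaling, the residue of $\Phi(x,-\alpha)$ produces $N^{2/3}$- and $N^{1/3}$-sized terms in the exponent that must cancel against the normalizing powers of $2$ and $N$ built into $\widetilde{\mathsf K}^{uv,\,\rm resc}_{22}$ before the clean $e^{\delta X}$ factor survives; you gesture at this cancellation (``the normalization constants are chosen for exactly this reason'') but do not carry it out, which is acceptable at the level of detail the paper itself adopts.
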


\begin{proof}
	The proof of Lemma~\ref{lem:AsymptoticsKernels} applies \emph{mutatis mutandis}.
\end{proof}

\begin{cor}\label{cor:AsymptoticsFredholmPfaffian}
	For any positive integer $m$ and any $S_1, \dots, S_m \in \R$, we have
	\begin{equation}
	\lim_{N\to\infty} \pf(J - P_s \breve{\mathsf{K}}^{\rm resc} P_s) = \pf(J - P_S \breve{\mathcal{A}} P_S).
	\end{equation}
\end{cor}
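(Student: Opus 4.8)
The plan is to prove convergence of the rescaled Fredholm Pfaffian term by term in its series expansion, together with a dominating bound that is uniform in $N$ — the same strategy used for the finite‑size Fredholm Pfaffian in Proposition~\ref{prop:cvgOfFredPf} and for the full‑space case in~\cite[Section~4]{BFP09}. First I would write out the series expansion of $\pf(J - P_s \breve{\mathsf{K}} P_s)$ over $n\ge 0$ and over the choices $i_1,\dots,i_n\in\{1,\dots,m\}$, so that the $n$‑th term is $\tfrac{(-1)^n}{n!}\sum_{i_1,\dots,i_n}\int_{I_{i_1}\times\cdots\times I_{i_n}}\pf[\breve{\mathsf{K}}^{(n)}(j_{i_a},x_a;j_{i_b},x_b)]_{1\le a,b\le n}\prod_a dx_a$ with $I_k=(s_k,\infty)$. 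Then I would perform the change of variables $x_a = 4N - 2u_{i_a}2^{5/3}N^{2/3} + X_a\,2^{4/3}N^{1/3}$ (and $z=\zeta/(2^{4/3}N^{1/3})$, $w=\omega/(2^{4/3}N^{1/3})$ inside the integrand), which maps $I_k$ to $(S_k,\infty)$ and produces a Jacobian $(2^{4/3}N^{1/3})^n$; as recorded in the text above, this volume element is exactly absorbed into the block‑wise rescaling and diagonal conjugation defining $\breve{\mathsf{K}}^{\rm resc}$ (diagonal conjugations leave every Pfaffian unchanged, and the residual scalar prefactors telescope against the volume factor). Hence the $n$‑th term becomes the corresponding term of the series for $\pf(J - P_S\breve{\mathsf{K}}^{\rm resc}P_S)$.

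The core of the argument is a uniform‑in‑$N$ bound making the series summable. To obtain it I would introduce a further conjugation by the block‑diagonal weight $\mathrm{diag}(e^{\nu_1 X},e^{-\nu_1 X},\dots,e^{\nu_m X},e^{-\nu_m X})$, which again does not change the value of any Pfaffian, with constants chosen so that $|\delta| < \nu_m < \nu_{m-1} < \cdots < \nu_1$ — the analogue of~\eqref{eq:conj_constants}. Using the uniform bounds of Lemma~\ref{lem:AsymptoticsKernels} (together with Lemma~\ref{lem:AsymptoticsKernelsTilde} for the pieces entering $\mathpzc{h}_1,\mathpzc{h}_2$) one then checks that every entry of the conjugated rescaled kernel is bounded, uniformly in $N$, by $C\,g(X)g(Y)$ for a fixed square‑integrable $g$: the $11$‑block has Airy‑type super‑exponential decay and so beats any exponential weight; the $e^{\delta X}$, $e^{\delta Y}$ growth of the $12$‑ and $22$‑blocks is killed because $\nu_k>|\delta|$; the $\mathcal{V}$‑contribution is in fact Gaussian in $X-Y$ and the $\mathcal{E}_0,\mathcal{E}_1$‑contributions decay exponentially in $|X-Y|$, and these survive the bounded conjugation (the ordering $\nu_m<\cdots<\nu_1$ is precisely what is needed to integrate the off‑diagonal $\mathcal{V}$‑terms, which carry the indicator $\Id_{[u_k<u_\ell]}$, i.e.\ $k>\ell$).

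With such entrywise bounds in hand, one checks that the Hilbert–Schmidt norm $\|\breve{\mathsf{K}}^{\rm resc}_{\rm conj}\|_{\mathrm{HS}}$ over the $2m$‑component space $\bigoplus_k L^2((S_k,\infty))^{\oplus 2}$ is finite and bounded uniformly in $N$. The Hadamard–Plemelj bound for Fredholm Pfaffians then bounds the $n$‑th term of the series, uniformly in $N$, by $\tfrac{n^{n/2}}{n!}\,m^n\,\|\breve{\mathsf{K}}^{\rm resc}_{\rm conj}\|_{\mathrm{HS}}^n$, which is the general term of a convergent series and thus a summable dominating bound independent of $N$. Finally, since Lemma~\ref{lem:AsymptoticsKernels} gives convergence of the rescaled kernel entries to those of $\breve{\mathcal{A}}$ uniformly on compact sets, and the uniform Hilbert–Schmidt bound controls the contribution from the non‑compact region, each fixed $n$‑th term converges to the corresponding term of the series for $\pf(J - P_S\breve{\mathcal{A}}_{\rm conj}P_S) = \pf(J - P_S\breve{\mathcal{A}}P_S)$; dominated convergence in the summation index $n$ then yields $\lim_{N\to\infty}\pf(J - P_S\breve{\mathsf{K}}^{\rm resc}P_S) = \pf(J - P_S\breve{\mathcal{A}}P_S)$, the claim. (The same estimates, taken as $N\to\infty$, also show that $\breve{\mathcal{A}}_{\rm conj}$, hence $\breve{\mathcal{A}}$, is Hilbert–Schmidt of the relevant type, so the right‑hand side is well defined.)

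The step I expect to be the main obstacle is exactly the uniform‑in‑$N$ control of all blocks of $\breve{\mathsf{K}}^{\rm resc}$ after conjugation: one must verify that the constants $\nu_k$ can be picked so that \emph{every} contribution decays — not only the Airy‑type double integrals but also the non‑Airy pieces $\mathcal{V}$ and $\mathcal{E}$, and the residues at $\pm\alpha$ that sit inside the double contour integrals — with rates and constants that do not deteriorate as $N\to\infty$. This is essentially the content of the estimates in Lemmas~\ref{lem:AsymptoticsKernels} and~\ref{lem:AsymptoticsKernelsTilde}, so once those are granted the remainder is the bookkeeping of choosing the conjugation as in~\eqref{eq:conj_constants}, invoking the Hadamard–Plemelj estimate, and passing to the limit — mirroring the finite‑size argument of Proposition~\ref{prop:cvgOfFredPf} and the full‑space treatment of~\cite[Section~4]{BFP09}.
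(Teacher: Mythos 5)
Your proposal is correct and follows the paper's own strategy: expand the Fredholm Pfaffian as its defining series, invoke the uniform bounds of Lemma~\ref{lem:AsymptoticsKernels} with $\kappa>|\delta|$, and pass the $N\to\infty$ limit inside by dominated convergence. The paper's proof is only three sentences and does not spell out the conjugation; you correctly observe that the bounds in Lemma~\ref{lem:AsymptoticsKernels} on their own have entries growing like $e^{\delta X}$ or $e^{\delta Y}$, so a diagonal conjugation with ordered constants (mirroring~\eqref{eq:conj_constants} in the finite-size Proposition~\ref{prop:cvgOfFredPf}) is what makes the Hadamard/Hilbert--Schmidt estimate honest — this is the detail the paper leaves implicit. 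One small irrelevancy: you cite Lemma~\ref{lem:AsymptoticsKernelsTilde} for the $\mathpzc{h}_i$'s, but those do not enter $\pf(J-P_S\breve{\mathcal{A}}P_S)$; they only appear in the scalar-product factor of Theorem~\ref{thm:main_asymptotics}, so that reference can be dropped here.
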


\begin{proof}
 Let us expand the Fredholm expansion as its defining series. By considering $\kappa > |\delta|$, we can use the bounds from Lemma~\ref{lem:AsymptoticsKernels} and dominated convergence to exchange the summation/integration with the limit $N\to\infty$. This yields the desired result.
 \end{proof}

 \begin{cor}\label{cor:AsymptoticsHs}
 For any given $L>0$, we have the following limits which hold uniformly in $Y\in [-L,L]$:
 \begin{equation}
 \lim_{N\to\infty} \mathsf h_1^{\delta,\, u ,\, \rm resc}(Y) = \mathpzc{h}^{\delta,\, u }_1(Y),\quad
 \lim_{N\to\infty} \mathsf h_2^{\delta,\, u ,\, \rm resc}(Y) = \mathpzc{h}^{\delta,\, u }_2(Y).
 \end{equation}
 Furthermore, for any $Y\geq -L$ and $\kappa > 0$, we have:
 \begin{equation}
 |\mathsf{h}_1^{\delta,\, u,\, \rm resc}(Y)|\leq C |Y| e^{|\delta Y|},\quad |\mathsf{h}_2^{\delta,\, u,\, \rm resc}(Y)|\leq C e^{-\kappa Y},
 \end{equation}
 for some constant $C$, uniformly in $N$.
 \end{cor}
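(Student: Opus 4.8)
The plan is to pass to the $N\to\infty$ limit term by term. Combining the definitions \eqref{eq:h_def} of $\mathsf h^{\alpha,\, j_\ell}_1,\mathsf h^{\alpha,\, j_\ell}_2$ (and \eqref{eq:j_def} for $\mathsf j$) from Lemma~\ref{lem:Q_2_limits} with the rescaling/conjugation identities fixed at the start of this section, one sees that $\mathsf h^{\delta,\, u,\, \rm resc}_1(Y)$ is the sum of four pieces --- the convolution $-\int_{S_1}^\infty \widetilde{\mathsf{K}}^{u u_1,\, \rm resc}_{22}(Y,V)\,\mathsf{f}^{-\delta,\,-u_1,\, \rm resc}_+(V)\,dV$, the convolution $-\int_{S_1}^\infty \mathsf{E}^{u u_1,\, \rm resc}_1(Y,V)\,\mathsf{f}^{-\delta,\,-u_1,\, \rm resc}_+(V)\,dV$, and the functions $\mathsf{g}^{\delta,\, u,\, \rm resc}_4(Y)$ and $\mathsf{j}^{\delta,\, u,\, \rm resc}(S_1,Y)$ --- while $\mathsf h^{\delta,\, u,\, \rm resc}_2(Y)$ is the analogous sum with $\widetilde{\mathsf{K}}^{u u_1,\, \rm resc}_{12}$ in place of $\widetilde{\mathsf{K}}^{u u_1,\, \rm resc}_{22}$, no $\mathsf{E}_1$ term, and $\mathsf{g}^{\delta,\, u,\, \rm resc}_3$ in place of $\mathsf{g}^{\delta,\, u,\, \rm resc}_4$. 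Each of these is the finite-$N$ counterpart of the corresponding summand of $\mathpzc h^{\delta,\, u}_1$ and $\mathpzc h^{\delta,\, u}_2$, so it suffices to take the limit in each.

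For the purely function-valued terms $\mathsf{g}^{\delta,\, u,\, \rm resc}_3$, $\mathsf{g}^{\delta,\, u,\, \rm resc}_4$ and $\mathsf{j}^{\delta,\, u,\, \rm resc}$ the convergence and the needed bounds are exactly the content of Lemma~\ref{lem:AsymptoticsFunctions}. For the two convolution terms I would argue by dominated convergence: the integrand converges to its limit \emph{uniformly} for $(Y,V)$ in compact sets --- this is Lemma~\ref{lem:AsymptoticsKernelsTilde} (resp.\ Lemma~\ref{lem:AsymptoticsKernels}) together with the uniform convergence of $\mathsf{f}^{-\delta,\,-u_1,\, \rm resc}_+$ from Lemma~\ref{lem:AsymptoticsFunctions} --- while an integrable (in $V$) dominating function, uniform in $N$ and locally uniform in $Y$, is furnished by the uniform-in-$N$ tail estimates: fixing $\kappa>|\delta|$, one has $|\widetilde{\mathsf{K}}^{u u_1,\, \rm resc}_{22}(Y,V)|\,|\mathsf{f}^{-\delta,\,-u_1,\, \rm resc}_+(V)|\leq C(e^{-\kappa Y}+e^{\delta Y})e^{(\delta-\kappa)V}$ and $|\mathsf{E}^{u u_1,\, \rm resc}_1(Y,V)|\,|\mathsf{f}^{-\delta,\,-u_1,\, \rm resc}_+(V)|\leq C e^{-(|\delta|+\kappa)|Y-V|}e^{\delta V}$, both integrable on $(S_1,\infty)$. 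Splitting $\int_{S_1}^\infty=\int_{S_1}^{M}+\int_M^\infty$ and using the uniform convergence of the integrand on $[S_1,M]\times[-L,L]$ for the first piece and the uniform smallness of the tail for the second upgrades the convergence of each convolution to uniform convergence in $Y\in[-L,L]$; the same splitting, together with the stronger $e^{-\kappa(Y+V)}$ decay of $\widetilde{\mathsf{K}}^{u u_1,\, \rm resc}_{12}$, handles $\mathsf h^{\delta,\, u,\, \rm resc}_2$.

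For the uniform-in-$N$ bounds I would simply integrate the dominating functions in $V$ and add the bounds on $\mathsf{g}_3,\mathsf{g}_4,\mathsf{j}$ from Lemma~\ref{lem:AsymptoticsFunctions}: the first convolution contributes $\leq C(e^{-\kappa Y}+e^{\delta Y})$, the $\mathsf{E}_1$-convolution contributes $\leq C e^{\delta Y}$ (split the $V$-integral at $V=Y$ to beat the growth of $\mathsf{f}_+$), $|\mathsf{g}^{\delta,\, u,\, \rm resc}_4(Y)|\leq C(|Y|e^{|\delta Y|}+e^{-\kappa Y})$, and $|\mathsf{j}^{\delta,\, u,\, \rm resc}(S_1,Y)|\leq C|Y|e^{|\delta Y|}$, so the $\mathsf j$-/$\mathsf g_4$-type terms dominate and yield $|\mathsf h^{\delta,\, u,\, \rm resc}_1(Y)|\leq C|Y|e^{|\delta Y|}$; for $\mathsf h^{\delta,\, u,\, \rm resc}_2$ every ingredient decays like $e^{-\kappa Y}$, giving $|\mathsf h^{\delta,\, u,\, \rm resc}_2(Y)|\leq C e^{-\kappa Y}$. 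The only point requiring a little care --- the ``main obstacle'', insofar as there is one, since the analytic input is already packaged in the preceding lemmas --- is maintaining uniformity of the convergence while controlling the $V$-tails: this hinges on choosing $\kappa>|\delta|$ so that the (super-)exponential decay of $\widetilde{\mathsf{K}}^{u u_1,\, \rm resc}_{22}$ and $\mathsf{E}^{u u_1,\, \rm resc}_1$ in $V$ dominates the factor $e^{\delta V}$ carried by $\mathsf{f}^{-\delta,\,-u_1,\, \rm resc}_+$, and on splitting the $\mathsf{E}_1$-convolution at $V=Y$ when $Y$ is large; everything else is a direct appeal to Lemmas~\ref{lem:AsymptoticsFunctions},~\ref{lem:AsymptoticsKernels},~and~\ref{lem:AsymptoticsKernelsTilde}.
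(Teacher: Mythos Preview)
Your proposal is correct and follows essentially the same approach as the paper, which merely cites \cite[Corollary~35]{BFO20} for the argument. You have spelled out precisely what that citation unpacks to: decompose $\mathsf h_1^{\delta,\,u,\,\rm resc}$ and $\mathsf h_2^{\delta,\,u,\,\rm resc}$ into their constituent pieces, apply Lemmas~\ref{lem:AsymptoticsFunctions}, \ref{lem:AsymptoticsKernels}, \ref{lem:AsymptoticsKernelsTilde} to each piece, and use dominated convergence (with $\kappa>|\delta|$) for the convolution terms; the resulting bounds are then added and the $|Y|e^{|\delta Y|}$ contribution from $\mathsf g_4$ and $\mathsf j$ dominates for $\mathsf h_1$, while the uniform $e^{-\kappa Y}$ decay of $\widetilde{\mathsf K}_{12}^{\,\rm resc}$ and $\mathsf g_3$ gives the bound for $\mathsf h_2$.
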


 \begin{proof}
 It is the same as~\cite[Corollary 35]{BFO20}.
 \end{proof}

 \begin{cor}\label{cor:AsymptoticsUs}
 For any given $L>0$, we have the following limits which hold uniformly in $Y\in [-L,L]$:
 \begin{equation}
 \lim_{N\to\infty} \mathsf{U}_{2k}^{\delta,\, u_k,\, \rm resc} (Y) = \mathcal{U}_{2k} (Y), \quad 1 < k \leq m
 \end{equation}
 with all other components identically zero. This means we have, as $2m \times 1$ vectors, $\lim_{N\to\infty} \mathsf{U}^{\delta,\, u_k,\, \rm resc} = \mathcal{U}$. Furthermore, for any $Y\geq -L$ and $\kappa > 0$, we have:
 \begin{equation}
 |\mathsf{U}_{2k}^{\delta,\, u_k,\, \rm resc} (Y)| \leq C e^{|\delta Y|}, \quad 1 < k \leq m,
 \end{equation}
 for some constant $C$, uniformly in $N$.
 \end{cor}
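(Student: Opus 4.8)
The plan is to mirror, almost verbatim, the argument already used for the scalar functions in Lemma~\ref{lem:AsymptoticsFunctions} and for the $\mathpzc{h}$'s in Corollary~\ref{cor:AsymptoticsHs}: start from the closed integral representation of $\mathsf{U}_2$ from Lemma~\ref{lem:lim_U_2}, insert the critical scalings, change variables in the contour integral, identify the pointwise limit of the integrand, and promote it to convergence of the integral by an $N$-uniform dominated-convergence bound; the decay bound is then read off a second, equivalent representation. First I would recall from Lemma~\ref{lem:lim_U_2} that $\mathsf{U}_2$ vanishes in all slots except the even ones $2k$ with $1<k\le m$, where
\[
(\mathsf{U}_2)_{2k}(y)=-\,\mathsf{f}^{j_1}_{+,-\alpha}(s_1)\int_{\I\R+\eta}\frac{dz}{2\pi\I}\,\frac{e^{-(y-s_1)z}}{(\tfrac12-z)^{j_k-j_1}(z+\alpha)},\qquad -\alpha<\eta<\tfrac12,
\]
so the rescaled vector $\mathsf{U}^{\delta,u_k,\rm resc}$ inherits the same sparsity pattern and it suffices to treat its $2k$-th component. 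I would then substitute $\alpha=\delta 2^{-4/3}N^{-1/3}$, $N-j_k=u_k 2^{5/3}N^{2/3}$, the scalings of $s_1,y$ around $4N$, and change variables $z=\zeta 2^{-4/3}N^{-1/3}$; choosing $\eta$ of the form $\eta_0 2^{-4/3}N^{-1/3}$ for a fixed $\eta_0>-\delta$ turns the contour into the fixed vertical line $\I\R+\eta_0$, which lies to the right of the pole $\zeta=-\delta$, and the factor $1/(z+\alpha)$ combines with $dz$ to produce the $N$-independent $d\zeta/(\zeta+\delta)$.

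The key computation is then purely asymptotic. Writing $(\tfrac12-z)^{-(j_k-j_1)}=2^{j_k-j_1}(1-2z)^{-(j_k-j_1)}$ and expanding $-(j_k-j_1)\log(1-2z)$ in powers of $z=\zeta 2^{-4/3}N^{-1/3}$, the $N^{1/3}$-linear term in the exponent cancels exactly against the $N^{1/3}$-linear term coming from $e^{-(y-s_1)z}$ (the same cancellation that occurs in the treatment of $\mathcal{V}^{uv}$ in Lemma~\ref{lem:AsymptoticsKernels}), the quadratic remainder survives and yields $e^{-\zeta^2(u_k-u_1)}$, and the linear-in-$Y$ term yields $e^{-\zeta(Y-S_1)}$; meanwhile the prefactor, after the rescaling prescribed in Section~\ref{sec:proof_asymptotics} is absorbed, converges by Lemma~\ref{lem:AsymptoticsFunctions} to the $\mathpzc{f}$-factor appearing in $\mathcal{U}_{2k}$. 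This identifies the pointwise limit of the rescaled integrand on $\I\R+\eta_0$ as precisely $e^{-\zeta^2(u_k-u_1)-\zeta(Y-S_1)}/(\zeta+\delta)$, i.e. the integrand defining $\mathcal{U}_{2k}$. Since $u_k<u_1$ for $k>1$, on $\I\R+\eta_0$ one has $|e^{-\zeta^2(u_k-u_1)}|=e^{(u_1-u_k)(\eta_0^2-(\Im\zeta)^2)}$, Gaussian-decaying in $\Im\zeta$; together with the elementary $N$-uniform bound $|e^{-(y-s_1)z}(1-2z)^{-(j_k-j_1)}|\le C e^{-c\,(\Im\zeta)^2}$ (obtained as in Lemma~\ref{lem:AsymptoticsKernels}, valid for $X,Y$ in a compact set and $N$ large), dominated convergence gives the claimed limit $\mathsf{U}^{\delta,u_k,\rm resc}_{2k}\to\mathcal{U}_{2k}$ uniformly on compacts, all other components being identically zero, hence $\mathsf{U}^{\delta,u_k,\rm resc}\to\mathcal{U}$ as $2m\times1$ vectors.

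For the uniform-in-$N$ bound $|\mathsf{U}^{\delta,u_k,\rm resc}_{2k}(Y)|\le C e^{|\delta Y|}$ on $Y\ge -L$, I would use instead the second representation in Lemma~\ref{lem:lim_U_2}, writing the pre-limit entry as $\Id_{[y\ge s_1]}\,\mathsf{f}^{j_1}_{+,-\alpha}(s_1)\oint_{\Gamma_{1/2}}\cdots-\Id_{[y<s_1]}\,\mathsf{f}^{j_k}_{+,-\alpha}(y)$. After rescaling, the $\oint_{\Gamma_{1/2}}$ piece is bounded, for any $\kappa>0$, by $C e^{-\kappa Y}$ by taking the contour $|z-1/2|=\epsilon/2$ and deforming it as in Lemma~\ref{lem:AsymptoticsFunctions}, while the second piece equals, up to constants, $\mathsf{f}^{-\delta,-u_k,\rm resc}_+(Y)$, bounded by $C e^{\delta Y}$ by Lemma~\ref{lem:AsymptoticsFunctions} and supported on $Y<S_1$; since $Y\ge -L$ is bounded below, both contributions are dominated by $C e^{|\delta Y|}$ uniformly in $N$. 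This yields all the assertions of Corollary~\ref{cor:AsymptoticsUs}.

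The hard part is not conceptual but purely a matter of bookkeeping, entirely parallel to Lemmas~\ref{lem:AsymptoticsFunctions}--\ref{lem:AsymptoticsKernelsTilde}: one must track the powers of $2$ and $N^{1/3}$ through the change of variables carefully enough to confirm both that the net prefactor converges to the correct $\mathpzc{f}$-factor and that the divergent $N^{1/3}$-linear exponentials from $e^{-(y-s_1)z}$ and from $(\tfrac12-z)^{-(j_k-j_1)}$ cancel, and one must pin down an explicit $N$-independent Gaussian majorant on $\I\R+\eta_0$ so that dominated convergence applies uniformly for $Y$ in compact sets; no new technique beyond what is already in the excerpt is needed.
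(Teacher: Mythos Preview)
Your proposal is correct and follows essentially the same approach the paper indicates: the paper's own proof is just a pointer to Corollary~\ref{cor:AsymptoticsHs} and to \cite[Lemma~4.9]{BFP09}, and your argument (start from the vertical-line integral representation in Lemma~\ref{lem:lim_U_2}, rescale, use the Gaussian decay from $u_k<u_1$ for dominated convergence, and read the uniform bound off the alternative $\Gamma_{1/2}$/$\mathsf{f}_{+,-\alpha}$ representation) is exactly that route spelled out in detail.
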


 \begin{proof}
 The proof is similar to that of Corollary~\ref{cor:AsymptoticsHs} and almost identical to~\cite[Lemma 4.9]{BFP09}.
 \end{proof}

Given all of the above, we now prove Theorem~\ref{thm:main_asymptotics}, our main asymptotic result.

\begin{proof}[Proof of Theorem~\ref{thm:main_asymptotics}]
	The convergence of the distribution follows from direct application of the previous lemmas and corollaries: Lemma~\ref{lem:AsymptoticsFunctions} for the functions $\mathsf g_1^{\delta,\, u,\,\rm resc}(X)$ and $\mathsf g_2^{\delta,\, u,\,\rm resc}(X)$, components of the vector $\mathsf Y$; the same lemma gives also bounds and limits for the rest of the functions entering in the right hand side of~\eqref{eq:main_finite}; Corollary~\ref{cor:AsymptoticsFredholmPfaffian} and Lemma~\ref{lem:AsymptoticsKernels} for the Fredholm Pfaffian; Lemma~\ref{lem:AsymptoticsKernelsTilde} and Corollary~\ref{cor:AsymptoticsHs} for the functions $\mathsf h_1^{\delta,\,u,\,\rm resc}(Y)$ and $\mathsf h_2^{\delta,\, u,\,\rm resc}(Y)$ entering in the vector $\mathsf Q$; and finally Corollary~\ref{cor:AsymptoticsUs} for the vector $\mathsf{U}$. As a consequence of the aforementioned bounds we can take the $N\to\infty$ limit inside the integrals by dominated convergence.
	
 Furthermore, the derivatives in the variables $s_k$ ($1 \leq k \leq m$) pose no issues as they produce only polynomial factors and we have exponential bounds for every term (after possible conjugation). This again allows us to use dominated convergence for the corresponding derivatives.

 As was the case in~\cite{BFO20}, the inverse operator also poses no problems, since for any $s \in \{s_1, \dots, s_k \}$ the derivative yields
 \begin{equation}
 \partial_s(\Id-J^{-1}\breve{\mathsf{K}})^{-1}=(\Id-J^{-1}\, \breve{\mathsf{K}})^{-1} J^{-1}\partial_s\breve{\mathsf{K}}\, (\Id-J^{-1}\breve{\mathsf{K}})^{-1}
 \end{equation}
 and, once multiplied by the Fredholm Pfaffian, the resolvent can be rewritten as a linear combination of two Fredholm Pfaffians. This last observation leads to the claimed result.
\end{proof}

\section{Limit to the Airy$_{\rm stat}$ process: proof of Theorem~\ref{thm:LimitAiryStat}} \label{sec:proof_Airy_stat}

In order to recover the kernel of the Airy$_{\rm stat}$ process, we need to consider the $\delta\to -\infty$ limit after the following replacements:
\begin{equation}
u_i=-\tau_i-\delta, \quad S_i=s_i+\delta(2u_i+\delta), \quad 1 \leq i \leq m,
\end{equation}
for some fixed ordered times $\tau_1 < \dots < \tau_m$ and numbers $s_1, \dots, s_m \in \R$.

To prove Theorem~\ref{thm:LimitAiryStat} we will have to undo some of the complexity of Section~\ref{sec:results_asymptotics} (which in turn comes from the complexity of Section~\ref{sec:results_finite}).

We first note that the complicated decomposition of the kernel $\breve{\cal A}^{uv}(x,y)$ that appears in~\ref{sec:results_asymptotics} was necessary so that things are convergent for generic values of $\delta$. However here we are interested in $\delta<0$ and $u,v>0$; in this case the situation simplifies a lot. it can be written as a single double integral.
\begin{lem}\label{lem:AiryStatA22term}
For $\delta<0$ and $u,v>0$ we have
\begin{equation}
\breve{\mathcal{A}}^{u v}_{22} (X, Y)=- \int_{\I\R+\eta_1} \frac{d \zeta}{2\pi\I} \int_{\I\R+\eta_2} \frac{d \omega}{2\pi\I} \frac{ e^{\frac{\zeta^3}{3} + \zeta^2 u - \zeta X} }{ e^{\frac{\omega^3}{3} - \omega^2 v - \omega Y} } \frac{1}{\zeta - \omega} \left(\frac{1}{ \zeta + \delta}+\frac{1}{\omega-\delta}\right),
\end{equation}
where $\max\{\delta,\, -u\}<\eta_1<\eta_2<\min\{v,-\delta\}$.
\end{lem}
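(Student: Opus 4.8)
The plan is to start from the definition of $\breve{\mathcal{A}}^{uv}_{22}(X,Y)$ in~\eqref{eq:227}, which is written as a sum of three pieces: the contribution $\mathcal{E}^{uv}_0$, the contribution $\mathcal{E}^{uv}_1$, and the genuine double integral with the $\tfrac{1}{\zeta+\delta}+\tfrac{1}{\omega-\delta}$ factor along contours $\zcd\,{}_{-\delta}\times\wcu\,{}_\zeta$ (for the $1/(\zeta+\delta)$ term) and $\zcd\,{}\times{}_\delta\wcu\,{}_\zeta$ (for the $1/(\omega-\delta)$ term). I want to show that in the regime $\delta<0$, $u,v>0$, all three pieces recombine into a single double integral over two parallel vertical lines $\I\R+\eta_1$ and $\I\R+\eta_2$ with $\max\{\delta,-u\}<\eta_1<\eta_2<\min\{v,-\delta\}$.

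First I would fix such $\eta_1,\eta_2$ (this interval is nonempty precisely because $\delta<0<\min\{v,-\delta\}$ and $\max\{\delta,-u\}<0$). The key structural point is that the cubic terms $e^{\zeta^3/3}$ and $e^{-\omega^3/3}$ give super-exponential decay, so the $\zeta$-contour may be deformed to any vertical line and likewise for $\omega$, provided one keeps track of residues crossed. For the term containing $1/(\omega-\delta)$: its contours are $\zcd$ (a descending contour, no pole of $\zeta$ enforced beyond $0$ being to its right) times ${}_\delta\wcu\,{}_\zeta$; I move the $\omega$-contour to the vertical line $\I\R+\eta_2$ and the $\zeta$-contour to $\I\R+\eta_1$. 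Since $\eta_1<\eta_2$ the pole at $\zeta=\omega$ is not crossed during these deformations in the relevant order, the pole $\omega=\delta<0<\eta_2$ stays on the correct side, and the pole $\zeta=-\delta>0$ issue does not arise here (no such factor). For the term containing $1/(\zeta+\delta)$: its contours are $\zcd\,{}_{-\delta}$ (so $\zeta$-contour passes to the right of $-\delta>0$) times $\wcu\,{}_\zeta$; moving the $\zeta$-contour leftward onto $\I\R+\eta_1$ crosses the pole at $\zeta=-\delta$, producing a residue; and moving $\omega$ onto $\I\R+\eta_2$ may cross $\omega=\zeta$. The residues generated in exactly this way should reproduce $\mathcal{E}^{uv}_0$ and $\mathcal{E}^{uv}_1$ (indeed $\mathcal{E}^{uv}_0=-e^{\delta(X-Y)+\delta^2(u+v)}$ is the single residue at $\zeta=-\delta$ after also picking the $\omega$ residue, and $\mathcal{E}^{uv}_1$ with contour ${}_{\pm\delta}\zcd$ is the remaining diagonal-residue contribution). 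The bookkeeping is the same computation as in~\cite[Section~4]{BFP09} and in~\cite[Section~2.3]{BFO20}, just reorganized.

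Concretely, the steps in order are: (i) verify the interval for $(\eta_1,\eta_2)$ is nonempty and that on these vertical lines each double integral converges absolutely thanks to the cubic exponentials; (ii) deform the $\omega$-integral of the $1/(\omega-\delta)$-term to $\I\R+\eta_2$ and its $\zeta$-integral to $\I\R+\eta_1$, checking no net residue is picked up because $\delta<0<\eta_2$, $-u<\eta_1$, and $\eta_1<\eta_2$; (iii) deform the $1/(\zeta+\delta)$-term similarly, this time collecting the residue at $\zeta=-\delta$ (and the consequential $\zeta=\omega$ residue), and identify the collected residues with $\mathcal{E}^{uv}_0(X,Y)$ and with the $\mathcal{E}^{uv}_1$ integral (using that on $X\ge Y$ the closed-up contour around $\pm\delta$ for $\mathcal E_1$ gives exactly that residue, and extending by antisymmetry otherwise); (iv) add the three pieces and observe the residue terms cancel $\mathcal E^{uv}$, leaving the single claimed double integral.

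The main obstacle I anticipate is step (iii): keeping the residue accounting consistent with the $X\ge Y$ versus $X<Y$ case split in the definition of $\mathcal{E}^{uv}$, and making sure the order of the two contour deformations (first $\zeta$, then $\omega$, or vice versa) is chosen so that the $\zeta=\omega$ pole contributes the right sign and lands on the correct side relative to $\eta_1<\eta_2$. This is purely a careful-signs computation, entirely parallel to the full-space argument of Baik--Ferrari--P\'ech\'e, so I would present it by reducing to that reference rather than redoing every residue; the novelty here is only that the $\delta<0$, $u,v>0$ restriction makes all the auxiliary poles ($-u<\eta_1$, $v>\eta_2$, $\delta<0$, $-\delta>0$) land automatically on the correct sides of the two vertical lines, which is what collapses the three-term decomposition into one.
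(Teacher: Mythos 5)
Your strategy --- deform to vertical lines $\I\R+\eta_1$, $\I\R+\eta_2$ and match the residues collected along the way against $\mathcal{E}^{uv}(X,Y)$ --- is the same mechanism as the paper's proof; the paper first merges the two contour specifications into a single common pair (possible precisely because $\delta<0$) so that only the $\zeta=\omega$ residue appears when swapping $\Re\zeta$ and $\Re\omega$, which is tidier bookkeeping but not a different idea. Your version, however, contains genuine bookkeeping errors, and they are traceable to a misreading of the contour convention. In the notation ${}_I\,\zcd\,{}_J$ the real-axis crossing is \emph{to the right of $I$ and to the left of $J$}; thus the contour $\zcd\,{}_{-\delta}$ for the $1/(\zeta+\delta)$ term lies to the \emph{left} of $-\delta>0$, not to its right as you write. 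Since moreover $\eta_1<\eta_2<\min\{v,-\delta\}$, the $\zeta$-contour stays to the left of $-\delta$ throughout the deformation: $\zeta=-\delta$ is never crossed, contributes no residue, and $\mathcal{E}^{uv}_0$ cannot be produced that way.

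Symmetrically, your claim that the $1/(\omega-\delta)$ term picks up no residue cannot be right: in both starting configurations the $\omega$-contour lies to the left of the $\zeta$-contour (this is what $\wcu\,{}_\zeta$ and ${}_\delta\wcu\,{}_\zeta$ mean), so $\Re\omega<\Re\zeta$, whereas the target has $\Re\zeta=\eta_1<\eta_2=\Re\omega$. Reversing that relative order crosses $\zeta=\omega$ exactly once, in each of the two terms, regardless of the order in which you move the contours. The correct accounting is that $\zeta=\omega$ is the \emph{only} pole crossed; adding the two residues gives the single $\omega$-integral $\int\frac{d\omega}{2\pi\I}\,e^{\omega^2(u+v)-\omega(X-Y)}\frac{2\omega}{\omega^2-\delta^2}$ over a vertical line in $(\delta,-\delta)$, and it is \emph{this} quantity, not separate $\zeta=-\delta$ and $\zeta=\omega$ pieces, that one must identify with $\mathcal{E}^{uv}(X,Y)$: push $\omega$ past $-\delta$ when $X\geq Y$ to recover $\mathcal{E}^{uv}_0+\mathcal{E}^{uv}_1$, and invoke the odd symmetry of the integrand under $\omega\mapsto-\omega$, $(X,Y)\mapsto(Y,X)$ for $X<Y$. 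One further inaccuracy to flag: the cubic exponentials alone do \emph{not} give decay on an arbitrary vertical line; on $\I\R+c$ the real part of the $\zeta$-exponent decays like $-(c+u)t^2$ and that of the $\omega$-exponent like $-(v-c)t^2$, so the hypotheses $u,v>0$ and the window $\max\{\delta,-u\}<\eta_1<\eta_2<\min\{v,-\delta\}$ are exactly what legitimize the move to vertical lines, not a consequence of the cubic.
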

\begin{proof}
First notice that the two contours in the double integral term of $\breve{\mathcal{A}}^{u v}_{22}$ in \eqref{eq:227} can be taken to be the equal provided $\delta<0$. Furthermore, the contours can be deformed to become vertical provided $-u<\Re(\zeta)$ and $\Re(\omega)<v$. By doing this, we have vertical contours with $\Re(\omega)<\Re(\zeta)$. Exchanging the contours so that in the end they satisfy $\max\{\delta,\, -u\}<\Re(\zeta)<\Re(\omega)<\min\{v,-\delta\}$, we pick up a residue. This latter equals $-\mathcal{E}^{uv}(X,Y)$.
\end{proof}

We next need two identities used to undo the steps of Lemma~\ref{lemma3.14} after the limit. That result was crucial for the general case; for pre-limit $\beta>0$ (corresponding post-limit to $\delta < 0$) said result is not necessary.

\begin{lem}\label{Lemma:5.2}
It holds that
\begin{equation}\label{eq5.3}
\widetilde{\mathcal{A}}^{u v}_{22} (X, Y)+{\cal E}^{uv}_1(X,Y)=\breve{\mathcal{A}}^{u v}_{22} (X, Y) - \mathpzc{g}_2^{\delta,\, u}(X) \mathpzc{f}^{-\delta,-v}(Y)+\Id_{[X>Y]} e^{\delta^2(u+v)}(e^{\delta(X-Y)}+e^{-\delta(X-Y)}).
\end{equation}
Moreover, for $v=u_1$, it holds that
\begin{equation}\label{eq5.4}
\widetilde{\mathcal{A}}^{u v}_{12}(X,Y)=\breve{\mathcal{A}}^{u v}_{12}(X,Y)+\mathpzc{g}_1^{\delta,\, u}(X) \mathpzc{f}^{-\delta,\, -v}(Y)
\end{equation}
and that
\begin{equation}\label{eq5.5}
\mathpzc{j}^{\delta,\, u}(S_1,X)=\int_{S_1}^\infty dY \Id_{[X>Y]}e^{\delta^2(u+v)}(e^{\delta(X-Y)}+e^{-\delta(X-Y)}) \mathpzc{f}^{-\delta,\, v}(Y).
\end{equation}
\end{lem}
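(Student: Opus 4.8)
The plan is to prove the three identities of Lemma~\ref{Lemma:5.2} by elementary contour deformation and residue bookkeeping; conceptually they reverse, in the scaling limit and in the simpler regime $\delta<0$, $u,v>0$, the residue decomposition of the kernel carried out for the finite-size model in Proposition~\ref{prop:Seconddecomposition} and Lemma~\ref{lemma3.14}. The single feature that makes this possible is that for $\delta<0$ all the relevant poles ($\zeta=\pm\delta$, $\omega=\pm\delta$, $\omega=\zeta$, $\zeta=0$) lie in a definite order relative to the integration contours, so that the kernels $\widetilde{\mathcal{A}}^{uv}$ and $\breve{\mathcal{A}}^{uv}$ differ only by a fixed small collection of residues.

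First I would dispose of \eqref{eq5.5}, which involves no contour integrals. On its right-hand side the factor $\Id_{[X>Y]}$ together with $\int_{S_1}^\infty dY$ restricts the $Y$-integral to $(S_1,X)$, and substituting $\mathpzc{f}^{-\delta,\,v}(Y)=e^{-\delta^3/3-\delta^2 v+\delta Y}$ reduces it to $\Id_{[X>S_1]}\,e^{-\delta^3/3+\delta^2 u}\int_{S_1}^X\big(e^{\delta X}+e^{-\delta X+2\delta Y}\big)\,dY$ (note the $v$-dependence cancels, consistently with the $v$-independence of the left-hand side). Evaluating this one-dimensional integral gives $\Id_{[X>S_1]}\,e^{-\delta^3/3+\delta^2 u}\big[e^{\delta X}(X-S_1)+\tfrac{e^{\delta X}-e^{\delta(2S_1-X)}}{2\delta}\big]$, which, after writing $\tfrac{\sinh\delta(X-S_1)}{\delta}=\tfrac{e^{\delta(X-S_1)}-e^{-\delta(X-S_1)}}{2\delta}$ and factoring out $\mathpzc{f}^{-\delta,\,-u}(S_1)=e^{-\delta^3/3+\delta^2 u+\delta S_1}$, is exactly $\mathpzc{j}^{\delta,\,u}(S_1,X)$.

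For \eqref{eq5.3} and \eqref{eq5.4} the key algebraic fact is the partial-fraction identity $\zeta+\omega=(\zeta+\delta)+(\omega-\delta)$, which shows that the double-integral integrand of $\widetilde{\mathcal{A}}^{uv}_{22}$ (the one carrying $\tfrac{\zeta+\omega}{(\zeta+\delta)(\omega-\delta)(\zeta-\omega)}$) coincides with the one appearing in $\breve{\mathcal{A}}^{uv}_{22}$ (carrying $\tfrac{1}{\zeta-\omega}\big(\tfrac{1}{\zeta+\delta}+\tfrac{1}{\omega-\delta}\big)$), and that the double-integral parts of $\widetilde{\mathcal{A}}^{uv}_{12}$ and $\breve{\mathcal{A}}^{uv}_{12}$ have identical integrands as well. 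Using Lemma~\ref{lem:AiryStatA22term}, I would write $\breve{\mathcal{A}}^{uv}_{22}$ as a single double integral over nested vertical contours with $\max\{\delta,-u\}<\Re\zeta<\Re\omega<\min\{v,-\delta\}$, and then deform the Airy-type contours defining $\widetilde{\mathcal{A}}^{uv}_{22}$ (respectively $\widetilde{\mathcal{A}}^{uv}_{12}$) into those configurations, collecting the residues crossed. The pole $\omega=\delta$ contributes, after the remaining $\zeta$-integral, precisely $\mathpzc{g}_2^{\delta,\,u}(X)\,\mathpzc{f}^{-\delta,\,-v}(Y)$ in the $22$-case and $\mathpzc{g}_1^{\delta,\,u}(X)\,\mathpzc{f}^{-\delta,\,-v}(Y)$ in the $12$-case (here one uses that $\mathpzc{f}^{-\delta,\,-v}(Y)=e^{-\delta^3/3+\delta^2 v+\delta Y}$ is the value of the $\omega$-exponential at $\omega=\delta$). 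The pole $\omega=\zeta$ has residue $-\tfrac{2\zeta}{\zeta^2-\delta^2}\,e^{\zeta^2(u+v)-\zeta(X-Y)}$ in the $22$-case; moving the leftover $\zeta$-contour onto the one of $\mathcal{E}^{uv}_1$ picks up the residue at $\zeta=-\delta$, which equals $\mathcal{E}^{uv}_0(X,Y)=-e^{\delta(X-Y)+\delta^2(u+v)}$, and in the $12$-case the $\omega=\zeta$ residue is a $\mathcal{V}^{uv}$-type Gaussian which cancels the explicit $\mathcal{V}^{uv}$ carried by $\breve{\mathcal{A}}^{uv}_{12}$. Bookkeeping the residues at $\omega=\zeta$ and $\zeta=\pm\delta$ against the explicit exponential/indicator terms, and adding back $\mathcal{E}^{uv}_1$ on the left of \eqref{eq5.3}, the $\mathcal{E}^{uv}_0$ piece combines with the residue contributions into $\Id_{[X>Y]}e^{\delta^2(u+v)}\big(e^{\delta(X-Y)}+e^{-\delta(X-Y)}\big)$, yielding the two identities (and one must also treat $X<Y$ by the anti-symmetry of $\mathcal{E}^{uv}$, which is where the indicator $\Id_{[X>Y]}$ is produced).

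The main obstacle is the orientation and sign bookkeeping in these deformations: one must verify carefully, using $\delta<0$, $u,v>0$ and (for \eqref{eq5.4}) $v=u_1$, exactly which poles lie strictly between the Airy-type contours of $\widetilde{\mathcal{A}}^{uv}$ and the vertical contours supplied by Lemma~\ref{lem:AiryStatA22term}, and with which sign each residue is collected, so that the auxiliary $\mathcal{V}^{uv}$ and $\mathcal{E}^{uv}_0$ pieces combine precisely as stated. The rest is routine; convergence of all single integrals produced along the way is automatic from the Gaussian decay of the $e^{\zeta^2 u}$, $e^{\omega^2 v}$ factors since $u,v>0$.
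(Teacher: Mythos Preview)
Your proof of \eqref{eq5.5} is exactly what the paper does, and your overall strategy for \eqref{eq5.3}--\eqref{eq5.4} (deform the $\omega$-contour and collect residues) is the right one. However, you take a longer route than necessary, and some of your residue bookkeeping is not quite right.

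The paper does \emph{not} pass through Lemma~\ref{lem:AiryStatA22term}. Instead, it moves only the $\omega$-contour of $\widetilde{\mathcal{A}}^{uv}_{22}$ (resp.\ $\widetilde{\mathcal{A}}^{uv}_{12}$) to the right of $\omega=\delta$, \emph{without} crossing $\omega=\zeta$. Since the integrands agree, this already lands on the double-integral part of $\breve{\mathcal{A}}^{uv}_{22}$ (resp.\ $\breve{\mathcal{A}}^{uv}_{12}$), and the single residue at $\omega=\delta$ is the only correction to compute. For the $22$-entry one then shifts the leftover $\zeta$-contour from ${}_\delta\zcd{}_{-\delta}$ to $\zcd{}_\delta$, picking up the residue at $\zeta=\delta$; combining that with $-\mathcal{E}^{uv}=-\mathcal{E}^{uv}_0-\mathcal{E}^{uv}_1$ yields the indicator term in \eqref{eq5.3}. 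Your detour through the vertical contours forces you to cross $\omega=\zeta$ as well, producing $\mathcal{E}$-type (in the $22$-case) and $\mathcal{V}$-type (in the $12$-case) residues that you then have to match back against the explicit $\mathcal{E}^{uv}$ and $\mathcal{V}^{uv}$ pieces---extra work that the paper avoids entirely.

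Two specific points to fix in your bookkeeping. First, in the $22$-case the $\omega=\delta$ residue, integrated over $\zeta$ on $\zcd{}_{-\delta}$, is \emph{not} exactly $\mathpzc{g}_2^{\delta,u}(X)\,\mathpzc{f}^{-\delta,-v}(Y)$: the contour of $\mathpzc{g}_2$ has $\zeta$ to the left of $\delta$, so a further residue at $\zeta=\delta$ is needed, and this is precisely the source of the $e^{-\delta(X-Y)}e^{\delta^2(u+v)}$ summand in the indicator term (the $e^{+\delta(X-Y)}$ summand comes from $-\mathcal{E}^{uv}_0$). Second, in the $12$-case your claim that the $\omega=\zeta$ residue ``cancels the explicit $\mathcal{V}^{uv}$ carried by $\breve{\mathcal{A}}^{uv}_{12}$'' has a sign issue: a careful computation gives this residue as $-\mathcal{V}^{uv}$, not $+\mathcal{V}^{uv}$, so if you go through the vertical representation the $\mathcal{V}$-pieces do not simply cancel. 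The paper's shorter route sidesteps this by never crossing $\omega=\zeta$.
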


\begin{proof}
We start with identity for the $22$ entry. Let us recall
\begin{equation}
\widetilde{\mathcal{A}}^{uv}_{22} (X, Y)= \int\limits_{\zcd\, {}_{-\delta}} \frac{d \zeta}{2\pi\I} \int\limits_{\wcu\, {}_{\delta,\zeta}} \frac{d \omega}{2\pi\I} \frac{ e^{\frac{\zeta^3}{3} + \zeta^2 u - \zeta X} } { e^{\frac{\omega^3}{3} - \omega^2 v - \omega Y} } \frac{1}{\zeta - \omega}\left(\frac{1}{\zeta + \delta}++\frac{1}{\omega - \delta}\right).
\end{equation}
By moving the integration contour $\omega$ to pass to the right of $\delta$, we get the double integral term in $\breve{\mathcal{A}}^{uv}_{22}$ and a correction term given by subtracting the pole at $\omega=\delta$. Thus we have
\begin{equation}
\widetilde{\mathcal{A}}^{uv}_{22} (X, Y) = \breve{\mathcal{A}}^{uv}_{22} (X, Y)-\mathcal{E}^{uv}(X,Y)-\int\limits_{{}_\delta \,\zcd}\frac{d \zeta}{2\pi\I} \frac{e^{\frac{\zeta^3}{3} + \zeta^2 u - \zeta X}}{\zeta-\delta} \mathpzc{f}^{-\delta,-v}(Y).
\end{equation}
By moving the integration path to the left of $\delta$ we get
\begin{equation}
-\int\limits_{{}_\delta \,\zcd}\frac{d \zeta}{2\pi\I} \frac{e^{\frac{\zeta^3}{3} + \zeta^2 u - \zeta X}}{\zeta-\delta} = -\mathpzc{g}_2^{\delta,\, u}(X) + e^{\frac{\delta^3}{3} + \delta^2 u - \delta X}.
\end{equation}
Combining this to the definition of $\mathcal{E}^{uv}(X,Y)$ leads to \eqref{eq5.3}.

For the $12$ entry identity we start with
\begin{equation}
\widetilde{\mathcal{A}}^{uv}_{12} (X, Y) = -\int\limits_{{}_{0}{\zcd }} \frac{d \zeta}{2\pi\I} \int\limits_{\wcu\, {}_{\delta,\zeta}} \frac{d \omega}{2\pi\I} \frac{ e^{\frac{\zeta^3}{3} - \zeta^2 u - \zeta X} }{ e^{\frac{\omega^3}{3} - \omega^2 v -\omega Y} } \frac{\zeta-\delta}{\omega-\delta} \frac{\zeta+\omega}{2 \zeta (\zeta-\omega)}.
\end{equation}
Moving the integration contour for $\omega$ to the right of $\delta$, we get the double integral of $\breve{\mathcal{A}}^{uv}_{12}(X,Y)$ and the correction term is $\mathpzc{g}_1^{\delta,\, u}(X) \mathpzc{f}^{-\delta,-v}(Y)$, thus giving \eqref{eq5.4}.

Finally, the identity~\eqref{eq5.5} is an elementary computation. The integral is empty for $X<S_1$ and it is from $[S_1,X]$ otherwise.
\end{proof}

As a consequence of Lemma~\ref{Lemma:5.2}, we have the following identities.

\begin{cor}
Let us define
\begin{equation}
\begin{aligned}
\tilde{\mathpzc{h}}^{\delta,\, u}_1(Y) &= -\int_{S_1}^\infty dV \breve{\mathcal{A}}^{u u_1}_{22}(Y, V) \mathpzc{f}^{-\delta,\, u_1}(V) + \mathpzc{g}_4^{\delta,\, u}(Y),\\
\tilde{\mathpzc{h}}^{\delta,\, u}_2(Y) &= \int_{S_1}^\infty dV \breve{\mathcal{A}}^{u u_1}_{12}(Y,V) \mathpzc{f}^{-\delta,\, u_1}(V) + \mathpzc{g}_3^{\delta,\, u}(Y).
\end{aligned}
\end{equation}
Then
\begin{equation}
\begin{aligned}
\mathpzc{h}^{\delta,\, u}_1(Y) &= \tilde{\mathpzc{h}}^{\delta,\, u}_1(Y) + \mathpzc{g}_2^{\delta,\, u}(Y) \braket{\mathpzc{f}^{-\delta,\, -u_1}}{P_{S_1} \mathpzc{f}^{-\delta,\, u_1}},\\
\mathpzc{h}^{\delta,\, u}_2(Y) &= \tilde{\mathpzc{h}}^{\delta,\, u}_2(Y) + \mathpzc{g}_1^{\delta,\, u}(Y) \braket{\mathpzc{f}^{-\delta,\, -u_1}}{P_{S_1} \mathpzc{f}^{-\delta,\, u_1}}.
\end{aligned}
\end{equation}
\end{cor}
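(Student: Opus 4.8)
The plan is to obtain both identities by direct substitution, relying only on the three relations \eqref{eq5.3}, \eqref{eq5.4}, \eqref{eq5.5} of Lemma~\ref{Lemma:5.2}, together with the elementary observation that $\int_{S_1}^\infty \mathpzc{f}^{-\delta,\,-u_1}(V)\,\mathpzc{f}^{-\delta,\,u_1}(V)\,dV = \braket{\mathpzc{f}^{-\delta,\,-u_1}}{P_{S_1}\mathpzc{f}^{-\delta,\,u_1}}$, an absolutely convergent integral since $\delta<0$. In both cases I would subtract $\tilde{\mathpzc{h}}$ from $\mathpzc{h}$, notice that the difference of the ``tilde'' and ``breve'' kernels appearing inside the integrals is a rank-one correction whose kernel factorizes as a $Y$-function times $\mathpzc{f}^{-\delta,\,-u_1}(V)$, pull that $Y$-function out of the integral, and recognize the remaining $V$-integral as the stated scalar product.

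For the second identity this is essentially immediate. Since $\mathpzc{h}^{\delta,\,u}_2$ and $\tilde{\mathpzc{h}}^{\delta,\,u}_2$ coincide except that $\widetilde{\mathcal{A}}^{u u_1}_{12}$ is replaced by $\breve{\mathcal{A}}^{u u_1}_{12}$ inside the integral (the $\mathpzc{g}_3^{\delta,\,u}$ terms are identical), applying \eqref{eq5.4} with $v=u_1$ gives $\widetilde{\mathcal{A}}^{u u_1}_{12}(Y,V)-\breve{\mathcal{A}}^{u u_1}_{12}(Y,V)=\mathpzc{g}_1^{\delta,\,u}(Y)\,\mathpzc{f}^{-\delta,\,-u_1}(V)$, hence
\[
\mathpzc{h}^{\delta,\,u}_2(Y)-\tilde{\mathpzc{h}}^{\delta,\,u}_2(Y)=\mathpzc{g}_1^{\delta,\,u}(Y)\int_{S_1}^\infty \mathpzc{f}^{-\delta,\,-u_1}(V)\,\mathpzc{f}^{-\delta,\,u_1}(V)\,dV=\mathpzc{g}_1^{\delta,\,u}(Y)\braket{\mathpzc{f}^{-\delta,\,-u_1}}{P_{S_1}\mathpzc{f}^{-\delta,\,u_1}},
\]
which is exactly the claim.

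For the first identity I would group the first two summands of $\mathpzc{h}^{\delta,\,u}_1$ as $-\int_{S_1}^\infty[\widetilde{\mathcal{A}}^{u u_1}_{22}(Y,V)+\mathcal{E}^{u u_1}_1(Y,V)]\,\mathpzc{f}^{-\delta,\,u_1}(V)\,dV$ and substitute \eqref{eq5.3} (again with $v=u_1$). This converts $\widetilde{\mathcal{A}}^{u u_1}_{22}+\mathcal{E}^{u u_1}_1$ into the sum of $\breve{\mathcal{A}}^{u u_1}_{22}$, the rank-one term $-\mathpzc{g}_2^{\delta,\,u}(Y)\,\mathpzc{f}^{-\delta,\,-u_1}(V)$, and the explicit indicator term $\Id_{[Y>V]}e^{\delta^2(u+u_1)}\bigl(e^{\delta(Y-V)}+e^{-\delta(Y-V)}\bigr)$. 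The $\breve{\mathcal{A}}^{u u_1}_{22}$ contribution together with $\mathpzc{g}_4^{\delta,\,u}(Y)$ reassembles precisely into $\tilde{\mathpzc{h}}^{\delta,\,u}_1(Y)$; the rank-one contribution, after pulling $\mathpzc{g}_2^{\delta,\,u}(Y)$ out, produces $+\mathpzc{g}_2^{\delta,\,u}(Y)\braket{\mathpzc{f}^{-\delta,\,-u_1}}{P_{S_1}\mathpzc{f}^{-\delta,\,u_1}}$; and the explicit indicator term, integrated against $\mathpzc{f}^{-\delta,\,u_1}$ over $(S_1,\infty)$, equals $\mathpzc{j}^{\delta,\,u}(S_1,Y)$ by \eqref{eq5.5}, entering here with a minus sign and thus cancelling the $+\mathpzc{j}^{\delta,\,u}(S_1,Y)$ present in the definition of $\mathpzc{h}^{\delta,\,u}_1$. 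Collecting the surviving terms yields the stated formula.

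No substantial obstacle is expected: the argument is a bookkeeping computation. The only points requiring a word of care are getting the signs right in the $\mathpzc{h}_1$ reduction (verifying that the integrated indicator term cancels $\mathpzc{j}^{\delta,\,u}$ exactly, not up to sign), and observing that in the regime $\delta<0$, $u,v>0$ all the integrals involved converge absolutely, so that interchanging the kernel identities with integration against $\mathpzc{f}^{-\delta,\,u_1}$ is legitimate.
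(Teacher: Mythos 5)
Your argument is correct and is precisely what the paper intends: the corollary is stated in the paper without an explicit proof, introduced only by the phrase ``As a consequence of Lemma~\ref{Lemma:5.2},'' and the expected derivation is exactly the substitution you carry out. For $\mathpzc{h}_2$, equation~\eqref{eq5.4} with $v=u_1$ replaces $\widetilde{\mathcal{A}}^{u u_1}_{12}$ by $\breve{\mathcal{A}}^{u u_1}_{12}$ plus the rank-one term, and pulling $\mathpzc{g}_1^{\delta,\,u}(Y)$ out of the $V$-integral yields the scalar product. For $\mathpzc{h}_1$, equation~\eqref{eq5.3} with $v=u_1$ rewrites $\widetilde{\mathcal{A}}^{u u_1}_{22}+\mathcal{E}^{u u_1}_1$, and the sign bookkeeping you flag works out: after distributing the overall minus, the rank-one piece contributes $+\mathpzc{g}_2^{\delta,\,u}(Y)\braket{\mathpzc{f}^{-\delta,\,-u_1}}{P_{S_1}\mathpzc{f}^{-\delta,\,u_1}}$, and the integrated indicator piece appears with a minus sign and, by~\eqref{eq5.5}, cancels the $+\mathpzc{j}^{\delta,\,u}(S_1,Y)$ in the definition of $\mathpzc{h}_1^{\delta,\,u}$. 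Your remark that $\delta<0$ is what makes $\braket{\mathpzc{f}^{-\delta,\,-u_1}}{P_{S_1}\mathpzc{f}^{-\delta,\,u_1}}$ finite and the interchange legitimate is also the point the paper records immediately after the corollary.
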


We further remark that the term $\braket{\mathpzc{f}^{-\delta,\, -u_1}}{P_{S_1} \mathpzc{f}^{-\delta,\, u_1}}<\infty$ for any $\delta<0$.

The reason for all these preliminary steps is that taking the limit $\delta\to -\infty$ in $\widetilde{\mathcal{A}}^{uv}_{12}(X,Y)$ and $\widetilde{\mathcal{A}}^{uv}_{22}(X,Y)$ is less straightforward than taking the same limit in $\breve{\mathcal{A}}^{uv}_{12}(X,Y)$ and $\breve{\mathcal{A}}^{uv}_{22}(X,Y)$ because the integration contour for $\omega$ passes to the left of $\delta$ in the former case, while in the latter case it passes to the right.

We now make the above precise and show that replacing the $\mathpzc{h}$'s with the $\tilde{\mathpzc{h}}$'s does not change the result provided that $\delta<0$.

\begin{thm}
For any $\delta<0$, Theorem~\ref{thm:main_asymptotics} is also correct if we replace $\cal Q$, defined in \eqref{eq2.24}, with $\widetilde {\cal Q}$ given by
\begin{equation}
 \widetilde{\cal Q} = (\tilde{\mathpzc{h}}^{\delta,\, u_1}_1, \tilde{\mathpzc{h}}^{\delta,\, u_1}_2, \dots, \tilde{\mathpzc{h}}^{\delta,\, u_m}_1, \tilde{\mathpzc{h}}^{\delta,\, u_m}_2 )^t.
\end{equation}
\end{thm}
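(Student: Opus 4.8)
The plan is to show that, in the inner product appearing on the right-hand side of~\eqref{eq:main_asymptotics}, the vector $\mathcal{Q}-\mathcal{U}$ may be replaced by $\widetilde{\mathcal{Q}}-\mathcal{U}$ without changing the value of the expression when $\delta<0$. By the Corollary relating $\mathpzc{h}^{\delta,\, u}_i$ and $\tilde{\mathpzc{h}}^{\delta,\, u}_i$, we have, as $2m\times 1$ vectors,
\begin{equation}
 \mathcal{Q}-\widetilde{\mathcal{Q}} = c_\delta(S_1)\,\mathcal{X}_1, \qquad \mathcal{X}_1 := \big(\mathpzc{g}_2^{\delta,\, u_1},\mathpzc{g}_1^{\delta,\, u_1},\dots,\mathpzc{g}_2^{\delta,\, u_m},\mathpzc{g}_1^{\delta,\, u_m}\big)^t,
\end{equation}
where $c_\delta(S_1)=\braket{\mathpzc{f}^{-\delta,\, -u_1}}{P_{S_1}\mathpzc{f}^{-\delta,\, u_1}}$ is a finite scalar for $\delta<0$ (explicitly $c_\delta(S_1) = -e^{-2\delta^3/3}e^{2\delta S_1}/(2\delta)$), as already noted after that Corollary. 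Hence the difference between the right-hand side of~\eqref{eq:main_asymptotics} evaluated with $\mathcal{Q}$ and with $\widetilde{\mathcal{Q}}$ equals
\begin{equation}
 \sum_{k=1}^m \partial_{S_k}\Big\{ \pf(J-\breve{\mathcal{A}}_S)\cdot c_\delta(S_1)\,\braket{P_S\mathcal{Y}}{(\Id-J^{-1}\breve{\mathcal{A}}_S)^{-1}P_S\,\mathcal{X}_1}\Big\},
\end{equation}
so it suffices to prove the identity $\braket{P_S\mathcal{Y}}{(\Id-J^{-1}\breve{\mathcal{A}}_S)^{-1}P_S\,\mathcal{X}_1}=0$ for all $S_1,\dots,S_m\in\R$; then each summand is $\partial_{S_k}$ of the zero function and the whole difference vanishes.

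To establish this vanishing I would transfer the corresponding finite-size identity. Comparing with~\eqref{eq:X_def}--\eqref{eq:Y_def}, the vectors $\mathcal{X}_1$ and $\mathcal{Y}$ are precisely the rescaled $N\to\infty$ limits of $X_1=(g_2^{j_1},g_1^{j_1},\dots,g_2^{j_m},g_1^{j_m})^t$ and $Y_2=(-g_1^{j_1},g_2^{j_1},\dots)=\mathsf{Y}$ (after the $\beta\to-\alpha$ limit), these convergences being part of Lemma~\ref{lem:AsymptoticsFunctions}. The key identity~\eqref{eq:xz_eq}, namely $\braket{P_sY_2}{(\Id-\breve G_s)^{-1}P_sX_1}=0$, holds for $\beta>0$, and by the analytic continuation carried out in Section~\ref{sec:analytic_continuation} (all objects involved being analytic in $\alpha,\beta$) it persists at $\beta=-\alpha$; since the pairing scales by an explicit nonzero prefactor, the rescaled identity still reads $\braket{P_S\mathcal{Y}^{\,\rm resc}}{(\Id-J^{-1}\breve{\mathcal{A}}^{\,\rm resc}_S)^{-1}P_S\mathcal{X}_1^{\,\rm resc}}=0$ at finite $N$, and letting $N\to\infty$ with the uniform bounds of Lemma~\ref{lem:AsymptoticsFunctions} and Lemma~\ref{lem:AsymptoticsKernels} together with the Neumann-series/dominated-convergence control of the resolvent already used in the proof of Theorem~\ref{thm:main_asymptotics} yields $\braket{P_S\mathcal{Y}}{(\Id-J^{-1}\breve{\mathcal{A}}_S)^{-1}P_S\mathcal{X}_1}=0$. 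Alternatively, one may re-derive this vanishing directly in the limit from the algebraic relations between $\mathpzc{g}_1^{\delta,\, u},\mathpzc{g}_2^{\delta,\, u}$ and the entries $\breve{\mathcal{A}}^{uv}_{ab}$, the argument of Appendix~B of~\cite{BFO20} applying \emph{mutatis mutandis} once one observes, via Lemma~\ref{lem:AiryStatA22term} and Lemma~\ref{Lemma:5.2}, that for $\delta<0$ the kernel $\breve{\mathcal{A}}^{uv}$ has the same block form and (anti)symmetry structure as the pre-limit kernel.

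The step I expect to be the main obstacle is exactly the justification of this vanishing identity for $\breve{\mathcal{A}}_S$: one has to check that the finite-$N$ relation $\braket{Y_2}{Z_1}=0$ genuinely survives both the analytic continuation in $\beta$ and the rescaling, which amounts to verifying that the rescaling prefactors attached to $\mathsf g_1$ and $\mathsf g_2$ cancel pairwise inside the pairing (so the vanishing is scale-invariant) and that the resolvent $(\Id-J^{-1}\breve{\mathcal{A}}_S)^{-1}$ is the strong limit of its pre-limit counterparts when sandwiched between the relevant exponentially decaying vectors. Everything else is routine: the decomposition $\mathcal{Q}-\widetilde{\mathcal{Q}}=c_\delta(S_1)\mathcal{X}_1$ and the finiteness of $c_\delta(S_1)$ follow directly from the preceding Corollary, and the passage from the vanishing of the inner product to the equality of the two forms of Theorem~\ref{thm:main_asymptotics} is immediate since differentiating the zero function gives zero.
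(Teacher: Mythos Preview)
Your reduction is correct: the difference $\mathcal{Q}-\widetilde{\mathcal{Q}}=c_\delta(S_1)\,\mathcal{X}_1$ with $\mathcal{X}_1=(\mathpzc{g}_2^{\delta,u_1},\mathpzc{g}_1^{\delta,u_1},\dots)^t$ and $c_\delta(S_1)<\infty$ for $\delta<0$, so everything indeed comes down to showing
\[
\braket{P_S\mathcal{Y}}{(\Id-J^{-1}\breve{\mathcal{A}}_S)^{-1}P_S\,\mathcal{X}_1}=0.
\]
Where you diverge from the paper is in how you prove this vanishing. Your primary route---pulling back to the finite-size identity $\braket{Y_2}{Z_1}=0$, analytically continuing in $\beta$, rescaling, and then passing to the $N\to\infty$ limit through the resolvent---is valid in principle but is exactly the ``main obstacle'' you yourself flag: one has to check that the prefactors match up and that the resolvent converges in the right sense. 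The paper sidesteps all of this. It simply observes that the proof of Lemma~\ref{lemma3.14} (the vanishing of~\eqref{eq3.84}) uses nothing beyond the block (anti)symmetry properties~\eqref{eq3.31} of the resolvent $H=P_s(\Id-\breve G_s)^{-1}P_s$, and those properties are inherited verbatim by $P_S(\Id-J^{-1}\breve{\mathcal{A}}_S)^{-1}P_S$ because $\breve{\mathcal{A}}$ is anti-symmetric, $\breve{\mathcal{A}}(X,Y)=-\breve{\mathcal{A}}^t(Y,X)$. Hence the four-term pairwise cancellation of Lemma~\ref{lemma3.14} goes through word-for-word at the limit, with no reference to finite $N$ at all. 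This is essentially the alternative you mention in passing; promoting it to the main argument makes the proof a few lines and removes the need for any limiting justification.
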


\begin{proof}
We need to prove that
\begin{equation}
\bra{-\mathpzc{g}_1^{\delta,\, u_1},\mathpzc{g}_2^{\delta,\, u_1},\ldots,-\mathpzc{g}_1^{\delta,\, u_m},\mathpzc{g}_2^{\delta,\, u_m}}P_S (\Id-J^{-1} \breve{\mathcal{A}}_s)^{-1} P_S \ket{\mathpzc{g}_2^{\delta,\, u_1},\mathpzc{g}_1^{\delta,\, u_1},\ldots,\mathpzc{g}_2^{\delta,\, u_m},\mathpzc{g}_1^{\delta,\, u_m}}=0.
\end{equation}
This expression is the asymptotic limit of \eqref{eq3.84} and the argument is exactly the same as in Lemma~\ref{lemma3.14} since only the (anti-)symmetry properties of the kernels are used there.
\end{proof}

Now we are ready to take the $\delta\to-\infty$ limit of the different terms entering in the statement of Theorem~\ref{thm:main_asymptotics}.

As usual the limit of the functions and kernels is well-defined under appropriate conjugation. Let us set
\begin{equation}\label{eqScalingAirystat}
u_i=-\tau_i-\delta,\quad S_i=s_i+\delta(2u_i+\delta),\quad X=x+\delta(2u+\delta),\quad Y=y+\delta(2v+\delta),
\end{equation}
for $1 \leq i \leq m$, where $u,v$ can be any values in $\{u_1,\ldots,u_m\}$. We denote $u=-\tau-\delta$ and $v=-\sigma-\delta$, thus $\tau,\sigma\in\{\tau_1,\ldots,\tau_m\}$.
\begin{prop}\label{prop:LimitKernels}
With the notations as in \eqref{eqScalingAirystat} we have the following $\delta\to-\infty$ limits:
\begin{equation}
\begin{aligned}
&\frac{e^{\frac23 u^3+u X}}{e^{-\frac23 v^3-v Y}} \breve{\mathcal{A}}^{uv}_{11}(X,Y) \\
&\qquad =- \int\limits_{{}_{\delta+\tau}\zcd } \frac{dz}{2\pi\I}\int\limits_{\wcu\, {}_{-\delta-\sigma,z-2\delta-\tau-\sigma}} \frac{dw}{2\pi\I}\frac{ e^{\frac{z^3}{3} - z(x+\tau^2)}}{ e^{\frac{w^3}{3}- w (y+\sigma^2)}} \frac{(z-2\delta-\tau)(w+2\delta+\sigma)(w+z-\tau-\sigma)}{4(z-\delta-\tau)(w+\delta+\sigma)(z-w-2\delta-\tau-\sigma)}\\
&\qquad\stackrel{\delta\to-\infty}{\longrightarrow} 0,
\end{aligned}
\end{equation}
\begin{equation}
\begin{aligned}
&\frac{e^{\frac23 u^3+u X}}{e^{\frac23 v^3+v Y}} \breve{\mathcal{A}}^{uv}_{12}(X,Y) =-\Id_{[\sigma<\tau]}\Id_{[y-2\delta \sigma\leq x-2\delta\tau]} \frac{e^{-\frac23\tau^3-\tau x}}{e^{-\frac23\sigma^3-\sigma y}} \frac{e^{-(x-y)^2/(4(\tau-\sigma))}}{\sqrt{4\pi(\tau-\sigma)}}\\
&\qquad-\int\limits_{{}_{\delta+\tau}{\zcd}} \frac{dz}{2\pi\I} \int\limits_{{}_{2\delta+\tau}\wcu\, {}_{z-\tau+\sigma}} \frac{dw}{2\pi\I} \frac{ e^{\frac{z^3}{3}-z(x+\tau^2)}}{ e^{\frac{w^3}{3} - w (y+\sigma^2)}} \frac{(z-2\delta-\tau)(w+z-2\delta-\tau-\sigma)}{2(z-\delta-\tau)(w-2\delta-\sigma)(z-w-\tau+\sigma)},\\
&\qquad\stackrel{\delta\to-\infty}{\longrightarrow} -\Id_{[\sigma<\tau]} \frac{e^{-\frac23\tau^3-\tau x}}{e^{-\frac23\sigma^3-\sigma y}} \frac{e^{-(x-y)^2/(4(\tau-\sigma))}}{\sqrt{4\pi(\tau-\sigma)}} -\int\limits_{{\zcd}} \frac{dz}{2\pi\I} \int\limits_{\wcu{}_{z-\tau+\sigma}} \frac{dw}{2\pi\I} \frac{ e^{\frac{z^3}{3}-z(x+\tau^2)}}{ e^{\frac{w^3}{3} - w (y+\sigma^2)}} \frac{1}{(z-w-\tau+\sigma)}
\end{aligned}
\end{equation}
and
\begin{equation}
\begin{aligned}
&\frac{e^{-\frac23 u^3-u X}}{e^{\frac23 v^3+v Y}} \breve{\mathcal{A}}^{uv}_{22}(X,Y) \\
&\qquad -\int\limits_{\I\R+\theta_1} \frac{dz}{2\pi\I} \int\limits_{\I\R+\theta_2} \frac{dw}{2\pi\I} \frac{ e^{\frac{z^3}{3}-z(x+\tau^2)}}{ e^{\frac{w^3}{3} - w (y+\sigma^2)}} \frac{w+z-\sigma+\tau}{(z+2\delta+\tau)(w-2\delta-\sigma)(z-w+2\delta+\tau+\sigma)}\\
&\qquad\stackrel{\delta\to-\infty}{\longrightarrow} 0
\end{aligned}
\end{equation}
with $\theta_1,\theta_2$ satisfying the conditions: $\theta_1>\max\{-\tau,0\}$, $\theta_2<\min\{0,\sigma\}$, and $\theta_1-\theta_2<-2\delta-\tau-\sigma$.
\end{prop}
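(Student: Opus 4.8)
The plan is to prove Proposition~\ref{prop:LimitKernels} by a steepest-descent style computation, tracking how the contours and the rational prefactors behave under the scaling \eqref{eqScalingAirystat}. First I would substitute $u=-\tau-\delta$, $v=-\sigma-\delta$, $X=x+\delta(2u+\delta)$, $Y=y+\delta(2v+\delta)$ directly into the integral representations of $\breve{\mathcal{A}}^{uv}_{11}$, $\breve{\mathcal{A}}^{uv}_{12}$ (using the single-double-integral form from Lemma~\ref{lem:AiryStatA22term} for the $22$ entry), and perform the change of variables $\zeta = z - \delta - \tau$ in the $z$-integral and $\omega = w + \delta + \sigma$ (respectively appropriate shifts) in the $w$-integral. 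The point of the exponential prefactors $e^{\pm(\frac23 u^3 + uX)}$, $e^{\pm(\frac23 v^3 + vY)}$ is precisely to cancel the terms in $\frac{\zeta^3}{3} + \zeta^2 u - \zeta X$ that diverge with $\delta$ (the $\delta^3$, $\delta^2$, and $\delta^1$ contributions in the new variable), leaving the $\delta$-independent cubic $\frac{z^3}{3} - z(x+\tau^2)$ modulo terms that vanish as $\delta\to-\infty$; this is the standard algebraic identity $\frac{(z-\delta-\tau)^3}{3} + (z-\delta-\tau)^2(-\tau-\delta) - (z-\delta-\tau)(x+\delta(2u+\delta)) = \frac{z^3}{3} - z(x+\tau^2) + (\text{const in }z)$, and the constant is exactly what the prefactor removes. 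This gives the displayed finite-$\delta$ formulas in the proposition.

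The second step is to take $\delta\to-\infty$ in these finite-$\delta$ expressions. Here one examines the rational prefactors: for the $11$ entry the factor $\frac{(z-2\delta-\tau)(w+2\delta+\sigma)}{4(z-\delta-\tau)(w+\delta+\sigma)(z-w-2\delta-\tau-\sigma)}$ behaves like $\frac{(-2\delta)(2\delta)}{4(-\delta)(\delta)(-2\delta)} = O(1/\delta) \to 0$, so the whole integrand vanishes pointwise; combined with uniform super-exponential (Airy) decay along the contours — which persists because the cubic term controls the tails and the poles at $z=\delta+\tau$, $w=-\delta-\sigma$ run off to $\pm\infty$ — dominated convergence yields the limit $0$. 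For the $12$ entry, the prefactor $\frac{(z-2\delta-\tau)(w+z-2\delta-\tau-\sigma)}{2(z-\delta-\tau)(w-2\delta-\sigma)(z-w-\tau+\sigma)}$ tends to $\frac{(-2\delta)(-2\delta)}{2(-\delta)(-2\delta)(z-w-\tau+\sigma)} = \frac{1}{z-w-\tau+\sigma}$, and the contour endpoints at $2\delta+\tau$ move to $-\infty$, so the up-contour ${}_{2\delta+\tau}\wcu{}_{z-\tau+\sigma}$ becomes $\wcu{}_{z-\tau+\sigma}$; for the indicator term, $\Id_{[y-2\delta\sigma \leq x - 2\delta\tau]}$ becomes $\Id_{[\sigma<\tau]}$ since $-2\delta\to+\infty$ and $\tau>\sigma \Leftrightarrow -2\delta\tau > -2\delta\sigma$ dominates. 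One recognizes the resulting object as $\widehat K_{\rm Ai}^{i,j}$ with the $\tau_i^2$-shifts, matching the kernel in Definition~\ref{defAiryStat}. The $22$ entry is handled identically with the vertical-contour representation, with the same $O(1/\delta)$ behaviour of the prefactor forcing the limit to vanish, the residual piece being absorbed into the $\mathpzc{g}$- and $\mathpzc{f}$-corrections of Lemma~\ref{Lemma:5.2}.

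The main obstacle will be making the dominated-convergence argument uniform in $\delta$: the poles of the integrands move with $\delta$ (to $\pm\infty$), which is good, but one must check that the contours can be chosen $\delta$-independently (or with controlled $\delta$-dependence, as in the stated conditions $\theta_1 > \max\{-\tau,0\}$, $\theta_2 < \min\{0,\sigma\}$, $\theta_1 - \theta_2 < -2\delta - \tau - \sigma$) so that no pole is crossed while deforming, and that the integrands have a $\delta$-uniform integrable majorant. The cubic term $\mathrm{Re}(z^3/3)$ provides Airy-type decay on the tilted rays, but near the real axis one relies on the rational prefactor being uniformly bounded away from its poles; since the poles escape to infinity, for $|\delta|$ large the prefactor is uniformly bounded on a fixed compact neighborhood of the contour, and on the tails the cubic dominates — so a majorant of the form $C e^{-c|z|^3} e^{-c|w|^3}$ (times a polynomial, times the Gaussian in the indicator term for the $12$ entry) works for all $\delta$ sufficiently negative. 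I would organize this exactly as in \cite[Lemma~4.4, Lemma~4.5]{BFP09} and the corresponding estimates of \cite{BFO20}, invoking those bounds \emph{mutatis mutandis} rather than redoing them. A final bookkeeping point is to verify that the Gaussian $e^{-(x-y)^2/(4(\tau-\sigma))}/\sqrt{4\pi(\tau-\sigma)}$ emerges correctly from the residue/Fourier computation when $\sigma<\tau$ — this is the same computation that produces the extended Airy kernel's ``heat kernel'' part and follows from closing one contour and evaluating a Gaussian integral.
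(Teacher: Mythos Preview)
Your proposal is correct and follows essentially the same approach as the paper: the paper's proof simply records the changes of variables ($\zeta=z+u$, $\omega=w-v$ for the $11$ entry; $\zeta=z+u$, $\omega=w+v$ for the $12$ double integral and $\zeta=z-\delta$ plus a Gaussian integral for the $\mathcal{V}^{uv}$ term; $\zeta=z-u$, $\omega=w+v$ for the $22$ entry starting from Lemma~\ref{lem:AiryStatA22term}), which are exactly the shifts you describe, and leaves the $\delta\to-\infty$ limits implicit from inspection of the rational prefactors. Your additional discussion of the $O(1/\delta)$ behaviour of the prefactors and the dominated-convergence justification is more explicit than what the paper writes but is the intended reasoning.
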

\begin{proof}
The expression of the conjugated $\breve{\mathcal{A}}^{uv}_{11}(X,Y)$ is obtained by the change of variables $\zeta=z+u$ and $\omega=w-v$.
For $\breve{\mathcal{A}}^{uv}_{12}(X,Y)$, the first term comes from $\mathcal{V}^{uv}(X,Y)$ after the change of variables $\zeta=z-\delta$ and a Gaussian integral, while the second term comes from the double integral after the change of variables $\zeta=z+u$ and $\omega=w+v$.
The result for $\breve{\mathcal{A}}^{uv}_{22}(X,Y)$ is simply obtained starting from the expression of Lemma~\ref{lem:AiryStatA22term} after the change of variables $\zeta=z-u$ and $\omega=w+v$.
\end{proof}

Let us now turn to the limits of the various functions entering the main expression of Theorem~\ref{thm:main_asymptotics}.

\begin{prop}\label{prop:LimitsFunctions}
With notation as in \eqref{eqScalingAirystat} and as $\delta\to-\infty$, we have the following $\delta$-independent functions:
\begin{equation}
\begin{aligned}
\mathpzc{e}^{\delta,\, u_1}(S_1)  & = -e^{-\frac23 \tau_1^3-\tau_1 s_1} \int\limits_{\zcd {}_{-\tau_1}} \frac{dz}{2\pi\I}\frac{e^{\frac{z^3}{3}-z(s_1+\tau_1^2)}}{(z+\tau_1)^2}={\cal R}, \\
e^{\frac23 u_1^3+u_1 Y}\mathpzc{f}^{-\delta,\, u_1}(Y) & = e^{-\frac23 \tau_1^3-\tau_1 y},\\
e^{-\frac23 u^3-uX} \mathpzc{g}_2^{\delta,\, u}(X) & = \int\limits_{\zcd\, {}_{-\tau}} \frac{dz}{2\pi\I} e^{\frac{z^3}{3} - z(x+\tau^2)} \frac{1}{z+\tau},\\
e^{\frac23 u^3+u X}\mathpzc{g}_3^{\delta,\, u}(X) & =\int\limits_{{}_{\tau}\zcd} \frac{dz}{2\pi\I} e^{\frac{z^3}{3}-z(x+\tau^2)} \frac{1}{z-\tau}, \\
e^{\frac23 u_k^3+u_k Y} {\cal U}_{2k}(Y) & = -e^{-\frac23 \tau_k^3-\tau_k y} \int\limits_{\I\R+\eta}\frac{dz}{2\pi\I} \frac{e^{(\tau_k-\tau_1) z^2-z(y-s_1)}}{z},
\end{aligned}
\end{equation}
where $\eta>0$. Moreover, we have the following $\delta$-dependent functions converging to $0$:
\begin{equation}
\begin{aligned}
e^{\frac23 u^3+uX} \mathpzc{g}_1^{\delta,\, u}(X)&= \int\limits_{{}_{\delta+\tau}\zcd} \frac{dz}{2\pi\I} e^{\frac{z^3}{3}- z(x+\tau^2)} \frac{z-\tau}{2(z-\tau-\delta)}\stackrel{\delta\to-\infty}{\longrightarrow} 0,\\
e^{-\frac23 u^3-u X} \mathpzc{g}_4^{\delta,\, u}(X)&=\int\limits_{\zcd\, {}_{-\tau,-2\delta+\tau}} \frac{dz}{2\pi\I} e^{\frac{z^3}{3} -z(x+\tau^2)} \frac{2(z+\delta+\tau)}{(z+\tau)(z+2\delta+\tau)^2}\stackrel{\delta\to-\infty}{\longrightarrow}0.
\end{aligned}
\end{equation}
\end{prop}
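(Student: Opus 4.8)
The plan is to treat every function in the statement by the same two-move recipe. \textbf{Step 1: a linear change of variables.} In each defining contour integral I would substitute $\zeta=z-u$ (for $\mathpzc{e}^{\delta,u_1}(S_1)$, $\mathpzc{g}_2$, $\mathpzc{g}_4$), $\zeta=z+u$ (for $\mathpzc{g}_1$, $\mathpzc{g}_3$), or $\zeta=z-\delta$ (for $\mathcal U_{2k}$), each chosen so that the pole of the integrand --- at $\zeta=\delta$, $\zeta=-\delta$, or $\zeta=0$ --- lands at a $\delta$-free location ($z=-\tau$, $z=\tau$, or $z=0$). The function $\mathpzc{f}^{-\delta,u}$ carries no integral and is handled by a bare exponent computation. \textbf{Step 2: collapse the exponent.} Expanding $(z\mp u)^3$, $\zeta^2u$ and $-\zeta X$ and inserting the scaling $u_i=-\tau_i-\delta$, $X=x+\delta(2u+\delta)$, the key identity $X+u^2=x+\tau^2$ --- together with the parallel cancellations of the constant and of the $\delta^2,\delta^3$ terms, and, for $\mathcal U_{2k}$, the relations $u_k-u_1=\tau_1-\tau_k$ and $Y-S_1=y-s_1+2\delta(\tau_1-\tau_k)$ --- shows that after multiplying by the indicated conjugation prefactor (and, for $\mathpzc{e}$, against its denominator $e^{\delta^3/3+\delta^2u-\delta S}$) the cubic exponent becomes exactly $\tfrac{z^3}{3}-z(x+\tau^2)$, respectively $(\tau_k-\tau_1)z^2-z(y-s_1)$, with all $\delta$-dependence gone.

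For the first block of five functions this already finishes the argument: the rational factors transform in the obvious way (e.g.\ $\tfrac1{\zeta-\delta}\mapsto\tfrac1{z+\tau}$, $\tfrac{\zeta+\delta}{2\zeta}\mapsto\tfrac{z-\tau}{2(z-\tau-\delta)}$), the shifted contours are precisely those displayed, and the resulting expressions are literally independent of $\delta$ and coincide with the claimed formulas --- for $\mathpzc{e}^{\delta,u_1}(S_1)$ this is exactly the definition of $\mathcal R$ in \eqref{eqDefinFctAiryStat}. No limit is needed here; these are exact identities valid for every $\delta$.

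The genuine $\delta\to-\infty$ limit is needed only for $\mathpzc{g}_1$ and $\mathpzc{g}_4$. There the transformed rational factors are $\tfrac{z-\tau}{2(z-\tau-\delta)}$ and $\tfrac{2(z+\tau+\delta)}{(z+\tau)(z+2\delta+\tau)^2}$, and the relevant poles at $z=\tau+\delta$ and $z=-2\delta-\tau$ run off to $\mp\infty$, so these factors tend to $0$ pointwise in $z$. To pass this to the integral I would first deform the $\delta$-dependent down-contour ${}_{\delta+\tau}\zcd$ (respectively $\zcd\,{}_{-\tau,-2\delta+\tau}$) to a \emph{fixed} steep-descent down-contour crossing the real axis at, say, $0$; this is legitimate for all sufficiently negative $\delta$ since the escaping poles stay strictly on one side and the cubic term annihilates the contributions near infinity. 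On the fixed contour the rational factor is bounded uniformly in $\delta$, the cubic exponential $|e^{z^3/3}|$ furnishes an integrable $\delta$-independent majorant, and dominated convergence gives the limit $0$.

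The only slightly delicate part is the contour bookkeeping: one must check that each substitution crosses no pole, that the contours obtained after conjugation are genuine steep-descent contours (so that $e^{z^3/3}$ really supplies the super-exponential decay used in the uniform bound), and that the deformation to a fixed contour in the $\mathpzc{g}_1,\mathpzc{g}_4$ cases is valid. The $\delta$-power cancellations in Step 2, though somewhat lengthy, are entirely mechanical once the substitutions are fixed and run parallel to the one-point computations of~\cite{BFO20}.
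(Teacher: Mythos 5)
Your proposal is correct and follows exactly the same route as the paper: the paper's (very terse) proof is precisely the list of substitutions $\zeta=z-u$ (for $\mathpzc{e}$, $\mathpzc{g}_2$, $\mathpzc{g}_4$), $\zeta=z+u$ (for $\mathpzc{g}_1$, $\mathpzc{g}_3$), $\zeta=z-\delta$ (for $\mathcal{U}_{2k}$), and a direct exponent computation for $\mathpzc{f}$, followed by the cancellation $u^2+X=\tau^2+x$ and $(u+\delta)^3=(-\tau)^3$ that you identify; your deformation-plus-dominated-convergence argument for the $\mathpzc{g}_1,\mathpzc{g}_4$ limits is the standard implicit step. One small imprecision: for $\mathpzc{g}_4$ the fixed contour cannot cross at $0$ --- it must cross strictly to the left of $-\tau$ (the pole at $z=-\tau$ lies to its right for every $\delta<0$, only the pole at $z=-2\delta-\tau$ escapes to $+\infty$) --- but you already flag the contour bookkeeping as a point to verify, so this does not affect the soundness of the argument.
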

\begin{proof}
The result for $\mathpzc{e}^{\delta,\, u_1}(S_1)$ is obtained by the change of variables $\zeta=z-u_1$. The one for $\mathpzc{g}_1^{\delta,\, u}(X)$ by the change of variables $\zeta=z+u$, while for $\mathpzc{g}_2^{\delta,\, u}(X)$ the change of variables is $\zeta=z-u$. The expression for $\mathpzc{f}^{-\delta,\, u_1}(Y)$ is a direct computation. The formula for $\mathpzc{g}_4^{\delta,\, u}(X)$ follows by taking $\zeta=z-u$. Finally, for $\mathpzc{g}_3^{\delta,\, u}(X)$ we change variables as $\zeta=z+u$, and for ${\cal U}_{2k}(Y)$ as $\zeta=z-\delta$.
\end{proof}

\begin{proof}[Proof of Theorem~\ref{thm:LimitAiryStat}]
The expression of the joint distribution in terms of a Fredholm Pfaffian and scalar product is well-defined for any value of $\delta$. This follows by just analyzing the behavior in $x,y$ of the kernels and functions. The only elements which are $\delta$-dependent include contour integrals with terms like $e^{z^3/3-zx}$ or $e^{-w^3/3+w y}$. These give (super-) exponential decay terms in $x,y$. This decay is not affected by the $\delta$-dependent term. Therefore the limiting result as $\delta\to-\infty$ is the one obtained by taking the limit of the different terms inside the integrals by the use of dominated convergence.

Since the diagonal terms of the Pfaffian kernel go to $0$, the Fredholm Pfaffian goes to a Fredholm determinant of a scalar kernel. The term $e^{-\frac23 u^3-uY}\tilde{\mathpzc{h}}^{\delta,\, u}_1(Y)$ as well as $e^{\frac23 u^3+uX}\mathpzc{g}_1^{\delta,\, u}(X)$ go to zero. The only term which has not been yet computed is the limit of $e^{\frac23 u^3+uY}\tilde{\mathpzc{h}}^{\delta,\, u}_2(Y)$, in particular the term $e^{\frac23 u^3+u Y}(\breve{\mathcal{A}}_{12}^{u_k u_1}P_{S_1} \mathpzc{f}^{-\delta,\, u_1})(Y)$. This elementary computation gives the last term in $\Phi^k(y)$ of \eqref{eqDefinFctAiryStat}.

We obtain the claimed result by putting all of the above together.
\end{proof}

Finally we want to write the limiting expressions in terms of Airy functions and exponentials, to compare with the result derived in~\cite{BFP09}.

\begin{lem}\label{lem:KernelAiryrepresentation}
With notation as in \eqref{eqScalingAirystat} we have
\begin{equation}
\begin{aligned}
\lim_{\delta\to-\infty}\frac{e^{\frac23 u^3+u X}}{e^{\frac23 v^3+v Y}} \breve{\mathcal{A}}^{uv}_{12}(X,Y)
=&-\Id_{[\sigma<\tau]} \frac{e^{-\frac23\tau^3-\tau x}}{e^{-\frac23\sigma^3-\sigma y}} \frac{e^{-(x-y)^2/(4(\tau-\sigma))}}{\sqrt{4\pi(\tau-\sigma)}}\\
&+\int_0^\infty d\lambda \Ai(x+\lambda+\tau^2)\Ai(y+\lambda+\sigma^2)e^{\lambda(\tau-\sigma)},
\end{aligned}
\end{equation}
which is the kernel $\widehat K_{\rm Ai}$ of~\cite[Thm.~1.2]{BFP09} after setting our $(\sigma,\tau)$ to their $(\tau_i,\tau_j)$.
\end{lem}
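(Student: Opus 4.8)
The plan is to reduce everything to Proposition~\ref{prop:LimitKernels}, which already computes the $\delta\to-\infty$ limit of the conjugated kernel $\tfrac{e^{\frac23 u^3+u X}}{e^{\frac23 v^3+v Y}}\breve{\mathcal{A}}^{uv}_{12}(X,Y)$ as the sum of the Gaussian term
\[
-\Id_{[\sigma<\tau]}\frac{e^{-\frac23\tau^3-\tau x}}{e^{-\frac23\sigma^3-\sigma y}}\frac{e^{-(x-y)^2/(4(\tau-\sigma))}}{\sqrt{4\pi(\tau-\sigma)}}
\]
(which coincides verbatim with the first term of the claimed identity and needs no further work) and the double-contour integral
\[
-\int\limits_{\zcd}\frac{dz}{2\pi\I}\int\limits_{\wcu{}_{z-\tau+\sigma}}\frac{dw}{2\pi\I}\frac{e^{\frac{z^3}{3}-z(x+\tau^2)}}{e^{\frac{w^3}{3}-w(y+\sigma^2)}}\frac{1}{z-w-\tau+\sigma}.
\]
Thus the only content of the lemma is the rewriting of this double integral as $\int_0^\infty d\lambda\,\Ai(x+\lambda+\tau^2)\Ai(y+\lambda+\sigma^2)e^{\lambda(\tau-\sigma)}$.

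First I would use that, by the choice of contours, $z$ runs along a path opening to the right and $w$ along one opening to the left of $z-\tau+\sigma$, so that $\Re(z-w-\tau+\sigma)>0$ holds uniformly; hence the Laplace-type identity $\frac{1}{z-w-\tau+\sigma}=\int_0^\infty e^{-\lambda(z-w-\tau+\sigma)}\,d\lambda$ is valid. Inserting it and interchanging the order of integration, which is legitimate because of the super-exponential decay of $|e^{z^3/3}|$ and $|e^{-w^3/3}|$ along the contours together with the extra damping $e^{-\lambda\Re(z-w)}$, the $z$- and $w$-integrals factorize and one is left with
\[
-\int_0^\infty d\lambda\,e^{\lambda(\tau-\sigma)}\left(\int\limits_{\zcd}\frac{dz}{2\pi\I}e^{\frac{z^3}{3}-z(x+\lambda+\tau^2)}\right)\left(\int\limits_{\wcu}\frac{dw}{2\pi\I}e^{-\frac{w^3}{3}+w(y+\lambda+\sigma^2)}\right).
\]
Then I would identify each factor with an Airy function via the standard contour representation $\Ai(a)=\frac{1}{2\pi\I}\int e^{t^3/3-ta}\,dt$ over the contour from $e^{-\I\pi/3}\infty$ to $e^{\I\pi/3}\infty$: since $\zcd$ is precisely this contour traversed in the opposite direction, the first factor equals $-\Ai(x+\lambda+\tau^2)$, and substituting $w\mapsto-w$ turns the second integral into the same $\zcd$-integral times an orientation sign, giving $\Ai(y+\lambda+\sigma^2)$. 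The two sign changes cancel the leading minus, and one obtains $\int_0^\infty d\lambda\,e^{\lambda(\tau-\sigma)}\Ai(x+\lambda+\tau^2)\Ai(y+\lambda+\sigma^2)$, with the $\lambda$-integral convergent at $+\infty$ because each Airy factor decays like $e^{-\frac23\lambda^{3/2}}$. Comparing with $\widehat K_{\rm Ai}$ of~\cite[Thm.~1.2]{BFP09}, the shifts $\tau^2,\sigma^2$ are exactly their ``entries shifted by $\tau_i^2$'' normalization and the Gaussian term matches term by term, so under the dictionary $(\sigma,\tau)\leftrightarrow(\tau_i,\tau_j)$ the two kernels agree.

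I do not expect a genuine obstacle here: the computation is a routine rewriting of a Fredholm-type kernel. The only points demanding care are getting the orientations of $\zcd$ and $\wcu$ (and hence all the signs) right, and checking the Fubini hypotheses, both of which follow immediately from the contour description and the cubic decay of the integrands.
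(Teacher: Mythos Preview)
Your proposal is correct and follows essentially the same route as the paper: start from the limiting expression in Proposition~\ref{prop:LimitKernels}, insert the Laplace identity $\frac{1}{z-w-\tau+\sigma}=\int_0^\infty e^{-\lambda(z-w-\tau+\sigma)}\,d\lambda$ (valid since $\Re(z-w-\tau+\sigma)>0$), factorize, and identify the resulting contour integrals with Airy functions via \eqref{eqAiryIdentities}. The paper's proof is terser but the argument is identical, including the sign bookkeeping coming from the orientation of $\zcd$.
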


\begin{proof}
We replace
\begin{equation}
\frac{1}{z-w-\tau+\sigma}=\int_0^\infty d\lambda e^{-\lambda(z-w-\tau+\sigma)}, \quad \Re(z-w-\tau+\sigma)>0
\end{equation}
into the limiting expression of Proposition~\ref{prop:LimitKernels} and the use of the two Airy identities (see e.g.~\cite[Equation~(A.1)]{BFP09})
\begin{equation}\label{eqAiryIdentities}
-\int\limits_{\zcd {}}\frac{dz}{2\pi\I} e^{\frac{z^3}{3}-zx} = \Ai(x),\qquad \int\limits_{\wcu {}}\frac{dw}{2\pi\I} e^{-\frac{w^3}{3}+wx} = \Ai(x)
\end{equation}
to get the result.
\end{proof}

\begin{lem}\label{lem:FunctionR}
With notation as in~\eqref{eqScalingAirystat} we have
\begin{equation}
\mathpzc{e}^{\delta,\, u_1}(S_1)= s_1 + e^{-\frac23 \tau_1^3}\int_{s_1}^\infty d\lambda \int_0^\infty d\mu e^{-\tau_1(\lambda+\mu)}\Ai(\lambda+\mu+\tau_1^2),
\end{equation}
which is the function $\cal R$ in~\cite[Definition~1.1]{BFP09}.
\end{lem}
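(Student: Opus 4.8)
The goal is to rewrite
\[
\mathpzc{e}^{\delta,\, u_1}(S_1)
= -e^{-\frac23 \tau_1^3-\tau_1 s_1}
\int\limits_{\zcd\, {}_{-\tau_1}} \frac{dz}{2\pi\I}\,
\frac{e^{\frac{z^3}{3}-z(s_1+\tau_1^2)}}{(z+\tau_1)^2}
\]
(the $\delta$-independent expression obtained in Proposition~\ref{prop:LimitsFunctions}) in the form appearing in~\cite[Definition~1.1]{BFP09}. The plan is a direct residue/Airy-integral computation in three steps.

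First I would treat the double pole at $z=-\tau_1$. Since the contour $\zcd\,{}_{-\tau_1}$ passes to the right of $-\tau_1$, I write $(z+\tau_1)^{-2}$ as an integral: for $\Re(z+\tau_1)>0$ one has
\[
\frac{1}{(z+\tau_1)^2} = \int_0^\infty d\mu\; \mu\, e^{-\mu(z+\tau_1)},
\]
which after plugging in turns the $z$-integral (times the prefactor) into
\[
-e^{-\frac23\tau_1^3}\int_0^\infty d\mu\,\mu\, e^{-\mu\tau_1}
\left(-\int\limits_{\zcd}\frac{dz}{2\pi\I}\, e^{\frac{z^3}{3}-z(s_1+\mu+\tau_1^2)}\right)
= -e^{-\frac23\tau_1^3}\int_0^\infty d\mu\,\mu\, e^{-\mu\tau_1}\,\Ai(s_1+\mu+\tau_1^2),
\]
where I used the Airy identity~\eqref{eqAiryIdentities} (the contour can be straightened back to $\zcd$ once the pole has been removed). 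The overall sign needs care: I expect a net $+$ sign in front, matching the claimed formula, and verifying this is one of the small bookkeeping points.

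Second, I would reconcile the weight $\mu$ against the double integral $\int_{s_1}^\infty d\lambda\int_0^\infty d\mu\, e^{-\tau_1(\lambda+\mu)}\Ai(\lambda+\mu+\tau_1^2)$ in the target: integrating out $\lambda$ over $[s_1,\infty)$ converts the single exponential weight into the factor one gets from shifting, and a change of variables $\lambda=s_1+\nu$ together with swapping the order of the $\mu,\nu$ integrals (legitimate by the super-exponential decay of $\Ai$) should produce exactly the $\mu\,e^{-\mu\tau_1}\Ai(s_1+\mu+\tau_1^2)$ integrand, up to the additive constant. Concretely, $\int_{s_1}^\infty d\lambda\, e^{-\tau_1\lambda}\Ai(\lambda+\mu+\tau_1^2)$ can be evaluated by recognizing it as a shifted exponentially-weighted Airy integral; combining with the $\mu$-integral reproduces the $(z+\tau_1)^{-2}$ pole structure in reverse. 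The additive $s_1$ then arises as the boundary/normalization term, analogous to how $\mathcal R$ is normalized in~\cite{BFP09} (it is the value forced by the requirement that the distribution integrate to one).

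The main obstacle I anticipate is not any single step but pinning down the constant and sign conventions so that the identity holds on the nose with~\cite[Definition~1.1]{BFP09}: the BFP normalization of $\mathcal R$ includes an explicit $s_1$ term, and one must check that our contour-integral representation, which superficially has no such term, indeed equals it after the residue manipulation — i.e.\ that the ``extra'' $s_1$ is generated by the regularization of the $1/(z+\tau_1)^2$ singularity (for instance by splitting $1/(z+\tau_1)^2 = \partial_z(-1/(z+\tau_1))$ and integrating by parts, the boundary term at the pole contributes the linear-in-$s_1$ piece). Once the constant is fixed, the rest is the routine Airy-identity bookkeeping sketched above. I would also double check that all interchanges of integration are justified — this is immediate from the Gaussian/cubic decay along the Airy contours and the exponential decay of $\Ai$ at $+\infty$, uniformly on the relevant half-lines.
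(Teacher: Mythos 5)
There is a genuine gap, and it is precisely in the step you flag as unclear. Your derivation starts from the claim that the contour $\zcd\,{}_{-\tau_1}$ ``passes to the right of $-\tau_1$''; by the paper's convention (subscript on the right means the contour lies to the \emph{left} of that point), the contour in the definition of $\mathcal{R}$ actually passes to the \emph{left} of $-\tau_1$. Consequently the Laplace representation $\frac{1}{(z+\tau_1)^2}=\int_0^\infty \mu\,e^{-\mu(z+\tau_1)}\,d\mu$, which requires $\Re(z+\tau_1)>0$, does not converge on the contour you start with, and your first step is not licit. This is not a bookkeeping point but the whole mechanism behind the additive $s_1$: one must first push the contour to the right of $-\tau_1$, and in doing so one crosses the double pole and picks up a residue. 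Since $\mathrm{Res}_{z=-\tau_1}\frac{e^{\frac{z^3}{3}-z(s_1+\tau_1^2)}}{(z+\tau_1)^2}=\left.\partial_z e^{\frac{z^3}{3}-z(s_1+\tau_1^2)}\right|_{z=-\tau_1}=-s_1\,e^{\frac23\tau_1^3+\tau_1 s_1}$, the prefactor $-e^{-\frac23\tau_1^3-\tau_1 s_1}$ turns this residue into exactly $+s_1$. That is where the linear term comes from; it is a plain residue, not a ``regularization'' of the singularity, an ``integration by parts boundary term'', or an a-priori normalization condition. Your proposed alternative mechanisms would not in fact produce the $s_1$ from the given formula.

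Once the contour has been moved, the rest of your outline is essentially the paper's argument, with only two cosmetic differences: the paper writes the Laplace kernel as $\int_0^\infty d\lambda\int_\lambda^\infty d\mu\,e^{-\mu(z+\tau_1)}$ rather than $\int_0^\infty \mu\,e^{-\mu(z+\tau_1)}\,d\mu$ (equivalent by doing the $\lambda$-integral), so it lands directly on the double-integral form of~\cite[Definition~1.1]{BFP09} after the shift $\lambda\mapsto\lambda-s_1$, $\mu\mapsto\mu-\lambda$; and your intermediate display has dropped the $e^{-\tau_1 s_1}$ factor and carries the wrong overall sign (it should read $+e^{-\frac23\tau_1^3-\tau_1 s_1}\int_0^\infty \mu\,e^{-\mu\tau_1}\Ai(s_1+\mu+\tau_1^2)\,d\mu$). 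Those last two are genuinely just bookkeeping, unlike the residue step.
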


\begin{proof}
One starts with the integral expression in Proposition~\ref{prop:LimitsFunctions}. Moving the integration contour to the right of $-\tau_1$ leads to a residue contribution equal to $s_1$. Then we use
\begin{equation}
\frac{1}{(z+\tau_1)^2}=\int_0^\infty d\lambda \int_\lambda^\infty d\mu e^{-\mu(z+\tau_1)}, \quad \Re(z)>-\tau_1
\end{equation}
together with an Airy identity from~\eqref{eqAiryIdentities} and finally we shift the integration variables $\lambda$ and $\mu$ as in the final expression.
\end{proof}

\begin{lem}\label{lem:FunctionPsi}
With notation as in \eqref{eqScalingAirystat} we have
\begin{equation}
e^{-\frac23 u^3-uX} \mathpzc{g}_2^{\delta,\, u}(X) = e^{\frac23\tau^3+\tau x}-\int_0^\infty d\lambda e^{-\lambda \tau} \Ai(x+\lambda+\tau^2),
\end{equation}
which is equal to the function $\Psi_j$ of~\cite[Definition~1.1]{BFP09} after setting our $\tau$ to their $\tau_j$.
\end{lem}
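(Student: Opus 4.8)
The plan is to start from the closed integral formula already produced in Proposition~\ref{prop:LimitsFunctions}, namely $e^{-\frac23 u^3-uX}\,\mathpzc{g}_2^{\delta,\,u}(X)=\int_{\zcd\,{}_{-\tau}}\frac{dz}{2\pi\I}\,e^{\frac{z^3}{3}-z(x+\tau^2)}\frac{1}{z+\tau}$, and to convert it into the Airy--exponential form of the statement exactly as in the proofs of Lemma~\ref{lem:FunctionR} and Lemma~\ref{lem:KernelAiryrepresentation}: first move the contour across the pole at $z=-\tau$, then represent the leftover rational factor as a Laplace integral and apply an Airy identity. Note there is no $\delta\to-\infty$ limit to take here, since the right-hand side of Proposition~\ref{prop:LimitsFunctions} is already $\delta$-independent; the content of the lemma is purely the rewriting.

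First I would deform the contour $\zcd\,{}_{-\tau}$, which in our conventions runs to the left of $-\tau$, into a down-oriented Airy contour passing to the right of $-\tau$. The integrand is entire except for the simple pole of $1/(z+\tau)$ at $z=-\tau$, so the only effect of the deformation is to pick up the residue there; a short computation gives $\mathrm{Res}_{z=-\tau}\big[e^{\frac{z^3}{3}-z(x+\tau^2)}/(z+\tau)\big]=e^{\frac23\tau^3+\tau x}$, and the orientation of $\zcd$ is such that this residue is added (not subtracted), producing the first term of the statement.

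Next I would treat the remaining contour integral, which now runs along a contour on which $\Re(z+\tau)$ stays bounded below by a positive constant (it is at least $c'+\tau>0$ at the real crossing and tends to $+\infty$ along the two rays). On such a contour $\tfrac{1}{z+\tau}=\int_0^\infty e^{-\lambda(z+\tau)}\,d\lambda$. Absolute convergence — super-exponential decay of $e^{z^3/3}$ along $\zcd$ together with exponential decay of $e^{-\lambda(z+\tau)}$ in $\lambda$, uniformly on the contour — justifies a Fubini interchange of the $z$- and $\lambda$-integrations. The inner integral is then $\int_{\zcd}\frac{dz}{2\pi\I}\,e^{\frac{z^3}{3}-z(x+\lambda+\tau^2)}$, which by the Airy identity~\eqref{eqAiryIdentities} equals $-\Ai(x+\lambda+\tau^2)$. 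Collecting the two contributions yields $e^{\frac23\tau^3+\tau x}-\int_0^\infty e^{-\lambda\tau}\Ai(x+\lambda+\tau^2)\,d\lambda$, and a direct comparison with the integral expression in~\cite[Definition~1.1]{BFP09} identifies it with $\Psi_j$ after setting our $\tau$ to their $\tau_j$.

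There is no substantial obstacle here; the computation is routine and structurally identical to Lemma~\ref{lem:FunctionR}. The only points needing attention are bookkeeping matters: keeping track of which side of $-\tau$ the contour of Proposition~\ref{prop:LimitsFunctions} lies on and of the orientation of $\zcd$, so that the residue at $z=-\tau$ enters with the correct plus sign, and verifying that after deformation $\Re(z+\tau)$ is genuinely positive on the whole contour so that both the Laplace representation of $1/(z+\tau)$ and the Fubini interchange are legitimate.
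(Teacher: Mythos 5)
Your proposal is correct and follows exactly the paper's own argument: move the contour of Proposition~\ref{prop:LimitsFunctions} across the simple pole at $z=-\tau$ (picking up the residue $e^{\frac23\tau^3+\tau x}$ with a plus sign, since for a down-oriented $\zcd$-contour the left-minus-right difference encircles the pole counterclockwise), then use the Laplace representation $(z+\tau)^{-1}=\int_0^\infty e^{-\lambda(z+\tau)}\,d\lambda$ on the shifted contour and the Airy identity~\eqref{eqAiryIdentities}. The extra detail you supply on the orientation/sign of the residue and on the Fubini justification is sound; nothing deviates from the route the paper takes.
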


\begin{proof}
It follows directly by first taking the integration path to the right of $-\tau$ and then using $(z+\tau)^{-1}=\int_0^\infty d\lambda e^{-\lambda(z+\tau)}$ together with one Airy identity from~\eqref{eqAiryIdentities}.
\end{proof}

\begin{lem}\label{lem:FunctionPhi}
With notation as in \eqref{eqScalingAirystat} we have
\begin{equation}
\begin{aligned}
&e^{\frac23 u^3+u X}\left[\mathpzc{g}_3^{\delta,\, u_k}(X)-{\cal U}_{2k}(X)+ (\breve{\mathcal{A}}_{12}^{u_k u_1}P_{S_1} \mathpzc{f}^{-\delta,\, u_1})(X)\right] \\
& \qquad = -\int_0^\infty d\lambda e^{\lambda \tau_k} \Ai(x+\lambda+\tau_k^2)+\Id_{[\tau_1<\tau_k]}e^{-\frac23\tau_k^3-\tau_k x} \int_{-\infty}^{s_1-x} d\lambda \frac{e^{-\lambda^2/(4(\tau_k-\tau_1))}}{\sqrt{4\pi (\tau_k-\tau_1)}}\\
&\qquad \ +e^{-\frac23\tau_1^3} \int_0^\infty d\lambda \int_{s_1}^\infty d\mu e^{-\lambda(\tau_1-\tau_k)} e^{-\tau_1 \mu} \Ai(\mu+\lambda+\tau_1^2) \Ai(x+\lambda+\tau_k^2),
\end{aligned}
\end{equation}
which is equal to the function $\Phi_k$ of~\cite[Definition~1.1]{BFP09}.
\end{lem}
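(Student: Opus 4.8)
The plan is to prove the identity for $\Phi_k$ by the same strategy used in the three previous lemmas (Lemmas~\ref{lem:KernelAiryrepresentation}, \ref{lem:FunctionR}, and~\ref{lem:FunctionPsi}): start from the $\delta$-independent limiting integral expressions collected in Propositions~\ref{prop:LimitKernels} and~\ref{prop:LimitsFunctions}, introduce an integral representation for each simple pole so as to factor the double contour integrals into products of single ones, and then recognize the single contour integrals as Airy functions via the identities~\eqref{eqAiryIdentities}. Concretely, the left-hand side splits into three pieces corresponding to $\mathpzc{g}_3^{\delta,\, u_k}$, $-\mathcal{U}_{2k}$, and the term $(\breve{\mathcal{A}}_{12}^{u_k u_1} P_{S_1} \mathpzc{f}^{-\delta,\, u_1})(X)$. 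The first two are already written as explicit single integrals in Proposition~\ref{prop:LimitsFunctions}, and the third is the $\delta\to-\infty$ limit of an integral against the conjugated $\breve{\mathcal{A}}_{12}$ whose limit is given in Proposition~\ref{prop:LimitKernels}.

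First I would handle $e^{\frac23 u_k^3 + u_k X}\mathpzc{g}_3^{\delta,\, u_k}(X)$: using the $\delta$-independent limit $\int_{{}_{\tau_k}\zcd}\frac{dz}{2\pi\I} e^{z^3/3 - z(x+\tau_k^2)}\frac{1}{z-\tau_k}$, I would move the contour to the left of $\tau_k$ (no residue, since the contour already separates $\tau_k$ appropriately for $\Re(z)<\tau_k$), write $\frac{1}{z-\tau_k} = -\int_0^\infty d\lambda\, e^{\lambda(z-\tau_k)}$ valid for $\Re(z)<\tau_k$, and apply the first Airy identity in~\eqref{eqAiryIdentities} to get $-\int_0^\infty d\lambda\, e^{\lambda\tau_k}\Ai(x+\lambda+\tau_k^2)$, the first term on the right. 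Next, for $-e^{\frac23 u_k^3+u_k X}\mathcal{U}_{2k}(X)$, I would start from $e^{-\frac23\tau_k^3-\tau_k x}\int_{\I\R+\eta}\frac{dz}{2\pi\I}\frac{e^{(\tau_k-\tau_1)z^2 - z(x-s_1)}}{z}$; this is a Gaussian integral with a simple pole at $0$, and closing/evaluating it (or writing $1/z$ as an integral and completing the square) produces precisely the middle term $\Id_{[\tau_1<\tau_k]}e^{-\frac23\tau_k^3-\tau_k x}\int_{-\infty}^{s_1-x} d\lambda\, \frac{e^{-\lambda^2/(4(\tau_k-\tau_1))}}{\sqrt{4\pi(\tau_k-\tau_1)}}$ (the indicator appearing because the Gaussian is only convergent/normalizable when $\tau_k>\tau_1$, matching the sign of the coefficient).

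The third piece is the genuinely new computation. I would take $v=u_1$ (so $\sigma=\tau_1$) in the limiting formula for $\breve{\mathcal{A}}^{u_k u_1}_{12}$ from Proposition~\ref{prop:LimitKernels}, integrate it against $P_{S_1}\mathpzc{f}^{-\delta,\, u_1}$ (whose conjugated limit is $e^{-\frac23\tau_1^3-\tau_1 y}$), and separate the two summands of the kernel. The Gaussian summand $-\Id_{[\tau_1<\tau_k]}\frac{e^{-\frac23\tau_k^3-\tau_k x}}{e^{-\frac23\tau_1^3-\tau_1 y}}\frac{e^{-(x-y)^2/(4(\tau_k-\tau_1))}}{\sqrt{4\pi(\tau_k-\tau_1)}}$, when multiplied by $e^{-\frac23\tau_1^3-\tau_1 y}$ and integrated over $y\in(s_1,\infty)$, should combine with the $-\mathcal{U}_{2k}$ contribution: indeed, integrating $\frac{e^{-(x-y)^2/(4(\tau_k-\tau_1))}}{\sqrt{4\pi(\tau_k-\tau_1)}}$ over $y\in(s_1,\infty)$ gives a complementary-error-function tail, and adding the $\int_{-\infty}^{s_1-x}$ piece from $-\mathcal{U}_{2k}$ collapses it to the clean half-line Gaussian tail displayed — this cancellation is the one bookkeeping point to watch carefully. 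The double-integral summand $-\int_{\zcd}\frac{dz}{2\pi\I}\int_{\wcu{}_{z-\tau_k+\tau_1}}\frac{dw}{2\pi\I}\frac{e^{z^3/3-z(x+\tau_k^2)}}{e^{w^3/3-w(y+\tau_1^2)}}\frac{1}{z-w-\tau_k+\tau_1}$, integrated against $e^{-\frac23\tau_1^3-\tau_1 y}$ over $y\in(s_1,\infty)$, I would treat by writing $\frac{1}{z-w-\tau_k+\tau_1}=\int_0^\infty d\lambda\, e^{-\lambda(z-w-\tau_k+\tau_1)}$ (valid on the chosen contours where $\Re(z-w-\tau_k+\tau_1)>0$), doing the $y$-integral explicitly (it converges for $\Re(w)$ small enough, yielding a $1/(w-\tau_1)$-type factor, hence the $\mu$-integral $\int_{s_1}^\infty d\mu\, e^{-\tau_1\mu}$ in the answer), and then applying both Airy identities in~\eqref{eqAiryIdentities} to the $z$- and $w$-contours to produce $e^{-\frac23\tau_1^3}\int_0^\infty d\lambda\int_{s_1}^\infty d\mu\, e^{-\lambda(\tau_1-\tau_k)}e^{-\tau_1\mu}\Ai(\mu+\lambda+\tau_1^2)\Ai(x+\lambda+\tau_k^2)$, the last term on the right. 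Collecting the three pieces and comparing with Definition~1.1 of~\cite{BFP09} (after the identification $\tau\leftrightarrow\tau_k$) finishes the proof.

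The main obstacle I anticipate is the middle-term reconciliation: making sure the Gaussian-tail contribution from the $\mathcal{V}$-part of $\breve{\mathcal{A}}_{12}$ integrated over $(s_1,\infty)$ glues correctly with the $\mathcal{U}_{2k}$ term to give exactly the stated $\int_{-\infty}^{s_1-x}$ Gaussian, including getting the indicator $\Id_{[\tau_1<\tau_k]}$ and the signs right; the contour-manipulation and Airy-identity steps for the double integral, while lengthy, are entirely parallel to Lemma~\ref{lem:KernelAiryrepresentation} and should be routine. One must also be slightly careful that all contour shifts used to create the $\lambda$- and $\mu$-integrals stay in the region of absolute convergence (guaranteed by the cubic terms $e^{\pm z^3/3}$, as noted in the proof of Lemma~\ref{lem:AsymptoticsKernels}), so that Fubini applies and the order of $(\lambda,\mu,z,w)$ integrations may be freely exchanged.
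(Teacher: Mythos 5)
Your overall plan — split the left-hand side into the three summands $\mathpzc{g}_3^{\delta,\,u_k}$, $-\mathcal{U}_{2k}$, and $\breve{\mathcal{A}}_{12}^{u_k u_1}P_{S_1}\mathpzc{f}^{-\delta,\,u_1}$, write $\tfrac{1}{z-\tau_k}$, $\tfrac{1}{z}$, and the simple poles as exponential integrals, and apply the Airy identities from~\eqref{eqAiryIdentities} — is exactly what the paper does, and your treatment of the first and third summands (the single-contour integral $\to$ first RHS term, and the double-contour part of $\widehat K_{\rm Ai}$ integrated against the conjugated $\mathpzc{f}$ $\to$ the iterated Airy term) is correct and faithful.

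The problem is the middle-term bookkeeping you flag as the ``main obstacle.'' You propose that the Gaussian ($\mathcal{V}$) summand of $\breve{\mathcal{A}}_{12}$ integrated over $y\in(s_1,\infty)$ ``glues'' with the $-\mathcal{U}_{2k}$ contribution ``to give exactly the stated $\int_{-\infty}^{s_1-x}$ Gaussian.'' This does not work arithmetically. With $c=\tau_k-\tau_1>0$, the conjugated $-\mathcal{U}_{2k}$ evaluates (as you and the paper compute, via $z^{-1}=\int_0^\infty e^{-\lambda z}\,d\lambda$ and the resulting Gaussian integral) to
$\Id_{[\tau_1<\tau_k]}\,e^{-\frac23\tau_k^3-\tau_k x}\int_{-\infty}^{s_1-x}\tfrac{e^{-\lambda^2/(4c)}}{\sqrt{4\pi c}}\,d\lambda$, which is already exactly the middle term of the statement on its own. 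The $\mathcal{V}$-summand of the conjugated $\breve{\mathcal{A}}_{12}$, when paired with $e^{-\frac23\tau_1^3-\tau_1 y}$ (the conjugated $\mathpzc{f}$), contributes in addition $-\Id_{[\tau_1<\tau_k]}\,e^{-\frac23\tau_k^3-\tau_k x}\int_{s_1-x}^\infty\tfrac{e^{-\lambda^2/(4c)}}{\sqrt{4\pi c}}\,d\lambda$. Summing the two gives $\int_{-\infty}^{s_1-x}-\int_{s_1-x}^\infty = 2\int_{-\infty}^{s_1-x}-1$, not $\int_{-\infty}^{s_1-x}$, since the Gaussian is symmetric and normalised. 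So the ``collapse'' you describe does not collapse.

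The paper's own (very terse) proof in fact attributes the middle term entirely to $-\mathcal{U}_{2k}$ and says that $(\breve{\mathcal{A}}_{12}^{u_k u_1}P_{S_1}\mathpzc{f}^{-\delta,\,u_1})(X)$ gives only the final iterated-Airy term, i.e.\ it does not invoke the combination you describe. This is consistent with the third piece of the contour formula for $\Phi^k$ in~\eqref{eqDefinFctAiryStat}, whose $w$-contour $\wcu\,{}_{z-\tau_k+\tau_1,\tau_1}$ passes to the left of $\tau_1$, which is exactly what results from integrating the double-integral part of $\widehat K_{\rm Ai}$ against $e^{-\tau_1 y}$ over $(s_1,\infty)$ and picking up the factor $1/(w-\tau_1)$. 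So the genuine gap in your proposal is this: you must determine precisely what happens to the $\mathcal{V}$-part of $\breve{\mathcal{A}}_{12}$ in the convolution, because as written your reconciliation produces the wrong answer. Either you should show that the $\mathcal{V}$-part does not contribute (which is what the paper's phrasing implies), or you must find the correct identity; in either case the step cannot be left as a heuristic ``gluing.''

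Two smaller points. First, the indicator $\Id_{[\tau_1<\tau_k]}$ on the middle term is not merely ``because the Gaussian is normalisable only when $\tau_k>\tau_1$'': it is structurally forced because $\mathcal{U}_{2k}$ is defined only for $k>1$ (and vanishes identically for the other components), so for $k=1$ the middle term is absent from the start. Second, when you write the third piece, note that after the $y$-integration you must deform the $w$-contour so that $\Re w<\tau_1$ before Fubini applies; this deformation crosses no pole (there is no pole at $w=\tau_1$ in $\widehat K_{\rm Ai}$) and is therefore free, but you should state it rather than assume it.
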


\begin{proof}
The computation for $\mathpzc{g}_3^{\delta,\, u_k}(X)$ is as in the previous lemmas and leads to the first term on the right-hand side. For $-{\cal U}_{2k}(X)$ we replace $z^{-1}=\int_0^\infty d\lambda e^{-\lambda z}$ since $\Re(z)>0$, and then perform a Gaussian integral. This leads to the second term. Finally, for the computation of $(\breve{\mathcal{A}}_{12}^{u_k u_1}P_{S_1} \mathpzc{f}^{-\delta,\, u_1})(X)$ we just use the representation in Lemma~\ref{lem:KernelAiryrepresentation} and the explicit formula for $\mathpzc{f}^{-\delta,\, u_1}$.
\end{proof}

\appendix

\section{On Pfaffians and point processes} \label{sec:pfaff}

In this section we recall some basics of Pfaffians and Fredholm Pfaffians. For more on the latter see~\cite[Appendix]{OQR16}; for the former see~\cite{Ste90}.

\paragraph{Pfaffians.} The Pfaffian of an anti-symmetric $2n \times 2n$ matrix $(a_{i j})$ is defined as:
\begin{equation}
 \pf [a_{i j}]_{1\leq i < j\leq 2n} = \frac{1}{2^{n} n!}\sum_{\sigma \in S_{2n}}\sgn(\sigma)a_{\sigma(1),\sigma(2)} a_{\sigma(3),\sigma(4)} \cdots a_{\sigma(2n-1),\sigma(2n)},
\end{equation}
where $S_{2n}$ is the permutation group of $\{1, \dots, 2n\}$. Observe that the Pfaffian is determined entirely by the upper triangular part of the matrix. Furthermore one has the following relation
\begin{equation}\label{eq:pf_det}
\left(\pf[a_{i j}]_{1\leq i < j\leq 2n}\right)^2=\det [a_{i j}]_{1\leq i,j\leq 2n}.
\end{equation}

Suppose we start with a $2 \times 2$ anti-symmetric matrix kernel $K(x, y)$, i.e.\,$K$ is a $2 \times 2$ matrix function of $(x, y)$ which satisfies $K(x, y) = -K^t(y, x)$ ($t$ is the transposition). Given such a kernel and points $x_1, \dots, x_n$, we can define a $2n \times 2n$ anti-symmetric matrix $K^{(n)}$ block-wise as follows: the $2 \times 2$ block at position $(i,j)$ ($1 \leq i, j \leq n$) is the matrix $K(x_i, x_j)$. $K^{(n)}$ thus defined is even-dimensional and anti-symmetric because $K(x, y) = -K^t(y, x)$ so its Pfaffian is well-defined.

\paragraph{Pfaffian processes.} A point process\footnote{See e.g.\,\cite{Jo05, BOO00, Bor10} for more on point processes.} on a configuration space $\Lambda$ is called \emph{Pfaffian with $2 \times 2$ matrix correlation kernel $K$} if there exists a $2 \times 2$ matrix $K$ satisfying $K(x, y) = -K^t(y, x)$ such that
the $n$-point correlation functions $\rho_n(x_1, x_2, \dots, x_n) = \Pb (S : x_1 \in S, \dots, x_n \in S)$ of the process, for all $n \geq 1$, are Pfaffians of the associated $2n \times 2n$ matrix $K^{(n)}$:
\begin{equation}
 \rho_n(x_1, x_2, \dots, x_n) = \pf[K^{(n)} (x_i, x_j)]_{1 \leq i, j\leq n}.
\end{equation}
For instance, one has $\rho_1(x) = K_{12}(x, x)$.

\paragraph{Fredholm Pfaffians.} Given a $2 \times 2$ anti-symmetric matrix kernel $K$ defined on a configuration space $\Lambda$ equipped with a measure $dx$, the \emph{Fredholm Pfaffian} of $K$ restricted to the subspace $U \subset \Lambda$ is defined as
 \begin{equation}\label{eq:fredholm_pf_def}
\pf(J + \lambda K)_{L^2(U)} = \sum_{n=0}^{\infty}\frac{\lambda^n}{n!} \int_{U^n} \pf[ K^{(n)} (x_i, x_j)]_{1\leq i, j \leq n} \prod_{i=1}^n d x_i.
\end{equation}
Here $J$ is the anti-symmetric matrix kernel $J(x,y)=\delta_{x,y} \left( \begin{smallmatrix} 0 & 1 \\ -1 & 0 \end{smallmatrix} \right)$.

Fredholm Pfaffians are defined up to conjugation, in the following sense. Suppose $\widetilde{K}$ is the anti-symmetric matrix kernel
\begin{equation}
\widetilde{K}(x,y)=
\left(\begin{smallmatrix} e^{f(x)} & 0 \\ 0 & e^{-f(x)} \end{smallmatrix}\right)
K(x,y)
\left(\begin{smallmatrix} e^{f(y)} & 0 \\ 0 & e^{-f(y)} \end{smallmatrix}\right)
\end{equation}
for a $dx$-measurable function $f$. Then $\pf [K^{(n)} (x_i, x_j)]_{1\leq i<j\leq 2n} = \pf [\widetilde{K}^{(n)} (x_i, x_j)]_{1\leq i<j\leq 2n}$ and so $\pf(J+\lambda K)_{L^2(U)} = \pf(J+\lambda \widetilde{K})_{L^2(U)}$. Importantly, we can use this to define $\pf(J+\lambda K)_{L^2(Y)}$ even if $K$ is not trace-class provided we find an appropriate $f$ which makes $\widetilde{K}$ trace-class.

We have the following relation between Fredholm Pfaffians with $2 \times 2$ matrix kernels $K$ and block Fredholm determinants with related kernel $J^{-1}K$:
\begin{equation}\label{eq:fred_pf_det}
 \pf(J + \lambda K)^2_{L^2(U)} = \det(\Id + \lambda J^{-1} K)_{L^2(U)},
\end{equation}
where we remark the Fredholm determinant on the right hand side is defined as in~\eqref{eq:fredholm_pf_def} with pf replaced by det and $K^{(n)}$ by $(J^{-1}K)^{(n)}$.

\paragraph{Extended kernels and Pfaffians.} In this note we are interested in \emph{(time-) extended Pfaffian point processes} as they provide the starting formulae for our work. We fix some integer $m \geq 1$ and look at the process at $m$ different time-space positions. Such Pfaffian processes can be viewed two (equivalent) ways: as processes with \emph{extended} $2 \times 2$ matrix kernels or as point processes with a $2m \times 2m$ matrix kernel. Rather than giving the definition here, we exemplify what this means in the next section in Remark~\ref{rem:ext_pfaffian}.

\section{Correlations for geometric weights} \label{sec:geom_wts}
We now state the main result on multi-point correlations for last passage percolation with independent geometric random variables. Specializing appropriately and taking the resulting parameters to $1$ will recover the exponential weights studied in this paper, and notably we will have proven Theorem~\ref{thm:exp_corr} this way.

\paragraph{Generic geometric weights.} Let $a, x_1, \dots, x_N$ be real numbers satisfying
\begin{equation}
0 \leq a < \min_i \tfrac{1}{x_i}, \quad 0 < x_1, \dots, x_N < 1
\end{equation}
and consider the following independent geometric weights $(W_{i,j})_{1 \leq j \leq i \leq N}$ on the corresponding lattice sites forming the half-space:
\begin{equation}
 W_{i,j} = \begin{cases}
 \mathrm{Geom}(a x_i), & i = j, \\
 \mathrm{Geom}(x_i x_j), & i > j.
 \end{cases}
\end{equation}
Here a random variable $X$ is said \textit{geometric} $\mathrm{Geom}(q)$ if $\Pb(X = k) = (1-q)q^k, \forall k\in\{0,1,2,\ldots\}$. See Figure~\ref{fig:geom_lpp} for an example.

\begin{figure}[t!]
 \centering
 \includegraphics[height=5.5cm]{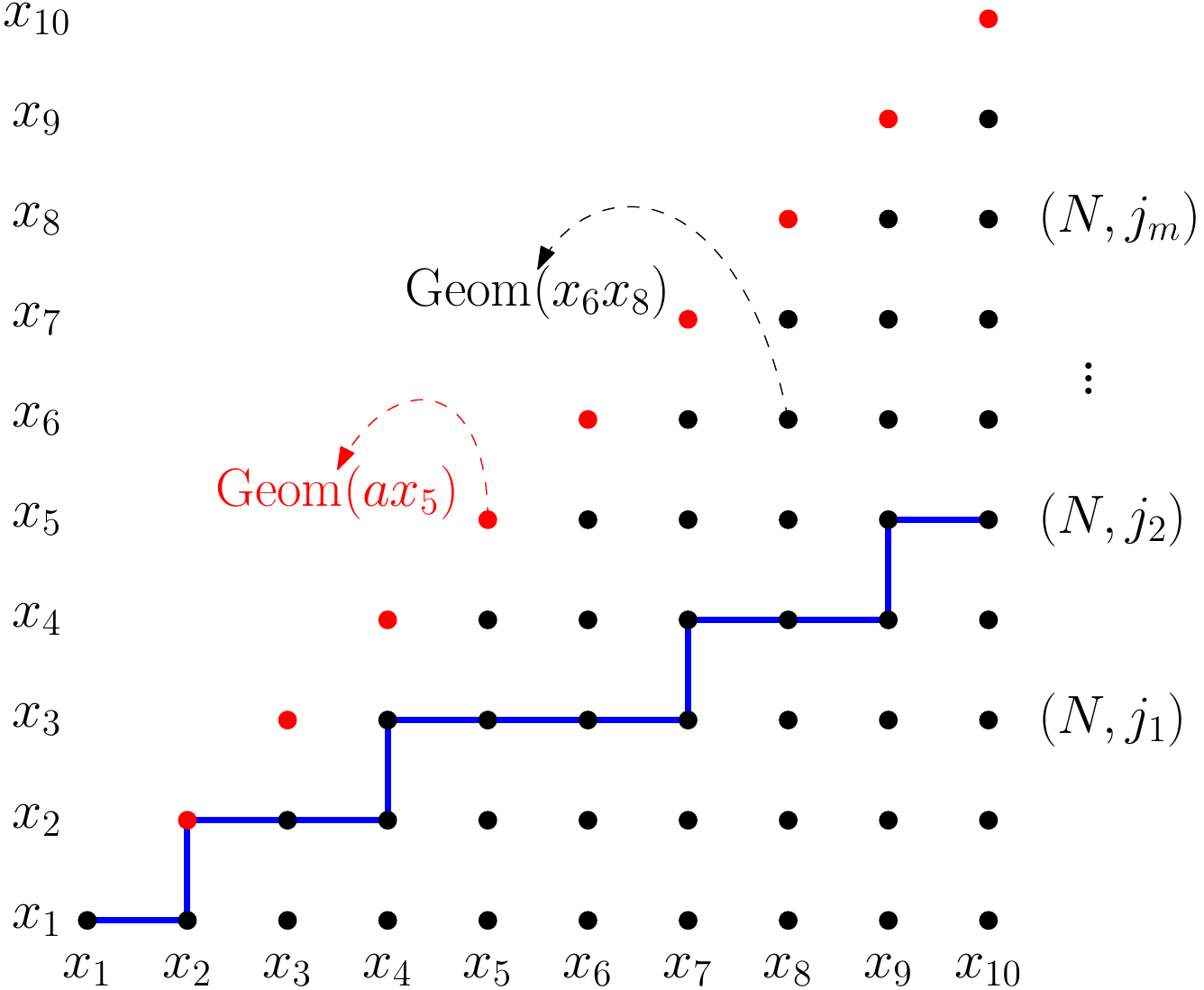}
 \caption{A possible LPP path (polymer) from $(1, 1)$ to $(N, j_2)=(10, 5)$. The non-diagonal dots are independent geometric random variables with parameters assigned by multiplying the row and column $x$ parameters; the diagonal has an extra parameter $a$.}
 \label{fig:geom_lpp}
\end{figure}

Let $L_{N, j_\ell}$ be the LPP from $(1,1)$ to $(N,j_\ell)$. The joint distribution function $\Pb(\bigcap_{\ell=1}^{m} \{ L_{N, j_\ell} \leq s_\ell \})$ is a Fredholm Pfaffian, a result we state next. We follow the exposition of Betea--Bouttier--Nejjar--Vuleti\'c~\cite{BBNV18}. The precise statement as stated below was previously proven by Baik--Barraquand--Corwin--Suidan~\cite{BBCS17}. The non-extended kernel first appeared, perhaps not completely rigourously, in~\cite{SI03} (the case $(m=1, j_1 = N)$). The Pfaffian structure for the case $(m=1, j_1=N)$ was first derived by Rains~\cite{Ra00}, and subsequently extended in~\cite{SI03, RB04, BBCS17, Gho17, BBNV18}. See also the related algebraic work of Baik--Rains~\cite{BR99, BR01b} for an alternative but equivalent approach via Toeplitz+Hankel determinants and matrix integrals, as well as the more combinatorial approach of Forrester--Rains~\cite{FR07}.

\begin{thm} \label{thm:geom_corr}
 The joint distribution function of the $L_{N, j_{\ell}}$'s, $1 \leq \ell \leq m$, is a Fredholm Pfaffian
 \begin{equation}
 \Pb \left( \bigcap_{\ell=1}^{m} \{ L_{N, j_\ell} \leq s_\ell \} \right) = \pf (J-K)_{\ell^2(X)},
 \end{equation}
 where
 \begin{equation}
 X = \bigcup_{\ell = 1}^m \{ \ell \} \times I_\ell, \qquad I_\ell = \{s_\ell + 1, s_\ell + 2, s_\ell + 3, \dots \}
 \end{equation}
 (the union is disjoint) and with $2 \times 2$ matrix correlation kernel $K$ given by:
 \begin{equation}
 \begin{split}
 K_{11}(j, k; j',k') &= \ics \oint \frac{dz}{z^{k}} \oint \frac{dw}{w^{k'}} F(j, z) F(j', w) \frac{(z-w) (z-a) (w-a)} {(z^2-1) (w^2-1) (zw-1)}, \\
 K_{12}(j, k; j', k') &= \ics \oint \frac{dz}{z^{k}} \oint \frac{dw}{w^{-k'+1}} \frac{F(j, z)} {F(j', w)} \frac{(zw-1) (z-a)}{(z-w) (z^2-1) (w-a)} \\
 &= -K_{21}(j', k'; j, k), \\
 K_{22}(j, k; j', k') &= \ics \oint \frac{dz}{z^{-k+1}} \oint \frac{dw}{w^{-k'+1}} \frac{1}{F(j, z) F(j', w)} \frac{ (z-w)}{ (zw-1) (z-a) (w-a)},
 \end{split}
 \end{equation}
 where
 \begin{equation}
 F(j, z)= \frac{\prod_{\ell=1}^N (1-x_\ell/z)}{\prod_{\ell=1}^{j} (1-x_\ell z)}
 \end{equation}
and where the contours are positively oriented circles centered around the origin satisfying the following conditions:
 \begin{itemize}
 \item for $K_{11}$, $1 < |z|, |w| < \min_i \frac{1}{x_i}$;
 \item for $K_{12}$, $\max \{ \max_i x_i, a\} < |w|$, $1 < |z| < \min_i \frac{1}{x_i}$, as well as $|w| < |z|$ if $j \leq j'$ and $|z| < |w|$ otherwise;
 \item for $K_{22}$, $\max \{ \max_i x_i, a\} < |w|, |z|$ and $1 < |zw|$.
 \end{itemize}
\end{thm}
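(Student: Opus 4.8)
\emph{Proof plan.} The plan is to realize half-space LPP with geometric weights as a marginal of a \emph{free-boundary (Pfaffian) Schur process}, for which the relevant particle configuration is a Pfaffian point process with a computable correlation kernel, and then to read off the multipoint distribution as a gap probability. This is the route of Betea--Bouttier--Nejjar--Vuleti\'c~\cite{BBNV18}, building on Baik--Barraquand--Corwin--Suidan~\cite{BBCS17} and~\cite{Ra00,SI03,RB04,Gho17}; in the form stated above the result is contained in~\cite{BBCS17,BBNV18}, so what is really needed is to fix the dictionary and transport their kernel to the present normalization. Concretely, one applies the Robinson--Schensted--Knuth (RSK) correspondence to the triangular array $(W_{i,j})_{1\le j\le i\le N}$. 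Because the array is of symmetric type with a distinguished diagonal carrying the parameter $a$, the push-forward measure, after a Littlewood-type summation (deformed by the boundary parameter) reflecting the diagonal symmetry, becomes a free-boundary Schur process with boundary parameter $a$ and specializations $x_1,\dots,x_N$; the hypotheses $0\le a<\min_i 1/x_i$ and $0<x_i<1$ are exactly what make these sums, and the later contour integrals, converge. By RSK and Greene's theorem, $L_{N,j_\ell}$ is identified with the largest part of the partition associated to the sub-array of columns $1,\dots,j_\ell$; since these sub-arrays are nested in $\ell$, the corresponding partitions interlace and we are dealing with the time-extended Schur process in the variable $j$.

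Next, by the Pfaffian analogue of the Eynard--Mehta theorem for free-boundary Schur processes (see~\cite{BBNV18} and the references therein), the configuration obtained by taking, at each time $j_\ell$, the shifted-partition point process $\{\lambda^{(j_\ell)}_i-i\}_{i\ge 1}$ and labelling it by $\ell$ is a Pfaffian point process on $\{1,\dots,m\}\times\Z$ with a $2\times2$ matrix correlation kernel given by contour integrals. The event $\bigcap_{\ell=1}^m\{L_{N,j_\ell}\le s_\ell\}$ is precisely the event that this process has no point in $X=\bigcup_\ell\{\ell\}\times\{s_\ell+1,s_\ell+2,\dots\}$, and for a Pfaffian point process this gap probability equals the Fredholm Pfaffian $\pf(J-K)_{\ell^2(X)}$ of the correlation kernel restricted to $X$. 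This already yields the claimed structural form of the answer.

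It then remains to compute $K$ explicitly and reconcile it with the displayed formulas. I would specialize the general free-boundary kernel of~\cite{BBNV18} to the geometric weights $W_{i,j}\sim\mathrm{Geom}(x_ix_j)$ for $i>j$ and $W_{i,i}\sim\mathrm{Geom}(ax_i)$, carry out the interlacing and partition summations, and then deform contours and collect residues until the three entries take the stated shape, with the specialization carried by $F(j,z)=\prod_{\ell=1}^N(1-x_\ell/z)\big/\prod_{\ell=1}^j(1-x_\ell z)$. The delicate point is purely bookkeeping: one must track which poles --- at $z=0$, $w=0$, $zw=1$, $z^2=1$, $w=a$, and so on --- are crossed at each contour deformation, since this is what pins down the circle conditions ($1<|z|,|w|<\min_i 1/x_i$ for $K_{11}$; $\max\{\max_i x_i,a\}<|w|$, $1<|z|<\min_i 1/x_i$ with $|w|<|z|$ iff $j\le j'$ for $K_{12}$; $\max\{\max_i x_i,a\}<|z|,|w|$ and $1<|zw|$ for $K_{22}$) and the $j\leftrightarrow j'$ asymmetry, which reflects the interlacing order in the extended direction. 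The main obstacle is therefore not conceptual but this matching of conventions and contours between~\cite{BBCS17,BBNV18} and the normalization used here --- a normalization chosen so that the subsequent exponential degeneration recovers Theorem~\ref{thm:exp_corr} cleanly. (One could instead verify the formula independently via the Toeplitz$+$Hankel / matrix-integral approach of Baik--Rains~\cite{BR99,BR01b} or the combinatorial approach of Forrester--Rains~\cite{FR07}, but degenerating~\cite{BBNV18} is by far the most economical.)
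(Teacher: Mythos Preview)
Your proposal is correct and aligns with how the paper treats this theorem: the paper does not give its own proof but quotes the result from the literature, attributing it to~\cite{BBCS17} in the form stated and to~\cite{BBNV18} for the exposition, with antecedents in~\cite{Ra00,SI03,RB04,Gho17}. The route you describe --- RSK onto a free-boundary (Pfaffian) Schur process, the Pfaffian Eynard--Mehta theorem for the extended correlation kernel, and the identification of the multipoint distribution as a gap probability --- is precisely the one used in those references, and your remark that the only real work here is matching contours and conventions is exactly right.
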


\paragraph{Remarks on extended Pfaffians.} We make a few remarks regarding the result we just stated, most notably to introduce extended kernels and to give the expansion of their Fredholm Pfaffians. This discussion is often skipped in the literature; see~\cite{Ra00} and~\cite{Bor10} for more on pfaffian and determinantal extended point processes.

\begin{rem} \label{rem:ext_pfaffian}
 Observe the following:
 \begin{itemize}
 \item with $\chi_k$ the characteristic function of $\{k\} \times I_k$ (note the latter are disjoint) we have
 \begin{equation} \label{eq:iso_geom}
 \begin{split}
  \ell^2(X) = \ell^2 \left(\bigcup_{k=1}^m \{k\} \times I_k \right) \cong & \bigoplus_{k=1}^m \ell^2(I_k), \quad \chi_1 f_1 + \cdots + \chi_m f_m \mapsfrom (f_1, \dots, f_m);
 \end{split}
 \end{equation}
 \item we can alternatively write the probability of interest as
 \begin{equation} \label{eq:geom_2_ker}
 \Pb \left( \bigcap_{\ell=1}^{m} \{ L_{N, j_\ell} \leq s_\ell \} \right) = \pf (J-P_s K P_s)_{\ell^2(\{1, 2, \dots, m \} \times \Z)},
 \end{equation}
 where $P_s(\ell, k) = \Id_{[k \in I_\ell]}$. Introducing the parameter $\lambda$ ($\lambda=-1$ in the case of interest), the expansion of the Fredholm Pfaffian $\pf (J + \lambda P_s K P_s)_{\ell^2(\{1, 2, \dots, m \} \times \Z)}$ becomes:
 \begin{equation} \label{eq:multi_expansion}
 \pf (J + \lambda P_s K P_s)_{\ell^2(\{1, 2, \dots, m \} \times \Z)} = \sum_{n = 0}^{\infty} \frac{\lambda^n}{n!} \sum_{i_1, \dots, i_n = 1}^{m} \sum_{k_1 \in I_{i_1}} \cdots \sum_{k_n \in I_{i_n}} \pf [K^{(n)} (j_{i_c}, k_c; j_{i_d}, k_d)]_{1 \leq c, d \leq n},
 \end{equation}
 where $[K^{(n)} (j_{i_c}, k_c; j_{i_d}, k_d)]_{1 \leq c, d \leq n}$ is the skew-symmetric $2n \times 2n$ matrix with $2 \times 2$ block at $(c, d)$ ($1 \leq c, d \leq n$) given by the matrix kernel $K (j_{i_c}, k_c; j_{i_d}, k_d)$.
 \item under the isomorphism from~\eqref{eq:iso_geom}, we can also view the multi-point probability of Theorem~\ref{thm:geom_corr} as the following Fredholm Pfaffian
 \begin{equation} \label{eq:geom_2m_ker}
 \begin{split}
 \Pb \left( \bigcap_{\ell=1}^{m} \{ L_{N, j_\ell} \leq s_\ell \} \right) &= \pf (J^{(m)}-K^{(m)})_{\ell^2(I_1) \oplus \cdots \oplus \ell^2 (I_m)} \\
 &= \pf (J^{(m)}-P^{(m)}_s K^{(m)} P^{(m)}_s)_{\ell^2(\Z) \oplus \cdots \oplus \ell^2 (\Z)},
 \end{split}
 \end{equation}
 where $J^{(m)}$ is the $2m \times 2m$ anti-symmetric matrix having just $J = \left( \begin{smallmatrix} 0 & 1 \\ -1 & 0 \end{smallmatrix} \right)$ on the diagonal; $K^{(m)}(k, k')$ is the $2m \times 2m$ matrix kernel whose $(c, d)$ $2 \times 2$ block/component ($1 \leq c, d \leq m$) is the $2 \times 2$ matrix kernel $K(j_c, k; j_d, k')$ from Theorem~\ref{thm:geom_corr}; and $P^{(m)}_s$ is the $2m \times 2m$ diagonal matrix $\mathrm{diag} (\chi_1, \chi_1, \dots, \chi_m, \chi_m)$ where $\chi_k$ is the characteristic function of $I_k$. The expansion remains that of equation~\eqref{eq:multi_expansion}; doing so however enables us to do useful computations with the matrix kernel $K^{(m)}$. To go between the two equalities in~\eqref{eq:geom_2m_ker} one uses
 \begin{equation}
 (P_s K P_s) (k, k') = \sum_{i_1, i_2 = 1}^m \begin{pmatrix} \chi_{i_1} (k) & 0 \\ 0 & \chi_{i_1} (k) \end{pmatrix} (K^{(m)})^{i_1 i_2} (k, k') \begin{pmatrix} \chi_{i_2} (k') & 0 \\ 0 & \chi_{i_2} (k') \end{pmatrix},
 \end{equation}
 where $(K^{(m)})^{i_1 i_2} (k, k')$ is just the $2 \times 2$ matrix kernel (block) $K (j_{i_1}, k; j_{i_2}, k')$.
 \end{itemize}
\end{rem}

\paragraph{Geometric weights of interest.} Let us now process the kernel of Theorem~\ref{thm:geom_corr} into a form suitable for our needs. We start with parameters $a, b$ with $0 \leq q, b < 1$ and $a\geq 0$ so that $ab<1$ and $a \sqrt{q} < 1$. We are interested in the following choice of $x$ parameters:
\begin{equation}
 x_1 = b, \quad x_i = \sqrt{q}, \quad i \geq 2.
\end{equation}
Let us denote
\begin{equation}
 \phi^{\rm geo}(z) = \left[\frac{1-\sqrt{q}/z} {1-\sqrt{q} z} \right]^{N-1} \quad \textrm{and}\quad B^{\rm geo}(z) = \frac{z-b}{1-bz}.
\end{equation}
With this in mind we have the following result.

\begin{thm} \label{thm:geom_corr_2}
Consider integers $k, k' \geq 0$ and parameters $a, b, \sqrt{q}$ all different. The kernel of Theorem~\ref{thm:geom_corr}, which we now label $K^{\rm geo}$, becomes (with the above choice of parameters)
\begin{align}
 & K^{\rm geo}_{11}(j, k; j',k') = \ics \Bigg[ \oint\limits_{\Gamma_{b, \sqrt{q}}} dw \oint\limits_{\Gamma_{\frac{1}{\sqrt{q}}}} dz + \oint\limits_{\Gamma_{\sqrt{q}}} dw \oint\limits_{\Gamma_{\frac{1}{b}}} dz \Bigg] \nonumber \\
 & \quad \times \frac{w^{k'}}{z^{k+1}} \frac{\phi^{\rm geo}(z) B^{\rm geo} (z)} {\phi^{\rm geo}(w) B^{\rm geo} (w)} (1-\sqrt{q} z)^{N-j} (1-\sqrt{q}/w)^{N-j'} \frac{(zw-1) (z-a) (1-aw)} {(z-w) (z^2-1) (w^2-1)}, \nonumber \\
 & K^{\rm geo}_{12}(j, k; j', k') = V^{\rm geo}(j, k; j', k') - \ics \Bigg[ \oint\limits_{\Gamma_{a, b, \sqrt{q}}} dw \oint\limits_{\Gamma_{\frac{1}{\sqrt{q}}}} dz + \oint\limits_{\Gamma_{a, \sqrt{q}}} dw \oint\limits_{\Gamma_{\frac{1}{b}}} dz \Bigg] \\
 & \quad \times \frac{w^{k'}}{z^{k+1}} \frac{\phi^{\rm geo}(z) B^{\rm geo} (z)} {\phi^{\rm geo}(w) B^{\rm geo} (w)} \frac{(1-\sqrt{q} z)^{N-j}} {(1-\sqrt{q} w)^{N-j'} } \frac{(zw-1) (z-a)}{(z-w) (z^2-1) (w-a)}, \nonumber \\
 & K^{\rm geo}_{22}(j, k; j', k') = E^{\rm geo} (j, k; j', k') - \ics \Bigg[ \oint\limits_{\Gamma_{\sqrt{q}}} dw \!\!\!\!\!\! \oint\limits_{\Gamma_{\frac{1}{a}, \frac{1}{b}, \frac{1}{\sqrt{q} } } } \!\!\!\!\! dz + \oint\limits_{\Gamma_{a}} dw \!\!\!\! \oint\limits_{\Gamma_{\frac{1}{b}, \frac{1}{\sqrt{q}}}} \!\!\!\! dz + \oint\limits_{\Gamma_{b}} dw \!\!\!\!
 \oint\limits_{\Gamma_{\frac{1}{a}, \frac{1}{\sqrt{q}}}} \!\!\!\! dz \Bigg] \nonumber \\
 & \quad \times \frac{w^{k'}}{z^{k+1}} \frac{\phi^{\rm geo}(z) B^{\rm geo} (z)} {\phi^{\rm geo}(w) B^{\rm geo} (w)} \frac{1}{(1-\sqrt{q}/z)^{N-j} (1-\sqrt{q} w)^{N-j'} } \frac{(z w-1)} { (z-w) (1-za) (w-a)}, \nonumber
\end{align}
where as always $K^{\rm geo}_{21}(j', k'; j, k) = - K^{\rm geo}_{12}(j, k; j', k')$. $E^{\rm geo}$ and $V^{\rm geo}$ are given by
\begin{equation} \label{eq:V_E_geo}
 \begin{split}
 E^{\rm geo} (j, k; j', k') &= \oint\limits_{\Gamma_{0, a, \sqrt{q}} } \frac{dz}{2 \pi \I} \frac{z^{k'-k-1}} {(1-\sqrt{q}/z)^{N-j} (1-\sqrt{q} z)^{N-j'}} \frac{(1-z^2)}{(1-az)(z-a)}, \\
 V^{\rm geo} (j, k; j', k') &= -\Id_{[j > j']} \oint\limits_{\Gamma_{0}} \frac{dz}{2 \pi \I} \frac{z^{k'-k-1}} {(1-\sqrt{q} z)^{j-j'}} = -\Id_{[j > j']} \Id_{[k \geq k']} \frac{q^{\frac{k-k'}{2}} (j-j')_{k-k'} } {(k-k')!}
 \end{split}
\end{equation}
with $(x)_n = \prod_{0 \leq \ell < n} (x+\ell)$ being the Pochhammer symbol.
\end{thm}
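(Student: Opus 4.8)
The plan is to prove Theorem~\ref{thm:geom_corr_2} by starting from Theorem~\ref{thm:geom_corr} and carrying out the specialization $x_1 = b$, $x_i = \sqrt{q}$ for $i \geq 2$ in a careful, bookkeeping-heavy manner. First I would substitute these values of the $x$ parameters into the function $F(j,z)$ and observe that it factorizes: the product $\prod_{\ell=1}^N(1-x_\ell/z)$ contributes a factor $(1-b/z)$ from $\ell=1$ and $(1-\sqrt q/z)^{N-1}$ from the rest, while $\prod_{\ell=1}^j(1-x_\ell z)$ contributes $(1-bz)$ (only if $j \geq 1$, which always holds) and $(1-\sqrt q z)^{j-1}$. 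Rearranging gives $F(j,z) = \phi^{\rm geo}(z) B^{\rm geo}(z) (1-\sqrt q z)^{N-j}$, where $\phi^{\rm geo}$ and $B^{\rm geo}$ are as defined just before the theorem statement; the remaining powers of $(1-\sqrt q z)$ are absorbed into the $(1-\sqrt q z)^{N-j}$ term after combining with the $(1-\sqrt q z)^{N-1}$ coming from $\phi^{\rm geo}$. One should double-check the exponent arithmetic here, since an off-by-one in $j$ versus $N-j$ is the easiest place to make an error.

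Next I would rewrite the contour integrals. In Theorem~\ref{thm:geom_corr} the contours are circles around the origin constrained only by inequalities on $|z|,|w|$ relative to $a$, $\max_i x_i = \max(b,\sqrt q)$, and $\min_i 1/x_i = \min(1/b, 1/\sqrt q)$. The key step is to deform these circles so that they become the explicit contours $\Gamma_{\bullet}$ encircling the specified poles: the integrand after specialization has poles at $z \in \{b, \sqrt q, \pm 1, 1/b, 1/\sqrt q, a\}$ (and similarly for $w$) coming from the zeros of $(z-b)$, $(1-\sqrt q z)$, $(z^2-1)$, $(zw-1)$, $(z-a)$, $(1-az)$ in the various denominators, together with the zero/pole at the origin from $z^{-k-1}$. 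For each matrix entry I would identify which poles lie on which side of the original circular contour (using the inequality constraints), and then express the single circular contour as a sum of small contours $\Gamma_{\bullet}$ around the enclosed poles — this is where the sums of two or three double integrals in the statement come from. For the $K_{12}$ and $K_{22}$ entries, the residue at certain poles (e.g. at $z$ near $a$ or at the origin) is precisely what produces the explicit $V^{\rm geo}$ and $E^{\rm geo}$ terms; I would compute those residues explicitly and verify they match the formulas in~\eqref{eq:V_E_geo}, including the evaluation of the residue at the origin giving the Pochhammer expression via $\oint \Gamma_0 \, z^{k'-k-1}(1-\sqrt q z)^{-(j-j')}\,dz/(2\pi\I)$.

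I would also verify the minor algebraic identities relating the rational prefactors: e.g. $\frac{(z-a)(1-aw)}{(z-w)(z^2-1)(w^2-1)}$ versus the original $\frac{(z-w)(z-a)(w-a)}{(z^2-1)(w^2-1)(zw-1)}$ — these differ by a substitution $w \mapsto 1/w$ in the $w$-integral, which I would handle by changing variables $w \to 1/w$ in the appropriate integrals (this also explains the shift from $w^{-k'+1}$ to $w^{k'}$ in the measure and the $a \leftrightarrow 1/a$, $b \leftrightarrow 1/b$, $\sqrt q \leftrightarrow 1/\sqrt q$ flips in the contour labels). The factor $B^{\rm geo}(z)/B^{\rm geo}(w)$ should appear symmetrically and one checks it is consistent with $F(j,z)/F(j',w)$ after the $w$-inversion. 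The anti-symmetry relation $K_{21}^{\rm geo}(j',k';j,k) = -K_{12}^{\rm geo}(j,k;j',k')$ is inherited directly from Theorem~\ref{thm:geom_corr} and requires no extra work.

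The main obstacle I expect is the contour-deformation bookkeeping for the $22$ entry: there the original constraint is $|zw| > 1$ with both $|z|,|w|$ bounded below, so after the change of variables $w \to 1/w$ and the inversion of $z$-related poles one gets three distinct double-integral terms (reflecting three different ways of distributing the poles at $1/a, 1/b, 1/\sqrt q$ across the $z$-contour) plus the $E^{\rm geo}$ residue term, and keeping track of exactly which poles are inside which contour — and not accidentally double-counting or missing the pole at $\pm 1$ or at the origin — is delicate. A secondary subtlety is ensuring all parameters $a,b,\sqrt q$ and their reciprocals are genuinely distinct so that all residues are simple; the hypothesis ``$a,b,\sqrt q$ all different'' in the statement is exactly what licenses this, and the general case then follows by continuity (or is simply not needed since we eventually take $q \to 1$ and $b \to 1-\rho$ etc. with generic $\alpha,\beta$). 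Once these deformations are done correctly, matching the resulting expressions term-by-term with the stated formulas is routine.
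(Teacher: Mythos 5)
Your overall strategy -- specialize the $x$ parameters, factorize $F(j,z)$, invert $w\mapsto 1/w$, and deform the circular contours into sums of small loops around poles -- is exactly the route the paper takes (it defers most of the work to Lemmas~C.3--C.5 of~\cite{BFO20}, which carry out precisely this bookkeeping in the one-point case). There is, however, one concrete misidentification that would derail the argument as you've planned it: you attribute the extra additive $V^{\rm geo}$ term to ``the residue at certain poles (e.g.\ at $z$ near $a$ or at the origin).'' That is not where $V^{\rm geo}$ comes from. Looking at the contour specification in Theorem~\ref{thm:geom_corr} for $K_{12}$, the \emph{nesting} of the $z$- and $w$-contours depends on the sign of $j-j'$: one has $|w|<|z|$ if $j\leq j'$ but $|z|<|w|$ if $j>j'$. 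The target formula fixes a single nesting uniformly, so for $j>j'$ one must exchange the order of the contours, and in doing so one crosses the simple pole at $w=z$ coming from the $1/(z-w)$ factor. That residue is exactly $V^{\rm geo}$ in its integral form (one checks the $\phi^{\rm geo}$ and $B^{\rm geo}$ factors in $F(j,z)/F(j',w)$ cancel at $w=z$, leaving $(1-\sqrt q z)^{-(j-j')}$). The $\Id_{[j>j']}$ indicator in~\eqref{eq:V_E_geo} is precisely the record of whether the exchange was needed; a residue at $z=a$ or $z=0$ would not produce that indicator and, indeed, neither $a$ nor $b$ even appears in the formula for $V^{\rm geo}$. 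The residue at $z=0$ only enters in the \emph{second} step, when you convert the contour-integral form of $V^{\rm geo}$ into the explicit Pochhammer expression, which you do describe correctly.

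This is the single genuinely new ingredient relative to the one-point case of~\cite{BFO20}, since for $m=1$ one always has $j=j'$ and $V^{\rm geo}\equiv 0$; the paper's proof flags it explicitly. The $E^{\rm geo}$ term, by contrast, is already present in the one-point setting and arises during the $22$-entry contour rearrangement from the pole at $zw=1$ (combined with the $|zw|>1$ constraint), so your somewhat vaguer description of it as a residue picked up in bookkeeping is acceptable and consistent with what happens in~\cite{BFO20}. A minor secondary slip: the factorization is $F(j,z)=z^{-1}B^{\rm geo}(z)\,\phi^{\rm geo}(z)\,(1-\sqrt q\,z)^{N-j}$ with an extra $z^{-1}$ (which is what shifts $z^{-k}$ to $z^{-k-1}$ in the final formula); you hedge on this, but it is worth carrying the $z^{-1}$ through explicitly from the start.
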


\begin{proof}
 The proof is identical to that of~\cite[Lemmas C.3, C.4, C.5]{BFO20} with one exception. For the $12$ entry there is the extra $V^{\rm geo}$ kernel. It appears for $j > j'$ since in that case only, as can be seen in Theorem~\ref{thm:geom_corr}, the $w$ contour is on the outside of the $z$ contour. Exchanging the two, we pick up a residue at $w=z$, and this is exactly $V^{\rm geo}$ in its integral form. Obtaining the second form of $V^{\rm geo}$ is just a residue computation. The proof for the rest of the $12$ entry then proceeds similarly to~\cite[Lemma C.4]{BFO20}.
\end{proof}

We can rewrite $E^{\rm geo}$ in a form more suitable for asymptotic analysis. We record the result below.

\begin{lem}
 We have
 \begin{equation}
 E^{\rm geo} (j, k; j', k') =
 \begin{dcases}
  -\oint\limits_{\Gamma_{1/\sqrt{q}, 1/a}} \frac{dz}{2 \pi \I} \frac{z^{k'-k-1}} {(1-\sqrt{q}/z)^{N-j} (1-\sqrt{q} z)^{N-j'}} \frac{(1-z^2)}{(1-az)(z-a)}, & \text{if } k \geq k',\\
 \oint\limits_{\Gamma_{\sqrt{q}, a}} \frac{dz}{2 \pi \I} \frac{z^{k'-k-1}} {(1-\sqrt{q}/z)^{N-j} (1-\sqrt{q} z)^{N-j'}} \frac{(1-z^2)}{(1-az)(z-a)}, & \text{if } k < k'
 \end{dcases}
 \end{equation}
and this explicitly shows $E^{\rm geo}$ is anti-symmetric:
\begin{equation}
 E^{\rm geo} (j, k; j', k') = -E^{\rm geo} (j', k'; j, k).
\end{equation}
\end{lem}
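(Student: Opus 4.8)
The plan is to derive both formulas by deforming contours in the defining integral
\[
E^{\rm geo}(j,k;j',k') = \oint\limits_{\Gamma_{0,a,\sqrt q}} \frac{dz}{2\pi\I}\, g_{j,j',k,k'}(z),\qquad
g_{j,j',k,k'}(z) = \frac{z^{k'-k-1}}{(1-\sqrt q/z)^{N-j}(1-\sqrt q z)^{N-j'}}\,\frac{1-z^2}{(1-az)(z-a)},
\]
and then to read off the anti-symmetry from the involution $z\mapsto 1/z$. First I would rewrite $(1-\sqrt q/z)^{-(N-j)}=z^{N-j}(z-\sqrt q)^{-(N-j)}$, so that $g_{j,j',k,k'}$ becomes a genuine rational function whose only finite singularities lie in $\{0,\sqrt q,a,1/\sqrt q,1/a\}$: a pole of order $N-j$ at $\sqrt q$, one of order $N-j'$ at $1/\sqrt q$, simple poles at $a$ and $1/a$, and a pole at $0$ precisely when $k-k'\ge N-j$ — in particular none at $0$ when $k<k'$.

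For $k<k'$ this at once gives the second formula: the loop around $0$ inside $\Gamma_{0,a,\sqrt q}$ encloses no singularity and may be discarded, leaving $\Gamma_{\sqrt q,a}$. For $k\ge k'$ I would instead invoke the residue theorem on $\C\cup\{\infty\}$, where the sum of all residues (including the one at infinity) vanishes; a power count gives $g_{j,j',k,k'}(z)=O(z^{\,k'-k-(N-j')-1})$ as $z\to\infty$, which is $O(z^{-2})$ for $k\ge k'$ as soon as $j'<N$, so $\operatorname{Res}_{z=\infty}g_{j,j',k,k'}=0$ and hence $\oint_{\Gamma_{0,a,\sqrt q}}=-\oint_{\Gamma_{1/\sqrt q,1/a}}$, which is the first formula. (The degenerate cases with $j$ or $j'$ equal to $N$ together with $k=k'$ are treated directly and do not interfere with the use made of the lemma.)

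The anti-symmetry is then immediate. Substituting $z=1/w$, $dz=-dw/w^2$ in the first formula and using the elementary identity $g_{j',j,k',k}(1/w)\cdot(-1/w^2)=g_{j,j',k,k'}(w)$, together with the fact that $z\mapsto 1/z$ carries $\Gamma_{1/\sqrt q,1/a}$ onto $\Gamma_{\sqrt q,a}$ with orientation preserved (these being small loops around points other than $0$ and $\infty$), one obtains for $k'>k$
\[
E^{\rm geo}(j',k';j,k)=-\oint\limits_{\Gamma_{1/\sqrt q,1/a}}\frac{dz}{2\pi\I}\, g_{j',j,k',k}(z)=-\oint\limits_{\Gamma_{\sqrt q,a}}\frac{dw}{2\pi\I}\, g_{j,j',k,k'}(w)=-E^{\rm geo}(j,k;j',k'),
\]
the last equality being the $k<k'$ formula; swapping the two index pairs covers $k>k'$, and $k=k'$ follows in the same way (or directly from $\operatorname{Res}_{z=0}+\operatorname{Res}_{z=\infty}=0$). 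The only step requiring real care is the bookkeeping of the poles at $0$ and at $\infty$ together with the degree count at infinity; everything else is a routine residue computation.
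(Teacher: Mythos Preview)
Your proof is correct and follows essentially the same approach as the paper: both drop the $z=0$ loop when $k<k'$ (no pole there) and deform through infinity when $k\ge k'$. You are in fact more careful than the paper on two points. First, you give the explicit degree count at infinity and flag the degenerate case $j'=N$ with $k=k'$, whereas the paper simply asserts ``$\infty$ is not a pole'' without qualification. Second, the paper does not spell out the anti-symmetry argument at all: it only checks the diagonal $E^{\rm geo}(j,k;j,k)=0$ by hand (residues at $0$ and $a$ cancel when $j=N$; residues at $a$ and $\sqrt{q}$ cancel when $j<N$) and leaves the off-diagonal anti-symmetry implicit in the two displayed formulas. Your explicit $z\mapsto 1/z$ substitution, together with the orientation check for small loops away from $0$ and $\infty$, is exactly the argument the paper tacitly relies on.
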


\begin{proof}
 If $k < k'$ we see $0$ is not a pole of the integrand so we can exclude it from the contour of Theorem~\ref{thm:geom_corr_2}. If $k \geq k'$, we see that $\infty$ is not a pole of the integrand. We can then deform the contours via infinity to enclose $1/a$ and $1/\sqrt{q}$, picking up an overall minus sign in the process. Let us remark this argument works regardless of whether $a > 1$ or $a \leq 1$ (as long as $ab, a \sqrt{q} < 1$) at the cost of possibly using disjoint contours around $a^{\pm 1}$ and $\sqrt{q}^{\pm 1}$. Finally, we observe that
 \begin{equation}
 E^{\rm geo}(j, k; j, k) = 0
 \end{equation}
 in two steps using the formula from Theorem~\ref{thm:geom_corr_2}. If $j=N$, the residue contributions from $0$ and $a$ cancel. If $j < N$, $0$ is not a pole anymore but then the residue contributions from $a$ and $\sqrt{q}$ cancel as well. This proves the result.
\end{proof}

\section{From geometric to exponential weights: proof of Theorem~\ref{thm:exp_corr}} \label{sec:geom_exp_limit}

The integrable LPP model with exponential weights of Section~\ref{sec:int} and its correlation kernel is a limit $q = e^{-\epsilon} \to 1-$ as $\epsilon \to 0+$ of the geometric model described above. In this section we make this limit explicit. The proofs of~\cite[Appendix C.3]{BFO20} apply mutatis mutandis modulo the change in some conjugation factors. Thus we will only state the statements and explain the differences from~\cite{BFO20} without repeating the details.

Throughout this section we fix $m$ a positive integer, two real numbers $\alpha \in (-1/2, 1/2), \beta \in (0, 1/2)$ and $m$ ordered integers $1 \leq j_1 < \dots < j_m \leq N$.

We are looking at the limit:
\begin{equation}
(a,b)=(1-\epsilon \alpha,1-\epsilon \beta),\quad q=1-\epsilon ,\quad (k, k')=\epsilon^{-1}(x, x').
\end{equation}
We wish to show that the $2m \times 2m$ matrix kernel $K^{\rm geo}$
converges to the kernel $K$ of Theorem~\ref{thm:exp_corr}
Further we will show the corresponding convergence of Fredholm Pfaffians:
\begin{equation}
 \pf (J-P_k K^{\rm geo} P_k) \to \pf (J-P_s K P_s),
\end{equation}
where $P_k$ and $P_s$ are the $2m \times 2m$ projectors
\begin{equation}
 \begin{split}
 P_k &= {\rm diag} (\Id_{[\ell > k_1]}, \Id_{[\ell > k_1]}, \dots, \Id_{[\ell > k_m]}, \Id_{[\ell > k_m]}), \\
 P_s &= {\rm diag} (\Id_{[x > s_1]}, \Id_{[x > s_1]}, \dots, \Id_{[x > s_m]}, \Id_{[x > s_m]})
 \end{split}
\end{equation}
and $k_c = s_c / \epsilon$ as $\epsilon \to 0+$. This will then finish the proof.

Let us consider the accordingly rescaled and conjugated kernel
\begin{alignat}{2}
K^{\rm geo,\, \epsilon}_{11}(j, x; j', x')& =\epsilon ^{-2-2N + (j+j')}K^{\rm geo}_{11}(j, k; j', k'), &\quad K^{\rm geo,\, \epsilon}_{12}(j, x; j', x') &= \epsilon^{j-j'-1}K^{\rm geo}_{12}(j, k; j', k'), \nonumber \\
K^{\rm geo,\, \epsilon}_{21}(j, x; j', x')& =\epsilon^{j'-j-1}K^{\rm geo}_{21}(j, k; j', k'), &\quad K^{\rm geo,\, \epsilon}_{22}(j, x; j', x') &= \epsilon^{2N - (j+j')}K^{\rm geo}_{22}(j, k; j', k'),
\end{alignat}
where $j, j' \in \{ j_1, \dots, j_m \}$ and the kernels on the left are still the discrete ones but now conjugated and with limiting parameters now depending on $\epsilon$.

The following result is straightforward.

\begin{prop}\label{prop:ConvGeom}
Uniformly for $x, x'$ in a compact subset of $\R_+$ and for any $j, j' \in \{j_1, \dots, j_m\}$, we have that:
\begin{equation}
\lim_{\epsilon \to 0} K^{\rm geo,\, \epsilon}_{cd}(j, x; j', x')= K_{cd}(j, x; j', x'), \quad 1 \leq c,d \leq 2
\end{equation}
with the extended kernel $K$ being the one of the exponential model from Theorem~\ref{thm:exp_corr}.
\end{prop}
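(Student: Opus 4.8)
The plan is to follow the exponential-limit argument of~\cite[Appendix~C.3]{BFO20} \emph{mutatis mutandis}; the only genuinely new features are that a single integer exponent $n$ there is here replaced by the $\ell$-dependent exponents $N-j_\ell$, and that the $12$ entry carries the extra kernel $V^{\rm geo}$. First I would substitute the parametrization $(a,b,q)=(1-\epsilon\alpha,1-\epsilon\beta,1-\epsilon)$ and $(k,k')=\epsilon^{-1}(x,x')$ into the formulas of Theorem~\ref{thm:geom_corr_2}, and in every double-contour integral perform the Möbius change of variables
\[
 z=\frac{1+\epsilon\zeta/2}{1-\epsilon\zeta/2},\qquad w=\frac{1+\epsilon\omega/2}{1-\epsilon\omega/2}.
\]
This map sends $z=1$ to $\zeta=0$, and as $\epsilon\to0$ it sends the poles $\sqrt q^{\pm1},a^{\pm1},b^{\pm1}$ of the integrands to $\mp\tfrac12,\mp\alpha,\mp\beta$ respectively; using $\alpha\in(-1/2,1/2)$, $\beta\in(0,1/2)$ and $\alpha+\beta>0$, the circular contours prescribed in Theorem~\ref{thm:geom_corr_2} can be deformed (without crossing poles) to the contours $\Gamma_{\dots}$ of Theorem~\ref{thm:exp_corr}. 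One then records the elementary limits $z^{-k}=z^{-x/\epsilon}\to e^{-x\zeta}$, $\phi^{\rm geo}(z)\to\phi(\zeta)$, $B^{\rm geo}(z)=\tfrac{z-b}{1-bz}\to-\tfrac{\zeta+\beta}{\zeta-\beta}$, $(1-\sqrt q z)^{N-j}\sim\epsilon^{N-j}(\tfrac12-\zeta)^{N-j}$, $z-a\sim\epsilon(\zeta+\alpha)$, $zw-1\sim\epsilon(\zeta+\omega)$, $z-w\sim\epsilon(\zeta-\omega)$, $z^2-1\sim2\epsilon\zeta$, and $dz=\epsilon(1+O(\epsilon))\,d\zeta$.

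Collecting powers of $\epsilon$ in each entry, the contribution of the rational prefactor, of the Jacobians, and of the factors $(1-\sqrt q z)^{N-j}$ etc. produces an overall power of $\epsilon$ which is cancelled \emph{exactly} by the rescaling/conjugation prefactor, namely $\epsilon^{-2-2N+(j+j')}$ for the $11$ entry, $\epsilon^{j-j'-1}$ and $\epsilon^{j'-j-1}$ for the $12$ and $21$ entries, and $\epsilon^{2N-(j+j')}$ for the $22$ entry; what remains after the limit is precisely the corresponding double-integral term of the kernel $K$ of Theorem~\ref{thm:exp_corr}, the signs being tracked through the orientation of the contours under the Möbius map and through the poles that change sides. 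The single-integral pieces are treated directly: for $V^{\rm geo}$ one uses $q^{(k-k')/2}=(1-\epsilon)^{(x-x')/(2\epsilon)}\to e^{-(x-x')/2}$ together with $\tfrac{(j-j')_{k-k'}}{(k-k')!}\sim\tfrac{(k-k')^{j-j'-1}}{(j-j'-1)!}$, so that $\epsilon^{j-j'-1}V^{\rm geo}\to V$; for $E^{\rm geo}$ the same change of variables in its single-contour form gives $\epsilon^{2N-(j+j')}E^{\rm geo}\to\tilde\varepsilon$.

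Finally, uniformity of the limit for $x,x'$ in a compact subset of $\R_+$ is automatic: the contours can be chosen independently of $x,x'$ and uniformly bounded away from all relevant poles, so the convergence of the integrands (and of the explicit $V^{\rm geo}$, $E^{\rm geo}$) is uniform on such sets. I expect the main obstacle to be purely organizational, namely the careful bookkeeping of the powers of $\epsilon$ and of the signs: one must verify that the pole configuration $\{\sqrt q^{\pm1},a^{\pm1},b^{\pm1}\}$ degenerates to $\{\pm\tfrac12,\pm\alpha,\pm\beta\}$ in an order compatible with the contour prescriptions of \emph{both} theorems, so that no pole crosses a contour during the deformation, and that the orientation reversals and pole exchanges combine to the correct signs in each of the three entries. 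There are no analytic difficulties since everything is done at finite $N$ with contours bounded away from their poles.
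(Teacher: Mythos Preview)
Your proposal is correct and follows essentially the same route as the paper: substitute the scaling, localise the integration variables near $1$, record the elementary pointwise limits of each factor, check that the powers of $\epsilon$ from the rescaling exactly cancel those produced by the integrand, and conclude by dominated convergence on fixed $\epsilon$-independent contours. The only cosmetic difference is that the paper uses the affine change $z=1+\epsilon Z$, $w=1+\epsilon W$ rather than your Möbius map; to leading order in $\epsilon$ these coincide, so the resulting limits and the bookkeeping of poles and contours are identical.
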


\begin{proof}

 The proof is the same as that of Proposition~45 (Appendix C.3) from~\cite{BFO20}, modulo different notation and the fact we have an extended kernel (which does not change any of the limiting arguments). We make the change of integration variables as $z=1+Z \epsilon $, $w=1+W \epsilon $. We further observe that all variables are in an $\epsilon $-neighborhood of $1$ and hence $Z$- and $W$-contours can be fixed independently of $\epsilon$ as long as they are in the correct position with respect to the poles.

 The following pointwise limits are then immediate:
\begin{equation}
\begin{aligned}
\lim_{\epsilon \to 0} \frac{(1-\sqrt{q}/z)^{N-1}}{(1-\sqrt{q}z)^{N-1}} z^{-k}& =\left[ \frac{\tfrac12+z}{\tfrac12-z} \right]^{N-1} e^{-x Z}=\Phi(x,Z),\\
\lim_{\epsilon \to 0} (1-\sqrt{q} z)^{N-j} \epsilon^{j-N} &= (\tfrac12-Z)^{j-N},\\
\lim_{\epsilon \to 0} (1-\sqrt{q} /w)^{N-j} \epsilon^{j-N} &= (\tfrac12+W)^{j-N}
\end{aligned}
\end{equation}
and
\begin{equation}
\begin{aligned}
\lim_{\epsilon \to 0} \frac{1-b/z}{1-b z}\frac{1-b w}{1-b/w}\frac{(z-a)(1-w a)(zw-1)}{(z^2-1)(1-w^2)(z-w)}&=\frac{(Z+\alpha)(Z+\beta)(W-\alpha)(W-\beta)(Z+W)}{4Z W (Z-W)(W+\beta)(Z-\beta)},\\
\lim_{\epsilon \to 0} \frac{1-b/z}{1-b z}\frac{1-b w}{1-b/w}\frac{(z-a)(zw-1)}{(z^2-1)(w-a)(z-w)} \epsilon &=\frac{(Z+\alpha)(Z+\beta)(W-\beta)(Z+W)}{2Z (Z-W)(W+\alpha)(W+\beta)(Z-\beta)},\\
\lim_{\epsilon \to 0} \frac{1-b/z}{1-b z}\frac{1-b w}{1-b/w}\frac{zw-1}{(1-a z)(w-a)(z-w)} \epsilon^{2} &=\frac{-(Z+\beta)(W-\beta)(Z+W)}{(Z-W)(W+\alpha)(W+\beta)(Z-\alpha)(Z-\beta)},\\
\lim_{\epsilon \to 0} z^{k'-k} \frac{z^{-1}-z}{(1-a z)(z-a)} \epsilon &= e^{-Z(x-x')}\frac{2Z}{Z^2-\alpha^2}, \\
\lim_{\epsilon \to 0} \frac{z^{k'-k-1}} {(1-\sqrt{q} z)^{j-j'}} \epsilon^{j-j'} &= \frac{e^{-Z(x-x')}} {(\tfrac12 - Z)^{j-j'}}.
\end{aligned}
\end{equation}
The last limit is used for the $V$ part of $K^{\rm geo,\, \epsilon}$; note that $V^{\rm geo,\, \epsilon}$ also has an \emph{explicit} form in terms of Pochhammer symbols, factorials, and powers of $q$ as in~\eqref{eq:V_E_geo}; it is then routine to alternatively use Stirling's approximation to compute its $q \to 1-$ limit and conclude it equals the exponential $V$ kernel of~\eqref{eq:V}.

To finish, let us choose the integration paths for $Z$ and $W$ such that they are bounded away from $0$ and from any of the poles of the integrands for any small enough $\epsilon > 0$. It follows that the integrands, appropriately multiplied by some powers of $\epsilon$, are uniformly bounded. We can then take the $\epsilon \to 0+$ limit inside using dominated convergence. This finishes the proof.
\end{proof}

We next provide exponential decay for $K^{\rm geo,\, \epsilon}$, allowing us to conclude by dominated convergence that the discrete Fredholm Pfaffian converges to the continuous one. Since not all terms of $K^{\rm geo,\, \epsilon}$ are exponentially decaying, we need a conjugation. Let
\begin{equation} \label{eq:geo_bounds}
 \nu=(\beta-\max\{0,-\alpha\})/4, \quad \tilde \mu=\max\{0,-\alpha\}+\frac32 \nu,\quad \mu =\max\{0,-\alpha\}+2\nu
\end{equation}
which implies $0<\nu<\tilde\mu<\mu<\beta$ for all $\beta>0$ and $\alpha+\beta>0$.

For $\alpha \geq 0$, we have $\nu=\beta/4$, $\tilde\mu=3\beta/8$ and $\mu=\beta/2$. They satisfy
\begin{equation}\label{eqA1}
\mu+\nu=3\beta/4<\beta,\quad \tilde\mu-\nu = \beta/8>0\geq -\alpha.
\end{equation}
For $\alpha<0$, we have $\nu=(\alpha+\beta)/4$, $\tilde\mu=(3\beta-5\alpha)/8$ and $\mu=(\beta-\alpha)/4$. They satisfy, using $-\alpha<\beta$,
\begin{equation}\label{eqA2}
\mu+\nu=3\beta/4-\alpha/4<\beta,\quad \tilde\mu-\nu=\nu/2-\alpha>-\alpha.
\end{equation}
Consider $m$ positive real numbers $\mu_c$, $1 \leq c \leq m$, satisfying
\begin{equation} \label{eq:geo_bounds_2}
\tilde \mu \leq \mu_m < \cdots < \mu_1 \leq \mu.
\end{equation}
Note that we will also have $|\mu_c-\mu_d|\leq \nu/2$.

Let $K^{\rm geo,\, \epsilon}_{\rm conj}$ be the $2m \times 2m$ matrix kernel given by
\begin{equation}
 K^{\rm geo,\, \epsilon}_{\rm conj} = M(x) K^{\rm geo,\, \epsilon} M(x'),
\end{equation}
where $M(x)={\rm diag}(e^{\mu_1 x}, e^{-\mu_1 x}, \dots, e^{\mu_m x}, e^{-\mu_m x})$. What we mean by the above is that, precisely, the $2 \times 2$ block at position $(c, d)$ ($1 \leq c, d \leq m$) of $K^{\rm geo,\, \epsilon}_{\rm conj}$ is
\begin{equation}
 K^{\rm geo,\, \epsilon}_{\rm conj} (j_c, x; j_d, x') =
 \left(
 \begin{array}{cc}
 K^{\rm geo,\, \epsilon}_{11} (j_c, x; j_d, x') e^{\mu_c x} e^{\mu_d x'} & K^{\rm geo,\, \epsilon}_{12} (j_c, x; j_d, x') e^{\mu_c x} e^{-\mu_d x'} \\
 K^{\rm geo,\, \epsilon}_{21} (j_c, x; j_d, x') e^{-\mu_c x} e^{\mu_d x'} & K^{\rm geo,\, \epsilon}_{22} (j_c, x; j_d, x') e^{-\mu_c x} e^{-\mu_d x'}
 \end{array}
 \right).
\end{equation}
This conjugation does not change the value of the Fredholm Pfaffian, that is, $\pf (J - P_k K^{\rm geo,\, \epsilon} P_k) = \pf (J - P_k K^{\rm geo,\, \epsilon}_{\rm conj} P_k)$.

\begin{lem}\label{lem:Bounds}
Fix $\beta\in (0,1/2)$, $\alpha\in (-1/2,1/2)$ so that $\alpha+\beta>0$. Further let $\nu, \mu$ along with $\mu_1, \dots, \mu_m$ be as in~\eqref{eq:geo_bounds} and~\eqref{eq:geo_bounds_2}. For $K^{\rm geo,\, \epsilon}_{\rm conj}$ the following bounds hold uniformly for $x, x'$ in compact subsets of $\R_+$ and for any $1 \leq c, d \leq m$:
\begin{equation}
 \begin{aligned}
 |K^{\rm geo,\, \epsilon}_{\rm conj,\, 11} (j_c, x; j_d, x')|  \leq C e^{-\nu (x+x')}, &\quad  |K^{\rm geo,\, \epsilon}_{\rm conj,\, 12} (j_c, x; j_d, x')|  \leq C e^{-\nu (x+x')}, \\
 |K^{\rm geo,\, \epsilon}_{\rm conj,\, 21} (j_c, x; j_d, x')|  \leq C e^{-\nu (x+x')}, &\quad |K^{\rm geo,\, \epsilon}_{\rm conj,\, 22} (j_c, x; j_d, x')|  \leq C e^{-\nu (x+x')}.
 \end{aligned}
\end{equation}
\end{lem}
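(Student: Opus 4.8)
\textbf{Proof plan for Lemma~\ref{lem:Bounds}.} The strategy follows that of~\cite[Appendix C.3]{BFO20}, with the one change being the bookkeeping of the $m$ conjugation parameters $\mu_1,\dots,\mu_m$ instead of a single $\mu$. First I would perform the change of variables $z = 1 + Z\epsilon$, $w = 1 + W\epsilon$ in all the contour integrals defining $K^{\rm geo,\,\epsilon}_{cd}(j_c,x;j_d,x')$, noting that after this substitution the $Z$- and $W$-contours can be chosen \emph{independently of $\epsilon$} provided they stay on the correct side of (and bounded away from) the poles, which sit in an $\epsilon$-neighbourhood of the images of $a,b,\sqrt q$ and their reciprocals, i.e.\ near $-\alpha,-\beta,0$ (and near $\alpha,\beta$ for the reciprocals). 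The factor $z^{-k} = (1+Z\epsilon)^{-x/\epsilon}$ contributes the exponential $e^{-xZ}$ up to bounded corrections, and similarly $w^{k'}$ contributes $e^{x'W}$; the remaining rational factors in $z,w$ are uniformly bounded on the chosen contours. Hence the size of each integral is governed by $\sup_{Z\in\gamma_z}e^{-x\Re Z}\cdot\sup_{W\in\gamma_w}e^{x'\Re W}$ together with the $\epsilon$-powers appearing in the rescaling, which are accounted for exactly as in~\cite{BFO20}.

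Next I would read off, for each of the four blocks, where the dominant pole of the $W$-integral (respectively $Z$-integral) lies, since closing/deforming the contour there dictates the worst-case exponential growth in $x$ or $x'$. For the $11$ block both $Z$ and $W$ contours can be pushed so that $\Re Z, -\Re W$ are as large as $\tfrac12 - O(\epsilon)$, so $|K^{\rm geo,\,\epsilon}_{11}|\le C e^{-(\frac12-o(1))(x+x')}$, which is more than enough. For the $12$ block the $W$-integral is controlled by the poles at $W\in\{-\alpha,-\beta\}$, giving growth $e^{\max(-\alpha,-\beta)x'} = e^{-\min(\alpha,\beta)x'}$; combined with the decay $e^{-(\frac12-o(1))x}$ in $x$ and the presence of the $V$-kernel term (whose explicit Pochhammer form from~\eqref{eq:V_E_geo} one handles by Stirling, yielding decay $e^{-(\frac12-o(1))|x-x'|}$ on $x\ge x'$), one obtains a bound of the form $C e^{-(\frac12-o(1))x} e^{-\min(\alpha,\beta)x'}$. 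The $21$ block is the transpose, and the $22$ block is controlled by the poles at $Z\in\{\alpha,\beta\}$ and $W\in\{-\alpha,-\beta\}$, giving $C e^{\max(\alpha,\beta)x}e^{-\min(\alpha,\beta)x'}$ — wait, one must be careful: the worst case is $e^{\beta x}e^{\beta x'}$ style growth, not decay, which is exactly why conjugation is needed.

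The final step is to apply the conjugation $M(x) = {\rm diag}(e^{\mu_1 x}, e^{-\mu_1 x},\dots,e^{\mu_m x}, e^{-\mu_m x})$ and verify, block by block, that every exponent multiplying $x$ and every exponent multiplying $x'$ becomes $\le -\nu$. Using $\tilde\mu\le\mu_d\le\mu$ and the inequalities~\eqref{eqA1}--\eqref{eqA2} (which encode $\mu+\nu<\beta$ and $\tilde\mu-\nu>-\alpha$, together with $|\mu_c-\mu_d|\le\nu/2$), this is a short direct check: e.g.\ for the $22$ block the worst exponent in $x$ after conjugation is roughly $\beta - \mu_c \le \beta - \tilde\mu \le -\nu$ by the second inequality in~\eqref{eqA1}/\eqref{eqA2} suitably rearranged, and similarly in $x'$; for the $12$ block the $x$-exponent $-\tfrac12 + \mu_c \le -\tfrac12 + \mu < -\nu$ and the $x'$-exponent $-\min(\alpha,\beta) - \mu_d$, which one checks is $\le -\nu$ in both sign regimes of $\alpha$ using $\mu+\nu<\beta$ and $\tilde\mu>-\alpha+\tfrac32\nu$; the $V$-kernel contribution is handled via $|\mu_c-\mu_d|\le\nu/2$ together with $\overline\mu_{cd}-\tfrac12+\nu\le 0$ exactly as in~\eqref{eq3.65}. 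Once all exponents are verified negative and bounded above by $-\nu$, and one notes the $\epsilon$-power prefactors from the rescaling remain $O(1)$ on the fixed contours, the claimed uniform bounds $|K^{\rm geo,\,\epsilon}_{\rm conj,\,cd}(j_c,x;j_d,x')|\le C e^{-\nu(x+x')}$ follow. The only mildly delicate point — the main obstacle — is checking that the two-regime case analysis in $\alpha$ (the definitions of $\nu,\tilde\mu,\mu$ differ for $\alpha\ge0$ and $\alpha<0$) genuinely makes every exponent land below $-\nu$ simultaneously across all four blocks and for all $\mu_c$; this is purely a finite arithmetic verification but must be done with care, and it is essentially the content of the analogous one-point lemma in~\cite{BFO20}.
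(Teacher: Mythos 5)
Your high-level plan matches the paper's strategy: same change of variables $z=1+Z\epsilon$, $w=1+W\epsilon$, same idea of positioning the contours so that the supremum of $|e^{-xZ}|$ and $|e^{x'W}|$ is controlled, and the same conjugation bookkeeping. However, the exponent-tracking for the $22$ block contains a genuine error that would derail the proof as written, and the self-correction you added makes it worse.

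For $K^{\rm geo,\,\epsilon}_{22}$ the $z$-contour must enclose poles near $Z\in\{\alpha,\beta,\tfrac12\}$, and the relevant factor $z^{-k}\simeq e^{-xZ}$ \emph{decays} when $\Re Z>0$. A pole at $Z=\beta>0$ therefore produces no growth; the only source of pre-conjugation growth in $x$ is the contour being forced through $\Re Z<0$, which happens iff $\alpha<0$ (pole near $Z=\alpha$). The worst-case pre-conjugation factor is hence of order $e^{\max\{0,-\alpha\}x}$ up to a margin, \emph{not} $e^{\max(\alpha,\beta)x}$ and certainly not "$e^{\beta x}e^{\beta x'}$ growth" as you wrote in your parenthetical correction. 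The verification you then propose, $\beta-\mu_c\le-\nu$, is an arithmetically false inequality: since $\mu_c\le\mu$ and $\mu+\nu<\beta$ by~\eqref{eqA1}--\eqref{eqA2}, one has $\beta-\mu_c\ge\beta-\mu>\nu>0$. What you actually need is $\max\{0,-\alpha\}+\text{(margin)}-\mu_c\le-\nu$, and this is precisely what the lower bound $\mu_c\ge\tilde\mu=\max\{0,-\alpha\}+\tfrac32\nu$ (so $\mu_c-\nu>\max\{0,-\alpha\}$) is engineered to supply. In the paper this is phrased as the contour constraint $|z|\ge 1-(\mu_c-\nu)\epsilon$, which is compatible with enclosing the pole near $\alpha$ exactly because $\mu_c-\nu>\max\{0,-\alpha\}$, yielding $|z^{-k}|\lesssim e^{(\mu_c-\nu)x}$ and, after conjugation by $e^{-\mu_c x}$, the exponent $-\nu$ on the nose. (The second term of the $22$ double integral, whose $z$-contour encircles only $\beta$ and $\tfrac12$, can be pushed to $\Re Z\ge\mu_c+\nu<\beta$ and needs no help from the conjugation.)

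Two further gaps worth flagging. First, you do not address the $E^{\rm geo}$ (i.e.\ $\tilde\varepsilon$) summand inside the $22$ entry at all; it is a kernel in $x-x'$ only, so it has no intrinsic decay in $x+x'$ and must be treated separately, exactly as the paper does by bounding $|z^{k'-k}|\lesssim e^{(\mu_c-\nu)(x-x')}$ on a suitable contour for $x>x'$ and then checking the conjugated exponent is $-\nu x+(\nu-\mu_c-\mu_d)x'\le-\nu(x+x')$. Second, your treatment of the $V$ term correctly identifies the two needed ingredients ($|\mu_c-\mu_d|\le\nu/2$ and $\overline\mu_{cd}-\tfrac12+\nu\le0$), but it is crucial to keep the constraint $x\ge x'$ and split $\mu_c x-\mu_d x'=\overline\mu_{cd}(x-x')+\tfrac12(\mu_c-\mu_d)(x+x')$ before estimating; without invoking $x\ge x'$ the $x'$-exponent $\tfrac12-\mu_d$ is positive and the bound cannot be closed. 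Fix the $22$-block arithmetic and supply the $E^{\rm geo}$ estimate, and the argument then runs as in the paper.
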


\begin{proof}
The $k,k'$ dependence of our kernels only appears in the term $w^{k'}/z^k$; the rest of the integrands remain nicely bounded and converge to their exponential analogues, see Proposition~\ref{prop:ConvGeom}.

For $K^{\rm geo,\, \epsilon}_{11}(j_c, x; j_d, x')$, let us choose contours satisfying $|z|\geq 1+(\mu_c+\nu)\epsilon $ and $|w|\leq 1-(\mu_d+\nu)\epsilon $. This is compatible with the contour requirement: (a) $w$ includes the poles at $b=1-\epsilon \beta$ and/or $\sqrt{q}\simeq 1-\epsilon/2$, since $-(\mu_d+\nu)\geq -(\mu+\nu)>-\beta$ by \eqref{eqA1}-\eqref{eqA2}, and (b) $z$ includes the poles at $1/\sqrt{q}\simeq 1+\epsilon/2$ and/or $1/b\simeq 1+\epsilon\beta$, since $\mu_c+\nu\leq \mu+\nu<\beta$ by \eqref{eqA1}-\eqref{eqA2}.

These yield: $|w^{k'}/z^k|\leq \frac{(1-(\mu_d + \nu)\epsilon )^{x'/\epsilon }}{(1+(\mu_c+\nu)\epsilon )^{x/\epsilon }}\simeq e^{-\mu_c x - \mu_d x'} e^{-\nu(x+x')}$ and the exponentially decaying bounds still persist after conjugation.

For the double contour integral in $K^{\rm geo,\, \epsilon}_{12}(j_c, x;j_d, x')$, we choose contours so that $|z| \geq 1+(\mu_c+\nu) \epsilon $ and $|w|\leq 1+(\mu_d-\nu)\epsilon$. The contour for $z$ need to include the poles at $1+\beta\epsilon$ and $1+\epsilon/2$, which is satisfied since $\mu_c+\nu<\beta$ by \eqref{eqA1}-\eqref{eqA2}. The contour for $w$ need to include the poles at $1-\epsilon/2$, $1-\epsilon\beta$ and $1-\epsilon\alpha$, which is satisfied since $\mu_d-\nu\geq \tilde\mu-\nu>\max\{0,-\alpha\}$ by \eqref{eqA1}-\eqref{eqA2}. We just need to have that the contours do not intersects, and this is satisfied by the condition $\mu_c+\nu>\mu_d-\nu$, equivalent to $\mu_d-\mu_c<2\nu$, which is clearly satisfied.

We then have: $|w^{k'}/z^k| \lesssim e^{-\mu_c x + \mu_d x'} e^{-\nu(x+x')}$ and conjugation still yields the first part of the exponentially decaying bound above. The kernel $V^{\rm geo,\, \epsilon}$ is explicit and gives the bound $e^{-\frac12(x-x')}e^{\mu_c x-\mu_d x'}\leq e^{-\frac{x-x'}{2}} e^{\mu_c x - \mu_d x'}$ for all $x\geq x'$. The term coming from $V$ is not decaying for $x-x'$ constant and thus the conjugation is here essential. Using the contour $|z|=1+(\mu_d-\nu)\epsilon$, for $V^{\rm geo,\, \epsilon}$ we get the bound $e^{-(x'-x)(\mu_d-\nu)}$, for $x>x'$ (otherwise it is simply $0$). After conjugation we have $e^{-(x'-x)(\mu_d-\nu)} e^{-\mu_c x-\mu_d x'}\leq e^{-\nu x'-\mu_c x}\leq e^{-\nu(x+x')}$.

The bounds for the $21$ entries $K^{\rm geo,\, \epsilon}_{\rm conj,\, 21} (j_c, x; j_d, x')$ follow from the anti-symmetry relation $K^{\rm geo,\, \epsilon}_{\rm conj,\, 21} (j_c, x; j_d, x') = - K^{\rm geo,\, \epsilon}_{\rm conj,\, 12} (j_d, x'; j_c, x)$.

Finally we turn to the $22$ entries. For the double integrand in $K^{\rm geo,\, \epsilon}_{22}(j_c, x; j_d, x')$ there are several terms. In the first and third one, we choose contours satisfying $|z| \geq 1-(\mu_c-\nu) \epsilon$ and $|w| \leq 1-(\mu_d+ \nu)\epsilon$, while in the second one $ |z| \geq 1+(\mu_c+\nu) \epsilon$ and $|w| \leq 1 + (\mu_d-\nu)\epsilon$. Again, the relations \eqref{eqA1}-\eqref{eqA2} imply that the conditions on the paths are satisfied. Combining the cases, we have $|w^{k'}/z^k| \lesssim e^{\mu_c x + \mu_d x'} e^{-\nu(x + x')}$. The exponentially decaying bound $e^{-\nu(x + x')}$ persists after conjugation. Finally there are the terms coming from the $E$ kernel. For $k>k'$ (and so $x > x'$) we take a contour with $|z| \geq 1-(\mu_c-\nu)\epsilon$, which gives $|z^{k'-k}| \lesssim e^{(\mu_c-\nu) (x - x')}$. After conjugation, this term is still decaying faster than $e^{-\nu x} e^{-\mu_d y}\leq e^{-\nu (x+y)}$. For $k<k'$ the bound follows from the anti-symmetry of the $E$ term.
\end{proof}

Finally the geometric Fredholm Pfaffian converges to the corresponding exponential one.

\begin{proof}[Proof of Theorem~\ref{thm:exp_corr}]
 The proof of Theorem~\ref{thm:exp_corr} is routine given Proposition~\ref{prop:ConvGeom} and Lemma~\ref{lem:Bounds}. We write the series expansion of the Fredholm Pfaffian and then use the Hadamard bound for determinants/Pfaffians to justify interchanging limit and summation.
\end{proof}

\section{On the Airy$_{\rm stat}$ and half-space Airy$_{\hs}$ processes}
\label{AppWellDef}

In Definition~\ref{def:Airy_half_stat_def} we defined a stochastic process by giving a formula for its finite-dimensional distributions. For the formula to actually define a stochastic process, it must have the following properties: when one of the $S_k\to -\infty$, the distribution goes to $0$; when all $S_k\to\infty$, the distribution goes to $1$; and that the distributions form a consistent family of distributions. All these properties follow if we show that the vector
\begin{equation}
\left(\frac{L_{N,j_k} - 4N + 4 u_k (2N)^{2/3}}{2^{4/3}N^{1/3}}, \quad k=1,\ldots,m\right)
\end{equation}
is tight, which in turns follows by showing that each component of the vector is tight. The consistency then follows by the fact that it is true for the finite-size formula by construction.

We already know that the distribution function converges to a limit, but we still need to verify that the limiting formula is a distribution function. Let us fix $k=1$, the result holding true for each $k$. We have
\begin{equation}
\begin{aligned}
F(S_1)&:=\lim_{N\to\infty} \Pb \left(\frac{L_{N,j_1} - 4N + 4 u_1 (2N)^{2/3}}{2^{4/3}N^{1/3}} \leq S_1 \right) \\
&= \partial_{S_1} \left\{ \pf (J - \breve{\mathcal{A}}_{S_1}) \left[ \mathpzc{e}^{\delta,\, u_1} (S_1) - \braket{P_{S_1} \mathcal{Y}} {(\Id-J^{-1} \breve{\mathcal{A}}_{S_1} )^{-1} P_{S_1} \mathcal{Q}} \right] \right\}.
\end{aligned}
\end{equation}

Using the formula
\begin{equation}
\pf(J-K)\braket{g}{(\Id-J^{-1}K)^{-1} h} = \pf(J-K)-\pf\big[J-K-J \ketbra{h}{g}-\ketbra{g}{h}J\big],
\end{equation}
we get
\begin{equation}
\begin{aligned}
F(S_1)=& \pf (J - \breve{\mathcal{A}}_{S_1}) \partial_{S_1}\mathpzc{e}^{\delta,\, u_1} (S_1) +(\mathpzc{e}^{\delta,\, u_1} (S_1)-1) \partial_{S_1} \pf (J - \breve{\mathcal{A}}_{s_1})\\
& +\partial_{S_1} \pf \big[J - \breve{\mathcal{A}}_{S_1}-P_{S_1}J\ketbra{\mathcal{Q}}{\mathcal{Y}}P_{S_1}- P_{S_1}\ketbra{\mathcal{Y}}{\mathcal{Q}} J P_{S_1}\big].
\end{aligned}
\end{equation}
For the term including $\mathpzc{e}^{\delta,\, u_1} (S_1)$ and its derivative, computing the residue at $\zeta=\delta$ explicitly from \eqref{eq2.26} we get that
\begin{equation}\label{eqE1}
\mathpzc{e}^{\delta,\, u_1} (S_1)=S_1-\delta(2u_1+\delta)-\int\limits_{{}_\delta\zcd} \frac{d \zeta}{2\pi\I} \frac{e^{\frac{\zeta^3}{3} + \zeta^2 u_1 - \zeta S_1} }{e^{\frac{\delta^3}{3} + \delta^2 u_1 - \delta S_1}} \frac{1}{(\zeta - \delta)^2}
\end{equation}
as well as
\begin{equation}\label{eqE2}
\partial_{S_1}\mathpzc{e}^{\delta,\, u_1} (S_1)=1+\int\limits_{{}_\delta\zcd} \frac{d \zeta}{2\pi\I} \frac{e^{\frac{\zeta^3}{3} + \zeta^2 u_1 - \zeta S_1} }{e^{\frac{\delta^3}{3} + \delta^2 u_1 - \delta S_1}} \frac{1}{\zeta - \delta}.
\end{equation}
The integrals in \eqref{eqE1} and \eqref{eqE2} have superexponential decay in $S_1$ as $S_1\to\infty$.

In Section~\ref{sec:proof_asymptotics} we have already obtained asymptotics and bounds on the kernels and vectors. Those results were uniform in $N$ and thus hold also for the limiting kernel and vectors. Using this we see that
\begin{equation}
\begin{aligned}
&\pf (J - \breve{\mathcal{A}}_{S_1}) \to 1, \quad \partial_{S_1}\pf (J - \breve{\mathcal{A}}_{S_1})\to 0, \\
&\partial_{S_1} \pf \big[J - \breve{\mathcal{A}}_{S_1}-P_{S_1}J\ketbra{\mathcal{Q}}{\mathcal{Y}}P_{S_1}- P_{S_1}\ketbra{\mathcal{Y}}{\mathcal{Q}} J P_{S_1}\big] \to 0,
\end{aligned}
\end{equation}
as $S_1\to\infty$. 

The above results imply that $\lim_{S_1\to\infty} F(S_1)=1$.

Let us remark that everything is actually truly uniform in $N$: that is, it holds for the scaled random variables and not only for the limiting formula.

Next we need to verify that $\lim_{S_1\to -\infty} F(S_1)=0$. For this purpose, define
\begin{equation}
X_N=\frac{L_{N,j_1}-L_{N,N} + 4 u_1 (2N)^{2/3}}{2^{4/3}N^{1/3}},\quad Y_N=\frac{L_{N,N}-4N}{2^{4/3}N^{1/3}}.
\end{equation}
Then we have
\begin{equation}
F(S_1)\leq \lim_{N\to\infty} \Pb(X_N\leq \tfrac12 S_1)+\Pb(Y_N\leq \tfrac12 S_1).
\end{equation}
By Lemma~\ref{lem:stationarity} we have that
\begin{equation}
-X_N = \sum_{k=1}^{N-j_1} \xi_k
\end{equation}
with $\xi_1,\xi_2,\ldots$ i.i.d.~${\rm Exp}(\tfrac12+\alpha)$ random variables. By the standard exponential Chebyshev inequality we obtain that for large $N$, 
\begin{equation*}
	\Pb(X_N\leq \tfrac12 S_1)\leq C e^{-(S_1+2\delta)^2/(4 u_1)}\to 0, \quad \text{as } S_1\to -\infty.
\end{equation*}

For the term $\Pb(Y_N\leq \tfrac12 S_1)$, notice that we can couple the stationary LPP with the LPP where the parameters at the two boundaries are both ${\rm Exp}(1)$, that is, we take the integrable model in \eqref{eq:int_wts} with $\alpha=\beta=\tfrac12$. With this choice of $\alpha$ and $\beta$ we have, in distribution, $L^{\rm pf}_{N,N}-\tilde\omega_{1,1}\stackrel{(d)}\leq L_{N,N}$. Thus
\begin{equation}\label{eqE3}
\lim_{N\to\infty}\Pb(Y_N\leq\tfrac12 S_1)\leq \lim_{N\to\infty} \Pb\left(\frac{L^{\rm pf}_{N,N}-\tilde\omega_{1,1}}{2^{4/3} N^{1/3}}\leq \tfrac12 S_1\right).
\end{equation}
The random variable $\tilde \omega_{1,1}$ is asymptotically irrelevant and from Theorem~1.3 of~\cite{BBCS17}, case (1), we get that the right-hand side of~\eqref{eqE3} converges to the GSE Tracy--Widom distribution~\cite{TW96}. Therefore
\begin{equation}
\lim_{N\to\infty}\Pb(Y_N\leq\tfrac12 S_1)\leq F_{\rm GSE}(\tfrac12 S_1)\to 0
\end{equation}
as $S_1\to -\infty$ (see e.g.~\cite{BR00} after Definition 1).

Finally, let us remark that a similar argument can be made for the Airy$_{\rm stat}$ process of Definition~\ref{defAiryStat}. It was shown in~\cite{BFP09} that for the full-space case, the joint distribution of the rescaled LPP converges to the right-hand side of~\eqref{eqAiryStat}, but it has not been verified (in op.~cit.) that no mass escapes at infinity. However, we know from the bounds obtained in~\cite{BFP12} that indeed it is a probability distribution function, so that the Airy$_{\rm stat}$ process is likewise well-defined.


\begin{thebibliography}{10}

	\bibitem{Agg16}
	A.~Aggarwal, \emph{{Current Fluctuations of the Stationary ASEP and Six-Vertex
	  Model}}, Duke Math. J. \textbf{167} (2018), 269--384.
	
	\bibitem{AB19}
	A.~Aggarwal and A.~Borodin, \emph{{Phase transitions in the ASEP and stochastic
	  six-vertex model}}, Ann. Probab. \textbf{47} (2019), 613--689.
	
	\bibitem{ACQ10}
	G.~Amir, I.~Corwin, and J.~Quastel, \emph{{Probability distribution of the free
	  energy of the continuum directed random polymer in 1+1 dimensions}}, Comm.
	  Pure Appl. Math. \textbf{64} (2011), 466--537.
	
	\bibitem{BBCS17b}
	J.~Baik, G.~Barraquand, I.~Corwin, and T.~Suidan, \emph{Facilitated exclusion
	  process}, Computation and Combinatorics in Dynamics, Stochastics and Control
	  (E.~Celledoni, G.~Di Nunno, K.~Ebrahimi-Fard, and H.Z. Munthe-Kaas, eds.),
	  Springer, 2018.
	
	\bibitem{BBCS17}
	J.~Baik, G.~Barraquand, I.~Corwin, and T.~Suidan, \emph{Pfaffian {S}chur
	  processes and last passage percolation in a half-quadrant}, Ann. Probab.
	  \textbf{46} (2018), 3015--3089.
	
	\bibitem{BDJ99}
	J.~Baik, P.A. Deift, and K.~Johansson, \emph{On the distribution of the length
	  of the longest increasing subsequence of random permutations}, J. Amer. Math.
	  Soc. \textbf{12} (1999), 1119--1178.
	
	\bibitem{BFP09}
	J.~Baik, P.L. Ferrari, and S.~P{\'e}ch{\'e}, \emph{{Limit process of stationary
	  TASEP near the characteristic line}}, Comm. Pure Appl. Math. \textbf{63}
	  (2010), 1017--1070.
	
	\bibitem{BFP12}
	J.~Baik, P.L. Ferrari, and S.~P{\'e}ch{\'e}, \emph{{Convergence of the
	  two-point function of the stationary TASEP}}, {Singular Phenomena and Scaling
	  in Mathematical Models}, Springer, 2014, pp.~91--110.
	
	\bibitem{BL16}
	J.~Baik and Z.~Liu, \emph{{TASEP on a ring in sub-relaxation time scale}}, J.
	  Stat. Phys. \textbf{165} (2016), 1051--1085.
	
	\bibitem{BL17}
	J.~Baik and Z.~Liu, \emph{{Multi-point distribution of periodic TASEP}}, J.
	  Amer. Math. Soc. \textbf{32} (2019), 609--674.
	
	\bibitem{BR00}
	J.~Baik and E.M. Rains, \emph{Limiting distributions for a polynuclear growth
	  model with external sources}, J. Stat. Phys. \textbf{100} (2000), 523--542.
	
	\bibitem{BR01b}
	J.~Baik and E.M. Rains, \emph{Algebraic aspects of increasing subsequences},
	  Duke Math. J. \textbf{109} (2001), 1--65.
	
	\bibitem{BR99b}
	J.~Baik and E.M. Rains, \emph{The asymptotics of monotone subsequences of
	  involutions}, Duke Math. J. \textbf{109} (2001), 205--281.
	
	\bibitem{BR99}
	J.~Baik and E.M. Rains, \emph{Symmetrized random permutations}, Random Matrix
	  Models and Their Applications, vol.~40, Cambridge University Press, 2001,
	  pp.~1--19.
	
	\bibitem{BCS06}
	M.~Bal{\'a}zs, E.~Cator, and T.~Sepp{\"a}l{\"a}inen, \emph{Cube root
	  fluctuations for the corner growth model associated to the exclusion
	  process}, Electron. J. Probab. \textbf{11} (2006), 1094--1132.
	
	\bibitem{Bar14}
	G.~Barraquand, \emph{{A phase transition for $q$-TASEP with a few slower
	  particles}}, Stoch. Proc. Appl. \textbf{125} (2015), 2674--2699.
	
	\bibitem{BBC18}
	G.~Barraquand, A.~Borodin, and I.~Corwin, \emph{{Half-space Macdonald
	  processes}}, Forum of Mathematics, Pi \textbf{8} (2020).
	
	\bibitem{BBCW17}
	G.~Barraquand, A.~Borodin, I.~Corwin, and M.~Wheeler, \emph{{Stochastic
	  six-vertex model in a half-quadrant and half-line open ASEP}}, Duke Math. J.
	  \textbf{167} (2018), 2457--2529.
	
	\bibitem{BKLD20}
	G.~Barraquand, A.~Krajenbrink, and P.~Le~Doussal, \emph{{Half-Space Stationary
	  Kardar--Parisi--Zhang Equation}}, J. Stat. Phys. \textbf{181} (2020), no.~4,
	  1149--1203.
	
	\bibitem{BKS85}
	{H. van} Beijeren, R.~Kutner, and H.~Spohn, \emph{Excess noise for driven
	  diffusive systems}, Phys. Rev. Lett. \textbf{54} (1985), 2026--2029.
	
	\bibitem{BBNV18}
	D.~Betea, J.~Bouttier, P.~Nejjar, and M.~Vuleti\'c, \emph{The free boundary
	  {S}chur process and applications {I}}, Ann. Henri Poincar\'e \textbf{19}
	  (2018), 3663--3742.
	
	\bibitem{BFO20}
	D.~Betea, P.L. Ferrari, and A.~Occelli, \emph{Stationary half-space last
	  passage percolation}, Comm. Math. Phys. \textbf{377} (2020), 421--467.
	
	\bibitem{BZ17}
	E.~Bisi and N.~Zygouras, \emph{{Point-to-line polymers and orthogonal Whittaker
	  functions}}, Trans. Amer. Math. Soc. \textbf{371} (2019), 8339--8379.
	
	\bibitem{BZ19}
	E.~Bisi and N.~Zygouras, \emph{{Transition between characters of classical
	  groups, decomposition of Gelfand-Tsetlin patterns and last passage
	  percolation}}, arXiv:1905.09756 (2019).
	
	\bibitem{Bor10}
	A.~Borodin, \emph{Determinantal point processes}, The Oxford Handbook of Random
	  Matrix Theory, Chapter 11 (G.~Akemann, J.~Baik, and P.~Di Francesco, eds.),
	  Oxford University Press, USA, 2010.
	
	\bibitem{BCF12}
	A.~Borodin, I.~Corwin, and P.L. Ferrari, \emph{Free energy fluctuations for
	  directed polymers in random media in $1+1$ dimension}, Comm. Pure Appl. Math.
	  \textbf{67} (2014), 1129--1214.
	
	\bibitem{BCFV14}
	A.~Borodin, I.~Corwin, P.L. Ferrari, and B.~Vet{\H o}, \emph{{Height
	  fluctuations for the stationary KPZ equation}}, Mathematical Physics,
	  Analysis and Geometry \textbf{December 2015} (2015).
	
	\bibitem{BF07}
	A.~Borodin and P.L. Ferrari, \emph{{Large time asymptotics of growth models on
	  space-like paths I: PushASEP}}, Electron. J. Probab. \textbf{13} (2008),
	  1380--1418.
	
	\bibitem{BFP06}
	A.~Borodin, P.L. Ferrari, and M.~Pr{\"a}hofer, \emph{{Fluctuations in the
	  discrete TASEP with periodic initial configurations and the Airy$_1$
	  process}}, Int. Math. Res. Papers \textbf{2007} (2007), rpm002.
	
	\bibitem{BFPS06}
	A.~Borodin, P.L. Ferrari, M.~Pr{\"a}hofer, and T.~Sasamoto, \emph{{Fluctuation
	  properties of the TASEP with periodic initial configuration}}, J. Stat. Phys.
	  \textbf{129} (2007), 1055--1080.
	
	\bibitem{BG12}
	A.~Borodin and V.~Gorin, \emph{Lectures on integrable probability},
	  arXiv:1212.3351 (2012).
	
	\bibitem{BOO00}
	A.~Borodin, A.~Okounkov, and G.~Olshanski, \emph{Asymptotics of {P}lancherel
	  measures for symmetric groups}, J. Amer. Math. Soc. \textbf{13} (2000),
	  481--515.
	
	\bibitem{RB04}
	A.~Borodin and E.M. Rains, \emph{{Eynard-Mehta theorem, Schur process, and
	  their Pfaffian analogs}}, J. Stat. Phys. \textbf{121} (2006), 291--317.
	
	\bibitem{BF20}
	A.~Bufetov and P.L. Ferrari, \emph{{Shock fluctuations in TASEP under a variety
	  of time scalings}}, arXiv:2003.12414 (2019).
	
	\bibitem{Bur56}
	P.J. Burke, \emph{The output of a queuing system}, Operations Res. \textbf{4}
	  (1956), 699--704.
	
	\bibitem{CFS16}
	S.~Chhita, P.L. Ferrari, and H.~Spohn, \emph{{Limit distributions for KPZ
	  growth models with spatially homogeneous random initial conditions}}, Ann.
	  Appl. Probab. \textbf{28} (2018), 1573--1603.
	
	\bibitem{Cor11}
	I.~Corwin, \emph{{The Kardar-Parisi-Zhang equation and universality class}},
	  Random Matrices: Theory Appl. \textbf{01} (2012), 1130001.
	
	\bibitem{CLW16}
	I.~Corwin, Z.~Liu, and D.~Wang, \emph{{Fluctuations of TASEP and LPP with
	  general initial data}}, Ann. Appl. Probab. \textbf{26} (2016), 2030--2082.
	
	\bibitem{DMO05}
	M.~Darief, J.~Mairesse, and N.~O'Connell, \emph{Queues, stores, and tableaux},
	  J. Appl. Probab. \textbf{4} (2005), 1145--1167.
	
	\bibitem{Dim20}
	E.~Dimitrov, \emph{{Two-point convergence of the stochastic six-vertex model to
	  the Airy process}}, arXiv:2006.15934 (2020).
	
	\bibitem{FF94b}
	P.A. Ferrari and L.~Fontes, \emph{{Shock fluctuations in the asymmetric simple
	  exclusion process}}, Probab. Theory Relat. Fields \textbf{99} (1994),
	  305--319.
	
	\bibitem{Fer10b}
	P.L. Ferrari, \emph{{From interacting particle systems to random matrices}}, J.
	  Stat. Mech. (2010), P10016.
	
	\bibitem{FGN17}
	P.L. Ferrari, P.~Ghosal, and P.~Nejjar, \emph{{Limit law of a second class
	  particle in TASEP with non-random initial condition}}, Ann. Inst. Henri
	  Poincaré Probab. Statist. \textbf{55} (2019), 1203--1225.
	
	\bibitem{FN13}
	P.L. Ferrari and P.~Nejjar, \emph{{Anomalous shock fluctuations in TASEP and
	  last passage percolation models}}, Probab. Theory Relat. Fields \textbf{161}
	  (2015), 61--109.
	
	\bibitem{FN16}
	P.L. Ferrari and P.~Nejjar, \emph{{Fluctuations of the competition interface in
	  presence of shocks}}, ALEA, Lat. Am. J. Probab. Math. Stat. \textbf{14}
	  (2017), 299--325.
	
	\bibitem{FN19}
	P.L. Ferrari and P.~Nejjar, \emph{{Statistics of TASEP with three merging
	  characteristics}}, J. Stat. Phys. \textbf{180} (2019), 398--413.
	
	\bibitem{FO17}
	P.L. Ferrari and A.~Occelli, \emph{{Universality of the GOE Tracy-Widom
	  distribution for TASEP with arbitrary particle density}}, Eletron. J. Probab.
	  \textbf{23} (2018), no.~51, 1--24.
	
	\bibitem{FO18}
	P.L. Ferrari and A.~Occelli, \emph{Time-time covariance for last passage
	  percolation with generic initial profile}, Math. Phys. Anal. Geom.
	  \textbf{22} (2019), 1.
	
	\bibitem{FS05a}
	P.L. Ferrari and H.~Spohn, \emph{Scaling limit for the space-time covariance of
	  the stationary totally asymmetric simple exclusion process}, Comm. Math.
	  Phys. \textbf{265} (2006), 1--44.
	
	\bibitem{FS10}
	P.L. Ferrari and H.~Spohn, \emph{{Random Growth Models}}, The Oxford handbook
	  of random matrix theory (G.~Akemann, J.~Baik, and P.~{Di Francesco}, eds.),
	  Oxford Univ. Press, Oxford, 2011, pp.~782--801.
	
	\bibitem{FS16}
	P.L. Ferrari and H.~Spohn, \emph{{On time correlations for KPZ growth in one
	  dimension}}, SIGMA \textbf{12} (2016), 074.
	
	\bibitem{FSW15}
	P.L. Ferrari, H.~Spohn, and T.~Weiss, \emph{{Brownian motions with one-sided
	  collisions: the stationary case}}, Electron. J. Probab. \textbf{20} (2015),
	  1--41.
	
	\bibitem{FV20}
	P.L. Ferrari and B.~Vet\H{o}, \emph{{Upper tail decay of KPZ models with
	  Brownian initial conditions}}, arXiv:2007.13496 (2020).
	
	\bibitem{FV13}
	P.L. Ferrari and B.~Vet{\H o}, \emph{{Tracy-Widom asymptotics for q-TASEP}},
	  Ann. Inst. H. Poincar\'e, Probab. Statist. \textbf{51} (2015), 1465--1485.
	
	\bibitem{FR07}
	P.J. Forrester and E.M. Rains, \emph{Symmetrized models of last passage
	  percolation and non-intersecting lattice paths}, J. Stat. Phys. \textbf{129}
	  (2007), 833--855.
	
	\bibitem{FNS77}
	D.~Forster, D.R. Nelson, and M.J. Stephen, \emph{Large-distance and long-time
	  properties of a randomly stirred fluid}, Phys. Rev. A \textbf{16} (1977),
	  732--749.
	
	\bibitem{Gho17}
	P.~Ghosal, \emph{Correlation functions of the {P}faffian {S}chur process using
	  {M}acdonald difference operators}, SIGMA \textbf{15} (2019), 092.
	
	\bibitem{Gro04}
	S.~Grosskinsky, \emph{Phase transitions in nonequilibrium stochastic particle
	  systemswith local conservation laws}, Ph.D. thesis, Technische
	  Universit{\"a}t M{\"u}nchen, https://mediatum.ub.tum.de/602023, 2004.
	
	\bibitem{IMS19}
	T.~Imamura, M.~Mucciconi, and T.~Sasamoto, \emph{{Stationary Stochastic Higher
	  Spin Six Vertex Model and $q$-Whittaker measure}}, Probab. Theory Relat.
	  Fields \textbf{177} (2020), 923--1042.
	
	\bibitem{SI04}
	T.~Imamura and T.~Sasamoto, \emph{Fluctuations of the one-dimensional
	  polynuclear growth model with external sources}, Nucl. Phys. B \textbf{699}
	  (2004), 503--544.
	
	\bibitem{IS12}
	T.~Imamura and T.~Sasamoto, \emph{{Stationary correlations for the 1D KPZ
	  equation}}, J. Stat. Phys. \textbf{150} (2013), 908--939.
	
	\bibitem{SI17b}
	T.~Imamura and T.~Sasamoto, \emph{{Free energy distribution of the stationary
	  O'Connell–Yor directed random polymer model}}, J. Phys. A: Math. Theor.
	  \textbf{50} (2017), 285203.
	
	\bibitem{SI17}
	T.~Imamura and T.~Sasamoto, \emph{{Fluctuations for stationary $q$-TASEP}},
	  Probab. Theory Relat. Fields \textbf{174} (2019), 647--730.
	
	\bibitem{Jo00b}
	K.~Johansson, \emph{Shape fluctuations and random matrices}, Comm. Math. Phys.
	  \textbf{209} (2000), 437--476.
	
	\bibitem{Jo03b}
	K.~Johansson, \emph{Discrete polynuclear growth and determinantal processes},
	  Comm. Math. Phys. \textbf{242} (2003), 277--329.
	
	\bibitem{Jo05}
	K.~Johansson, \emph{Random matrices and determinantal processes}, Mathematical
	  Statistical Physics, Session LXXXIII: Lecture Notes of the Les Houches Summer
	  School 2005 (A.~Bovier, F.~Dunlop, A.~van Enter, F.~den Hollander, and
	  J.~Dalibard, eds.), Elsevier Science, 2006, pp.~1--56.
	
	\bibitem{Jo15}
	K.~Johansson, \emph{{Two time distribution in Brownian directed percolation}},
	  Comm. Math. Phys. \textbf{351} (2017), 441--492.
	
	\bibitem{Jo18}
	K.~Johansson, \emph{{The two-time distribution in geometric last-passage
	  percolation}}, arXiv:1802.00729 (2018).
	
	\bibitem{JR19}
	K.~Johansson and M.~Rahman, \emph{{Multi-time distribution in discrete
	  polynuclear growth}}, arXiv:1906.01053 (2019).
	
	\bibitem{KPZ86}
	M.~Kardar, G.~Parisi, and Y.Z. Zhang, \emph{Dynamic scaling of growing
	  interfaces}, Phys. Rev. Lett. \textbf{56} (1986), 889--892.
	
	\bibitem{KPS19}
	A.~Knizel, L.~Petrov, and A.~Saenz, \emph{{Generalizations of TASEP in discrete
	  and continuous inhomogeneous space}}, Commun. Math. Phys. \textbf{372}
	  (2019), 797--864.
	
	\bibitem{KLD19}
	A.~Krajenbrink and P.~Le Doussal, \emph{{Replica Bethe Ansatz solution to the
	  Kardar-Parisi-Zhang equation on the half-line}}, SciPost Phys. \textbf{8}
	  (2020), 035.
	
	\bibitem{KS92}
	J.~Krug and H.~Spohn, \emph{Kinetic roughening of growning surfaces}, Solids
	  far from equilibrium: growth, morphology and defects, Cambridge University
	  Press, 1992, pp.~479--582.
	
	\bibitem{Lig75}
	T.M. Liggett, \emph{{Ergodic theorems for the asymmetric simple exclusion
	  process}}, Trans. Amer. Math. Soc. \textbf{213} (1975), 237--261.
	
	\bibitem{Lig77}
	T.M. Liggett, \emph{{Ergodic theorems for the asymmetric simple exclusion
	  process II}}, Ann. Probab. \textbf{4} (1977), 795--801.
	
	\bibitem{Li99}
	T.M. Liggett, \emph{Stochastic interacting systems: contact, voter and
	  exclusion processes}, Springer Verlag, Berlin, 1999.
	
	\bibitem{Liu19}
	Z.~Liu, \emph{{Multi-time distribution of TASEP}}, arXiv:1907.09876 (2019).
	
	\bibitem{Mea98}
	P.~Meakin, \emph{Fractals, scaling and growth far from equilibrium}, Cambridge
	  {U}niversity {P}ress, Cambridge, 1998.
	
	\bibitem{ND17}
	J.~De Nardis and P.~Le Doussal, \emph{{Tail of the two-time height distribution
	  for KPZ growth in one dimension}}, J. Stat. Mech. \textbf{053212} (2017).
	
	\bibitem{ND18}
	J.~De Nardis and P.~Le Doussal, \emph{{Two-time height distribution for 1D KPZ
	  growth: the recent exact result and its tail via replica}}, J. Stat. Mech.
	  \textbf{093203} (2018).
	
	\bibitem{NDT17}
	J.~De Nardis, P.~Le Doussal, and K.A. Takeuchi, \emph{{Memory and Universality
	  in Interface Growth}}, Phys. Rev. Lett. \textbf{118} (2017), 125701.
	
	\bibitem{NKDT20}
	J.~De Nardis, A.~Krajenbrink, P.~Le Doussal, and T.~Thiery, \emph{Delta-{B}ose
	  gas on a half-line and the {K}ardar--{P}arisi--{Z}hang equation: boundary
	  bound states and unbinding transitions}, J. Stat. Mech. \textbf{043207}
	  (2020).
	
	\bibitem{Nej17}
	P.~Nejjar, \emph{{Transition to shocks in TASEP and decoupling of last passage
	  times}}, ALEA, Lat. Am. J. Probab. Math. Stat. \textbf{15} (2018),
	  1311--1334.
	
	\bibitem{OQR16}
	J.~Ortmann, J.~Quastel, and D.~Remenik, \emph{{Exact formulas for random growth
	  with half-flat initial data}}, Ann. Appl. Probab. \textbf{26} (2016),
	  507--548.
	
	\bibitem{PS00}
	M.~Pr{\"a}hofer and H.~Spohn, \emph{Universal distributions for growth
	  processes in 1+1 dimensions and random matrices}, Phys. Rev. Lett.
	  \textbf{84} (2000), 4882--4885.
	
	\bibitem{PS02}
	M.~Pr{\"a}hofer and H.~Spohn, \emph{Scale invariance of the {PNG} droplet and
	  the {A}iry process}, J. Stat. Phys. \textbf{108} (2002), 1071--1106.
	
	\bibitem{PS02b}
	M.~Pr{\"a}hofer and H.~Spohn, \emph{Exact scaling function for one-dimensional
	  stationary {KPZ} growth}, J. Stat. Phys. \textbf{115} (2004), 255--279.
	
	\bibitem{Qua11}
	J.~Quastel, \emph{{Introduction to KPZ}}, Current Developments in Mathematics
	  (2011), 125--194.
	
	\bibitem{QR16}
	J.~Quastel and D.~Remenik, \emph{{How flat is flat in a random interface
	  growth?}}, Trans. Amer. Math. Soc. \textbf{371} (2019), 6047--6085.
	
	\bibitem{QS15}
	J.~Quastel and H.~Spohn, \emph{The one-dimensional kpz equation and its
	  universality class}, J. Stat. Phys. \textbf{160} (2015), 965--984.
	
	\bibitem{Ra00}
	E.M. Rains, \emph{Correlation functions for symmetrized increasing
	  subsequences}, arXiv:math.CO/0006097 (2000).
	
	\bibitem{Sas05}
	T.~Sasamoto, \emph{Spatial correlations of the {1D KPZ} surface on a flat
	  substrate}, J. Phys. A \textbf{38} (2005), L549--L556.
	
	\bibitem{SI03}
	T.~Sasamoto and T.~Imamura, \emph{Fluctuations of a one-dimensional polynuclear
	  growth model in a half space}, J. Stat. Phys. \textbf{115} (2004), 749--803.
	
	\bibitem{SS10b}
	T.~Sasamoto and H.~Spohn, \emph{{Exact height distributions for the KPZ
	  equation with narrow wedge initial condition}}, Nucl. Phys. B \textbf{834}
	  (2010), 523--542.
	
	\bibitem{SS10}
	T.~Sasamoto and H.~Spohn, \emph{{One-dimensional Kardar-Parisi-Zhang equation:
	  an exact solution and its universality}}, Phys. Rev. Lett. \textbf{104}
	  (2010), 230602.
	
	\bibitem{Ste90}
	J.R. Stembridge, \emph{Nonintersecting paths, {P}faffians, and plane
	  partitions}, Adv. Math. \textbf{83} (1990), 96--131.
	
	\bibitem{Tak13}
	K.A. Takeuchi, \emph{{Crossover from Growing to Stationary Interfaces in the
	  Kardar-Parisi-Zhang Class}}, Phys. Rev. Lett. \textbf{110} (2013), 210604.
	
	\bibitem{Tak16}
	K.A. Takeuchi, \emph{{An appetizer to modern developments on the
	  Kardar--Parisi--Zhang universality class}}, Physica A \textbf{504} (2016),
	  77--105.
	
	\bibitem{TW94}
	C.A. Tracy and H.~Widom, \emph{{Level-spacing distributions and the Airy
	  kernel}}, Comm. Math. Phys. \textbf{159} (1994), 151--174.
	
	\bibitem{TW96}
	C.A. Tracy and H.~Widom, \emph{On orthogonal and symplectic matrix ensembles},
	  Comm. Math. Phys. \textbf{177} (1996), 727--754.
	
	\bibitem{Zyg18}
	N.~Zygouras, \emph{{Some algebraic structures in the KPZ universality}},
	  arXiv:1812.07204 (2018).
	
\end{thebibliography}

\end{document}